\documentclass[10pt]{article}
\usepackage{geometry}
\usepackage{amsmath}
\usepackage{amsfonts}
\usepackage{amsthm}
\usepackage{amssymb}
\usepackage{mathtools}
\usepackage{float}
\usepackage{verbatim}
\usepackage{listings}
\usepackage{extpfeil}
\usepackage{url}
\usepackage{quiver}
\usepackage{tikz-cd}
\usepackage{tkz-euclide}
\usepackage{MnSymbol}
\usepackage{tikz}
\usepackage{enumitem}
\usepackage{comment}

\usepackage[dvipsnames]{xcolor}
\usepackage[colorlinks]{hyperref}
\hypersetup{
    pdfborder={0 0 0},
    colorlinks=true,
    linkcolor=Blue,
    filecolor=Blue,      
    urlcolor=Blue,
    citecolor=ForestGreen,
}
\newcommand*{\defeq}{\mathrel{\vcenter{\baselineskip0.5ex \lineskiplimit0pt\hbox{\scriptsize.}\hbox{\scriptsize.}}}=}
\newcommand*\circled[1]{\tikz[baseline=(char.base)]{
            \node[shape=circle,draw,inner sep=2pt] (char) {#1};}}

\newcommand*{\norm}{\operatorname{N}\mathfrak{a}}
\newcommand*{\aL}{\mathfrak{a}^{-1}L}

\newcommand*{\sigC}{\sigma_{\mathbb{C}}}
\newcommand{\Hom}{\operatorname{Hom}}
\newcommand{\SL}{\operatorname{SL}}
\newcommand{\End}{\operatorname{End}}
\newcommand{\Aut}{\operatorname{Aut}}
\newcommand{\disc}{\operatorname{disc}}

\newcommand\coker{\operatorname{coker}}
\newcommand{\triple}[1]{(#1,\langle z \rangle,g)}
\newcommand{\base}{\mathbb{P}(\Lambda\otimes\mathbb{C})-\mathbb{P}(\Lambda\otimes\mathbb{R})}

\theoremstyle{plain}
\newtheorem{theorem}{Theorem}
\newtheorem{conjecture}{Conjecture}
\newtheorem{proposition}{Proposition}
\newtheorem{corollary}{Corollary}
\newtheorem{lemma}{Lemma}

\newtheorem{introthm}{Theorem}

\theoremstyle{definition}
\newtheorem{remark}{Remark}
\newtheorem{example}{Example}
\newtheorem{claim}{Claim}
\newtheorem{definition}{Definition}

\usepackage{titling}
\title{Cocycle Relations and Elliptic Gamma Values}
\author{Teymour Gray}
\date{}

\begin{document}

\maketitle
\begin{abstract}
    Recent work of Bergeron-Charollois-Garc\'ia proposed a conjectural analytic expression for elliptic units for complex cubic fields as the special values of smoothed quotients of elliptic Gamma functions. These values depend on the conductor ideal, ray class group element, smoothing ideal, and an admissible element $h$. If this analytic expression is to fit into the framework of explicit class field theory, then, as in the case of torsion points on CM elliptic curves, the resulting complex number should depend only on the congruence class of $h$ modulo a fractional ideal. This conjecture has been supported by much numerical evidence. In this paper, we use the cocycle relations of the smoothed elliptic Gamma function to prove that the construction is indeed independent of $h$.
\end{abstract}
\tableofcontents

\section{Introduction}
Explicit class field theory, originating in Kronecker's \textit{Jugendtraum} (`dream of youth') and formulated as Hilbert's twelfth problem, seeks transcendental functions whose special values generate abelian extensions of number fields. Recent progress on this problem and its refinements has been broadly based on the construction of arithmetic cocycles, valued in meromorphic or analytic sheaves \cite{BCG23,DV21,DPV24,Kopp25,Radchenko26}. 

In the first of these papers, Bergeron-Charollois-Garc\'{\i}a conjectured that the evaluations of a smoothed quotient of elliptic Gamma functions at cubic irrationalities, denoted ${\bf\Gamma}_{\mathfrak{a},h}(L) $ for the notation explained below, are equal to the image, under a complex embedding, of units in abelian extensions of complex cubic fields.

The elliptic Gamma function, introduced by Ruijsenaars \cite{Ruijsenaars97}, is a meromorphic function on $\mathbb{C}\times\mathbb{H}\times\mathbb{H}$ given by the doubly infinite product:
\begin{equation*}
    \Gamma(z,\tau,\sigma) = \prod_{j,k\geq 0}\frac{1-q_{\tau}^{j+1}q_{\sigma}^{k+1}q_z^{-1}}{1-q_{\tau}^jq_{\sigma}^kq_z}
\end{equation*}
where $q_w = e^{2\pi i w}$. Subsequent work interpreted the elliptic Gamma function cohomologically as part of an arithmetic cocycle \cite{FV00,FHRZ08}. 

The arithmetic setting is that of a complex cubic field $F$ with complex embedding $\sigC$, fractional ideal $L$, and smoothing ideal $\mathfrak{a}$ of prime norm $N$. For this data, we consider \textit{admissible} elements $h\in L\otimes \mathbb{Q}=F$, which are specified by congruence conditions modulo $L$ and $\aL$. Then, in the simplest case, the conjectural complex cubic elliptic unit is expressed using an $N$-smoothed quotient of elliptic Gamma functions:
\begin{equation}\label{eq: smoothed elliptic gamma}
    {\bf\Gamma}_{\mathfrak{a},h}(L)^{-1} = \frac{\Gamma\left(z,\tau,\sigma\right)^N}{\Gamma\left(Nz,N\tau,N\sigma\right)}
\end{equation}
for $z$, $\tau$, $\sigma\in \sigC(F)$ cubic irrationalities determined by the input data. We state the precise arithmetic assumptions in Theorem \ref{theorem: arithmetic theorem in introduction} below and give the general definition of ${\bf\Gamma}_{\mathfrak{a},h}(L)$ after stating the Theorem.

Aside from the far deeper question of the algebraicity of these complex numbers, two more immediately tractable questions arise. Firstly, can the evaluation procedure be interpreted cohomologically? Secondly, are the resulting complex numbers independent of the choice of admissible element $h$?

We show that these questions are closely related: by formalising the cohomological consequences of the smoothing process, we prove that the construction of complex cubic elliptic units is independent of $h$. This resolves the  analytic component of a conjecture by Bergeron-Charollois-Garc\'{\i}a.
\begin{introthm}\label{theorem: arithmetic theorem in introduction}
    Let $F$ be a complex cubic field with complex embedding $\sigC$. Choose a conductor ideal $\mathfrak{f}$ such that the narrow ray class field $F^+(\mathfrak{f})$ is totally complex. Let $\varepsilon$ generate the positive units congruent to $1$ modulo $\mathfrak{f}$ and write $\chi_{\varepsilon}$ for its characteristic polynomial. Let $\mathfrak{b}$ be any integral ideal coprime to $\mathfrak{f}$ and write $L=\mathfrak{f}\mathfrak{b}^{-1}$. Finally, consider an integral smoothing ideal $\mathfrak{a}$ such that $\norm$ is prime.
    
    Suppose that $\operatorname{N}\mathfrak{a}$ is coprime to $6\operatorname{N}(\mathfrak{f})\cdot \disc(\chi_{\varepsilon})$. Then, for any admissible elements $h$ and $h'$,
    \begin{equation*}
        {\bf\Gamma}_{\mathfrak{a},h}(L) = {\bf\Gamma}_{\mathfrak{a},h'}(L).
    \end{equation*}
    In other words, for $\mathfrak{a}$ outside a finite set of primes, the complex number ${\bf\Gamma}_{\mathfrak{a},h}(L)$ is independent of $h$.
\end{introthm}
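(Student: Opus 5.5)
Two admissible elements differ by an element of the lattice, so the plan is to reduce to a single lattice translation and then use the cocycle relations of the smoothed elliptic Gamma function to show that the translation acts trivially. Concretely, admissibility is a congruence condition modulo $L$ and $\mathfrak{a}^{-1}L$. We may therefore write $h'=h+\lambda$ with $\lambda$ in the appropriate lattice (essentially $L$, possibly after adjusting by elements of $\mathfrak{a}^{-1}L$ compatible with the congruence conditions). Since translations compose, it suffices to treat $\lambda$ running over a $\mathbb{Z}$-basis of $L$. Recall that ${\bf\Gamma}_{\mathfrak{a},h}(L)$ is built by choosing a basis of $L$ adapted to $\varepsilon$ and evaluating a product of smoothed Gamma factors at $z=\sigC(h)$ (in suitable coordinates), with $\tau,\sigma$ coming from the action of $\varepsilon$. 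This is the cocycle value on the pair $(1,\varepsilon)$, or the corresponding cone decomposition.

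The first step is to express the dependence on $h$ through the "translation" part of the cocycle. The smoothed elliptic Gamma function $\Gamma^N(z,\tau,\sigma)/\Gamma(Nz,N\tau,N\sigma)$ satisfies the standard difference equations $\Gamma(z+\tau,\tau,\sigma)=\theta_0(z,\sigma)\Gamma(z,\tau,\sigma)$ and $\Gamma(z+1,\tau,\sigma)=\Gamma(z,\tau,\sigma)$, together with the modular ($\SL_3(\mathbb{Z})$) three-term relations of Felder–Varchenko. Translating $z$ by a lattice vector then multiplies the value by a product of smoothed theta functions $\theta_0(z,\sigma)^N/\theta_0(Nz,N\sigma)$, evaluated at shifted points. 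These theta factors themselves form a cocycle for the smaller group $\SL_2(\mathbb{Z})\ltimes\mathbb{Z}^2$, i.e.\ the affine cocycle one degree down.

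The second step is to use the unit $\varepsilon$. Because the evaluation point is fixed by $\varepsilon$, applying the cocycle relation for the element $\gamma_\varepsilon\in\SL_3(\mathbb{Z})$ (together with the translation by $\lambda$) rewrites the ratio ${\bf\Gamma}_{\mathfrak{a},h+\lambda}/{\bf\Gamma}_{\mathfrak{a},h}$ as a coboundary-type expression. This expression is a ratio of smoothed theta values at $h$ and at $\varepsilon h$ (or $\varepsilon\lambda$). Since $h\equiv\varepsilon h$ modulo the relevant lattice (as $\varepsilon\equiv1 \bmod \mathfrak{f}$), these theta values in turn differ by theta transformation factors. The smoothing is what kills the remaining exponentials of quadratic polynomials (Bernoulli-type terms). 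Taking the $N$-th power and dividing by the $N$-scaled version cancels the quadratic and linear terms. What remains is a root of unity $e^{2\pi i Q(h,\lambda)}$, where $Q$ is a rational expression with denominators dividing $6$, $\operatorname{N}(\mathfrak f)$ and $\disc(\chi_\varepsilon)$.

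The main obstacle is this last bookkeeping: showing that the leftover constant is exactly $1$, not merely a root of unity. I would compute the exponent explicitly via the modular properties (the $B_{3,3}$-type polynomial in the Felder–Varchenko relation and the $B_{2,2}$ term for theta). Then I would show that, for $N$ prime and coprime to $6\operatorname{N}(\mathfrak f)\disc(\chi_\varepsilon)$, the factor $N\cdot Q(h,\lambda)-Q(Nh,N\lambda)/N$ is an integer, using that $N$ is invertible modulo the denominators. Independence of the choice of basis and of the cone decomposition then follows from the same cocycle relation, completing the proof.
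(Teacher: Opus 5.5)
There is a genuine gap, and it appears at your first step. ${\bf\Gamma}_{\mathfrak{a},h}(L)$ is not one fixed function evaluated at $\sigC(h)$. The point $h$ determines the primitive functional $a$, via $h\in\ker(a,\varepsilon a)\otimes\mathbb{Q}$ and $\sigC\in U_a$. Hence $h$ also determines the direction vector $\gamma\in\mathbb{Q}h$, the moduli $\sigC(\alpha)/\sigC(\gamma)$ and $\sigC(\beta)/\sigC(\gamma)$, and even the number of Gamma factors; compare the two sides of the paper's numerical example. So passing from $h$ to $h'=h+v$ compares $\Gamma_{a,\varepsilon a}$ with a different function $\Gamma_{a',\varepsilon a'}$. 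Your use of $\varepsilon h\equiv h$ modulo $L$ and of the fixed line $\langle\sigC\rangle$ does match the paper. But besides the $\theta_0$-type terms you describe, you also need:
\begin{itemize}
\item the \v{C}ech three-term relation $\phi^{0,2}$ on the two triangles $(a,\varepsilon a,\varepsilon a')$ and $(a,a',\varepsilon a')$;
\item the mixed terms $\phi^{1,1}$ on the edges $(a,a')$ and $(\varepsilon a',a)$.
\end{itemize}
Nor can you break $v$ into translations by a basis of $L$. The second admissibility condition, that $\langle h\rangle_{\mathbb{Q}}$ generate $\mathfrak{a}^{-1}L/L$, is not preserved by $h\mapsto h+\lambda$ with $\lambda\in L$. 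Off the admissible set the smoothed value genuinely changes: the paper's extremal appendix notes it is not constant on $L$ once the generating condition is dropped. So there is no action of $L$ leaving the value invariant, only a comparison between two admissible points.

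The final step uses the wrong mechanism, and it misassigns the hypotheses. Write $V=\mathfrak{a}^{-1}L$.
\begin{itemize}
\item The smoothed terms $\widetilde{\phi^{1,1}_{a,b}}$ and $\widetilde{\phi^{0,2}_{a,b,c}}$ are trivial when every edge $(a,b)$ involved satisfies $V=L+\ker(a,b)$, but not in general. Only then is the $V$-direction vector the $L$-one divided by $N$, so that $\widetilde{\Gamma}_{a,b}$ is honestly a product of $\Gamma(z,\tau,\sigma)^N/\Gamma(Nz,N\tau,N\sigma)$. For other edges the smoothed factor is not of this scaled form. Nothing then forces the leftover to be $e^{2\pi i(NQ(h,\lambda)-Q(Nh,N\lambda)/N)}$ with denominators that coprimality could clear.
\item Even for good triangles, triviality of $\widetilde{\phi^{0,2}}$ takes a real argument. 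One needs a distribution relation $N\sum_{\nu\in P}B_{3,3}(w+z(\nu),z(\underline{x}))\equiv B_{3,3}(Nw,z(\underline{x}))$ modulo $6\mathbb{Z}$, over a plane $P\subset(\mathbb{Z}/N)^3$ meeting each coordinate axis trivially. One also needs to control the cubic cross-term caused by $F(L)$ failing to split as $F(V)+P$. This, together with the $\mathbb{Z}^2\rtimes\Gamma_0(N)$-invariance of $\theta_0^N$, is where $(N,6)=1$ enters.
\item The condition $(N,\disc(\chi_\varepsilon))=1$ is not a denominator condition at all. Choose $\ell\in\mathbb{Z}$ with $\ell\equiv\varepsilon^{-1}$ on $V/L$. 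The condition makes $\varepsilon^{-1}-\ell$ invertible on $L/NV$. Consequently every line of $\mathbb{P}(L/NV)\cong\mathbb{P}^1(\mathbb{F}_N)$ is the kernel on $L/NV$ of $a''$ for some admissible $h''\equiv h$ modulo $L$.
\end{itemize}
Since $N\geq 5$, there are $N+1>4$ lines. You can therefore pick $h''$ whose line avoids the four lines attached to $a,\varepsilon^{-1}a,a',\varepsilon^{-1}a'$. Then both comparisons $h\leftrightarrow h''$ and $h''\leftrightarrow h'$ involve only good edges and triangles. This detour through an auxiliary admissible point, rather than a chain of lattice translations plus a denominator count, is the missing idea.
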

\begin{remark}
    We call $h$ admissible rather than $\lambda = hq$ for $q$ the positive generator of $\mathfrak{f}\cap\mathbb{Z}$ as in \cite{BCG23}. We recall the definition of admissibility in \eqref{eq: arithmetic admissible} below.
\end{remark}
\begin{remark}
    The requirement that $\operatorname{N}\mathfrak{a}$ is coprime to $ \disc(\chi_{\varepsilon})$ is new to this paper and we highlight its role in the proof sketch of Theorem \ref{theorem: main theorem in main body}. For now, let us note that the consequence of excluding our smoothing ideal $\mathfrak{a}$ from a finite set of primes is relatively mild. After all, the complex numbers ${\bf\Gamma}_{\mathfrak{a},h}(L)$ depend only mildly on this choice. Indeed, for two smoothing ideals $\mathfrak{a}$ and $\mathfrak{c}$, we have the easily verifiable relation:
    \begin{equation*}
        \frac{{\bf\Gamma}_{\mathfrak{a},h}(L)^{\operatorname{N}\mathfrak{c}}}{{\bf\Gamma}_{\mathfrak{a},h}(\mathfrak{c}^{-1}L)}=\frac{{\bf\Gamma}_{\mathfrak{c},h}(L)^{\operatorname{N}\mathfrak{a}}}{{\bf\Gamma}_{\mathfrak{c},h}(\mathfrak{a}^{-1}L)}.
    \end{equation*}
\end{remark}
The independence result admits a more general formulation that does not require an arithmetic setting. We work instead with a pair of rank $3$ lattices $L\subset V$ satisfying $V/L\cong\mathbb{Z}/N$, where $N$ is no longer assumed to be prime, and introduce a generalised notion of admissibility. This formulation both generalises the arithmetic theorem and provides a geometric interpretation of the independence result: the admissible data can be associated with a homology class in a homotopy quotient.

So that the generalisation from the arithmetic case is clear, we first recall the general definition of ${\bf\Gamma}_{\mathfrak{a},h}(L)$. For the same notation as used in the Theorem statement above, we define:
\begin{equation*}
    {\bf\Gamma}_{\mathfrak{a},h}(L)^{-1} = \frac{\Gamma_{a,\varepsilon a}(\sigma_{\mathbb{C}}(h),\sigma_{\mathbb{C}};L)^{\norm}}{\Gamma_{a,\varepsilon a}(\sigma_{\mathbb{C}}(h),\sigma_{\mathbb{C}};\mathfrak{a}^{-1}L)}
\end{equation*}
where $a$ is a primitive functional on $L$ uniquely determined by the triple of data $h$, $\sigC$, and $\varepsilon$, and $\Gamma_{a,\varepsilon a}$ is a product of elliptic Gamma functions; see Definition \ref{def: elliptic Gamma cochain}. The quotient above can be expressed as a product of $N$-smoothed Gamma functions; in the simplest case, the product contains one term and we recover the identity in \eqref{eq: smoothed elliptic gamma}. In this arithmetic case, $h$ is required to satisfy two conditions:
\begin{equation}\label{eq: arithmetic admissible}
    h\equiv 1 \text{ modulo } L \quad \quad \langle h \rangle_{\mathbb{Q}} \text{ generates }\aL/L.
\end{equation}
The notation for the second condition is shorthand for asking that some rational rescaling of $h$ should generate the quotient $\aL/L$. We observe that admissibility implies $\varepsilon \cdot h \equiv h$ modulo $L$ and that any two admissible elements are congruent modulo $L$; see Remark \ref{remark: admissible vector gives admissible triple} for more details.
\begin{introthm}\label{theorem: main result in introduction}
    Fix a pair of rank $3$ lattices $(V,L)$ with cyclic quotient $V/L \cong \mathbb{Z}/N$. Consider triples $h\in L\otimes\mathbb{Q}$, $\langle z\rangle \in \mathbb{P}(L^{\vee}\otimes\mathbb{C})-\mathbb{P}(L^{\vee}\otimes\mathbb{R}) $, and an automorphism $g$ of the pair $(V,L)$, which satisfy the following conditions:
    \begin{itemize}
        \item $g\cdot h\equiv h$ modulo $L$.
        \item $\langle h\rangle_{\mathbb{Q}}$ generates the quotient $V/L$.
        \item $g\cdot \langle z\rangle =\langle z\rangle$.
    \end{itemize}
    Let $\chi_g$ denote the characteristic polynomial of $g$. Suppose that $(N,6\cdot \disc (\chi_g))=1$. Then, for all triples $\triple{h}$ and $\triple{h'}$ satisfying the conditions above such that $h\equiv h'$ modulo $L$, 
    \begin{equation*}
        \widetilde{\Phi}\triple{h} =  \widetilde{\Phi}\triple{h'}
    \end{equation*}
    for the notation:
    \begin{equation*}
    \widetilde{\Phi}\triple{h}=\widetilde{\Gamma}_{a,ga}(z(h),z)=\frac{\Gamma_{a,ga}(z(h),z;L)^N}{\Gamma_{a,ga}(z(h),z;V)}
    \end{equation*}
    and the primitive functional $a\in V^{\vee} $ is determined by the triple.
\end{introthm}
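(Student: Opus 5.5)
Since $h\equiv h'$ modulo $L$, write $h'=h+\ell$ with $\ell\in L$, and use the translation behaviour of the elliptic Gamma cochain. The primitive functional $a\in V^\vee$ attached to the triple is determined by $\langle z\rangle$ and $g$ (via the $g$-eigenline), so it is the same for both triples. Only the first argument changes, from $z(h)$ to $z(h)+z(\ell)$.

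The first step is to express the translation by $\ell$ through the cocycle relations for $\Gamma_{a,ga}$. Translating by a lattice vector changes each Gamma factor by quasi-periodicity. The elliptic Gamma function satisfies $\Gamma(z+\tau,\tau,\sigma)=\theta_0(z,\sigma)\Gamma(z,\tau,\sigma)$ and is invariant under $z\mapsto z+1$. Hence the ratio $\Gamma_{a,ga}(z(h+\ell),z;L)/\Gamma_{a,ga}(z(h),z;L)$ is a finite product of theta functions $\theta_0$, and so by the theta transformation law a product of exponentials of quadratic polynomials (Bernoulli-type $Q$-factors). I would organise this cohomologically: $\Gamma_{a,b}(\cdot;L)$ is a $1$-cochain for $\mathrm{SL}(L)$ acting on the affine lattice $L\rtimes \mathrm{SL}(L)$. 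Its coboundary under translation is a theta cochain, and the coboundary of that is an exponential of a polynomial cochain. The $V$-version behaves in the same way, with $\ell\in L\subset V$.

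The second step is to show that the smoothed combination $N\cdot(\text{$L$-term})-(\text{$V$-term})$ kills these correction factors. The theta factors coming from $L$ and from $V$ fit into the distribution relation $\prod_{v\in V/L}\theta(z+v)$. Here the hypothesis that $\langle h\rangle_{\mathbb{Q}}$ generates $V/L$ is used: it lets $z(h)$ parametrise the nontrivial cosets, so the smoothed theta products telescope. What survives is an exponential $\exp\!\big(2\pi i\, P(h,\ell)\big)$ with $P$ a rational polynomial. Its smoothed version has denominators dividing $N$ times powers of $2$ and $3$, coming from the Bernoulli polynomials $B_2$ and $B_3$, and involves the quantities $a(\ell)$ and $ga(\ell)$ measured against $a, ga, g^2a$.

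The main obstacle is the final step: showing that this residual exponent is an integer, so that the exponential equals $1$. I would use $g\cdot h\equiv h$ modulo $L$, which gives $(g-1)h\in L$, together with $g$-invariance of $\langle z\rangle$ (so $z\circ g=\lambda z$), to rewrite the exponent as a sum over the $g$-orbit. The remaining denominators are then $6N$ and the resultant-type quantity coming from inverting $g-1$ on $L/NL$. For the sum to vanish modulo $\mathbb{Z}$ one needs $\chi_g$ separable modulo $N$, so that $a, ga, g^2a$ form a basis of $V^\vee\otimes\mathbb{Z}/N$. This is exactly what $(N,\disc\chi_g)=1$ guarantees, and $(N,6)=1$ removes the Bernoulli denominators. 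If the direct computation becomes unwieldy, I would instead reduce to $\ell$ ranging over generators of $L$ adapted to the basis $\{a,ga,g^2a\}$ modulo $N$ and verify each case separately.
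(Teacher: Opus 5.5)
Your proposal rests on the claim that the primitive functional $a$ is determined by $\langle z\rangle$ and $g$ alone. On that premise, $\widetilde{\Phi}(h,\langle z\rangle,g)$ and $\widetilde{\Phi}(h',\langle z\rangle,g)$ would be values of the single function $\widetilde{\Gamma}_{a,ga}$ at $z(h)$ and $z(h)+z(\ell)$. The claim is false. The defining condition is $h\in\ker(a)\cap\ker(ga)$, so $a$ is the primitive generator (with sign fixed by $z\in U_a$) of the annihilator of $\langle h, g^{-1}h\rangle_{\mathbb{Q}}$.

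Replacing $h$ by $h'=h+\ell$ changes this plane in general. So $h'$ carries its own $a'$ and $b'=ga'$, and the two values come from different functions on different opens $V_a\cap V_{ga}$ and $V_{a'}\cap V_{ga'}$. The paper's numerical example shows this: its two admissible elements give different products of smoothed Gamma functions. In the only case where your premise holds, $a=a'$, one has $\ell\in L\cap\ker(a,ga)=\mathbb{Z}\gamma$, and the statement is trivial because $\Gamma_{a,ga}(w,z)$ is invariant under $w\mapsto w+z(\gamma)$.

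So quasi-periodicity of one $\Gamma_{a,ga}$ cannot make the comparison. The missing idea is to use the full equivariant \v{C}ech structure of $(\Delta,\Gamma)$:
\begin{itemize}
\item the \v{C}ech relation $\Gamma_{a,b}\Gamma_{b,b'}\Gamma_{b',a}=\phi^{0,2}_{a,b,b'}$ and its analogue for $(a,a',b')$;
\item the equivariance $\Gamma_{ga,ga'}(w,gz)=\Gamma_{a,a'}(w,z)$, combined with $z$ being a $g$-eigenvector;
\item the relations $d\Gamma=\phi^{1,1}\cdot\partial\Delta$ and $d\Delta=\phi^{2,0}$, which absorb the translations.
\end{itemize}
The resulting expression contains $\widetilde{\phi^{0,2}}$ on triple intersections. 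Proving it trivial is the technical heart of the paper: a rank-$2$ distribution relation for $B_{3,3}$ modulo $6\mathbb{Z}$, plus control of the $x_1x_2x_3$ cross-term. Nothing in your plan addresses this.

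The same error misplaces the hypotheses.
\begin{itemize}
\item That $\langle h\rangle_{\mathbb{Q}}$ generates $V/L$ does not make $z(h)$ parametrise cosets. It gives $(a,ga)\in\mathcal{W}$, i.e.\ $V=L+\ker(a,ga)$, so that $\widetilde{\Gamma}_{a,ga}$ is a product of $N$-smoothed Gamma functions.
\item The smoothed $\widetilde{\phi^{1,1}}$ and $\widetilde{\phi^{0,2}}$ vanish only on pairs in $\mathcal{W}$ and triples in $\mathcal{Q}$. The pairs $(a,a')$ and $(ga',a)$ that arise need not lie in $\mathcal{W}$. This is why the paper introduces $d(h,h')$ and passes through an intermediate admissible $h''\equiv h$ with $d(h,h'')=d(h',h'')=3$.
\item $(N,\operatorname{disc}\chi_g)=1$ enters there. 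It makes $g^{-1}-\lambda$ an automorphism of $L/NV$, where $\lambda\in\mathbb{Z}$ represents $g^{-1}$ on $V/L$. Hence every line of $L/NV$ arises as $\ker(a''\otimes\mathbb{Z}/N|_{L/NV})$ for some eligible $h''$.
\item $(N,6)=1$ supplies more than four lines modulo each $p\mid N$. It also gives the $\Gamma_0(N)$-invariance of $\theta_0^N$ and the divisibility of $\sum m$ and $\sum m^2$ by $N$ needed in the Bernoulli congruences.
\end{itemize}
None of this concerns $a,ga,g^2a$ spanning $V^\vee\otimes\mathbb{Z}/N$, and a residual-exponent computation with a fixed $a$ cannot detect these conditions.
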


The paper is organsised as follows. In Section $2$, we review the BCG conjecture \cite{BCG23} in depth, discussing the context of Stark's Conjecture, the Limit formula and elliptic units for imaginary quadratic fields. Moreover, we will define both the elliptic Gamma cocycle and a smoothed version.

The next three sections establish our main result. The key idea is to deduce the independence of $h$ from the partial vanishing of the smoothed elliptic Gamma cocycle. In Section $3$, we explain the connection between these problems by expressing the difference between the evaluations at $h$ and $h'$ in terms of a $2$-cocycle. The main part of the proof demonstrates that, under certain simplifying assumptions on the functionals $a$, $ga$, $a'$, and $ga'$, as well as $(N,6)=1$, the smoothed cocycle vanishes. We prove this in Section $4$. Under the additional assumption $(N,6\cdot \text{disc}(\chi_g))=1$, we reduce the general case to that of the vanishing cocycle in Section $5$, completing the proof of Theorem \ref{theorem: main result in introduction}.

We finish the main body of the paper with two (co)-homological contributions. We both develop a cohomological evaluation procedure and provide a homological interpretation of the input data; this offers a geometric rationale for the independence of $h$ by organising the admissible data into a single homological object.

In the first appendix, we discuss the case $N$ no longer coprime to $6$. The final appendix directly verifies, up to signs, Theorem \ref{theorem: main result in introduction} in the extremal case that our evaluation point $h$ lies inside the lattice $L$: if $\lambda$ is the strictly complex eigenvalue $g\cdot z=\lambda z$, then for all $h\in L$ such that $\langle h \rangle_{\mathbb{Q}}$ generates $V/L$, we show:
\begin{equation*}
    \widetilde{\Phi}\triple{h}^2 = \lambda^{N-1}.
\end{equation*}

\textbf{Acknowledgements.} I would like to thank my advisor Luis Garc\'ia for the many helpful discussions, insights, and, most importantly, his constant encouragement. This paper is part of on-going PhD work.
\vspace{0.1cm}

\textbf{AI Disclaimer.} All results in this paper are the author's own work, obtained without the aid of AI. 
\newpage
\section{Complex cubic elliptic units}
Since the context of complex cubic elliptic units is Stark's Conjecture, we begin with an overview of the conjecture. For a more detailed, yet very readable exposition, see the writings of Dasgupta \cite{Dasgupta99,Dasgupta_AWS}.
\subsection{Stark's Conjecture for ATRs and the BCG Conjecture}
The setting of the rank $1$ abelian Stark conjecture is a number field $F$, choice of conductor ideal $\mathfrak{f}$ and set $S$, which is the union of the archimedean places and all the finite primes dividing $\mathfrak{f}$. The set $S$ governs the order of vanishing at $s=0$ of the $\mathfrak{f}$-truncated $L$-functions associated to characters $\chi$ of the narrow ray class group $G = \operatorname{Cl}^+(\mathfrak{f})$:
\begin{equation*}
    L_{\mathfrak{f}}(\chi,s) = \prod_{\mathfrak{p}\nmid\mathfrak{f}}(1-\chi(\mathfrak{p})\operatorname{N}\mathfrak{p}^{-s})^{-1} = \sum_{(\mathfrak{a},\mathfrak{f})=1}\chi(\mathfrak{a})\operatorname{N}\mathfrak{a}^{-s}
\end{equation*}
by the following formula:
\begin{equation*}
    \text{ord}_{s=0}L_{\mathfrak{f}}(\chi,s)=\begin{cases}
        |\{v\in S: \chi(G_v)=1\}|&: \chi\neq \chi_{\text{triv}} \\ 
        |S|-1&: \chi = \chi_{\text{triv}}
    \end{cases}
\end{equation*}
where $G_v\subset \operatorname{Gal}(F^+(\mathfrak{f})/F)\cong\operatorname{Cl}^+(\mathfrak{f})$ is the decomposition group associated to the place $v$. In particular, to ensure that all $L$-functions vanish, Stark assumed that $|S|\geq 2$ and $S$ contains at least one place $v$ that splits completely in $F^+(\mathfrak{f})$. Of interest to us is the case that $F$ is an Almost Totally Real (ATR) field: $F$ has exactly one complex place, represented by a fixed complex embedding $\sigC$. In this case, our place that splits completely will be the complex place. 

We fix a complex place $\iota$ of $F^+(\mathfrak{f})$ lying above $\sigC$ and let $e_{\mathfrak{f}}$ denote the number of roots of unity in $F^+(\mathfrak{f})$. For simplicity, we assume $|S|\geq 3$. In this case, Stark conjectured the following.
\begin{conjecture}\label{conj: Stark L-functions}
    There exists a unit $u_{\mathfrak{f}}\in \mathcal{O}_{F^+(\mathfrak{f})}^*$ such that:
    \begin{equation*}
        L_{\mathfrak{f}}'(\chi,0) = -\frac{1}{e_{\mathfrak{f}}}\sum_{\sigma \in G} \chi(\sigma)\log|\iota (u_{\mathfrak{f}}^{\sigma})|^2
    \end{equation*}
    and for every place $w$ lying above a real place, we have $|u_{\mathfrak{f}}|_w=1$.
\end{conjecture}
\begin{remark}
    The simplifying assumption $|S|\geq 3$ covers all non-trivial cases for complex cubic fields. Indeed, when $S$ contains two places that split completely, the Stark Conjecture is trivially true as all the $L$-functions vanish to order at least $2$ (except for the trivial character when $|S|=2$ - this case is dealt with separately) and so we may take $u_{\mathfrak{f}}=1$. In particular, for a non-trivial example, all the real places must ramify in $F^+(\mathfrak{f})$. For a complex cubic field with unique real embedding denoted by $\sigma_{\mathbb{R}}$, this is equivalent to:
    \begin{equation*}
        \operatorname{Cl}^+(\mathfrak{f}) \neq \operatorname{Cl}(\mathfrak{f})\iff \mathcal{O}_{\mathfrak{f}}^{+,*} =\mathcal{O}_{\mathfrak{f}}^{*}
    \end{equation*}
    for the notation $ \mathcal{O}_{\mathfrak{f}}^{+,*}=\{x\in 1+\mathfrak{f}\cap \mathcal{O}_{F}^*: \sigma_{\mathbb{R}}(x)>0\} $ and $\mathcal{O}_{\mathfrak{f}}^{*} =1+\mathfrak{f}\cap \mathcal{O}_{F}^* $. In particular, $\mathfrak{f}$ cannot be $(1)$ and so to obtain a non-trivial Stark unit, we must have $|S|\geq 3$.
\end{remark}
The conjecture may equivalently be formulated in partial-zeta functions $\zeta_{\mathfrak{f}}(\sigma,s)$ since they are Fourier dual to $L_{\mathfrak{f}}(\chi,s)$. For $\sigma\in G$, they are defined:
\begin{equation*}
    \zeta_{\mathfrak{f}}(\sigma,s) = \sum_{(\mathfrak{a},\mathfrak{f})=1,\sigma_{\mathfrak{a}}= \sigma}\operatorname{N}\mathfrak{a}^{-s}.
\end{equation*}
From the Fourier inversion relation:
\begin{equation*}
     \zeta_{\mathfrak{f}}(\sigma,s) = \frac{1}{|G|}\sum_{\chi\in \widehat{G}}\chi(\sigma)L_{\mathfrak{f}}(\overline{\chi},s)
\end{equation*}
we may rewrite Conjecture \ref{conj: Stark L-functions}.
\begin{conjecture}
    There exists a unit $u_{\mathfrak{f}}\in \mathcal{O}_{F^+(\mathfrak{f})}^*$ such that:
    \begin{equation*}
          \zeta_{\mathfrak{f}}'(\sigma,0) = -\frac{1}{e_{\mathfrak{f}}}\log|\iota (u_{\mathfrak{f}}^{\sigma})|^2
    \end{equation*}
    and for every place $w$ lying above a real place, we have $|u_{\mathfrak{f}}|_w=1$.
\end{conjecture}
The conjecture only specifies the absolute value of $u_\mathfrak{f}$ at all places: $u_\mathfrak{f}$ is only well-defined up to a root of unity in $F^+(\mathfrak{f})$. Tate addressed this ambiguity by \textit{smoothing} the zeta functions \cite{Tate81,Tate84}. Precisely, we choose a smoothing prime ideal $\mathfrak{a}$ of $F$ that is coprime to $\mathfrak{f}$ and satisfies $e(\mathfrak{a}|p) < p-1$; so for imaginary quadratic and complex cubic fields, it suffices to take $\mathfrak{a}$ coprime to $6\mathfrak{f}$. This second condition ensures that $\zeta_p\notin F^+(\mathfrak{f})$: thus, all the roots of unity of $F^+(\mathfrak{f})$ are coprime to $p$ and embed into $k(\mathfrak{q})^*$ for $\mathfrak{q}|\mathfrak{a}$. In particular, $\operatorname{N}\mathfrak{a}-\sigma_{\mathfrak{a}}\in \mathbb{Z}G$ annihilates $\mu_{F^+(\mathfrak{f})}$. This motivates the definition of the $\mathfrak{a}$-smoothed partial zeta function:
\begin{equation*}
    \zeta_{\mathfrak{f},\mathfrak{a}}(\sigma,s ) = \zeta_{\mathfrak{f}}(\sigma_{\mathfrak{a}}\sigma,s)-\operatorname{N}\mathfrak{a}^{1-s}\zeta_{\mathfrak{f}}(\sigma,s).
\end{equation*}
\begin{conjecture}[Smoothed Stark's Conjecture]
    There exists a \emph{unique} unit $u_{\mathfrak{f},\mathfrak{a}}\in \mathcal{O}_{F^+(\mathfrak{f})}^*$ satisfying $u_{\mathfrak{f},\mathfrak{a}}-1\in \mathfrak{a}\mathcal{O}_{F^+(\mathfrak{f})}$ such that:
    \begin{equation*}
          \zeta_{\mathfrak{f},\mathfrak{a}}'(\sigma,0) = \log|\iota (u_{\mathfrak{f},\mathfrak{a}}^{\sigma})|^2
    \end{equation*}
    and for every place $w$ lying above a real place, we have $|u_{\mathfrak{f},\mathfrak{a}}|_w=1$. We call $u_{\mathfrak{f},\mathfrak{a}}$ \emph{the} $\mathfrak{a}$-smoothed Stark unit.
\end{conjecture}
\begin{remark}
    The change in rational constant is explained by the relation $u_{\mathfrak{f}}^{e_{\mathfrak{f}}}=u_{\mathfrak{f},\mathfrak{a}}^{\operatorname{N}(\mathfrak{a})-\sigma_\mathfrak{a}}$. We note that Stark's conjecture predicts that $u_{\mathfrak{f},\mathfrak{a}}^{\operatorname{N}(\mathfrak{a})-\sigma_\mathfrak{a}}$ is an $e_{\mathfrak{f}}$-th power in $F^+(\mathfrak{f})$ \cite[$\S4$ Lemma $6$]{Stark80}. The congruence condition ensures taking $e_{\mathfrak{f}}$-roots preserves uniqueness.
\end{remark}
A goal of explicit class field theory is to find an analytic function whose evaluation at specific `torsion' points is precisely the smoothed Stark unit. A necessary first step is to prove a limit formula relating the derivative of the smoothed partial zeta function at $s=0$ with the logarithm of the absolute value squared of this analytic function. In the imaginary quadratic case, this follows from the classical Kronecker limit formula, which expresses the derivative at $s=0$ in terms of a smoothed quotient of theta functions evaluated at a primitive $\mathfrak{f}$-torsion point on an elliptic curve; see Theorem \ref{theorem: KLF} for an explicit statement.

The generalisation of this limit formula to the complex cubic case was recently proven \cite{BCG23}[Theorem $3.2$]. In this case, the analogue of the primitive $\mathfrak{f}$-torsion point is the admissible element $h\in \mathfrak{f}^{-1}L$.
\begin{theorem}[Limit formula]\label{theorem: limit formula}
    Let $F$ be a complex cubic field with fixed complex embedding $\sigC$, $\mathfrak{f}$ a proper integral ideal and $\mathfrak{a}$ a prime ideal coprime to $6\operatorname{N}\mathfrak{f}$ and of prime norm. For $\mathfrak{b}$ integral ideal coprime to $\mathfrak{f}$, we let $L=\mathfrak{f}\mathfrak{b}^{-1}$. If we assume $\mathcal{O}_{\mathfrak{f}}^{+,*}=\mathcal{O}_{\mathfrak{f}}^*$, then the leading term of $\zeta_{\mathfrak{f},\mathfrak{a}}'(\sigma_{\mathfrak{b}},s)$ at $s=0$ is given by:
    \begin{equation*}
        \zeta_{\mathfrak{f},\mathfrak{a}}'(\sigma_{\mathfrak{b}},0)=\log|{\bf\Gamma}_{\mathfrak{a},h}(L)|^2
    \end{equation*}
    where $h\in L\otimes\mathbb{Q}$ is an admissible element: that is $h\equiv 1$ modulo $L$ and there exists a rational rescaling of $h$ which generates $\aL/L$.
    Explicitly, the analytic expression is a smoothed quotient of elliptic Gamma functions:
    \begin{equation*}
        {\bf\Gamma}_{\mathfrak{a},h}(L)^{-1} = \frac{\Gamma_{a,\varepsilon a}(\sigC(h),\sigC;L)^{\operatorname{N}\mathfrak{a}}}{\Gamma_{a,\varepsilon a}(\sigC(h),\sigC;V)}
    \end{equation*}
    where $\varepsilon$ is the unique generator of $\mathcal{O}_{\mathfrak{f}}^{+,*}$ that satisfies $\sigma_{\mathbb{R}}(\varepsilon)\in (0,1)$ for the unique real embedding and $a$ is a primitive functional on $L$ uniquely determined by the data $h$, $\sigC$ and $\varepsilon$.
\end{theorem}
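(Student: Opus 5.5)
The plan is the standard route for Kronecker-type limit formulas: turn the smoothed partial zeta function into a lattice sum over a fundamental cone for $\varepsilon$, then identify the derivative at $s=0$ of that lattice sum with $\log|\cdot|^2$ of a product of elliptic Gamma functions. This statement is \cite{BCG23}[Theorem $3.2$], so in the paper it may simply be quoted; what follows is how I would prove it.

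\textbf{Step 1: reduce to a sum over a cone.} Integral ideals $\mathfrak{c}$ coprime to $\mathfrak{f}$ with $\sigma_{\mathfrak{c}}=\sigma_{\mathfrak{b}}$ correspond, via $\mathfrak{c}=x\mathfrak{b}$, to elements $x\in \mathfrak{b}^{-1}$ with $x\equiv 1$ modulo $\mathfrak{f}\mathfrak{b}^{-1}=L$ and $\sigma_{\mathbb{R}}(x)>0$, taken modulo $\mathcal{O}_{\mathfrak{f}}^{+,*}=\varepsilon^{\mathbb{Z}}$. Here the hypothesis $\mathcal{O}_{\mathfrak{f}}^{+,*}=\mathcal{O}_{\mathfrak{f}}^*$ makes the unit group exactly $\varepsilon^{\mathbb{Z}}$. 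Since $\operatorname{N}(x)=\sigma_{\mathbb{R}}(x)|\sigC(x)|^2$, this gives
\begin{equation*}
\zeta_{\mathfrak{f}}(\sigma_{\mathfrak{b}},s)=\operatorname{N}\mathfrak{b}^{-s}\sum_{x\in (h+L)\cap C}\sigma_{\mathbb{R}}(x)^{-s}|\sigC(x)|^{-2s}.
\end{equation*}
The domain $C$ is a fundamental domain for $\varepsilon$ acting on $\{\sigma_{\mathbb{R}}>0\}\subset F\otimes\mathbb{R}$. I would choose it as the wedge between the planes $\ker a$ and $\ker \varepsilon a$, where $a$ is the primitive functional on $L$ singled out by $h$, $\sigC$ and $\varepsilon$; it is chosen so that these planes contain the complex direction $\ker\sigma_{\mathbb{R}}$ in a controlled way. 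The smoothed function $\zeta_{\mathfrak{f},\mathfrak{a}}$ is then the difference of the same sum over $\aL$ and $\operatorname{N}\mathfrak{a}$ times the sum over $L$. Multiplying by $\operatorname{N}\mathfrak{a}$ reflects that $\sigma_{\mathfrak{a}}\sigma_{\mathfrak{b}}$ corresponds to the lattice $\mathfrak{a}^{-1}L$. The role of admissibility is that $h+\aL$ decomposes correctly into cosets of $L$.

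\textbf{Step 2: Fourier expansion in the wedge.} Decompose each lattice point by its level $a(x)$, or $\varepsilon a(x)$, and its position in a two-dimensional sublattice. Poisson summation in the two transverse directions, followed by the Mellin transform in the level variable, expresses the derivative at $s=0$ as a sum over dual lattice vectors of $-\log|1-q_{\sigC(h)}q_{\tau}^{j}q_{\sigma}^{k}|^2$. Here $\tau,\sigma$ are the ratios of $\sigC$ evaluated on a basis adapted to $a,\varepsilon a$. Reassembling these terms should produce $\log|\Gamma_{a,\varepsilon a}(\sigC(h),\sigC;L)|^{-2}$, the product form of Definition \ref{def: elliptic Gamma cochain}. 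The two boundary faces of the wedge give the numerator and denominator factors of $\Gamma$.

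\textbf{Step 3 (the main obstacle): the leftover terms.} The unsmoothed formula also contains three kinds of extra terms:
\begin{itemize}
\item the contribution of the zero Fourier mode;
\item polynomial (Bernoulli-type) terms in $\sigC(h),\tau,\sigma$, which would exponentiate to roots of unity and $e^{2\pi i(\text{cubic})}$ factors;
\item terms involving $\log\sigma_{\mathbb{R}}$, coming from the real place.
\end{itemize}
I would show all of these cancel in the combination $\operatorname{N}\mathfrak{a}\cdot(\cdot)_L-(\cdot)_{\aL}$. The polynomial terms are homogeneous in the lattice. Passing from $L$ to the index-$\operatorname{N}\mathfrak{a}$ overlattice rescales them exactly by $\operatorname{N}\mathfrak{a}$, and this is where $\operatorname{N}\mathfrak{a}$ prime and coprime to $6\operatorname{N}\mathfrak{f}$ is used, to clear the denominators $6$ and the torsion. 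The zero-mode and $\log\sigma_{\mathbb{R}}$ terms equal $\log\operatorname{N}\mathfrak{b}$ times $\zeta(0)$-type values, which the smoothing kills because $\zeta_{\mathfrak{f},\mathfrak{a}}(\sigma,0)=0$. Here the assumption that every real place ramifies is used.

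I expect the real difficulty to lie in the analytic continuation and in justifying the exchange of summation on the wedge boundary, where lattice points on $\ker a$ are counted with weight $1/2$. I would first try to handle this by a Shintani-style signed decomposition into simplicial cones, which makes each piece a finite sum of two-variable Hurwitz-type series with explicit continuation.
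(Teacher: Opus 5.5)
\textbf{There is a genuine gap in your proposal, and it sits exactly in Steps 1--2, where the content of the theorem lies.} The paper itself gives no proof of this statement. It is quoted from \cite{BCG23} and used only as an input, in Corollary~\ref{corollary: root of unity}, to get $|{\bf\Gamma}_{\mathfrak a,h}(L)/{\bf\Gamma}_{\mathfrak a,h'}(L)|=1$. So your sketch has to stand on its own.

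\emph{The wedge is not a fundamental domain.} The planes $\ker a$ and $\ker\varepsilon a$ meet along $\mathbb{R}h$, which is how $a$ is defined. Work in the slice $\{\sigma_{\mathbb R}=1\}\cong\mathbb{C}$. There $\varepsilon$ acts by $w\mapsto\mu w$, with $\mu=\sigma_{\mathbb C}(\varepsilon)/\sigma_{\mathbb R}(\varepsilon)$ and $|\mu|=\sigma_{\mathbb R}(\varepsilon)^{-3/2}>1$. In this slice your wedge becomes a sector with vertex $\sigma_{\mathbb C}(h)/\sigma_{\mathbb R}(h)$. A sector has infinite volume for the invariant measure $|dw|^2/|w|^2$, whereas a (conical) fundamental domain has volume $2\pi\log|\mu|$. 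So the wedge is not a fundamental domain for $\varepsilon$ on $\{\sigma_{\mathbb R}>0\}$.

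\emph{This is not a matter of choosing a better domain.} The wedge indexes $\Gamma_{a,\varepsilon a}$ for a different reason. The product runs over $\lambda$ in $\{a\ge0,\varepsilon a\ge0\}$, and over its negative, modulo $\mathbb{Z}\gamma$, where $\gamma\in\mathbb{Q}h$ is the edge. Up to an exponential factor, each factor $1-e^{2\pi i\,\sigma_{\mathbb C}(h+\lambda)/\sigma_{\mathbb C}(\gamma)}$ is the regularised product of $\sigma_{\mathbb C}(x)/\sigma_{\mathbb C}(\gamma)$ over an entire line $x\in h+\lambda+\mathbb{Z}\gamma$. That line crosses $\{\sigma_{\mathbb R}=0\}$.

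\emph{Your Fourier analysis is set up in the wrong dimension.} In $\log|\Gamma_{a,\varepsilon a}|^2$ the product index is two-dimensional: it is the pair of levels $a(\lambda),\varepsilon a(\lambda)$. The dual variable, coming from $-\log(1-q)=\sum_{n\ge1}q^n/n$, is one-dimensional and dual to translation by $\gamma$. So Poisson summation would have to run along $\gamma$ alone, not in two transverse directions with a single level. But your zeta summand $\mathbf{1}_{\{\sigma_{\mathbb R}>0\}}\mathbf{1}_C\operatorname{N}(x)^{-s}$ is truncated along precisely those lines.

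\emph{What is missing.} You need an identity converting the sum over $\varepsilon$-orbits in $(h+L)\cap\{\sigma_{\mathbb R}>0\}$ into a suitably regularised sum over the lines $h+\lambda+\mathbb{Z}\gamma$, with $\lambda$ in the wedge. That is the substance of the limit formula. The sentence ``reassembling these terms should produce $\log|\Gamma_{a,\varepsilon a}|^{-2}$'' does not supply it. The difficulties you flag in Step 3 (continuation, boundary weights) come after this missing step.

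Several hypotheses are also assigned roles they do not play.
\begin{itemize}
\item $\mathcal{O}_{\mathfrak f}^{+,*}$ is infinite cyclic for every complex cubic field, since the torsion is $\pm1$ and $-1$ is not positive.
\item $\zeta_{\mathfrak f}(\sigma,0)=0$ for every $\sigma$ simply because $F$ has a complex place.
\item Consequently your argument never actually uses $\mathcal{O}_{\mathfrak f}^{+,*}=\mathcal{O}_{\mathfrak f}^{*}$. One would expect it to enter in the missing step, for instance when a signed decomposition trades the condition $\sigma_{\mathbb R}(x)>0$ for the character $\operatorname{sgn}\sigma_{\mathbb R}(x)$ on all of $h+L$.
\item $h+\mathfrak a^{-1}L$ is a union of $\operatorname{N}\mathfrak a$ cosets of $L$ for \emph{every} $h$, so that is not what admissibility buys. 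Its real role is that $(a,\varepsilon a)\in\mathcal W$. Then $a$ is primitive on $\mathfrak a^{-1}L$, the edge vector for $\mathfrak a^{-1}L$ is $\gamma/\operatorname{N}\mathfrak a$, and ${\bf\Gamma}_{\mathfrak a,h}(L)^{-1}$ is a product of $\operatorname{N}\mathfrak a$-smoothed Gamma functions.
\item The Bernoulli-type terms do not simply ``rescale by $\operatorname{N}\mathfrak a$'', because passing to $\mathfrak a^{-1}L$ changes both the index sets and the direction vectors. Compare Section~\ref{sec: vanishing theorems}: there the analogous $B_{3,3}$-terms cancel only modulo $6\mathbb{Z}$, only on $\mathcal Q$, and only after a genuine distribution relation.
\end{itemize}
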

From Stark's conjecture, the following is expected of the complex number ${\bf\Gamma}_{\mathfrak{a},h}(L)$.
\begin{conjecture}[BCG conjecture] 
The complex number ${\bf\Gamma}_{\mathfrak{a},h}(L) = \iota (u_{L,\mathfrak{a}})$ for $\iota : F^+(\mathfrak{f})\hookrightarrow\mathbb{C}$ a complex embedding of the narrow ray class field extending $\sigC$. Then:
\begin{itemize}
    \item The preimage is a unit, $\textbf{u}_{L,\mathfrak{a}}\in \mathcal{O}_{F^+(\mathfrak{f})}^*$.
    \item For every complex place $w$ extending the real embedding of $F$, we have $|\textbf{u}_{L,\mathfrak{a}}|_w=1$.
    \item Reciprocity law: for $\mathfrak{c}$ a fractional ideal coprime to $\mathfrak{f}$,  $\textbf{u}_{L,\mathfrak{a}}^{\sigma_{\mathfrak{c}}}= \textbf{u}_{\mathfrak{c}^{-1}L,\mathfrak{a}}$.
\end{itemize}
Moreover, the complex number ${\bf\Gamma}_{\mathfrak{a},h}(L)$ is independent of $h$.
\end{conjecture}
Our theorem settles the final part of the conjecture, the \textit{analytic} component relating to the behaviour of smoothed elliptic Gamma as a meromorphic function.
\begin{example}
    We now give an example of this phenomenon. Consider the complex cubic field $F=\mathbb{Q}(\beta)$ for $\beta^3-\beta^2+3\beta-4=0$, viewed as a subfield of $\mathbb{C}$ by specifying $\beta\in \mathbb{H}$. Above the rational primes $p\in\{2,3,7\}$, we have a unique prime ideal of norm $p$, which we denote $\mathfrak{p}_p$. We choose our conductor ideal to be $\mathfrak{f}=\mathfrak{p}_3$; this has narrow ray class group $\operatorname{Cl}^+(\mathfrak{f})\cong\mathbb{Z}/4\mathbb{Z}$, generated by $\mathfrak{b}= \mathfrak{p}_2$. Finally, take as our smoothing ideal $\mathfrak{a}=\mathfrak{p}_7$.

    For $L=\mathfrak{f}$, the following are two admissible elements:
    \begin{equation*}
        h = \beta^2 + 3\beta + 15 \quad \quad h' = 11\beta^2 + 5\beta + 46.
    \end{equation*}
    Write $\Gamma^N(z,\tau,\sigma) = \frac{\Gamma(z,\tau,\sigma)^N}{\Gamma(Nz,N\tau,N\sigma)}$ for the $N$-smoothed Gamma function. Here $N=\operatorname{N}\mathfrak{a}=7$. Then:
    \begin{align*}
        {\bf\Gamma}_{\mathfrak{a},h}(L)^{-1}&= \Gamma^N\left(\frac{1}{3},\frac{-\beta-52}{168},\frac{-\beta^2+16}{168}\right) \approx -4.336094 -1.464725i
        \\ {\bf\Gamma}_{\mathfrak{a},h'}(L)^{-1}&= \Gamma^N\left(\frac{1}{3},\frac{-\beta-10}{189},\frac{-\beta^2-89}{189}\right)\cdot \Gamma^N\left(\frac{1}{3}+\frac{-\beta^2-\beta-99}{378},\frac{-\beta-10}{189},\frac{-\beta^2-89}{189}\right)\approx -4.336094 -1.464725i
    \end{align*}
   which numerically verifies the equality ${\bf\Gamma}_{\mathfrak{a},h}(L)={\bf\Gamma}_{\mathfrak{a},h'}(L)$. 
   
   As an aside, recall that this complex number (conjecturally) equals $\iota(u_{\mathfrak{f},\mathfrak{a}})$ for a complex embedding $\iota:F^+(\mathfrak{f})\hookrightarrow\mathbb{C}$. To compute this Stark unit precisely, we use the (conjectured) reciprocity law: that is to say, the algebraic conjugates of $u_{\mathfrak{f},\mathfrak{a}}$ are given by ${\bf\Gamma}_{\mathfrak{a},h}(L)$ for $L=\mathfrak{f}\mathfrak{b}^{-j}$, $j\in \{0,1,2,3\}$. Then, using the \texttt{algdep} functionality of Pari-GP, we deduce that $u_{\mathfrak{f},\mathfrak{a}}$ is a root of the following polynomial:
    \begin{equation*}
        f(x)=x^4 - (1-\beta)\cdot x^3+(6\beta^2 - 2\beta - 5)\cdot x^2-(1-\beta)\cdot x+1\in \mathcal{O}_{F}[x].
    \end{equation*}
\end{example}
\begin{remark}
    Since the Stark conjecture is trivial in the case that $S$ contains at least two places that split completely in $F^+(\mathfrak{f})$, the only other class of cases is when $F$ is totally real. In this case, there is now a choice for which place splits completely, namely whether it is archimedean or finite. Progress in the totally real case when the archimedean place splits dates back to the work of Shintani \cite{Shintani76}; for recent work in the real quadratic field case, see the work of Kopp \cite{Kopp25} and breakthrough paper of Radchenko-Wheeler \cite{Radchenko26}. The finite prime case has recently been proven in the groundbreaking work of Dasgupta-Kakde \cite{DK_Brumer,DK_Z}; see also \cite{DK24} for the implications for the Gross-Stark conjecture and explicit class field theory for totally real fields. To appreciate the distinct flavours of the archimedean and finite place formulations, consider the case of $\mathbb{Q}$: in the archimedean setting, this is the theory of cyclotomic units; in the finite setting, Stark's conjecture follows from Stickelberger's Theorem.
\end{remark}
\subsection{The tower of multiple elliptic Gamma functions}
As seen in the example above, complex cubic elliptic units ${\bf\Gamma}_{\mathfrak{a},h}(L)$ can be expressed using (a product of) $N$-smoothed elliptic Gamma functions. Writing $q_w = e^{2\pi i w}$, we recall the elliptic Gamma function is defined on 
$\mathbb{C}\times\mathbb{H}\times\mathbb{H}$ by the doubly infinite product:
\begin{equation*}
    \Gamma(z,\tau,\sigma) = \prod_{j,k\geq 0}\frac{1-q_{\tau}^{j+1}q_{\sigma}^{k+1}q_z^{-1}}{1-q_{\tau}^jq_{\sigma}^kq_z}
\end{equation*}
and extended to $\mathbb{C}\times(\mathbb{C}-\mathbb{R})\times (\mathbb{C}-\mathbb{R})$ by:
\begin{equation*}
    \Gamma(z,\tau,\sigma) = \Gamma(z-\tau,-\tau,\sigma)^{-1}=\Gamma(z-\sigma,\tau,-\sigma)^{-1}.
\end{equation*}
Together with the normalising condition $\Gamma(\frac{1+\tau+\sigma}{2},\tau,\sigma)=1$, the elliptic Gamma function is characterised by the following difference equations \cite{FV00}[Theorem 1.1]:
\begin{equation}\label{eq: difference relation}
    \Gamma(z+\sigma,\tau,\sigma) = \theta_0(z,\tau)\cdot \Gamma(z,\tau,\sigma) \quad \quad \Gamma(z+\tau,\tau,\sigma) = \theta_0(z,\sigma)\cdot \Gamma(z,\tau,\sigma)
\end{equation}
for the holomorphic theta function on $\mathbb{C}\times\mathbb{H}$ given by:
\begin{equation*}
    \theta_0(z,\tau) = \prod_{j\geq 0}(1-q_{\tau}^{j+1}q_z^{-1})(1-q_{\tau}^jq_z).
\end{equation*}
This theta function satisfies modularity properties under $\mathbb{Z}^2\rtimes\SL_2(\mathbb{Z})$ which ensure it descends to a section of a line bundle on the universal elliptic curve over the moduli stack of elliptic curves. The process of $N$-smoothing yields a function on the universal elliptic curve over $Y_0(N)$, whose evaluation at torsion above CM points is classically known to yield elliptic units, integral elements that lie in abelian extensions of imaginary quadratic fields of specified conductor. 

To illustrate this phenomenon, consider the imaginary quadratic field $K=\mathbb{Q}(i)$ and conductor ideal $\mathfrak{f}=(3)$. Then, for a smoothing ideal $\mathfrak{a}=(2+i)$ of norm $N=5$, we define the auxiliary data of a CM point $\tau = \frac{1+i}{4+3i}\in K $ and $z = \frac{1}{3}$, a primitive $\mathfrak{f} = (3)$-torsion point on $\mathbb{C}/\mathbb{Z}\oplus \mathbb{Z}\tau$; these choices are explained in Example \ref{example: elliptic units}. Then,
\begin{equation}\label{eq: classical elliptic units}
    \theta_{\mathfrak{a}}(\mathfrak{f}) \defeq \frac{\theta_0(z,\tau)^N}{\theta_0(Nz,N\tau)}\approx 0.232051+ 0.866025i
\end{equation}
lies in the ray class field $K(\mathfrak{f})$, an abelian extension of $K$ ramified only above $\mathfrak{f}$. As in the complex cubic case, the torsion point is the congruence class $1$ modulo $\mathfrak{f}$. A representative of this class is suppressed from the notation $\theta_{\mathfrak{a}}(\mathfrak{f})$ as the complex number is \textit{independent of the choice of representative}; see Corollary \ref{corollary: independence of h for IQF}.

Thus, the analytic functions resolving Stark's Conjecture in the lowest-degree examples of ATRs are the theta function $\theta_0(z,\tau)$ and elliptic Gamma function $\Gamma(z,\tau,\sigma)$. These functions appear as the first two in a tower of multiple elliptic Gamma functions $\Gamma_n$, introduced by \cite{Nishizawa01}, in the $n+2$ variables $z\in \mathbb{C}$, $\tau_0,\dots,\tau_n\in \mathbb{C}-\mathbb{R}$. They fit together in a tower via the difference relations:
\begin{equation}\label{eq: general difference}
    \Gamma_n(z+\tau_i,\tau_0,\dots,\tau_n) = \Gamma_{n-1}(z,\tau_0,\dots,\hat{\tau_i},\dots,\tau_n)\cdot \Gamma_n(z,\tau_0,\dots,\tau_n)
\end{equation}
which precisely generalise the ones given in \eqref{eq: difference relation}. Computational evidence that smoothed quotients of these functions evaluated at certain `torsion' points compute smoothed Stark units for higher ATRs has recently been provided \cite{Morain24}. 

Narukawa \cite{Narukawa04} proved the following functional identities for these multiple elliptic Gamma functions, expressed in homogeneous coordinates:
\begin{equation}\label{eq: cocycle identity}
    \prod_{j=1}^{n}\Gamma_{n-2}\left(\frac{w}{x_j},\left(\frac{x_i}{x_j}\right)_{i\neq j}\right) = \exp\left(-\frac{2\pi i }{n!}B_{n,n}(w,\underline{x})\right)
\end{equation}
for $w\in \mathbb{C}$ and $x_1,\dots,x_n\in \mathbb{C}^*$ such that $\frac{x_i}{x_j}\in \mathbb{C}-\mathbb{R}$ for all $i\neq j$. Here, $B_{n,m}(w,\underline{x})$ is the multiple Bernoulli polynomial: a degree $m$ polynomial, homogeneous in the $n+1$ variables $w\in \mathbb{C}$, $x_1,\dots,x_n\in \mathbb{C}^*$. It is defined by the following exponential generating function:
\begin{equation}\label{eq: exp generating function}
    e^{wt}\prod_{j=1}^{n}\frac{t}{e^{x_jt}-1} = \sum_{m=0}^{\infty}B_{n,m}(w,\underline{x})\frac{t^m}{m!}.
\end{equation}
The analogue of the difference relation \eqref{eq: general difference} is the following identity:
\begin{equation}\label{eq: phase relation}
    \frac{1}{n}(B_{n,n}(w+x_i,\underline{x})-B_{n,n}(w,\underline{x})) = B_{n-1,n-1}(w,x_1,\dots,\hat{x_i},\dots,x_n)
\end{equation}
which is readily deduced from the generating function. Indeed, this is how one proves the identity in \eqref{eq: cocycle identity}: combining \eqref{eq: general difference} and \eqref{eq: phase relation}, one shows the quotient of left by right hand side in \eqref{eq: cocycle identity} is, restricted to $\mathbb{C}$, an $n$-periodic meromorphic function and so constant. For $n\geq 3$, this is immediate, whilst for $n=2$ and $\theta_0$, we must additionally verify the quotient is holomorphic; together with double periodicity, this implies the quotient is constant. Finally, to determine the constant we evaluate at the central value $\frac{1}{2}(x_1+\cdots+x_n)$. 

The final identities satisfied by the tower of functions that we wish to highlight are the so-called distribution relations.
\begin{lemma}[Distribution Relations]
    For a positive integer $N$, we have the following functional identities:
    \begin{align}
        \prod_{j=0}^{N-1}\Gamma_n\left(z+\frac{j}{N},\tau_0,\dots,\tau_n\right) &= \Gamma_n(Nz,N\tau_0,\dots,N\tau_n) \label{eq: first DR}
        \\ \prod_{j=0}^{N-1}\Gamma_n(z+j\tau_i,\tau_0,\dots,N\tau_i,\dots,\tau_n) &= \Gamma_n(z,\tau_0,\dots,\tau_i,\dots,\tau_n) \label{eq: second DR}
    \end{align}
    Combining these two, yields the pushforward relation:
    \begin{equation}\label{eq: general pushforward}
           \prod_{j,j_0,\dots, j_n=0}^{N-1}\Gamma\left(z+\frac{j+j_0\tau_0+\cdots+j_n\tau_n}{N},\tau_0,\dots,\tau_n\right)=\Gamma(Nz,\tau_0,\dots,\tau_n).
    \end{equation}
\end{lemma}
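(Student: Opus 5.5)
We prove the two distribution relations by induction on $n$ along the tower, using the difference relations \eqref{eq: general difference}. The base of the induction can be taken at the bottom of the tower. Alternatively, we argue directly from the product formula for $\Gamma_n$ valid when all $\tau_i\in\mathbb{H}$ and then extend by the reflection identities $\Gamma_n(z,\dots,\tau_i,\dots)=\Gamma_n(z-\tau_i,\dots,-\tau_i,\dots)^{-1}$.

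For $\tau_i\in\mathbb{H}$, $\Gamma_n$ is a ratio of products over $j_0,\dots,j_n\ge 0$ of factors of the form $(1-q_z^{\pm1}\prod q_{\tau_k}^{j_k+\epsilon})$, with $\epsilon\in\{0,1\}$ and the sign depending on the parity of $n$. The two relations then reduce to elementary identities on such factors.

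\textbf{Relation \eqref{eq: first DR}.} Use $\prod_{j=0}^{N-1}(1-\zeta_N^j x)=1-x^N$. Shifting $z\mapsto z+\tfrac{j}{N}$ multiplies $q_z$ by $\zeta_N^j$. Take the product over $j$ factor by factor. Each factor becomes $1-q_z^{\pm N}\prod q_{\tau_k}^{N(j_k+\epsilon)}$, which is the corresponding factor of $\Gamma_n(Nz,N\tau_0,\dots,N\tau_n)$.

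\textbf{Relation \eqref{eq: second DR}.} Reindex. Every $j_i\ge 0$ is uniquely $j_i=Nm+r$ with $0\le r\le N-1$ and $m\ge0$. Hence $q_{\tau_i}^{j_i}=q_{N\tau_i}^{m}q_{\tau_i}^{r}$. In $\Gamma_n(z,\dots,\tau_i,\dots)$, split the index $j_i$ this way and absorb $q_{\tau_i}^r$ into $q_z$. The denominator factors are then exactly those of $\prod_r\Gamma_n(z+r\tau_i,\dots,N\tau_i,\dots)$.

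For the numerator factors, indexed by $j_i+1$, we have $q_{\tau_i}^{j_i+1}=q_{N\tau_i}^{m+1}q_{\tau_i}^{-(N-r)}$ with $N-r$ running over $1,\dots,N$. We must check that this matches $q_{N\tau_i}^{m'+1}q_{\tau_i}^{-r'}$ with $r'=r$ ranging over $0,\dots,N-1$. Writing $j_i+1=N(m+1)-(N-1-r')$, the residues pair up bijectively, so the numerators also match.

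The sign conventions in the numerator (exponent $j_k+1$ and $q_z^{-1}$) are the only bookkeeping subtlety. This step is where I expect the main care is needed: the reindexing of the numerator factors must be checked to be a bijection.

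\textbf{Extension to all $\tau_k$.} Both sides of each relation are meromorphic. When some $\tau_k\in-\mathbb{H}$, apply the reflection identity in that variable. For \eqref{eq: first DR}, use $N(z-\tau_k)=Nz-N\tau_k$. For \eqref{eq: second DR} with $k=i$, the shifts $z+r\tau_i-N\tau_i$ reindex as $z+r'\tau_i-\tau_i\cdot(\text{const})$; concretely, $-(z+r\tau_i)+\dots$ reduces to the same relation for $-\tau_i$ after substituting $r\mapsto N-1-r$. This transfers the identity from the upper-half-plane case.

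\textbf{Pushforward relation \eqref{eq: general pushforward}.} Apply \eqref{eq: second DR} once in each variable to go from $\Gamma_n(z,\tau_0,\dots,\tau_n)$ to a product over $j_0,\dots,j_n$ of $\Gamma_n(z+\sum_k j_k\tau_k, N\tau_0,\dots,N\tau_n)$. Then replace $(z,\tau_k)$ by $(z/N,\tau_k/N)$ and apply \eqref{eq: first DR} to each factor to absorb the $j/N$ shifts. Finally, rename $z/N\mapsto z$. This gives
\[
\prod_{j,j_0,\dots,j_n}\Gamma_n\Bigl(z+\tfrac{j+\sum j_k\tau_k}{N},\tau_0,\dots,\tau_n\Bigr)=\Gamma_n(Nz,\tau_0,\dots,\tau_n),
\]
as stated.
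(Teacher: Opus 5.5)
Your proof is correct, and it supplies an argument the paper does not give. The paper states this lemma without proof, treating the two distribution relations as known properties of Nishizawa's functions, and only remarks that the pushforward relation follows by combining them. Your route is the standard one: check both relations factor by factor on the product expansion for $\tau_k\in\mathbb{H}$, using $\prod_j(1-\zeta_N^jx)=1-x^N$ and the splitting $j_i=Nm+r$. Then transfer them to $\mathbb{C}-\mathbb{R}$ via the reflection identity, with the reindexing $r\mapsto N-1-r$ when the reflected variable is $\tau_i$ itself. The induction along the tower that you mention at the start is never used and can be dropped.

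Two bookkeeping slips should be fixed.

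In the numerator of the second relation, writing $j_i=Nm+r$ gives $q_{\tau_i}^{j_i+1}=q_{N\tau_i}^{m+1}q_{\tau_i}^{-(N-1-r)}$, not $q_{\tau_i}^{-(N-r)}$. So this factor matches the numerator factor of $\Gamma_n(z+r'\tau_i,\dots,N\tau_i,\dots)$ with $r'=N-1-r$. The clean statement is that $\{N(m+1)-r' : m\ge 0,\ 0\le r'\le N-1\}=\{1,2,3,\dots\}$, with each value attained exactly once.

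In the pushforward step, the substitution $(z,\tau_k)\mapsto(z/N,\tau_k/N)$ is wrong as written, since it produces $\Gamma_n(\,\cdot\,,\tau_0/N,\dots)$. No rescaling of the $\tau_k$ is needed. For fixed $(j_0,\dots,j_n)$, apply the first relation at $w=z+\frac{1}{N}\sum_k j_k\tau_k$:
\[
\prod_{j=0}^{N-1}\Gamma_n\Bigl(w+\tfrac{j}{N},\tau_0,\dots,\tau_n\Bigr)=\Gamma_n\Bigl(Nz+\textstyle\sum_k j_k\tau_k,\,N\tau_0,\dots,N\tau_n\Bigr).
\]
Then take the product over $(j_0,\dots,j_n)$. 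By your iterated second relation evaluated at the point $Nz$, with the $\tau_k$ unscaled, this equals $\Gamma_n(Nz,\tau_0,\dots,\tau_n)$.
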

\subsection{Elliptic Gamma Cocycle}
For $\Gamma_0=\theta_0$ and $\Gamma_1=\Gamma$, the use of homogeneous coordinates, as used in identity \eqref{eq: cocycle identity}, has been given a geometric interpretation in \cite{FHRZ08}, for which the identities above become cocycle relations in a group-equivariant \v{C}ech double complex. We explain these in reverse, starting with the geometric cocycle for elliptic Gamma.

We begin by fixing $L$ to be a free abelian group of rank $3$, with an orientation fixed by an isomorphism $\bigwedge^3L\xrightarrow{\sim}\mathbb{Z}$, and write $\Lambda = \Hom(L,\mathbb{Z})$ for its $\mathbb{Z}$-dual. The complex manifold we consider is the total space of the dual tautological bundle $\mathcal{O}(1)\rightarrow \mathbb{P}(\Lambda\otimes\mathbb{C}) -\mathbb{P}(\Lambda\otimes\mathbb{R})$, which we denote $X_3$. This carries a natural action of $G_3\defeq \Lambda^{\vee}\rtimes \SL(\Lambda)$. Explicitly, $\mathcal{O}(1)$ is a quotient of the trivial bundle with fiber $\Lambda^{\vee}\otimes\mathbb{C}$ on which $\SL(\Lambda)$ acts diagonally and $\Lambda^{\vee}$ by (inverse) fiberwise translation. Using the canonical identification $\Lambda^{\vee}\cong L$, this is simply:
\begin{equation}\label{eq: non-homo action}
    (l,g)\cdot (u,\langle z\rangle) = (gu - l,\langle gz\rangle ).
\end{equation}
This natural action descends to the quotient $\mathcal{O}(1)$. As in \cite{FHRZ08}, we will work with \textit{homogeneous} coordinates on trivialisations on the bundle, which we now describe. First note that a trivialisation of $\mathcal{O}(1)|_{V}$ is an element $v\in L\otimes\mathbb{C}$ such that $z(v)\neq 0 $ for all $z\in V$; indeed, the fiber above $\langle z\rangle $ is precisely $\langle z\rangle^{\vee} = \Lambda^{\vee}\otimes\mathbb{C}/\operatorname{Ann}(\langle z\rangle)\cong L\otimes\mathbb{C}/\ker( z)$. To obtain homogeneous coordinates, we take as our trivialising section the normalised vector $\frac{v}{z(v)}$ so that:
\begin{equation*}
    \mathcal{O}(1)|_V\cong \mathbb{C}\times V\xrightarrow{\sim} (\mathbb{C}\times U)/\mathbb{C}^*\quad \quad (u,\langle z \rangle )\longmapsto (z(u),z)
\end{equation*}
for $U = \pi^{-1}(V)$, $\pi:\Lambda\otimes\mathbb{C}-0\rightarrow \mathbb{P}(\Lambda\otimes\mathbb{C})$, and a diagonal $\mathbb{C}^*$-action on the right hand side. 
\begin{lemma}
    In homogeneous coordinates, the action of $G_3$ is given by:
    \begin{equation}\label{eq: homogeneous action}
        (l,g)\cdot (w,z) = (w-gz(l),gz).
    \end{equation}
\end{lemma}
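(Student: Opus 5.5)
We compute the action directly by transporting the non-homogeneous action \eqref{eq: non-homo action} through the trivialisation $\mathcal{O}(1)|_V\cong(\mathbb{C}\times U)/\mathbb{C}^*$, $(u,\langle z\rangle)\mapsto(z(u),z)$. We take a point $(u,\langle z\rangle)$ of $X_3$ with $u\in L\otimes\mathbb{C}$ representing a class in the fiber $L\otimes\mathbb{C}/\ker(z)$. Its homogeneous coordinates are $(w,z)=(z(u),z)$, defined up to the diagonal $\mathbb{C}^*$-scaling $(w,z)\sim(cw,cz)$. Applying $(l,g)$ gives $(gu-l,\langle gz\rangle)$, whose homogeneous coordinates are $\big((gz)(gu-l),\,gz\big)$.

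The key step is to expand $(gz)(gu-l)$ using the compatibility of the diagonal $\SL(\Lambda)$-action with the pairing between $\Lambda\otimes\mathbb{C}$ and $\Lambda^{\vee}\otimes\mathbb{C}\cong L\otimes\mathbb{C}$. Since $g$ acts on $L=\Lambda^{\vee}$ by the contragredient of its action on $\Lambda$, we have the invariance $(gz)(gu)=z(u)=w$. Linearity then gives
\begin{equation*}
(gz)(gu-l)=w-(gz)(l),
\end{equation*}
which is exactly \eqref{eq: homogeneous action}.

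It remains to check well-definedness. First, the answer must not depend on the lift $u$ of the fiber element. If $u$ is replaced by $u+k$ with $z(k)=0$, then $(gz)(gk)=z(k)=0$, so the coordinates are unchanged. Second, the formula must be $\mathbb{C}^*$-equivariant: replacing $(w,z)$ by $(cw,cz)$ produces $(cw-c(gz)(l),\,c\,gz)$, which is the same point of $(\mathbb{C}\times U)/\mathbb{C}^*$. Finally, composing two such maps confirms that this defines a group action of the semidirect product. Here one uses the multiplication $(l,g)(l',g')=(l+gl',gg')$, consistent with \eqref{eq: non-homo action}.

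The only real subtlety is bookkeeping: making sure the convention by which $\SL(\Lambda)$ acts on $L\cong\Lambda^{\vee}$ is the contragredient one, so that the pairing is invariant. One should also read ``$gz(l)$'' as $(gz)(l)$, the functional $gz$ applied to $l$, rather than $g(z(l))$. With these conventions fixed, the lemma is a direct one-line substitution.
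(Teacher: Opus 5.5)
Your proposal is correct and is essentially the paper's argument: push $(u,\langle z\rangle)\mapsto(gu-l,\langle gz\rangle)$ through the coordinates $(u,\langle z\rangle)\mapsto(z(u),z)$ and use the invariance $(gz)(gu)=z(u)$ to get $(gz)(gu-l)=w-(gz)(l)$. Your checks of independence of the lift, $\mathbb{C}^*$-equivariance and compatibility with $(l,g)(l',g')=(l+gl',gg')$ are correct extras that the paper leaves implicit, since it phrases the same computation via the transported trivialising section $\frac{gv}{gz(gv)}=\frac{gv}{z(v)}$.
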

\begin{proof}
    If $\frac{v}{z(v)}$ trivialises $\mathcal{O}(1)$ over $U$, then $\frac{gv}{gz(gv)}=\frac{gv}{z(v)}$ trivialises $\mathcal{O}(1)$ over $V$. If we let $w=z(u)\in \mathbb{C}$, then with respect to these trivialisations, the action becomes:
   \[\begin{tikzcd}
	{(u,\langle z\rangle )} && {(gu - l,\langle gz\rangle )} \\
	{(w,z)} && \begin{array}{c} (gz(gu)-gz(l),gz)=(w-gz(l),gz) \end{array}
	\arrow[maps to, from=1-1, to=1-3]
	\arrow[tail reversed, from=1-1, to=2-1]
	\arrow[tail reversed, from=1-3, to=2-3]
	\arrow[maps to, from=2-1, to=2-3]
\end{tikzcd}\]
\end{proof}
To compute $G_3$-equivariant cohomology of $X_3$, we require a $G_3$-invariant open cover. This will be indexed by primitive elements in $\Lambda$.
\begin{definition}\label{def: open cover}
    Our open cover of $X_3$ is $\mathcal{U}=\{V_a\}_{a\in \Lambda_{\text{prim}}}$ for $V_a = (\mathbb{C}\times U_a)/\mathbb{C}^*$ and $U_a$ defined by:
    \begin{equation*}
        U_a = \{z \in \Lambda\otimes\mathbb{C}-\mathbb{C}\cdot \Lambda\otimes\mathbb{R}: z(\lambda)/z(\mu)\in \mathbb{H} \text{ for }\lambda\wedge\mu \text{ oriented basis of }\ker(a)\otimes\mathbb{Q}\}
    \end{equation*}
    The orientation on $\ker(a)$ is that induced by $v\wedge -:\bigwedge^2\ker(a)\xrightarrow{\sim}\bigwedge^3L\cong \mathbb{Z}$ for any $v$ satisfying $a(v)>0$. 
\end{definition}
We note that $\pi(U_a)$ defines a trivialising open for $\mathcal{O}(1)$ as $\langle z\rangle  \in \pi(U_a) $ if and only if $ z:\ker(a)\otimes\mathbb{R}\xrightarrow{\sim}\mathbb{C}$; in particular, any element of $\ker(a)$ trivialises $\mathcal{O}(1)|_{\pi(U_a)}$ and so $V_a$ is an open subset of $X_3$. Moreover, it is easy to check that $U_{ga}=g\cdot U_{a}$ for $g\in\SL(\Lambda)$ and so $\mathcal{U}$ is a $G_3$-invariant open cover.
\begin{remark}
     Notice that $V_a$ is biholomorphic to $ \mathbb{C}\times \pi(U_a)$ and $\pi(U_a)$ is biholomorphic to $\mathbb{C}\times\mathbb{H}$. In particular, the opens $V_a$ are contractible domains of holomorphy,
\end{remark}
\begin{remark}
    We give a geometric interpretation of the open $V_a$. Recall that primitive functionals are in bijection with oriented planes in $L\otimes\mathbb{Q}$, via the correspondence $a\mapsto \ker(a)$ and the orientation described in Definition \ref{def: open cover}.
    
    We now claim that any oriented plane $P$ in $L\otimes\mathbb{R}$ determines a contractible domain of holomorphy in $\base$. To see this, recognise that $\base$ is fibered over the Grassmannian of oriented planes $\operatorname{Gr}_2^+(\Lambda\otimes\mathbb{R})\cong S^2$ with fibers biholomorphic to $\mathbb{H}$. An oriented plane $P$ in $ L\otimes\mathbb{R}$ determines a loop $S^1$ inside $\operatorname{Gr}_2^+(\Lambda\otimes\mathbb{R})$ by considering lines in a complementary plane to $(L/P)^{\vee}\otimes\mathbb{R}\cong \mathbb{R}$. The orientation allows us to single out one of the two discs in $S^2-S^1$. The corresponding open of $\base$ obtained by taking the preimage is clearly contractible and a domain of holomorphy. This open consists of those functionals that restrict to an isomorphism $P\xrightarrow{\sim}\mathbb{C}$ and further satisfy an orientation condition.
\end{remark}
We are now ready to introduce the double complex for a $G_3$-equivariant sheaf on $X_3$, such as $\mathcal{O}^*$ or $\mathcal{M}^*$.
\begin{definition}
    Let $\mathcal{F}$ be a $G_3$-equivariant sheaf of abelian groups on $X_3$. For a $G_3$-invariant open cover $\mathcal{U}$, we form the double complex:
    \begin{equation*}
        C_{G_3}^{p,q}(\mathcal{U},\mathcal{F}) = C^p(G_3,\check{C}^q(\mathcal{U},\mathcal{F})) \quad \quad C^n_{G_3}(\mathcal{U},\mathcal{F}) = \oplus_{p+q=n}C_{G_3}^{p,q}(\mathcal{U},\mathcal{F})
    \end{equation*}
    with vertical and horizontal differentials given by the group and \v{C}ech differentials, denoted $d$ and $\partial$, respectively. The cohomology of the total complex $(C^\bullet_{G_3}(\mathcal{U},\mathcal{F}),D)$ maps to the $G_3$-equivariant cohomology $H^n_{G_3}(X_3,F)$.
\end{definition}
\begin{remark}
    The cover $\mathcal{U}$ is a Leray cover for equivariant constant and coherent sheaves. Thus, it is also a Leray cover for $\mathcal{O}^*$, but not necessarily for $\mathcal{M}^*$.
\end{remark}
To define the elliptic Gamma cochain, we first introduce the notion of a framing, as well as that of modulus and direction vector, as in \cite{FHRZ08}.
\begin{definition}[Framing]
    A framing $f$ is an assignment to each primitive functional $a\in \Lambda$ an oriented basis $f(a)=(x_1,x_2,x_3)$ such that $a(x_1)=1$ and $x_2$, $x_3\in \ker(a)$. 
\end{definition}
The condition that $a(x_1)>0$ implies that $(x_2, x_3)$ must be an oriented basis of $\ker(a)$. 
\begin{definition}[Direction vector \& modulus]\label{def: direction vector and modulus}
    Given the ordered pair $(a,b)$ of primitive functionals, we have $\det(a\wedge b\wedge -) = s\gamma \in \Lambda^{\vee}\cong L$, for $s$ a positive integer and $\gamma$ a primitive vector. We define $s$ to be the modulus and $\gamma$ to be the direction vector of the pair $(a,b)$. We will often use the notation $\gamma = (a,b)_L$.
\end{definition}
Equivalently, $s$ is the size of torsion in $\Lambda/\langle a,b\rangle$. Moreover, since $\ker(a,b)$ carries a canonical orientation, we can describe $\gamma$ as the unique primitive positive vector in $\ker(a,b)$.
\begin{definition}[Elliptic Gamma cochain]\label{def: elliptic Gamma cochain}
    The elliptic Gamma cochain is the following $G_3$-equivariant $1$-cochain valued in $\mathcal{M}^*$:
    \begin{equation*}
        (\Delta^f,\Gamma)\in C^1_{G_3}(\mathcal{U},\mathcal{M}^*) = C^1(G_3,\check{C}^0(\mathcal{U},\mathcal{M}^*)) \oplus C^0(G_3,\check{C}^1(\mathcal{U},\mathcal{M}^*)) 
    \end{equation*}
    The $(1,0)$-cochain $(\Delta^f_a:G_3\rightarrow\mathcal{M}^*(V_a))$ is defined by restriction to the translation subgroup $L$ as:
    \begin{equation*}
        \Delta^f_a((u,g))(w,z) =\Delta^f_a(u)(w,z) = \prod_{k=0}^{a(u)-1}\theta_0\left(\frac{w+kz(x_1)}{z(x_3)},\frac{z(x_2)}{z(x_3)}\right)
    \end{equation*}
    in the case that $a(u)>0$; see \cite{FHRZ08}[Definition 3.12] for the full generality.
    
    The $(0,1)$-cochain $(\Gamma_{a,b})\in \prod \mathcal{M}^*(V_a\cap V_b)$ is defined as a product of standard elliptic Gamma:
    \begin{equation*}
        \Gamma_{a,b}(w,z) = \prod_{\delta\in F_{a,b}^{\alpha,\beta}}\Gamma\left(\frac{w+z(\delta)}{z(\gamma)},\frac{z(\alpha)}{z(\gamma)},\frac{z(\beta)}{z(\gamma)}\right)
    \end{equation*}
    for the additional choices of $\alpha$ and $\beta$ satisfying $a(\alpha)$, $b(\beta)>0$ and $a(\beta)=b(\alpha)=0$, and index set the following  natural representatives for $L/\langle \alpha,\beta,\gamma\rangle $:
    \begin{equation*}
        F_{a,b}^{\alpha,\beta} = \{\delta\in L/\mathbb{Z}\gamma: 0\leq a(\delta)< a(\alpha), 0\leq b(\delta)< b(\beta)\}.
    \end{equation*}
\end{definition}
Before verifying this construction is independent of the choices of $\alpha$ and $\beta$, we make some preliminary remarks.
\begin{remark}
    The functions are homogeneous in $(w,z)$ and so are well-defined on $V_a$ and $V_a\cap V_b$ respectively. Indeed, $\left(\frac{w}{z(\gamma)},\frac{z(\alpha)}{z(\gamma)},\frac{z(\beta)}{z(\gamma)}\right)$ is a representative for the $\mathbb{C}^*$-equivalence class $[(w,(z(\alpha),z(\beta),z(\gamma)))]$.
\end{remark}
\begin{remark}
    We address the dependence of the cochain $(\Delta^f,\Gamma)$ on the choice of framing $f$ in Section \ref{sec: framing}.
\end{remark}
\begin{remark}
    In \cite{FHRZ08}[\S 4.3], the $(1,0)$-cochain is defined via $\Delta^{f}_{a}((u,g))(w,z) = \Delta^{f}_{a}(u\circ g^{-1})(w,z)=\Delta^{f}_{a}(gu)(w,z)$. The issue with this definition is that the $(1,1)$ component of $D(\Delta^f,\Gamma)$, namely $\frac{d\Gamma}{\partial \Delta}$, will no longer be $\mathcal{O}^*$-valued, contradicting \cite{FHRZ08}[Theorem 4.1]. However, this is not too alarming, as the authors appear to use the `correct' definition via restriction in \cite{FHRZ08}[\S 5.3], suggesting this misstep was a typo.
\end{remark}
\begin{remark}
    Since $(\alpha,\beta,\gamma)$ give a basis of $L\otimes\mathbb{C}$, we can explicitly describe the open set:
    \begin{equation*}
        \pi(U_a\cap U_b) = \left\{(z(\alpha):z(\beta):z(\gamma))\in \mathbb{P}^2(\mathbb{C}): \frac{z(\alpha)}{z(\gamma)}\in \mathbb{L}, \frac{z(\beta)}{z(\gamma)}\in \mathbb{H}\right\}\cong \mathbb{L}\times\mathbb{H}\subset \mathbb{C}^2=\{z(\gamma)\neq 0\}
    \end{equation*}
    In particular, $V_{a}\cap V_b\cong \mathbb{C}\times\mathbb{L}\times\mathbb{H}$ inside the local trivialisation of $\mathcal{O}(1)$ given by $\gamma\in \ker(a)\cap \ker(b)$. 
\end{remark}
\begin{remark}\label{remark: alpha and beta}
    The choice of a primitive pair $(\alpha,\beta)$ is equivalent to extending $(a,b)$ to a positive $\mathbb{Q}$-basis of $\Lambda\otimes\mathbb{Q}$ by a primitive functional. Precisely:
    \begin{equation*}
       \{(\alpha,\beta)\in L_{\text{prim}}^2: a(\alpha),b(\beta)>0, a(\beta)=b(\alpha)=0\}\longleftrightarrow \{c\in \Lambda_{\text{prim}}: \det(a\wedge b\wedge c)>0\}
    \end{equation*}
    There are natural notions of minimality on both sides. We could pick $\alpha$ and $\beta$ such that $\langle \alpha,\gamma\rangle =\ker(b)$ and $\langle\beta,\gamma\rangle =\ker(a) $. These conditions, together with the positivity constraint, determine $\alpha$ and $\beta$ up to $\mathbb{Z}\gamma$. For the functional $c$, the positivity constraint is equivalent to asking that $c(\gamma)>0$. We could further ask that $c(\gamma)=1$. The notions of minimality are not equivalent: the condition $c(\gamma)=1$ is strictly stronger. In other words, $c(\gamma)=1$ implies that $(\alpha,\beta)$ are chosen minimally, but the converse is false.

    Finally, we note that when $(\alpha,\beta)$ are chosen minimally, the size of the index set is precisely the modulus of the pair, that is $|F_{a,b}^{\alpha,\beta} |=s$.
\end{remark}
\begin{lemma}
    The meromorphic function $\Gamma_{a,b}\in \mathcal{M}^*(V_a\cap V_b)$ is independent of the choice of $\alpha$ and $\beta$. 
\end{lemma}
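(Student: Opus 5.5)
We reduce independence of the choices to two elementary moves, each controlled by a distribution relation from the Lemma on Distribution Relations together with the $\mathbb{Z}\gamma$-invariance of each factor.

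\textbf{Well-definedness in $\delta$.} Replacing $\delta$ by $\delta+k\gamma$ changes the first argument by $z(\gamma)k/z(\gamma)=k$. Since $\Gamma$ is $1$-periodic in $z$, each factor depends only on the class of $\delta$ in $L/\mathbb{Z}\gamma$. So $\Gamma_{a,b}$ is a well-defined product over $F^{\alpha,\beta}_{a,b}$.

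\textbf{First move: shifting $\alpha$ or $\beta$ within its coset.} Here $\alpha$ is replaced by $\alpha'=\alpha+m\beta+n\gamma$ with $b(\alpha')=0$. Positivity and $b(\alpha)=0$ force $m=0$, so $\alpha'=\alpha+n\gamma$, and similarly $\beta'=\beta+n'\gamma$. Then $z(\alpha')/z(\gamma)=z(\alpha)/z(\gamma)+n$, and $\Gamma$ is $1$-periodic in $\tau$ and $\sigma$ by its product definition. This periodicity is preserved under the extension to $\mathbb{C}-\mathbb{R}$, because the extension formula involves only $z-\tau$ and $-\tau$. The shift also changes the index set: $F$ is defined by the values $a(\delta)$ and $b(\delta)$ modulo $\mathbb{Z}\gamma$, and these are unchanged, so $F$ is the same set. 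Hence $\Gamma_{a,b}$ is unchanged.

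\textbf{Second move: multiples.} Every admissible $\alpha$ lies on the ray $\ker(b)\cap\{a>0\}$ modulo $\gamma$ in the plane $\ker(b)$. Any two admissible choices $\alpha_1,\alpha_2$ are therefore both positive rational multiples of a minimal $\alpha_0$, modulo $\mathbb{Q}\gamma$. Combined with the first move, it suffices to compare $\alpha_0$ with $\alpha=N\alpha_0+n\gamma$, and after the first move with $N\alpha_0$. Write $\tau_0=z(\alpha_0)/z(\gamma)$ and $\sigma=z(\beta)/z(\gamma)$. The index set for $N\alpha_0$ decomposes as $F^{N\alpha_0,\beta}=\{\delta+j\alpha_0:\delta\in F^{\alpha_0,\beta},\,0\le j<N\}$, since $a$ is additive and $b(\alpha_0)=0$. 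The product over $F^{N\alpha_0,\beta}$ thus becomes, for each $\delta$,
\[
\prod_{j=0}^{N-1}\Gamma\left(\tfrac{w+z(\delta)}{z(\gamma)}+j\tau_0,N\tau_0,\sigma\right),
\]
which equals $\Gamma\left(\tfrac{w+z(\delta)}{z(\gamma)},\tau_0,\sigma\right)$ by \eqref{eq: second DR}. The same argument handles $\beta$, using the symmetry $\Gamma(z,\tau,\sigma)=\Gamma(z,\sigma,\tau)$.

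\textbf{Main obstacle.} The point needing care is justifying the decomposition of $F^{\alpha,\beta}_{a,b}$ for non-minimal $\alpha$. When $\alpha$ is not minimal, a general $\alpha$ need not be $N\alpha_0$ for the minimal $\alpha_0$ but only congruent to it modulo $\mathbb{Z}\gamma$ after rescaling. One must therefore check that the coset representatives $\delta+j\alpha_0$ exhaust exactly $\{0\le a<a(\alpha)\}\times\{0\le b<b(\beta)\}$ in $L/\mathbb{Z}\gamma$. This holds because $L/\mathbb{Z}\gamma\to\mathbb{Z}^2$, $\delta\mapsto(a(\delta),b(\delta))$, is injective: its kernel is $\ker(a)\cap\ker(b)=\mathbb{Z}\gamma$, using primitivity of $\gamma$. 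It is also essential that the distribution relation \eqref{eq: second DR} holds on all of $\mathbb{C}\times(\mathbb{C}-\mathbb{R})^2$, since $z(\alpha)/z(\gamma)\in\mathbb{L}$. I would verify this by applying the extension formula to both sides.
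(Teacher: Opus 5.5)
Your proposal is correct and follows the paper's proof. You reduce by symmetry to the dependence on $\alpha$, absorb translations by $\mathbb{Z}\gamma$ using the $1$-periodicity of $\Gamma$ in each argument, and compare a minimal $\alpha_0$ with $N\alpha_0$ via the second distribution relation \eqref{eq: second DR}. Your explicit decomposition of $F^{N\alpha_0,\beta}_{a,b}$, and your check that \eqref{eq: second DR} still holds on the lower half-plane $\mathbb{L}$ (where $z(\alpha)/z(\gamma)$ lies), supply details the paper leaves implicit.
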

\begin{proof}
    To highlight the role played by $\alpha$ and $\beta$, let us provisionally write $\Gamma_{a,b}^{\alpha,\beta}$. By symmetry, it suffices to focus on the dependence on $\alpha$. The conditions specify $\alpha$ up to positive rescaling and translation by $\mathbb{Z}\gamma$. Since each of the entries of $\Gamma$ are invariant under translation by $\mathbb{Z}$, one can reduce to showing $\Gamma_{a,b}^{\alpha,\beta}$ and $\Gamma_{a,b}^{N\alpha,\beta}$ agree. This then follows directly from the second distribution relation \eqref{eq: second DR}. Alternatively, one can define $\Gamma_{a,b}$ without reference to either $\alpha$ or $\beta$ by describing it as a quotient of infinite products over cones \cite{FHRZ08}[\S 3.2].
\end{proof}
The main result of \cite{FHRZ08}[Theorem $4.1$] is that, although $(\Delta^f,\Gamma)$ is not a $1$-cocycle valued in $\mathcal{M}^*$, its differential is $\mathcal{O}^*$-valued. 
\begin{theorem}\label{theorem: section of gerbe}
    Fix a framing $f$ and consider the $1$-cochain $(\Delta^f,\Gamma)\in C^1_{G_3}(\mathcal{U},\mathcal{M}^*)$. Then $D(\Delta^f,\Gamma)$ lies in $C^2_{G_3}(\mathcal{U},\mathcal{O}^*)$. In particular, it defines a cocycle $[(\Delta^f,\Gamma)]\in H^1_{G_3}(X_3,\mathcal{M}^*/\mathcal{O}^*)$ whose image under the boundary map associated to the long exact sequence associated to:
    \[\begin{tikzcd}
    	0 & {\mathcal{O}^*} & {\mathcal{M}^*} & {\mathcal{M}^*/\mathcal{O}^*} & 0
    	\arrow[from=1-1, to=1-2]
    	\arrow[from=1-2, to=1-3]
    	\arrow[from=1-3, to=1-4]
    	\arrow[from=1-4, to=1-5]
    \end{tikzcd}\]
    is $[D(\Delta^f,\Gamma)]\in H^2_{G_3}(X_3,\mathcal{O}^*)$.
\end{theorem}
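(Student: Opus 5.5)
We need to show that $D(\Delta^f,\Gamma)$ has holomorphic, nowhere-vanishing components. The second claim then follows formally from the construction of the connecting homomorphism. Since $D=d\pm\partial$ on the total complex, the differential of a $1$-cochain has three components: a $(2,0)$-part $d\Delta^f$, a $(1,1)$-part $\frac{d\Gamma}{\partial\Delta^f}$, and a $(0,2)$-part $\partial\Gamma$. I will treat each separately. In each case the aim is either to show the component is identically $1$, or to identify it with an exponential of a polynomial.

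\textbf{The $(2,0)$-component.} $d\Delta^f$ evaluated at $(u,v)$ is $\Delta^f_a(u+v)\cdot\big((u\cdot\Delta^f_a(v))\,\Delta^f_a(u)\big)^{-1}$. Here the translation $u$ acts on functions by $w\mapsto w-z(u)$, via \eqref{eq: homogeneous action}. Take $a(u),a(v)>0$, as in the definition, and write $u-a(u)x_1\in\ker(a)$ in the basis $x_2,x_3$. The product over $k$ of $\theta_0$ then telescopes: the ranges $0\le k<a(u)$ and $a(u)\le k<a(u)+a(v)$ match, up to shifts of the first argument by elements of $\mathbb{Z}z(x_2)+\mathbb{Z}z(x_3)$. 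After normalising by $z(x_3)$, those shifts are shifts by $\mathbb{Z}+\mathbb{Z}\tau$. The quasi-periodicity of $\theta_0$,
\[\theta_0(z+1,\tau)=\theta_0(z,\tau),\qquad \theta_0(z+\tau,\tau)=-q_z^{-1}\theta_0(z,\tau),\]
shows the discrepancy is a product of factors $\pm\exp(\text{linear in } w)$. These are in $\mathcal{O}^*$. The remaining sign cases reduce to this one via the general definition of \cite{FHRZ08}, so no new idea is needed there.

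\textbf{The $(1,1)$-component.} On $V_a\cap V_b$ this is $\Gamma_{a,b}(w-z(u),z)/\Gamma_{a,b}(w,z)$ times $\Delta^f_b(u)/\Delta^f_a(u)$, up to the sign conventions of the double complex. To compute it, decompose $u$ in the basis $\alpha,\beta,\gamma$ and apply the difference equations \eqref{eq: difference relation} term by term over $F_{a,b}^{\alpha,\beta}$. Translation by $\gamma$ is trivial, since it shifts the first argument by $1$. Translation by $\beta$ permutes the index set $F^{\alpha,\beta}_{a,b}$, except at the boundary $b(\delta)=b(\beta)-1$, which produces a product of $\theta_0$'s in the $a$-chart. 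Likewise $\alpha$ produces $\theta_0$'s in the $b$-chart. I claim these are exactly $\Delta^f_b(u)/\Delta^f_a(u)$, after changing from the basis $(x_2,x_3)$ of $\ker(a)$ to the basis $(\beta,\gamma)$. That change of basis uses the modular transformation of $\theta_0$ under $\SL_2(\mathbb{Z})$, which holds only up to $\exp$ of a quadratic polynomial in $w$. So this component is in $\mathcal{O}^*$ but is not trivial. I would choose $\alpha,\beta$ minimally (Remark~\ref{remark: alpha and beta}) to keep the bookkeeping finite.

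\textbf{The $(0,2)$-component and the conclusion.} The main obstacle is the \v{C}ech part: showing that $\Gamma_{a,b}\Gamma_{b,c}\Gamma_{c,a}$ on $V_a\cap V_b\cap V_c$ is $\exp$ of a cubic polynomial. First I would treat the unimodular case, where $a,b,c$ form a basis of $\Lambda$. There the product is exactly the three-term identity \eqref{eq: cocycle identity} with $n=3$, written in homogeneous coordinates, giving $\exp(-\frac{2\pi i}{6}B_{3,3})$. For a general triple, I would use the distribution relations \eqref{eq: first DR}, \eqref{eq: second DR} and the pushforward relation \eqref{eq: general pushforward}. These let me refine the lattice $L$ to a sublattice on which the three functionals are unimodular. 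Each $\Gamma_{a,b}$ is then expressed as a product of unimodular-type Gamma functions over coset representatives, and the identity is applied coset by coset. The resulting exponent is a sum of Bernoulli polynomials, hence holomorphic and nonvanishing. The delicate point is verifying that the index sets $F$ for the three pairs decompose compatibly over the refined lattice; that is where I expect the real work.

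Given all three components are $\mathcal{O}^*$-valued, $(\Delta^f,\Gamma)$ reduces to a cocycle in $C^1_{G_3}(\mathcal{U},\mathcal{M}^*/\mathcal{O}^*)$. By definition of the snake-lemma connecting map, its boundary is the class of $D$ of the lift, namely $[D(\Delta^f,\Gamma)]$.
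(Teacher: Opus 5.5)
Your component-by-component plan has the right shape. The paper only cites \cite[Theorem~4.1]{FHRZ08} here, but it reproduces the pieces in Remark~\ref{remark: explicit phi2,0}, Proposition~\ref{prop: explicit phi0,2} and Proposition~\ref{prop: smoothed phi1,1}. The genuine gap is in your $(2,0)$ step, which only treats pairs of translations, and that is the easy case. Although $\Delta^f_a((u,g))=\Delta^f_a(u)$, the group $G_3$ acts on $\check{C}^0(\mathcal{U},\mathcal{M}^*)=\prod_a\mathcal{M}^*(V_a)$ by moving the index as well as pulling back. The relevant identity is $\Delta^f_{g^{-1}a}(g^{-1}v)(w,g^{-1}z)=\Delta^{g\cdot f}_a(v)(w,z)$, and the right-hand side is the theta product for a \emph{different} oriented basis of $\ker(a)\cap L$. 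Hence
\[
\phi^{2,0}_a((u_1,g_1),(u_2,g_2))(w,z)=\phi^{2,0}_a(u_1,g_1u_2)(w,z)\cdot\psi^{f,g_1\cdot f}_a(g_1u_2)(w+z(u_1),z).
\]
The second factor is not controlled by quasi-periodicity. It is $\mathcal{O}^*$-valued only because of the $\SL_2(\mathbb{Z})$-automorphy of $\theta_0$ (Lemmas~\ref{lemma: explicit cocycle j} and~\ref{lemma: change of framing}), and it is in general the exponential of a quadratic polynomial in $w$. So your claim that the discrepancy is a product of factors $\pm\exp(\text{linear in }w)$ is false on $G_3\times G_3$. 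The repair uses the modularity input you only invoke for $(1,1)$. For $(1,1)$ you should likewise say why $(u,g)$ reduces to $u$: this is the $\SL(\Lambda)$-equivariance $\Gamma_{ga,gb}(w,gz)=\Gamma_{a,b}(w,z)$.

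In the $(0,2)$ step a case is missing, while the point you flag as delicate is routine. Refining to a sublattice on which the three functionals are unimodular is impossible for linearly dependent triples. Coplanar $a,b,c$ (e.g.\ $c$ in the cone spanned by $a$ and $b$) have nonempty triple intersections. They need the separate coplanar (cone-decomposition) identity $\Gamma_{a,b}=\Gamma_{a,c}\Gamma_{c,b}$, which gives $\phi^{0,2}=1$, together with $\Gamma_{b,a}=\Gamma_{a,b}^{-1}$ to pass between orientations.

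For a positively oriented independent triple, let $x_1,x_2,x_3$ be the direction vectors of $(a_2,a_3),(a_3,a_1),(a_1,a_2)$. For the pair $(a_i,a_j)$, with $(i,j,k)$ cyclic, take $\gamma=x_k$ and $(\alpha,\beta)=(x_i,x_j)$; this is allowed since $\Gamma_{a,b}$ is independent of $\alpha,\beta$ by \eqref{eq: second DR}. Then $F_{a_1,a_2,a_3}(L)=\{\delta\in L:0\le a_j(\delta)<a_j(x_j)\}$ reduces modulo $\mathbb{Z}x_k$ exactly to $F^{x_i,x_j}_{a_i,a_j}$. All three factors are therefore products over one index set, and \eqref{eq: cocycle identity} applies termwise; this is Proposition~\ref{prop: explicit phi0,2}.

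Your $(1,1)$ bookkeeping also needs correcting. For minimal $(\alpha,\beta)$ of modulus $s>1$, the vectors $\alpha,\beta,\gamma$ span only an index-$s$ sublattice, so $u$ has no integral decomposition in them. Translation by $\beta$ does not permute $F^{\alpha,\beta}_{a,b}$ up to a boundary. It shifts every $\delta$ by a full period and produces $\theta_0(\,\cdot\,,z(\alpha)/z(\gamma))$, i.e.\ factors in the $b$-chart; translation by $\alpha$ gives the $a$-chart. There are two ways to fix this. You can reindex $\delta+u$ into $F^{\alpha,\beta}_{a,b}$ carefully. Alternatively, reduce to modulus one via the coplanar chain, as in Proposition~\ref{prop: smoothed phi1,1}. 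There $(\alpha,\beta,\gamma)$ is a basis, and the framing $f'(a)=(\alpha,\beta,\gamma)$, $f'(b)=(\beta,\alpha,-\gamma)$ makes $d\Gamma/\partial\Delta^{f'}$ a product of quasi-periodicity factors. The factor $\psi^{f,f'}$ then handles the change back to $f$.
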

\begin{remark}
    In Section \ref{sec: framing}, we prove the cohomology class in $H^1_{G_3}(X_3,\mathcal{M}^*/\mathcal{O}^*)$ is independent of $f$.
\end{remark}
The geometric interpretation of this result is discussed in depth in \cite{FHRZ08}: briefly, the $2$-cocycle defines a $G_3$-equivariant holomorphic gerbe on $X_3$ for which $(\Delta^f,\Gamma)$ is naturally a meromorphic section.

To end this section, let us briefly outline the smoothing process. For a superlattice $V\supset L$, with cyclic quotient $V/L\cong \mathbb{Z}/N$, we consider the smoothing operation:
\begin{equation*}
    \widetilde{\Delta} = \frac{\Delta(-;L)^N}{\Delta(-;V)}\quad \widetilde{\Gamma} = \frac{\Gamma(-;L)^N}{\Gamma(-;V)}.
\end{equation*}
These will define smoothed cochains for a congruence subgroup. Moreover, with some further assumptions, we will be able to control where the differential of this cochain vanishes, almost allowing us to promote it to an $\mathcal{M}^*$-valued cocycle. It is this cohomological simplification that enables us to prove the independence of $h$ conjecture. The relation between these problems is explained in Sections \ref{sec: expression} and \ref{sec: vanishing theorems}.

\subsection{Classical elliptic units and $\theta_0$}\label{sec: definition of theta_0}
Since the $\theta_0$ function will play such a prominent part in this paper, let us describe it in more detail. The analogous setup is a rank $2$ lattice $L$ and complex manifold $X_2$, the total space of $\mathcal{O}(1)$ over $\base$, where as previously $\Lambda = L^{\vee}$. Things are far simpler in this case as the base space is biholomorphic to $\mathbb{H}\sqcup \mathbb{L}$, a union of two contractible domains of holomorphy. Therefore the line bundle is globally trivial and a trivialisation is given by \textit{any} element $v\in L\otimes\mathbb{R}$: for any $z\notin \mathbb{P}(\Lambda\otimes\mathbb{R})$, it follows $z$ does not vanish on any real hyperplanes and so in the rank $2$ case, this implies $z$ has trivial kernel on $L\otimes\mathbb{R}$. 

Moreover, taking as our $G_2$-invariant cover $\mathcal{U} = \{\mathcal{O}(1)|_{\mathbb{H}},\mathcal{O}(1)|_{\mathbb{L}}\}$, the \v{C}ech part of the story is trivial. It therefore suffices to consider the case of $G_2\cong \mathbb{Z}^2\rtimes\operatorname{SL}_2(\mathbb{Z})$ acting on $\mathbb{C}\times \mathbb{H}$ by:
\begin{equation*}
    \left(\begin{matrix}
        a&b\\c&d\end{matrix}\right)\cdot(x,\tau)=\left(\frac{x}{c\tau+d},\frac{a\tau+b}{c\tau+d}\right) \quad \quad (u\hspace{0.2 cm} v) \cdot(x,\tau)=(x-u\tau -v,\tau).
\end{equation*}
\begin{remark}
    If we took the basis $L=\langle e_1,e_2\rangle$ and expressed $z = (z_1 \hspace{0.2cm} z_2)$ with respect to the dual basis, then for the trivialising section $\frac{e_2}{z_2}$, the action above is the same as described in \eqref{eq: homogeneous action} for $(x,\tau)=\left(\frac{w}{z_2},\frac{z_1}{z_2}\right)$ a representative of $\mathbb{C}^*$-equivalence class $(w,z)$.
\end{remark}
As we describe explicitly below, $\theta_0\in \mathcal{O}(\mathbb{C}\times\mathbb{H})$ satisfies $d\theta_0\in C^1(G_2,\mathcal{O}^*(\mathbb{C}\times\mathbb{H}))$. Formulated in the geometric language of \cite{FHRZ08}, this realises $\theta_0$ as a holomorphic section of a line bundle on the universal elliptic curve over the moduli stack $[\mathbb{C}\times\mathbb{H}/G_2]$. The line bundle is specified by the cohomology class $[d\theta_0]$ in $ H^1(G_2,\mathcal{O}^*(\mathbb{C}\times\mathbb{H}))$, which can be identified with the Picard group of the moduli stack.
\begin{lemma}\label{lemma: explicit cocycle j}
    We write $j\defeq d\theta_0: G_2\rightarrow\mathcal{O}^*(\mathbb{C}\times\mathbb{H})$. Then, for  
    $l = (u \hspace{0.2cm} v)\in \mathbb{Z}^2$ and $g\in \SL_2(\mathbb{Z})$, such that $g^{-1} = \big(\begin{smallmatrix}
        a&b\\c&d
    \end{smallmatrix}\big)$:
    \begin{equation*}
         j(l)(x,\tau) = \frac{\theta_0(x+u\tau+v,\tau)}{\theta_0(x,\tau)}=e^{2\pi iQ_l(x,\tau)}\quad j(g)(x,\tau) = \frac{\theta_0(\frac{x}{c\tau+d},\frac{a\tau+b}{c\tau+d})}{\theta_0(x,\tau)} = e^{\pi iP_g(x,\tau)}
    \end{equation*}
    for $Q_l(x,\tau)$, $P_g(x,\tau) \in \mathbb{Q}(\tau)[x]$ given by:
    \begin{align*}
        Q_l(x,\tau) &= -ux+\frac{1}{2}u(1+\tau-u\tau ) \\
        P_g(x,\tau) &= x^2\frac{c}{c\tau+d}+x\left(\frac{1}{c\tau+d}-1\right)-\frac{a\tau+b}{6(c\tau+d)}+\frac{\tau}{6}-\frac{1}{6}\Phi(g)
    \end{align*}
    where $\Phi:\SL_2(\mathbb{Z})\rightarrow\mathbb{Z}/12$ is a group morphism defined on generators as $\Phi\left(\big(\begin{smallmatrix}
            1&1\\0&1
        \end{smallmatrix}\big)\right) = 1$, $\Phi\left(\big(\begin{smallmatrix}
            0&1\\-1&0
        \end{smallmatrix}\big)\right) = 3$.

\end{lemma}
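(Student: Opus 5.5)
The plan is to prove the two formulas separately, by reducing $\theta_0$ to standard objects whose transformation laws are classical. Throughout, $j$ is the group coboundary $d\theta_0$, so $j(\gamma)=\gamma^*\theta_0/\theta_0$, and it satisfies the cocycle rule $j(\gamma\gamma')=j(\gamma')\cdot\gamma'^*j(\gamma)$ up to the ordering convention fixed by the action $(u\ v)\cdot(x,\tau)=(x-u\tau-v,\tau)$. It therefore suffices to check each formula on generators and then verify that the proposed closed forms $e^{2\pi iQ_l}$ and $e^{\pi iP_g}$ themselves satisfy the same cocycle rule.

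\textbf{Translations.} Periodicity $\theta_0(x+1,\tau)=\theta_0(x,\tau)$ is immediate from the product. Shifting the index in the product gives
\begin{equation*}
\theta_0(x+\tau,\tau)=\frac{1-q_x^{-1}}{1-q_x}\,\theta_0(x,\tau)=-q_x^{-1}\theta_0(x,\tau).
\end{equation*}
This is $e^{2\pi i(-x+1/2)}$, which matches $Q_{(1\ 0)}$. Iterating for $u>0$ and telescoping gives
\begin{equation*}
\prod_{k=0}^{u-1}\bigl(-q_{x+k\tau}^{-1}\bigr)=\exp\Bigl(2\pi i\bigl(-ux+\tfrac{u}{2}-\tfrac{u(u-1)}{2}\tau\bigr)\Bigr),
\end{equation*}
which is exactly $e^{2\pi iQ_l(x,\tau)}$. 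For $u<0$, apply the same identity at $x+u\tau$ and invert. Alternatively, observe that $Q_l$ is quadratic in $u$ and satisfies the additive cocycle identity $Q_{l+l'}(x)=Q_{l}(x+u'\tau+v')+Q_{l'}(x)$ modulo $\mathbb{Z}$. A direct expansion of this identity handles both signs of $u$ at once.

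\textbf{$\SL_2(\mathbb{Z})$.} I will write $\theta_0$ in terms of the Jacobi theta function and $\eta$:
\begin{equation*}
\theta_0(x,\tau)=C\cdot e^{\pi i(\tau/6-x)}\frac{\vartheta_1(x,\tau)}{\eta(\tau)},
\end{equation*}
for an explicit constant $C$ (a power of $i$). This is checked by comparing products, using $\vartheta_1=-iq^{1/8}q_x^{1/2}\prod_{n\ge1}(1-q^n)(1-q^nq_x)(1-q^{n-1}q_x^{-1})$ and $\eta=q^{1/24}\prod(1-q^n)$. Under $\tau\mapsto\frac{a\tau+b}{c\tau+d}$, $x\mapsto\frac{x}{c\tau+d}$, the function $\vartheta_1$ picks up $(c\tau+d)^{1/2}e^{\pi icx^2/(c\tau+d)}$ times an eighth root of unity $\epsilon(g)^3$. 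The function $\eta$ picks up $(c\tau+d)^{1/2}\epsilon(g)$, where $\epsilon$ is the eta multiplier. The half-integral weights cancel in the quotient, leaving the character $\epsilon(g)^2$, which takes values in the 12th roots of unity. The prefactor $e^{\pi i(\tau/6-x)}$ contributes $\frac{a\tau+b}{6(c\tau+d)}-\frac{\tau}{6}$ and $-\frac{x}{c\tau+d}+x$ to the exponent. Collecting terms reproduces $P_g$ up to the constant $-\frac16\Phi(g)$, where $e^{-\pi i\Phi(g)/6}$ is identified with $\epsilon(g)^{\pm 2}$.

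\textbf{Main obstacle.} The delicate step is pinning down this root of unity consistently with the paper's conventions: the use of $g^{-1}$ entries, and the inverse convention in the action. Rather than trust general eta-multiplier formulas, I will verify the identity directly on $T=\big(\begin{smallmatrix}1&1\\0&1\end{smallmatrix}\big)$ and $S=\big(\begin{smallmatrix}0&1\\-1&0\end{smallmatrix}\big)$. For $T$ this is trivial, since $\theta_0$ is $1$-periodic in $\tau$. For $S$ it is the classical $\eta(-1/\tau)=\sqrt{-i\tau}\,\eta(\tau)$ together with the corresponding law for $\vartheta_1$, and this fixes the values $\Phi(T)=1$ and $\Phi(S)=3$. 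Finally I will check that $g\mapsto e^{\pi iP_g}$ satisfies the multiplicative cocycle identity. Its non-constant part is a coboundary-type expression, namely the weight-$0$ index-$\tfrac12$ automorphy factor times $e^{\pi i(\tau/6-x)}$ transported. The constant part is a homomorphism to $\mathbb{Z}/12$, so the Dedekind-sum style corrections are absorbed precisely because $\Phi$ is a homomorphism; its well-definedness follows since $\epsilon^2$ is a character of $\SL_2(\mathbb{Z})$, and the abelianisation of $\SL_2(\mathbb{Z})$ is $\mathbb{Z}/12$. Agreement on generators plus the cocycle property then gives the formula for all $g$.
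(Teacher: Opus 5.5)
Your translation argument is the same as the paper's and is correct. In the $\SL_2(\mathbb{Z})$ part, however, your normalisation has the wrong sign in the exponent. Comparing products gives
\[
\theta_0(x,\tau)=-i\,e^{\pi i(x-\tau/6)}\,\frac{\vartheta_1(x,\tau)}{\eta(\tau)}.
\]
Two checks confirm this. First, $\vartheta_1/\eta$ begins with $e^{2\pi i\tau/12}$, so the prefactor must contain $e^{-\pi i\tau/6}$. Second, the factor $q_x^{1/2}(1-q_x^{-1})=-q_x^{-1/2}(1-q_x)$ inside $\vartheta_1$ forces $e^{+\pi i x}$.

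With your prefactor $e^{\pi i(\tau/6-x)}$, two things go wrong:
\begin{itemize}
\item You would get $\theta_0(x+\tau,\tau)=-e^{-2\pi i(x+\tau)}\theta_0(x,\tau)$, which contradicts your own translation formula.
\item The prefactor would contribute $\frac{a\tau+b}{6(c\tau+d)}-\frac{\tau}{6}-\frac{x}{c\tau+d}+x$ to the exponent. This is the negative of the corresponding part of $P_g$, so the step where collecting terms reproduces $P_g$ fails as written.
\end{itemize}

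With the corrected sign everything matches:
\begin{itemize}
\item The prefactor contributes $x\bigl(\frac{1}{c\tau+d}-1\bigr)-\frac{a\tau+b}{6(c\tau+d)}+\frac{\tau}{6}$.
\item The quotient $\vartheta_1/\eta$ contributes $\epsilon(g^{-1})^2e^{\pi icx^2/(c\tau+d)}$, since $\vartheta_1$ carries multiplier $\epsilon^3$ and weight $\tfrac12$ against $\eta$'s $\epsilon$ and weight $\tfrac12$.
\item Hence $e^{-\pi i\Phi(g)/6}=\epsilon(g^{-1})^2=\epsilon(g)^{-2}$. On generators, $\epsilon(T)^{-2}=e^{-\pi i/6}$ and $\epsilon(S)^{-2}=-i=e^{-3\pi i/6}$ for $T=\big(\begin{smallmatrix}1&1\\0&1\end{smallmatrix}\big)$ and $S=\big(\begin{smallmatrix}0&1\\-1&0\end{smallmatrix}\big)$. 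This agrees with $\Phi(T)=1$ and $\Phi(S)=3$.
\end{itemize}

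Once this is fixed, your route is a valid alternative to the paper's. The difference lies in where the $S$-transformation comes from.

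The paper specialises the rank-two Narukawa identity $\theta_0\bigl(\frac{w}{z_2},\frac{z_1}{z_2}\bigr)\theta_0\bigl(\frac{w}{z_1},\frac{z_2}{z_1}\bigr)=\exp(-\pi iB_{2,2}(w,z_1,z_2))$, combined with $\theta_0(x,\tau)=\theta_0(-x,-\tau)^{-1}$. It then asserts that the general case follows from the two generators. This keeps the proof inside the homogeneous-coordinate and multiple-Bernoulli formalism that the paper reuses later for $\phi^{0,2}$ with $B_{3,3}$. There, the constant $\Phi(S)=3$ comes from the constant term $\tfrac12$ of $B_{2,2}$ together with the sign in $\theta_0(-x,\tau)=-q_x^{-1}\theta_0(x,\tau)$.

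You instead import the classical transformation laws of $\vartheta_1$ and $\eta$. This explains conceptually why the constant is a $12$th root of unity: it is the character $\epsilon^{-2}$ of $\SL_2(\mathbb{Z})^{ab}\cong\mathbb{Z}/12$. If you use those laws for all $\gamma$, you get the formula for every $g$ at once, and only need to compare the two characters on $T$ and $S$. In that case your separate check that $g\mapsto e^{\pi iP_g}$ satisfies the cocycle identity is redundant. It is, however, exactly the step the paper passes over when it extends from generators to all of $\SL_2(\mathbb{Z})$.
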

\begin{proof}
    The $\mathbb{Z}^2$-cocycle relation follows from $\theta_0(x+1,\tau)=\theta_0(x,\tau)$ and $\theta_0(x+\tau,\tau)=-q_x^{-1}\cdot\theta_0(x,\tau)$. As $\SL_2(\mathbb{Z})$ is generated by $\big(\begin{smallmatrix}
            1&1\\0&1
        \end{smallmatrix}\big)$ and $\big(\begin{smallmatrix}
            0&1\\-1&0
        \end{smallmatrix}\big)$, the general cocycle relation can be deduced from $\theta_0(x,\tau+1)=\theta_0(x,\tau)$ and the identity described in \eqref{eq: cocycle identity}, namely:
    \begin{equation*}
         \theta_0\left(\frac{w}{z_2},\frac{z_1}{z_2}\right)\cdot \theta_0\left(\frac{w}{z_1},\frac{z_2}{z_1}\right) = \exp(-\pi i B_{2,2}(w,z_1,z_2))
    \end{equation*}
    Notice that the left hand side is precisely $j\left(\big(\begin{smallmatrix}
            0&1\\-1&0
        \end{smallmatrix}\big)\right)(x,\tau)=\theta_0(x,\tau)/\theta_0(\frac{x}{-\tau},\frac{1}{-\tau})$, as we have $\theta_0(x,\tau) = \theta_0(-x,-\tau)^{-1}$.
        
    An alternative perspective is provided in \cite{Kato04}, in which the $\SL_2(\mathbb{Z})$-automorphy factor follows only from the explicit $\mathbb{Z}^2$-cocycle, computation of the divisor of $\theta_0$, and the pushforward relation described in \eqref{eq: general pushforward}. That $\theta_0$ should satisfy an $\SL_2(\mathbb{Z})$-modularity relation is not surprising: the $\mathbb{Z}^2$-cocycle exhibits $\theta_0$ as a section of a line bundle on the universal elliptic curve over $\mathbb{H}$ whose divisor is the zero section $[\mathcal{O}]$ and pullback by $\SL_2(\mathbb{Z})$ preserves this divisor. This perspective is discussed further in Appendix \ref{appendix: coprime assumption}; for instance, see  Remark \ref{remark: kato perspective}.
\end{proof}
Of critical importance to this paper is the effect of $N$-smoothing on the cocycle $j$ restricted to the congruence subgroup $\Gamma_0(N)$.
\begin{definition}[$N$-smoothed theta function]
    For a positive integer $N$, define the $N$-smoothed theta function:
    \begin{equation*}
        \theta_0^N(x,\tau) \defeq \frac{\theta_0(x,\tau)^N}{\theta_0(Nx,N\tau)}.
    \end{equation*}
    We write $j^N = d\theta_0^N:\mathbb{Z}^2\rtimes\Gamma_0(N)\rightarrow\mathcal{O}^*(\mathbb{C}\times\mathbb{H})$ for the smoothed cocycle.
\end{definition}
It suffices to consider $N$ positive by the relation $\theta_0^{-N}=(\theta_0^N)^{-1}$. Moreover, we restrict to $\Gamma_0(N)$ as this is precisely the congruence subgroup which preserves the divisor of $\theta_0^N$, given by $ N\cdot [\mathcal{O}]-\sum_{j=0}^{N-1}\left[\frac{j}{N}\right]$, where these divisors are viewed as sections of the universal elliptic curve over $\mathbb{H}$.
\begin{proposition}\label{prop: j^N trivial}
    For $N$ coprime to $6$, the $N$-smoothed cocycle $j^N$ is trivial. Explicitly, for $l=(u \hspace{0.2cm} v)\in \mathbb{Z}^2$ and $g^{-1}=\big(\begin{smallmatrix}a& b \\c&d\end{smallmatrix}\big)\in\Gamma_0(N)$:
    \begin{equation*}
        \theta_0^N(x+u\tau+v)=\theta_0^N(x,\tau)\quad \theta_0^N\left(\frac{x}{c\tau+d},\frac{a\tau+b}{c\tau+d}\right) = \theta_0^N(x,\tau) .
    \end{equation*}
    In other words, $\theta_0^N$ 
    is a function on the universal elliptic curve over $Y_0(N)$ for $(N,6)=1$.
\end{proposition}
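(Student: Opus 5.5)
The plan is to compute $j^N$ directly from the explicit cocycle $j$ of Lemma \ref{lemma: explicit cocycle j}. The key observation is that rescaling $(x,\tau)\mapsto(Nx,N\tau)$ intertwines the action of $\mathbb{Z}^2\rtimes\Gamma_0(N)$ with the action of another element of $G_2$. Hence $j^N$ is the exponential of a difference of the polynomials $Q$ and $P$, and almost everything cancels.

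\textbf{Translations.} For $l=(u\hspace{0.2cm}v)$ we have
\begin{equation*}
(Nx+uN\tau+Nv,\,N\tau)=l'\cdot(Nx,N\tau), \qquad l'=(u\hspace{0.2cm}Nv).
\end{equation*}
Therefore
\begin{equation*}
j^N(l)(x,\tau)=\exp\bigl(2\pi i\,[\,NQ_l(x,\tau)-Q_{l'}(Nx,N\tau)\,]\bigr).
\end{equation*}
Substituting the formula for $Q$, the terms in $x$, $u\tau$ and $u^2\tau$ cancel. What remains is
\begin{equation*}
NQ_l(x,\tau)-Q_{l'}(Nx,N\tau)=\tfrac12 u(N-1),
\end{equation*}
so $j^N(l)=(-1)^{u(N-1)}=1$, since $N$ is odd.

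\textbf{Matrices.} Let $g^{-1}=\big(\begin{smallmatrix}a&b\\c&d\end{smallmatrix}\big)\in\Gamma_0(N)$, so that $N\mid c$. Then
\begin{equation*}
\Bigl(\tfrac{Nx}{c\tau+d},\tfrac{N(a\tau+b)}{c\tau+d}\Bigr)=g'\cdot(Nx,N\tau), \qquad g'^{-1}=\begin{pmatrix}a&Nb\\ c/N&d\end{pmatrix}\in\SL_2(\mathbb{Z}).
\end{equation*}
Hence $j^N(g)=\exp\bigl(\pi i\,[\,NP_g(x,\tau)-P_{g'}(Nx,N\tau)\,]\bigr)$. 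Inserting the formula for $P$:
\begin{itemize}
\item the $x^2$ terms cancel, because $N\cdot c=N^2\cdot (c/N)$;
\item the linear terms in $x$ cancel identically;
\item the terms $-\tfrac{a\tau+b}{6(c\tau+d)}+\tfrac{\tau}{6}$ cancel after rescaling.
\end{itemize}
We are left with
\begin{equation*}
j^N(g)=\exp\Bigl(\tfrac{\pi i}{6}\bigl(\Phi(g')-N\Phi(g)\bigr)\Bigr).
\end{equation*}
So the proposition reduces to the congruence
\begin{equation*}
\Phi(g')\equiv N\,\Phi(g)\pmod{12}\qquad\text{for all } g\in\Gamma_0(N),
\end{equation*}
where $g'$ is the conjugate of $g$ by $\operatorname{diag}(N,1)$.

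\textbf{The congruence (main obstacle).} This is the only non-formal step and the one I expect to be hardest. I would handle it by identifying $e^{\pi i\Phi(g)/6}$ with the multiplier system of $\eta^2$. This identification should follow from setting $x\to 0$ in the $\theta_0$ relation, or equivalently from the standard factorisation of $\theta_0$ as a Jacobi theta function divided by $\eta$. Under this identification, the desired congruence says exactly that the eta quotient
\begin{equation*}
\frac{\eta(\tau)^{2N}}{\eta(N\tau)^{2}}
\end{equation*}
transforms under $\Gamma_0(N)$ with the pure automorphy factor $(c\tau+d)^{N-1}$ and trivial character. This is precisely the Newman--Ligozat criterion. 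Its hypotheses are checked as follows:
\begin{itemize}
\item $2N\cdot 1+(-2)\cdot N=0\equiv 0 \pmod{24}$;
\item $2N\cdot N+(-2)\cdot 1=2(N^2-1)\equiv 0\pmod{24}$, because $N^2\equiv 1\pmod{12}$ whenever $(N,6)=1$;
\item the character $\bigl(\tfrac{(-1)^{N-1}N^{-2}}{d}\bigr)$ is trivial, since $N^{-2}$ is a square and $N-1$ is even.
\end{itemize}
The reduction from the criterion back to the congruence above requires some care with signs and branches of $(c\tau+d)^{1/2}$. These can be matched against the $\SL_2(\mathbb{Z})$-normalisation of $\Phi$ on the two generators.

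If the eta route proves awkward, the fallback is Rademacher's formula. For $c\neq0$ it gives
\begin{equation*}
\Phi(M)\equiv\frac{a+d}{c}-12\,\operatorname{sgn}(c)\,s(d,|c|) \pmod{12}.
\end{equation*}
One would then verify the congruence using $\Phi(g')=\tfrac{N(a+d)}{c}-12\,s(d,c/N)$ together with the standard integrality properties of $6c\,s(d,c)$ and reciprocity for Dedekind sums. Either way, $(N,6)=1$ enters exactly through $N^2\equiv1\pmod{12}$ (in the eta route) and the oddness of $N$ (for the translations).
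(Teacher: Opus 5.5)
Your reduction is the paper's own proof. The paper likewise computes $j^N(l)=\exp(2\pi i\frac{N-1}{2}u)$ and writes $j^N(g)=j(g)(x,\tau)^N/j(\alpha g\alpha^{-1})(Nx,N\tau)$ with $\alpha=\operatorname{diag}(N,1)$, but it then records that this equals $1$ without addressing the $\Phi$-term. You correctly isolate what is actually needed, namely $\Phi(\alpha g\alpha^{-1})\equiv N\Phi(g)\pmod{12}$ on $\Gamma_0(N)$.

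Your eta route closes this gap. Letting $x\to 0$ in Lemma~\ref{lemma: explicit cocycle j} gives $\eta(\gamma\tau)^2=e^{\pi i\Phi(\gamma)/6}(c\tau+d)\eta(\tau)^2$ for $\gamma=\big(\begin{smallmatrix}a&b\\c&d\end{smallmatrix}\big)$. This has integral weight, so $e^{\pi i\Phi/6}$ is an honest character and the branch issues you anticipate do not arise. The congruence is then exactly the triviality of the character of $\eta(\tau)^{2N}/\eta(N\tau)^2$ on $\Gamma_0(N)$, and your Newman--Ligozat checks are right.

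The paper's only genuine argument for this step is the Kato-style one in Appendix~\ref{appendix: coprime assumption}. There, $\theta_0^N$ and $g^*\theta_0^N$ define the same $\mathbb{Z}^2$-cocycle and have the same divisor, so their ratio is constant in $x$. The distribution relation for each $l$ prime to $N$ then forces this constant to lie in $\mu_{l^2-1}$, and $\gcd_{(l,N)=1}(l^2-1)=1$ exactly when $(N,6)=1$.

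The two routes trade off as follows:
\begin{itemize}
\item Yours is more explicit but imports Newman--Ligozat, which itself rests on the Dedekind-sum formula for the eta multiplier.
\item The paper's needs no multiplier computation and also gives the $\mu_{24}$ bound for arbitrary $N$.
\item The hypothesis $(N,6)=1$ enters differently: in yours through $12\mid N^2-1$, in the paper's through $2$ and $3$ being prime to $N$.
\end{itemize}

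Your fallback has an error. Reducing Rademacher's function mod $12$ does not give the homomorphism $\Phi$. For $c\neq 0$ the correct formula is $\Phi(M)\equiv\frac{a+d}{c}-12\operatorname{sgn}(c)s(d,|c|)-3\operatorname{sgn}(c)\pmod{12}$. For instance, your formula gives $0$ on $\big(\begin{smallmatrix}0&1\\-1&0\end{smallmatrix}\big)$ instead of $3$.

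Since $c$ and $c/N$ have the same sign, omitting this term shifts your target congruence by $3(N-1)$, which is $6 \bmod 12$ whenever $N\equiv 3\pmod 4$. Concretely, take $N=7$ and the element $\big(\begin{smallmatrix}1&0\\7&1\end{smallmatrix}\big)$. Your formula gives $-4$ for it and $2$ for its conjugate $\big(\begin{smallmatrix}1&0\\1&1\end{smallmatrix}\big)$, and $2\not\equiv -28\pmod{12}$. The true values $-7$ and $-1$ do satisfy $-1\equiv -49\pmod{12}$. So the fallback as written would be aiming at a false congruence; use the eta route.
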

\begin{proof}
    We can explicitly compute:
    \begin{equation*}
        j^N(l)(x,\tau) = \exp\left(2\pi i\cdot \frac{N-1}{2}u\right)=1\quad\quad j^N(g)(x,\tau) = \frac{j(g)(x,\tau)^N}{j(\alpha g\alpha^{-1})(Nx,N\tau)} =1
    \end{equation*}
    where $\alpha=\big(\begin{smallmatrix}
        N&0\\0&1
    \end{smallmatrix}\big)\in \operatorname{GL}_2(\mathbb{Q})$. The assumption $g\in \Gamma_0(N)$ is equivalent to $\alpha g\alpha^{-1}\in \SL_2(\mathbb{Z})$.

    In the appendix, we give a more conceptual proof using the pushforward relation, due to \cite{Kato04}, in the general case $N$ no longer coprime to $6$.
\end{proof}
We finish this section by rephrasing Kronecker's Limit Formula to mirror the BCG Limit Formula in the complex cubic case. To that end, let us introduce theta functions associated to primitive functionals on an oriented rank $2$ lattice $L$, as done in \cite{Morain24}. 
\begin{definition}
    For $a\in \Lambda$ primitive, we write $\gamma= \det(a\wedge - )$ and choose $\alpha\in L$ so that $L=\langle \alpha,\gamma\rangle$ and $a(\alpha)>0$. Then, for $U_a = \{z\in \Lambda\otimes\mathbb{C}-\Lambda\otimes\mathbb{R}:z(\alpha)/z(\gamma)\in \mathbb{H}\}$ and $(w,z)\in V_a =  (\mathbb{C}\times U_a)/\mathbb{C}^*$, we define:
    \begin{equation*}
\theta_a(w,z)=\theta_0\left(\frac{w}{z(\gamma)},\frac{z(\alpha)}{z(\gamma)}\right).
    \end{equation*}
\end{definition}
For a superlattice $V\supset L$, and a primitive functional $a\in L^{\vee}$ that extends to a $\mathbb{Z}$-valued function on $V$, we may define a smoothed theta function with respect to the pair:
\begin{equation*}
    \widetilde{\theta}_a(w,z;L,V) = \frac{\theta_a(w,z;L)^N}{\theta_a(w,z;V)}.
\end{equation*}
We will suppress the lattice pair from the notation on the left hand side when it is clear which pair is involved.
\begin{remark}
    When $V/L\cong\mathbb{Z}/N$, the geometric notion of smoothing coincides with our earlier notion of $N$-smoothing:
    \begin{equation*}
        \widetilde{\theta}_a(w,z) = \theta_0^N\left(\frac{w}{z(\gamma)},\frac{z(\alpha)}{z(\gamma)}\right).
    \end{equation*}
    This is because the assumption that $a$ extends to a $\mathbb{Z}$-valued functional on $V$ is equivalent to its kernel generating the quotient: in other words, $V/L = \langle \gamma/N\rangle$.

    If we further assume that $N$ is coprime to $6$, then by $\Gamma_0(N)$-invariance this construction is independent of $a$: if $b\in L^{\vee}$ is another primitive functional that extends to $V^{\vee}$, we have $\widetilde{\theta}_a(w,z)=\widetilde{\theta}_b(w,z)$.
\end{remark}
To continue the parallel story with the complex cubic case, we consider an imaginary quadratic field $K$ with fixed complex embedding $\sigC$, $\mathfrak{f}$ a proper integral ideal, and $\mathfrak{a}$ a prime ideal coprime to $6\mathfrak{f}$ and of prime norm $N$. For $\mathfrak{b}$ an integral ideal coprime to $\mathfrak{f}$, we set $L=\mathfrak{f}\mathfrak{b}^{-1}$ and define $h\in L\otimes\mathbb{Q}$ to be admissible if $h\equiv 1$ modulo $L$ and $\langle h\rangle_{\mathbb{Q}}$ generates $\aL/L$: $h$ defines a primitive $\mathfrak{f}$-torsion point on the elliptic curve $\mathbb{C}/L$. Then, we define:
\begin{equation*}
    \theta_{\mathfrak{a},h}(L) = \widetilde{\theta}_a(\sigC(h),\sigC;L,\aL)
\end{equation*}
for primitive $a\in \Lambda$ uniquely specified by the conditions that $a(h)=0$ and $\sigC\in U_a$. It is then immediate from our previous work that this construction is independent of $h$.
\begin{corollary}[Independence of $h$ for $\theta_{\mathfrak{a},h}(L)$]\label{corollary: independence of h for IQF}
    Let $(V,L)$ be a pair of rank $2$ lattices with cyclic quotient $V/L\cong \mathbb{Z}/N$ for $(N,6)=1$. Fix $z$ a strictly complex functional and $h$, $h'\in L\otimes\mathbb{Q}$ be two elements both of whose rational span generates $V/L$. Then, for $h\equiv h'$, we have:
    \begin{equation*}
        \widetilde{\theta}_a(z(h),z) =\widetilde{\theta}_{a'}(z(h'),z).
    \end{equation*}
\end{corollary}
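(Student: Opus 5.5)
The plan is to turn both sides into values of the single function $\theta_0^N$ and then use Proposition \ref{prop: j^N trivial}: for $(N,6)=1$, $\theta_0^N$ is invariant under the full group $\mathbb{Z}^2\rtimes\Gamma_0(N)$.

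\emph{Step 1 (identify the smoothing).} Since $a(h)=0$, the rational span $\langle h\rangle_{\mathbb{Q}}$ equals $\ker(a)\otimes\mathbb{Q}$. By hypothesis $\langle h\rangle_{\mathbb{Q}}$ generates $V/L$, so $\ker(a)\otimes\mathbb{Q}$ meets $V$ in a vector that generates $V/L$; this vector is $\gamma/N$, where $\gamma$ is the primitive vector of $\ker(a)$ in $L$. Hence $a$ extends to a $\mathbb{Z}$-valued functional on $V$, and the Remark preceding the corollary applies:
\begin{equation*}
\widetilde{\theta}_a(w,z)=\theta_0^N\left(\frac{w}{z(\gamma)},\frac{z(\alpha)}{z(\gamma)}\right),
\end{equation*}
where $(\alpha,\gamma)$ is an oriented basis of $L$ with $z(\alpha)/z(\gamma)\in\mathbb{H}$. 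The same holds for $a'$, with primitive vector $\gamma'$ and companion $\alpha'$.

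\emph{Step 2 (change of $a$).} Both bases $(\alpha,\gamma)$ and $(\alpha',\gamma')$ are oriented bases of $L$, so they differ by some $g\in\SL_2(\mathbb{Z})$. The subgroup $L+\mathbb{Z}\gamma/N=V=L+\mathbb{Z}\gamma'/N$ is common to both, so $g$ preserves $V$. In the coordinates $(x,\tau)$ this is exactly the condition $g\in\Gamma_0(N)$, which I would check via the criterion $\alpha g\alpha^{-1}\in\SL_2(\mathbb{Z})$ from the proof of Proposition \ref{prop: j^N trivial}. Using the $\SL_2$ part of the homogeneous action \eqref{eq: homogeneous action} and the $\Gamma_0(N)$-invariance of $\theta_0^N$, we get
\begin{equation*}
\widetilde{\theta}_a(w,z)=\widetilde{\theta}_{a'}(w,z)\quad\text{for all } w.
\end{equation*}
This is precisely the independence of $a$ already noted in the Remark.

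\emph{Step 3 (change of $h$).} Write $h'=h+l$ with $l\in L$. Then
\begin{equation*}
\frac{z(h')}{z(\gamma')}=\frac{z(h)}{z(\gamma')}+\frac{z(l)}{z(\gamma')},
\end{equation*}
and $l=u\alpha'+v\gamma'$ with $u,v\in\mathbb{Z}$, so the shift equals $u\tau'+v$. The translation invariance $\theta_0^N(x+u\tau+v,\tau)=\theta_0^N(x,\tau)$ from Proposition \ref{prop: j^N trivial} therefore gives
\begin{equation*}
\widetilde{\theta}_{a'}(z(h'),z)=\widetilde{\theta}_{a'}(z(h),z).
\end{equation*}
Combining this with Step 2 at $w=z(h)$ proves the corollary.

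The main obstacle is bookkeeping rather than depth. First, orientations: the hypothesis $z\in U_a$ must select the correct sign of $a$ (and hence $\gamma$), and similarly for $a'$, so that both bases are oriented and lie in the same component $\mathbb{H}$ rather than $\mathbb{L}$. Second, one must confirm that "preserves $V$" translates into $\Gamma_0(N)$ and not its transpose $\Gamma^0(N)$ under the paper's convention $g^{-1}=\big(\begin{smallmatrix}a&b\\c&d\end{smallmatrix}\big)$. I would settle both by writing $V=\langle \alpha,\gamma/N\rangle$ explicitly in coordinates and checking directly.
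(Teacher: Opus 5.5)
Your proposal is correct and follows the paper's proof: first replace $a$ by $a'$ at the fixed point $w=z(h)$ using the $\Gamma_0(N)$-invariance of $\theta_0^N$ (the content of the preceding Remark), then replace $z(h)$ by $z(h')$ using the $\mathbb{Z}^2$-invariance from Proposition \ref{prop: j^N trivial}. Your bookkeeping worry about $\Gamma_0(N)$ versus $\Gamma^0(N)$ resolves in your favour: write $\alpha=p\alpha'+q\gamma'$ and $\gamma=r\alpha'+s\gamma'$ with $ps-qr=1$; then $\gamma/N\in V=\langle\alpha',\gamma'/N\rangle$ forces $N\mid r$, so $\widetilde{\theta}_a(w,z)=\theta_0^N\bigl(x'/(r\tau'+s),(p\tau'+q)/(r\tau'+s)\bigr)$ with $x'=w/z(\gamma')$ and $\tau'=z(\alpha')/z(\gamma')$, and the transition matrix lies in $\Gamma_0(N)$ as required.
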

\begin{proof}
    By the previous remark and the $\mathbb{Z}^2$-invariance of the $N$-smoothed $\theta_0^N$:
    \begin{equation*}
        \widetilde{\theta}_a(z(h),z) = \widetilde{\theta}_{a'}(z(h),z) =\widetilde{\theta}_{a'}(z(h'),z).
    \end{equation*}
    This proves the independence of $h$ for $\theta_{\mathfrak{a},h}(L)$ by taking $z=\sigC$ and $V=\aL$. We observe that for a prime ideal $\mathfrak{a}$, the assumption that $\aL/L$ is a cyclic $\mathbb{Z}$-module is equivalent to $\mathfrak{a}$ having prime norm.
\end{proof}
By the independence result above, the notation $\theta_{\mathfrak{a}}(L)$ is unambiguous. Using this notation, we reformulate Kronecker's Limit Formula in this context \cite{Colmez04,Kato04}.
\begin{theorem}[Kronecker's Limit Formula]\label{theorem: KLF}
    Let $K$, $\sigC$, $\mathfrak{f}$, $\mathfrak{a}$, $\mathfrak{b}$, and $h$ be as above. Write $L=\mathfrak{f}\mathfrak{b}^{-1}$ and $w_\mathfrak{f}$ for the number of roots of unity in $1+\mathfrak{f}$. Then, the leading term of the smoothed-partial zeta function is given by:
    \begin{equation*}
    \zeta_{\mathfrak{f},\mathfrak{a}}'(\sigma_{\mathfrak{b}},0)=\frac{1}{w_{\mathfrak{f}}} \log|\theta_{\mathfrak{a}}(L)|^2.
\end{equation*}
\end{theorem}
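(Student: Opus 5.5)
The plan is to reduce the statement to the classical Kronecker second limit formula for Epstein-type zeta functions twisted by a torsion point. Then check that the smoothing operation \(\zeta_{\mathfrak{f}}(\sigma_{\mathfrak{a}}\sigma,s)-\operatorname{N}\mathfrak{a}^{1-s}\zeta_{\mathfrak{f}}(\sigma,s)\) matches exactly the passage from \(\theta_0\) to \(\theta_0^N\). By Corollary \ref{corollary: independence of h for IQF} we may use any admissible \(h\), so we take \(h=1\) where convenient.

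\textbf{Step 1: partial zeta as a lattice sum.} An integral ideal \(\mathfrak{c}\) coprime to \(\mathfrak{f}\) with \(\sigma_{\mathfrak{c}}=\sigma_{\mathfrak{b}}\) in \(\operatorname{Cl}(\mathfrak{f})\) (narrow equals wide here, since \(K\) has no real places) can be written as \(\mathfrak{c}=x\mathfrak{b}\) with \(x\in\mathfrak{b}^{-1}\) and \(x\equiv 1\) modulo \(\mathfrak{f}\mathfrak{b}^{-1}=L\). The element \(x\) is unique up to roots of unity in \(1+\mathfrak{f}\). This gives
\begin{equation*}
    \zeta_{\mathfrak{f}}(\sigma_{\mathfrak{b}},s)=\frac{\operatorname{N}\mathfrak{b}^{-s}}{w_{\mathfrak{f}}}\sum_{x\in 1+L,\, x\neq 0}|\sigC(x)|^{-2s}.
\end{equation*}
We do the same for \(\sigma_{\mathfrak{a}}\sigma_{\mathfrak{b}}\), replacing \(\mathfrak{b}\) by \(\mathfrak{a}\mathfrak{b}\) and \(L\) by \(\mathfrak{a}^{-1}L=V\). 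The normalising factor \(\operatorname{N}(\mathfrak{a}\mathfrak{b})^{-s}=N^{-s}\operatorname{N}\mathfrak{b}^{-s}\) combines with the \(N^{1-s}\) term, so the smoothed zeta becomes
\begin{equation*}
    \frac{\operatorname{N}\mathfrak{b}^{-s}N^{-s}}{w_{\mathfrak{f}}}\left(\sum_{x\in 1+V}|x|^{-2s}-N\sum_{x\in 1+L}|x|^{-2s}\right).
\end{equation*}
Since both sums vanish at \(s=0\) after the smoothing combination (checked below), the prefactor \(\operatorname{N}\mathfrak{b}^{-s}N^{-s}\) only contributes through its value \(1\) at \(s=0\).

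\textbf{Step 2: apply the twisted limit formula.} We choose an oriented basis \(\alpha,\gamma\) of \(L\) with \(\gamma\) spanning \(\ker a\), so that \(\tau=z(\alpha)/z(\gamma)\in\mathbb{H}\) and \(x=z(h)/z(\gamma)\). Kronecker's second limit formula, as in Siegel and \cite{Kato04}, expresses
\begin{equation*}
    \frac{d}{ds}\Big|_{s=0}\sum_{\lambda\in L}|z(h+\lambda)|^{-2s}
\end{equation*}
as \(-\log|g(x,\tau)|^2\) up to normalising constants and \(\log|z(\gamma)|\)-type terms. Here \(g\) is the Siegel function, equal to \(\theta_0(x,\tau)\) times an explicit factor
\begin{equation*}
    \exp\bigl(\pi i\,(\text{quadratic in } x,\ \text{linear in }\tau)\bigr)
\end{equation*}
and a power of \(|\eta(\tau)|\). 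The value at \(s=0\) of each sum is a constant (essentially \(-1\), or \(0\) for non-lattice \(h\)), so the \(N\)-weighted difference of values vanishes as claimed. For \(V\) we use the basis \(\alpha,\gamma/N\), which is legitimate because \(a\) extends to \(V\). This gives parameters \((Nx,N\tau)\) after rescaling the homogeneous coordinates by \(z(\gamma)/N\).

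\textbf{Step 3: cancellation of the non-holomorphic factors.} In the combination \(N\cdot(\text{term for } L)-(\text{term for } V)\), the following should all cancel: the \(\eta\)-contributions, the \(\log|z(\gamma)|\) and \(\log N\) normalisations, and the Bernoulli-type exponential factors. This is the same computation as in Proposition \ref{prop: j^N trivial}, now applied to absolute values. What remains is
\begin{equation*}
    \log\left|\frac{\theta_0(x,\tau)^N}{\theta_0(Nx,N\tau)}\right|^2=\log|\theta_{\mathfrak{a}}(L)|^2.
\end{equation*}
The overall sign is fixed by the sign convention in the definition of \(\zeta_{\mathfrak{f},\mathfrak{a}}\).

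\textbf{Main obstacle.} The delicate part is Step 3. We have to track the precise quadratic exponentials and \(\eta\)-powers, including \(\operatorname{Im}\tau\)-dependent terms such as \(\operatorname{Im}(x)^2/\operatorname{Im}\tau\) and the contribution of \(\operatorname{Im}\tau\) versus \(\operatorname{Im}(N\tau)\), and verify that they cancel exactly under \(N\)-smoothing. The roots-of-unity bookkeeping giving \(1/w_{\mathfrak{f}}\) also needs care. My first attempt would be to use the Bernoulli-polynomial form \(B_{2,2}\) of the exponent. Its homogeneity in \((w,z)\) makes the cancellation between \((x,\tau)\) and \((Nx,N\tau)\) with weight \(N\) transparent, as in the proof of Proposition \ref{prop: j^N trivial}.
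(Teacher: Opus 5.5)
The paper gives no proof of this statement; it quotes it from \cite{Colmez04,Kato04}. Your route is the standard one behind those references: write both partial zeta functions as sums over $1+L$ and $1+\mathfrak{a}^{-1}L$, apply the second limit formula, and let the $N$-weighted difference produce $\theta_0^N$. Step 1 is correct, including the factor $1/w_{\mathfrak{f}}$.

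The gap is in Step 3, which is where the content lies, and the cancellation mechanism you describe does not work. The combination $N\cdot(\text{term for }L)-(\text{term for }V)$ only removes contributions that get multiplied by exactly $N$ under $(x,\tau,z(\gamma))\mapsto(Nx,N\tau,z(\gamma)/N)$. Other kinds of terms would survive it:
a power of $|\eta(\tau)|$;
an additive constant;
a term $c\log|z(\gamma)|^2$, which would leave $(N-1)c\log|z(\gamma)|^2+2c\log N$.
So these terms must be \emph{absent} rather than cancel, and you need to say why.

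First, $1\notin\mathfrak{a}^{-1}L$, because $\mathfrak{f}$ is proper and coprime to $\mathfrak{a}$. Hence each sum vanishes at $s=0$ on its own; the weighted difference of values vanishes only for this reason, and for a lattice point it would be $N-1$. It follows that each derivative at $s=0$ is invariant under rescaling the lattice, which rules out $\log|z(\gamma)|$, $\log N$ and covolume terms.

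Second, for $h\notin\Lambda$ the second limit formula holds exactly:
\begin{equation*}
\frac{d}{ds}\Big|_{s=0}\sum_{w\in h+\Lambda}|w|^{-2s}=-\log|g|^2,\qquad |g|^2=e^{-2\pi\operatorname{Im}(\tau)\,B_2(\operatorname{Im}(x)/\operatorname{Im}(\tau))}\,|\theta_0(x,\tau)|^2 .
\end{equation*}
There is no additional $\eta$-factor, since $\prod(1-q^n)^2$ already sits inside $\theta_0$. You can check that the additive constant is $0$ by letting $h$ tend to a lattice point and comparing with the first limit formula.

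Finally, $h\in\mathbb{Q}\gamma$, so $x=z(h)/z(\gamma)$ is rational and $\operatorname{Im}(x)=0$. The exponential factor is then $|q_\tau|^{1/6}$ for $L$ and $|q_{N\tau}|^{1/6}=|q_\tau|^{N/6}$ for $V$, and these cancel in the weighted difference. What remains is $\log|\theta_0^N(x,\tau)|^2$, with the correct sign and no convention needed. Proposition \ref{prop: j^N trivial} and $B_{2,2}$ are not the relevant tools here: they govern the holomorphic $\Gamma_0(N)$-automorphy factor, not this real-analytic normalisation.

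Separately, $h=1$ is generally not admissible. In Example \ref{example: elliptic units} one has $\mathbb{Q}\cap\mathfrak{a}^{-1}L=\mathbb{Q}\cap L=3\mathbb{Z}$, which is why $h=4+3i$ is used there. If you used $h=1$ in Step 2, the functional killing it would not extend to $\mathfrak{a}^{-1}L$, and $\gamma/N$ would not lie in $V$. You need neither $h=1$ nor the independence corollary. The lattice sums depend only on the coset $h+L=1+L$, and admissibility of $h$ is exactly what makes $(\alpha,\gamma/N)$ a basis of $V$ in Step 2.
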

The theory of complex multiplication allows one to complete the proof of Stark's conjecture in this case \cite{Shalit87,Robert73,Rubin99}. We end with two short examples, the first explaining the choice of data in \eqref{eq: classical elliptic units}.
\begin{example}\label{example: elliptic units}
    In the example given in \eqref{eq: classical elliptic units}, we considered $K=\mathbb{Q}(i)$, $\mathfrak{f}=(3)$, and $\mathfrak{a}=(2+i)$ of norm $N=5$. Note that $w_{\mathfrak{f}}=1$. Then, for $L=\mathfrak{f}$, we may take $h=4+3i$, $\gamma = 12+9i$ and $\alpha = 3+3i$. 
    \begin{equation*}
        \theta_{\mathfrak{a}}(\mathfrak{f}) = \theta_0^N\left(\frac{1}{3},\frac{3+3i}{12+9i}\right)\approx 0.232051+ 0.866025i.
    \end{equation*}
    Since the ray class group of conductor $\mathfrak{f}$ has order $2$ and is generated by $\mathfrak{b}=(1+i)$, the conjugate is given by $\theta_{\mathfrak{a}}(\mathfrak{f}\mathfrak{b}^{-1})$. For this, we take $h=4+3i$, $\gamma = 12+9i$ and $\alpha = \frac{1}{2}(3+3i)$.
    \begin{equation*}
        \theta_{\mathfrak{a}}(\mathfrak{f})^{\sigma_{\mathfrak{b}}}=\theta_{\mathfrak{a}}(\mathfrak{f}\mathfrak{b}^{-1}) = \theta_0^N\left(\frac{1}{3},\frac{\frac{1}{2}(3+3i)}{12+9i}\right)\approx -3.232051 - 0.866025i.
    \end{equation*}
    This explains the choice of auxiliary data. These elements satisfy $x^2+3x-3i\in \mathbb{Z}[i][x]$ and so are $3$-units in the ray class field $K(\mathfrak{f})$. This concurs with the refinement of Stark's conjecture in the case $S=\{\sigC,\mathfrak{f}\}$ has size $2$.
\end{example}
\begin{example}
    As we stated Stark's Conjecture for $|S|\geq 3$, for which we expect genuine units, let us give an example in this case. So we take $K=\mathbb{Q}(\sqrt{-2})$, $\mathfrak{f} = (3)= \mathfrak{p}\cdot \overline{\mathfrak{p}} $, and $\mathfrak{a} = (3+\sqrt{-2})$ of norm $N=11$. Again, we have $w_{\mathfrak{f}}=1$. Moreover, since $(3)$ splits, $\mathfrak{f}$ is composite, $|S|=3$ and we expect our $\mathfrak{a}$-smoothed Stark unit to be a genuine unit. As $\mathfrak{b}=(\sqrt{-2})$ generates $\operatorname{Cl}(\mathfrak{f})\cong \frac{(\mathcal{O}/\mathfrak{p})^*\oplus(\mathcal{O}/\overline{\mathfrak{p}})^*}{\mathcal{O}^* \text{ mod }\mathfrak{f}}\cong\mathbb{Z}/2$, we compute:
    \begin{equation*}
        \theta_{\mathfrak{a}}(\mathfrak{f}) = \theta_0^N\left(\frac{1}{3},\frac{12+3\sqrt{-2}}{39+9\sqrt{-2}}\right)\approx -0.050510 + 0.087486i \quad          \theta_{\mathfrak{a}}(\mathfrak{f})^{\sigma_{\mathfrak{b}}} = \theta_0^N\left(\frac{1}{3},\frac{\frac{1}{2}(12+3\sqrt{-2})}{39+9\sqrt{-2}}\right)\approx-4.949490 - 8.572768i.
    \end{equation*}
    Here, we have taken $h = 13+3\sqrt{-2}$, $\gamma = 39+9\sqrt{-2}$ in both cases, and $\alpha = 12+3\sqrt{-2}$ for the former and $\frac{1}{2}(12+3\sqrt{-2})$ for the latter. We verify that these satisfy $x^2+(5+6\sqrt{-2})x+1\in \mathbb{Z}[\sqrt{-2}][x]$.
\end{example}
\subsection{Dependence on framings}\label{sec: framing}
We finish this chapter by dealing with the cohomological dependence of the elliptic Gamma cochain $(\Delta^f,\Gamma)$ on the choice of framing $f$. We thus define the following cochain $\psi^{f,f'}\in C^{1,0}_{G_3}(\mathcal{U},\mathcal{M}^*)$:
\begin{equation}\label{eq: change of framings}
    \quad \psi^{f,f'} \defeq  \frac{\Delta^{f'}}{\Delta^{f}}.
\end{equation}
\begin{lemma}\label{lemma: change of framing}
    The cochain $\psi^{f,f'}$ is $\mathcal{O}^*$-valued and so the $1$-cocycle $(\Delta^f,\Gamma)\in Z^1_{G_3}(\mathcal{U},\mathcal{M}^*/\mathcal{O}^*)$ is independent of $f$. In particular, the cohomology class $[D(\Delta^f,\Gamma)]\in H^2_{G_3}(X_3,\mathcal{O}^*)$ is independent of $f$.
\end{lemma}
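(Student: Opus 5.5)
The plan is to show directly that for each primitive $a$ and each $u\in L$ the ratio $\psi^{f,f'}_a(u)=\Delta^{f'}_a(u)/\Delta^f_a(u)$ is a nowhere-vanishing holomorphic function on $V_a$. The independence statements then follow formally. The $(1,0)$-component only enters through $\Delta$, so it changes by $\psi^{f,f'}$ while $\Gamma$ is unchanged. Hence $(\Delta^{f'},\Gamma)=(\Delta^f,\Gamma)\cdot\psi^{f,f'}$ with $\psi^{f,f'}\in C^{1,0}_{G_3}(\mathcal{U},\mathcal{O}^*)$, and the images in $Z^1_{G_3}(\mathcal{U},\mathcal{M}^*/\mathcal{O}^*)$ coincide. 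Moreover
\[
D(\Delta^{f'},\Gamma)=D(\Delta^f,\Gamma)\cdot D\psi^{f,f'},
\]
and $D\psi^{f,f'}$ is an $\mathcal{O}^*$-valued coboundary, so the class in $H^2_{G_3}(X_3,\mathcal{O}^*)$ is unchanged. This is the compatibility with the boundary map in Theorem \ref{theorem: section of gerbe}.

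For the key step, fix $a$ and two framings $f(a)=(x_1,x_2,x_3)$ and $f'(a)=(x_1',x_2',x_3')$. Since $(x_2,x_3)$ and $(x_2',x_3')$ are oriented bases of $\ker(a)$, there is a matrix $g\in\SL_2(\mathbb{Z})$ relating them, and $x_1'=x_1+y$ for some $y\in\ker(a)$. Write
\[
x=\frac{w}{z(x_3)},\qquad \tau=\frac{z(x_2)}{z(x_3)},\qquad \tau\in\mathbb{H}\ \text{on}\ U_a .
\]
Each factor of $\Delta^{f'}_a(u)$ has the form
\[
\theta_0\left(\frac{w+kz(x_1')}{z(x_3')},\frac{z(x_2')}{z(x_3')}\right).
\]
I would compare it with the corresponding factor of $\Delta^f_a(u)$ in two moves.

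First, change the basis of $\ker(a)$ using the $\SL_2(\mathbb{Z})$-automorphy in Lemma \ref{lemma: explicit cocycle j}. The ratio $\theta_0(g\cdot(x,\tau))/\theta_0(x,\tau)=j(g)(x,\tau)=e^{\pi i P_g(x,\tau)}$ is an exponential of a polynomial, so it lies in $\mathcal{O}^*$. Here I need $x$ to range over all of $\mathbb{C}$ with $\tau\in\mathbb{H}$, which holds on $V_a$.

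Second, account for the shift by $kz(y)$. Writing $y=mx_2+nx_3$, we get
\[
\frac{kz(y)}{z(x_3)}=km\tau+kn,
\]
so the translation cocycle $j(l)=e^{2\pi iQ_l}$ of Lemma \ref{lemma: explicit cocycle j} gives an $\mathcal{O}^*$ ratio for each $k$.

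Taking the product over $k=0,\dots,a(u)-1$, the ratio $\psi^{f,f'}_a(u)$ is a finite product of exponentials of polynomials in $w/z(x_3)$ with coefficients holomorphic in $z$. It is therefore holomorphic and nonvanishing, and it is homogeneous of degree $0$, so it is well defined on $V_a$. For $a(u)\le 0$ I would use the general definition in \cite{FHRZ08}[Definition 3.12], which is again a finite product (or inverse product) of the same $\theta_0$ factors. The same comparison applies verbatim, and the case $a(u)=0$ gives the empty product.

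The main obstacle I expect is bookkeeping rather than conceptual. The issue is that for $g$ not of the standard form, the rescaling by $c\tau+d$ must match exactly $z(x_3')/z(x_3)$, so I would check carefully that the homogeneous coordinates transform as claimed. An alternative that avoids the issue is to note that both $\Delta^f_a(u)$ and $\Delta^{f'}_a(u)$ have the same divisor on $V_a$: the union of the hyperplanes $w+kz(x_1)\in z(\ker a)$, which does not depend on the framing modulo $\ker(a)$. Their ratio is then a nowhere-vanishing holomorphic function on $V_a$, and this already suffices. I would present the divisor argument as the main proof and keep the explicit cocycles as a check.
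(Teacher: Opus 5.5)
Your proposal is correct. The explicit two-move comparison you describe is exactly the paper's proof, while the divisor argument you want to lead with is a genuinely different route.

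In the paper, $l=y_1-x_1\in\ker(a)$ and $g\in\SL(\ker(a))$ relates the bases $(y_2,y_3)$ and $(x_2,x_3)$. Your translation step and your basis-change step are then combined into the single group element $(kl,g)$, giving
\[
\psi^{f,f'}_a(u)(w,z)=\prod_{k=0}^{a(u)-1}j((kl,g))\left(\frac{w+kz(x_1)}{z(x_3)},\frac{z(x_2)}{z(x_3)}\right).
\]
The paper leaves the formal consequences for $Z^1_{G_3}(\mathcal{U},\mathcal{M}^*/\mathcal{O}^*)$ and $H^2_{G_3}(X_3,\mathcal{O}^*)$ implicit; you spell them out correctly.

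Your divisor argument is sound. The zeros of $\theta_0$ are simple and lie exactly on $\mathbb{Z}+\mathbb{Z}\tau$. So the $k$-th factor of $\Delta^f_a(u)$ has reduced divisor $\sum_{\lambda\in\ker(a)\cap L}[w+kz(x_1)=z(\lambda)]$. This depends only on $x_1$ modulo $\ker(a)$ and on the lattice $\mathbb{Z}x_2\oplus\mathbb{Z}x_3$. A meromorphic function with trivial divisor on a complex manifold is a unit, so the ratio lies in $\mathcal{O}^*$.

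This route is shorter and avoids the $c\tau+d$ bookkeeping, but it only gives $\psi^{f,f'}\in C^{1,0}_{G_3}(\mathcal{U},\mathcal{O}^*)$. The explicit route gives more, and the paper uses the extra information in two places:
\begin{enumerate}
\item It shows $\psi^{f,f'}$ is a product of values of $j$, hence an exponential of a polynomial in $w$ with coefficients in $\mathbb{Q}(z(L))$. This is needed for the strengthened claim in Corollary \ref{corollary: root of unity}, because the term $\psi^{f,g\cdot f}$ appears in Proposition \ref{prop: cocycle expression}.
\item The same split into a $\ker(a)$-translation and an $\SL_2(\mathbb{Z})$ basis change is reused in Proposition \ref{prop: smoothed compatible pairs}. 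There the basis change is shown to lie in $\Gamma_0(N)$, which is how smoothing kills $\widetilde{\psi}$.
\end{enumerate}
So the explicit computation is worth keeping as more than a check.
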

\begin{proof}
    Given two framings $f$ and $f'$, we  define the following auxiliary data for each functional $a$. Writing $f(a) = (x_1,x_2,x_3)$ and $f'(a) = (y_1,y_2,y_3)$, we set $l\defeq y_1-x_1 \in \ker(a)$ and $g\in \SL(\ker(a))$ such that $g\cdot (\begin{smallmatrix}
        y_2 \\y_3
    \end{smallmatrix}) = (\begin{smallmatrix}
        x_2 \\x_3
    \end{smallmatrix})$. Then, we can express the cochain $\psi^{f,f'}$ explicitly in terms of the cocycle $j$ for $\theta_0$:
    \begin{equation*}
        \psi^{f,f'}_{a}(u)(w,z) = \prod_{k=0}^{a(u)-1} j((kl, g))\left(\frac{w+kz(x_1)}{z(x_3)},\frac{z(x_2)}{z(x_3)}\right).
    \end{equation*}
    In particular, $\psi^{f,f'}\in C^1_{G_3}(\mathcal{U},\mathcal{O}^*)$.
\end{proof}
\newpage
\section{Cocycle expression}\label{sec: expression}
In this section, we begin the proof of the main result, Theorem \ref{theorem: arithmetic theorem in introduction}, that for $h$ and $h'$ two admissible elements of $L$, and for $\mathfrak{a}$ outside some finite set of primes, we have:
\begin{equation*}
    {\bf\Gamma}_{\mathfrak{a},h}(L)={\bf\Gamma}_{\mathfrak{a},h'}(L).
\end{equation*}
Before we outline the structure of the proof, let us properly introduce the alternative notation for the (smoothed) elliptic Gamma function used in Theorem \ref{theorem: main result in introduction}. 

Throughout this section, we fix a rank $3$ lattice $L$ and framing $f$. We return to the choice of framing in the next section, where it will be appropriate to pick one satisfying certain conditions.

Mirroring the unit $\varepsilon\in \mathcal{O}_{\mathfrak{f}}^{+,\ast}$ in arithmetic setting, we below consider an element $g\in \SL(\Lambda)$. Throughout we also use $g$ to denote the induced element in $\SL(L)$ given by pullback by $g^{-1}$ on $\Lambda^{\vee}\cong L$. With this notation in mind, we record the following action of $\SL(\Lambda)$ on the set of framings. For $g\in\SL(\Lambda)$ and a framing $f$, define:
\begin{equation*}
    g\cdot f: a\longmapsto gf(g^{-1}a) = (gy_1,gy_2, gy_3)
\end{equation*}
when $f(g^{-1}a)=(y_1,y_2, y_3)$.

We restrict our attention to those $g$ which have no rational eigenvalues and a pair of strictly complex ones; equivalently, the characteristic polynomial $\chi_g\in\mathbb{Q}[x]$ is irreducible and has negative discriminant.
\begin{lemma}\label{lemma: a assocaited to (h,z,g)}
    Consider $g\in \SL(\Lambda)$ for which $\chi_g\in\mathbb{Q}[x]$ is irreducible and has negative discriminant and let $z\in\Lambda\otimes\mathbb{C}-\Lambda\otimes\mathbb{R}$ be an eigenfunctional of $g$. Then, for any nonzero element $h\in L\otimes\mathbb{Q}$, there is a unique primitive functional $a\in \Lambda$ such that:
    \begin{enumerate}[label=(\roman*)]
        \item $h\in \ker(a,ga)\otimes\mathbb{Q}$,
        \item $z\in U_a$.
    \end{enumerate}
\end{lemma}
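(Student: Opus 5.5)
Since $a$ and $ga$ are both required to vanish on $h$, I would first pin down the line $\ker(a,ga)\otimes\mathbb{Q}$ and then show that the remaining freedom in $a$ is a single sign, which condition (ii) fixes. Write $W=\{a\in\Lambda\otimes\mathbb{Q}: a(h)=0=(ga)(h)\}=\operatorname{Ann}(h)\cap\operatorname{Ann}(g^{-1}h)$, where I use $(ga)(h)=a(g^{-1}h)$ under the identification $\Lambda^\vee\cong L$. Since $\chi_g$ is irreducible of degree $3$, $g$ has no invariant line in $L\otimes\mathbb{Q}$. Hence $h$ and $g^{-1}h$ are linearly independent, and $W$ is a rational line. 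Condition (i) says exactly that $a\in W$: the inclusion $h\in\ker(a)\cap\ker(ga)$ gives $a(h)=0$ and $(ga)(h)=0$. Conversely, for $a\in W$ nonzero, $a$ and $ga$ are independent (again because $g$ has no rational eigenvector in $\Lambda\otimes\mathbb{Q}$), so $\ker(a,ga)\otimes\mathbb{Q}$ is a line containing $h$. The primitive functionals satisfying (i) are therefore exactly $\pm a_0$, where $a_0$ is a primitive generator of $W\cap\Lambda$.

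It remains to show that exactly one of $\pm a_0$ satisfies $z\in U_a$. The set $\ker(-a_0)$ equals $\ker(a_0)$ as a plane, but the orientation of Definition~\ref{def: open cover} reverses, because the sign of $a(v)$ flips. For a fixed oriented basis $\lambda\wedge\mu$ of $\ker(a_0)\otimes\mathbb{Q}$, the quotient $z(\lambda)/z(\mu)$ therefore lies in $\mathbb{H}$ for one sign and in $\mathbb{L}$ for the other, provided it is not real. So the remaining task is to check that $z|_{\ker(a_0)\otimes\mathbb{R}}$ is an $\mathbb{R}$-isomorphism onto $\mathbb{C}$, i.e.\ that $z(\lambda)/z(\mu)\notin\mathbb{R}\cup\{\infty\}$.

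This nondegeneracy is the main step. Suppose instead that $z$ restricted to $P=\ker(a_0)\otimes\mathbb{R}$ has real-rank at most $1$. Then there is a nonzero $v\in P$ with $z(v)=0$, and hence also $\bar z(v)=0$. Write $gz=\lambda z$ with $\lambda\notin\mathbb{R}$, and note that $\bar z$ is an eigenfunctional with eigenvalue $\bar\lambda$. Then $v$ lies in the real line $\ker(z)\cap\ker(\bar z)\cap(L\otimes\mathbb{R})$. This line is the real eigenline of $g$ acting on $L\otimes\mathbb{R}$, corresponding to the real eigenvalue of the dual action. Consequently $P$ contains that eigenline, so $a_0$ vanishes on the real eigenvector $e$. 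Since $g^{-1}e$ is a multiple of $e$, $ga_0$ vanishes on $e$ as well. Thus $e\in\ker(a_0,ga_0)\otimes\mathbb{R}$, which is the real line spanned by $h$. That makes $h$ a real eigenvector of $g$ lying in $L\otimes\mathbb{Q}$, i.e.\ a rational eigenvector, contradicting irreducibility of $\chi_g$.

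With nondegeneracy established, the ratio $z(\lambda)/z(\mu)$ is strictly complex, and exactly one sign of $a_0$ places $z$ in $U_a$. This gives both existence and uniqueness of $a$. The places requiring care are the bookkeeping of the dual actions ($g$ on $\Lambda$ versus $L$) and the orientation sign flip. The $\mathbb{C}^*$-scaling of $z$ does not affect membership in $U_a$, since the condition involves only the ratio $z(\lambda)/z(\mu)$.
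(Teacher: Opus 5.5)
Your proof is correct, and its skeleton matches the paper's. In both, $\operatorname{Ann}\langle h,g^{-1}h\rangle$ is a rational line, so (i) determines $a$ up to $\pm a_0$. Condition (ii) then selects one sign once $z$ is known to be an $\mathbb{R}$-isomorphism on $\ker(a_0)\otimes\mathbb{R}$.

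Where you differ is in that nondegeneracy step.

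\textbf{The paper's route.} It uses that $h$ and $g^{-1}h$ form a basis of $\ker(a_0)\otimes\mathbb{Q}$, and computes directly
$$z(h)/z(g^{-1}h)=z(h)/(gz)(h)=\lambda^{-1}\notin\mathbb{R}.$$
This implicitly needs $z(h)\neq 0$, which holds because otherwise $z$ would also kill $g^{-1}h$ and hence a whole real plane. It is a one-line check, and it makes the choice of sign explicit in terms of $\lambda$.

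\textbf{Your route.} You argue by contradiction via the real eigenline $e$ of $g$ on $L\otimes\mathbb{R}$. This is longer and needs the auxiliary fact that $a_0$ and $ga_0$ are independent. In return it is more structural: it shows $z$ degenerates on a real plane exactly when that plane contains $e$. Moreover, the rational functionals vanishing on $e$ form a $g$-stable subspace, so by irreducibility no rational plane contains $e$. Hence $z$ is nondegenerate on every rational plane, not just on $\ker(a_0)\otimes\mathbb{R}$.
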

\begin{proof}
    Since $g$ has no rational eigenvalues, the elements $h,g^{-1}h\in L\otimes \mathbb{Q}$ are linearly independent and therefore, the annihilator $\operatorname{Ann}(\langle h,g^{-1}h\rangle)$ is one-dimensional. This condition specifies the primitive functional $a$ up to a choice of sign; the sign ambiguity corresponds to the two orientations on $\ker(a)$. Since $z(h)/z(g^{-1}h)=z(h)/gz(h)\in\mathbb{C}-\mathbb{R}$, we have exactly one of $z\in U_a$ or $z\in U_{-a}$.
\end{proof}
\begin{definition}
    For $g$ and $z$ as in the Lemma above and a nonzero element $h\in L\otimes\mathbb{Q}$, we set:
    \begin{equation*}
        \Phi\triple{h} \defeq \Gamma_{a,ga}(z(h),z)
    \end{equation*}
    where the primitive functional $a$ is uniquely determined by the triple of data.
\end{definition}
\begin{lemma}
    This construction yields a well-defined non-zero complex number. That is $\Phi(h,\langle z\rangle,g)\in \mathbb{C}^*$ is independent of the choice of generator of $\langle z\rangle$. 
\end{lemma}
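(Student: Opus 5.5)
The plan has three parts: check that the primitive functional $a$ does not depend on the representative of $\langle z\rangle$, invoke homogeneity of $\Gamma_{a,ga}$, and show that the evaluation point $(z(h),z)$ lies in $V_a\cap V_{ga}$ away from the zeros and poles of $\Gamma_{a,ga}$.

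\textbf{The functional $a$.} By Lemma \ref{lemma: a assocaited to (h,z,g)}, $a$ is pinned down by two conditions. Condition (i) does not involve $z$. Condition (ii) is $z\in U_a$, and $U_a$ is a cone: it is stable under $\mathbb{C}^*$-scaling, since the defining ratios $z(\lambda)/z(\mu)$ are unchanged when $z$ is replaced by $cz$. So $a$ depends only on $\langle z\rangle$, and the same is true of $ga$.

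\textbf{Homogeneity.} Replacing $z$ by $cz$ replaces the pair $(z(h),z)$ by $(c\,z(h),cz)$. These two points represent the same $\mathbb{C}^*$-class in $V_a\cap V_{ga}$. By the homogeneity remark following Definition \ref{def: elliptic Gamma cochain}, every argument
\[
\frac{w+z(\delta)}{z(\gamma)},\qquad \frac{z(\alpha)}{z(\gamma)},\qquad \frac{z(\beta)}{z(\gamma)}
\]
is invariant under this scaling, so the value of $\Gamma_{a,ga}$ does not change. It remains to show the point lies in $V_a\cap V_{ga}$. Since $g\langle z\rangle=\langle z\rangle$ and $U_{ga}=gU_a$, we get $z\in U_a$ if and only if $gz\in U_{ga}$, and $gz$ is a scalar multiple of $z$. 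Hence $\langle z\rangle\in \pi(U_a)\cap\pi(U_{ga})$.

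\textbf{Finiteness and nonvanishing.} I expect this to be the main obstacle, because the product is meromorphic and the point must avoid its divisor. The zeros and poles of $\Gamma(x,\tau,\sigma)$ lie on $x\in \mathbb{Z}+\mathbb{Z}\tau+\mathbb{Z}\sigma$, and these lattice translates are exactly the conditions where the numerator or denominator factors $1-q_\tau^jq_\sigma^kq_x^{\pm1}$ vanish. For the factor indexed by $\delta$, take $x=(z(h)+z(\delta))/z(\gamma)$ with $\tau=z(\alpha)/z(\gamma)$ and $\sigma=z(\beta)/z(\gamma)$. Such a factor is singular or zero only if
\[
z(h+\delta-m\gamma-j\alpha-k\beta)=0
\]
for some integers $m,j,k$, that is, $z(v)=0$ for some $v\in L\otimes\mathbb{Q}$. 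But $z\notin\Lambda\otimes\mathbb{R}$ up to scaling. Because $z$ is an eigenfunctional of $g$ for a strictly complex eigenvalue and $\chi_g$ is irreducible, its kernel in $L\otimes\mathbb{Q}$ is trivial. To see this, note that $\ker z\cap L\otimes\mathbb{Q}$ is a $g$-stable rational subspace, hence zero or everything, and it is not everything because $z\neq0$. This needs $v\neq 0$. In the degenerate case $v=0$, $h$ would lie in $L$ modulo $\langle\alpha,\beta,\gamma\rangle$, and I would treat that case separately: the singular factors come in quotient pairs, and the total divisor of $\Gamma_{a,ga}$ is supported where $z(v)=0$ for nonzero lattice vectors $v$ in suitable cones. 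I would check this via the cone-product description cited from \cite{FHRZ08}. If that route fails, I would restrict the statement to $h\notin L$, and then every factor is a finite nonzero complex number.
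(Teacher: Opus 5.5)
You have a genuine gap in the degenerate case $h\in L$. Your scaling argument is fine, including the checks that $a$ depends only on $\langle z\rangle$ and that $\langle z\rangle\in\pi(U_a)\cap\pi(U_{ga})$. For $h\notin L$, your factor-by-factor divisor analysis, combined with injectivity of $z$ on $L\otimes\mathbb{Q}$, correctly replaces the paper's citation of \cite{FHRZ08}[Proposition 3.3]. For $h\in L$, however, you offer only a vague sketch, and your fallback of restricting to $h\notin L$ would prove a weaker statement. The lemma is asserted for every nonzero $h\in L\otimes\mathbb{Q}$, and $h\in L$ is exactly the situation of Appendix \ref{appendix: extremal}. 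Since $h\in\ker(a,ga)\otimes\mathbb{Q}=\mathbb{Q}\gamma$, this case is $h=n\gamma$ with $n\in\mathbb{Z}$. Your argument still handles every factor with $\delta\neq0$, because such $\delta\notin\langle\alpha,\beta,\gamma\rangle$. The only open factor is $\delta=0$, evaluated at $x=n\in\mathbb{Z}$.

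The missing idea is that on $V_a\cap V_{ga}$ the two moduli lie in opposite half-planes. Nothing cancels between factors, contrary to your suggestion that singular factors come in quotient pairs. By the remark after Definition \ref{def: elliptic Gamma cochain}, $\tau=z(\alpha)/z(\gamma)\in\mathbb{L}$ and $\sigma=z(\beta)/z(\gamma)\in\mathbb{H}$. So you may not use the $\mathbb{H}\times\mathbb{H}$ product, whose factor $1-q_x$ would give a spurious pole at $x\in\mathbb{Z}$; this is presumably what worried you. Instead,
\[
\Gamma(x,\tau,\sigma)=\Gamma(x-\tau,-\tau,\sigma)^{-1}=\prod_{j,k\geq 0}\frac{1-q_\tau^{-(j+1)}q_\sigma^{k}q_x}{1-q_\tau^{-j}q_\sigma^{k+1}q_x^{-1}}.
\]
Its zeros lie on $(j+1)\tau-k\sigma+\mathbb{Z}$, which has imaginary part $<0$. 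Its poles lie on $-j\tau+(k+1)\sigma+\mathbb{Z}$, which has imaginary part $>0$. Hence no integer is a zero or pole, and $\Phi(h,\langle z\rangle,g)\in\mathbb{C}^*$ also when $h\in L$. With this half-plane analysis added, your proof is complete. Alternatively, cite \cite{FHRZ08}[Proposition 3.3] as the paper does, which covers all of $\ker(a,b)\otimes\mathbb{Q}$ at once.
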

\begin{proof}
    Recall that our opens are defined as $V_a = (\mathbb{C}\times U_a)/\mathbb{C}^*$ where $\mathbb{C}^*$-action is diagonal. In particular, $\Gamma_{a,b}(z(u),z)= \Gamma_{a,b}(\lambda z(u),\lambda z)$ for any $\lambda\in \mathbb{C}^*$ and $u\in L\otimes \mathbb{C}$. 
    
    For the non-vanishing of $\Phi(h,\langle z\rangle,g) = \Gamma_{a,ga}(z(h),z)$, we recall that in general $\Gamma_{a,b}(z(-),z)$ is holomorphic and non-vanishing on $\ker(a,b)\otimes\mathbb{Q}$ when $z:L\otimes\mathbb{Q}\hookrightarrow\mathbb{Q}$ \cite{FHRZ08}[Proposition 3.3]. Now observe the following. Firstly, $a$ is chosen so that $h\in \ker(a,ga)\otimes\mathbb{Q}$. Secondly, for $z$ a non-rational eigenfunctional of $g$, the kernel $\ker(z:L\otimes\mathbb{Q}\rightarrow\mathbb{C})$ is a rational vector space of dimension $1$ or $0$ that carries an action of $g$; in particular, since $g$ has no rational eigenvectors, we deduce $z:L\otimes \mathbb{Q}\hookrightarrow\mathbb{C}$.
\end{proof}
Now we outline the structure of the proof. For $\triple{h}$ and $\triple{h'}$ satisfying our analogue of admissibility, to be specified shortly, and $h\equiv h'$ modulo $L$, we will show:
\begin{enumerate}
    \item The relevant quotient of \textit{unsmoothed} Gamma functions can be expressed:
    \begin{equation*}
        \frac{\Phi\triple{h}}{\Phi\triple{h'}}=\text{ expression in }D(\Delta,\Gamma)
    \end{equation*}
    in terms of the $2$-cocycle $ D(\Delta,\Gamma)\in C^2_{G_3}(\mathcal{U},\mathcal{O}^*)$. This is the content of Proposition \ref{prop: cocycle expression}.
    \item Then we consider a superlattice $V$. The computation above is immediately promoted to an expression relating the quotient of \textit{smoothed} Gamma functions in terms of the smoothed $2$-cocycle $D(\widetilde{\Delta},\widetilde{\Gamma})$.
    \begin{equation*}
        \frac{\widetilde{\Phi}\triple{h}}{\widetilde{\Phi}\triple{h'}}=\text{ expression in }D(\widetilde{\Delta},\widetilde{\Gamma})
    \end{equation*}
    Assuming that $V/L$ is cyclic of order $N$ coprime to $6$ and $d(h,h')=3$, for an invariant $d$ to be introduced, the expression on the right-hand side vanishes. This is the content of Propositions \ref{prop: smoothed compatible pairs}, \ref{prop: smoothed phi2,0}, \ref{prop: smoothed phi1,1} and \ref{prop: smoothed phi0,2}.
    \item Finally, using the hypothesis that $(N, 6\cdot \disc(\chi_g))=1$, we can reduce to the $d=3$ case. This is the content of Corollary \ref{corollary: d=3} to Proposition \ref{prop: image of Psi}.
\end{enumerate}
We begin by defining the notion of an admissible triple for $L$. The motivation for considering the triple of data is justified later when we give a homological interpretation of the triple as a loop in a homotopy quotient.
\begin{definition}[Admissible triple for $L$]
    Consider a nonzero rational vector $h\in L\otimes\mathbb{Q}$, strictly complex functional $z\in \Lambda\otimes\mathbb{C}-\Lambda\otimes\mathbb{R}$, and automorphism $g\in \SL(\Lambda)$ that has no rational eigenvalues. We call $\triple{h}$ an admissible triple for $L$ if:
    \begin{equation*}
        g\cdot \langle z \rangle =\langle z\rangle \quad\quad  g\cdot  h\equiv h \text{ modulo }L.
    \end{equation*}
\end{definition}
Observe that for $h'\in h+L$, the triple $\triple{h'}$ is also admissible for $L$.
\begin{remark}\label{remark: admissible vector gives admissible triple}
    Observe that an admissible element $h$ for the fractional ideal $L$ in the arithmetic context of \cite{BCG23} gives an admissible triple $(h,\langle \sigC\rangle,\varepsilon)$ for $L$. Indeed, $\sigC$ is an eigenvector of $\varepsilon$ and the assumption that $h\equiv 1$ modulo $L$ implies $\varepsilon h\equiv h$ modulo $L$; to see this note that $h\in \mathfrak{f}^{-1}L$ and $\varepsilon$ is assumed to lie in $1+\mathfrak{f}$. 
\end{remark}
To state the explicit expression alluded to above we need explicit notation for the components of the cocycle:
\begin{equation*}
    D(\Delta,\Gamma) = (\phi^{2,0},\phi^{1,1},\phi^{0,2})\in C^2_{G_3}(\mathcal{U},\mathcal{O}^*) \subset C^2_{G_3}(\mathcal{U},\mathcal{M}^*)
\end{equation*}
where $\phi^{p,q}\in C^{p,q}_{G_3}(\mathcal{U},\mathcal{O}^*)=C^p(G_3,\check{C}^q(\mathcal{U},\mathcal{O}^*))$. 
\begin{proposition}\label{prop: cocycle expression}
    Let $\triple{h}$ an admissible triple for $L$. Then, for all $h'\equiv h$ modulo $L$:  
    \begin{equation*}
        \frac{\Phi\triple{h}}{\Phi\triple{h'}}=\psi^{f,g \cdot f}_{b'}(v)(z(h),z)^{-1}\cdot \frac{\phi^{2,0}_a(u,v-u)(z(h),z)}{\phi^{2,0}_{a'}(u,g^{-1}v-u)(gz(h),z) }\cdot \phi^{1,1}_{a,a'}(u)(gz(h),z)\cdot \phi^{1,1}_{b',a}(v)(z(h),z)\cdot \frac{\phi^{0,2}_{a,b,b'}(z(h),z)}{\phi^{0,2}_{a,a',b'}(z(h'),z)}
    \end{equation*}
    where $a$ and $a'$ are the unique primitive functionals associated to $\triple{h}$ and $\triple{h'}$ respectively and we write $b=ga$, $b'=ga'$. Finally, $u$ and $v$ are the elements of $L$ determined by:
    \begin{equation*}
        h'=h+v \hspace{1cm} \text{and}\hspace{1cm} h' = g^{-1}h+u.
    \end{equation*}
\end{proposition}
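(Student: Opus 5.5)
The plan is to unwind $D(\Delta^f,\Gamma)=(\phi^{2,0},\phi^{1,1},\phi^{0,2})$ into its three explicit components, and then move from $\Phi\triple{h}=\Gamma_{a,b}(z(h),z)$ to $\Phi\triple{h'}=\Gamma_{a',b'}(z(h'),z)$ through a chain of elementary substitutions. Each substitution is one instance of the definition of a component. With the paper's conventions I expect:
\begin{itemize}
\item $\phi^{0,2}=\partial\Gamma$, that is, $\phi^{0,2}_{a,b,c}=\Gamma_{b,c}\Gamma_{a,c}^{-1}\Gamma_{a,b}$;
\item $\phi^{1,1}$ compares the translate $\gamma^{*}\Gamma_{\gamma a,\gamma b}$ with $\Gamma_{a,b}\cdot\Delta_b(\gamma)/\Delta_a(\gamma)$, where I record the translation action on $V_a$ as $(w,z)\mapsto(w-z(u),z)$ by \eqref{eq: homogeneous action};
\item $\phi^{2,0}=d\Delta^f$.
\end{itemize}
I would first fix these sign and ordering conventions once and for all, since the statement's exponents depend on them.

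\emph{First step.} Use $\phi^{0,2}_{a,b,b'}$ at $(z(h),z)$ to write $\Gamma_{a,b}=\phi^{0,2}_{a,b,b'}\,\Gamma_{a,b'}\,\Gamma_{b,b'}^{-1}$. This splits $\Phi\triple{h}$ into a $\Gamma_{a,b'}$ term and a $\Gamma_{b,b'}=\Gamma_{ga,ga'}$ term. Note that $\langle z\rangle\in\pi(U_a\cap U_b\cap U_{b'})$. This holds because $z\in U_a$ and $z\in U_{a'}$ by Lemma \ref{lemma: a assocaited to (h,z,g)}, and then $gz\in U_{ga}$ and $gz\in U_{ga'}$ with $\langle gz\rangle=\langle z\rangle$. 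So every function is evaluated on a common open.

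\emph{Second step: the $\Gamma_{b,b'}$ term.} I use that $\Gamma$ is $\SL(\Lambda)$-equivariant on the nose ($\phi^{1,1}(g)$ for $g\in\SL$ involves only the $\Delta$ part, and $\Delta$ restricts to translations). Combined with homogeneity and $gz\in\mathbb{C}^*z$, this gives $\Gamma_{b,b'}(z(h),z)=\Gamma_{a,a'}(gz(h),z)$ up to a factor involving $\Delta^f$ versus $\Delta^{g\cdot f}$. That framing discrepancy is exactly what produces $\psi^{f,g\cdot f}$ (Lemma \ref{lemma: change of framing}).

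\emph{Third step: the two translations.} Since $gz(h)=z(g^{-1}h)=z(h'-u)$, I apply $\phi^{1,1}_{a,a'}(u)$ at $(gz(h),z)$ to move $\Gamma_{a,a'}$ to the point $z(h')$. This costs $\Delta_{a'}(u)/\Delta_a(u)$. Similarly, since $z(h')=z(h+v)$, I apply $\phi^{1,1}_{b',a}(v)$ at $(z(h),z)$ to move $\Gamma_{a,b'}$ to $z(h')$. This costs $\Delta_a(v)/\Delta_{b'}(v)$. At $z(h')$ the three surviving $\Gamma$'s are $\Gamma_{a,b'}$, $\Gamma_{a,a'}^{-1}$ and $\Gamma_{a',b'}$ (the last coming from $\Phi\triple{h'}$). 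They recombine into $\phi^{0,2}_{a,a',b'}(z(h'),z)^{-1}$, which accounts for the last factor in the statement.

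\emph{Fourth step: the $\Delta$ residues.} What remains is a product of $\Delta$ values:
\begin{itemize}
\item $\Delta_a(u)$ and $\Delta_a(v)$, to be compared through $\phi^{2,0}_a(u,v-u)=\Delta_a(u)\Delta_a(v-u)\Delta_a(v)^{-1}$ (evaluated at the appropriately translated points);
\item $\Delta_{a'}(u)$ evaluated at $g$-shifted arguments, together with a $\Delta_{b'}(v)$ term coming from the $g\cdot f$ framing, to be compared through $\phi^{2,0}_{a'}(u,g^{-1}v-u)$ at $(gz(h),z)$ after transporting by $g$.
\end{itemize}
The leftover framing mismatch is $\psi^{f,g\cdot f}_{b'}(v)(z(h),z)^{-1}$. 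To close this bookkeeping I will use:
\begin{itemize}
\item admissibility, which gives $g^{-1}v-u\in L$ and makes the translation elements consistent;
\item the facts $a(h)=a(g^{-1}h)=0$ and $a'(h')=a'(g^{-1}h')=0$, which make the $\Delta$ arguments match.
\end{itemize}

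I expect the main obstacle to be this last bookkeeping step. Two things have to be verified there: that the leftover $\Delta$ factors cancel exactly to the stated $\phi^{2,0}$ quotient and $\psi$ term, with the correct evaluation points ($z(h)$ versus $gz(h)$) and inverses; and that no intermediate meromorphic value is evaluated at a pole. For the latter I would invoke the holomorphy of $\Gamma_{a,b}$ on rational points of $\ker(a,b)$ \cite{FHRZ08}[Proposition 3.3], the injectivity of $z$ on $L\otimes\mathbb{Q}$, and the fact that $\phi^{p,q}$ and $\psi$ are $\mathcal{O}^*$-valued (Theorem \ref{theorem: section of gerbe}, Lemma \ref{lemma: change of framing}). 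I would do this check first in the simplest case $v=u=0$ to pin down the conventions, and then run the general cancellation.
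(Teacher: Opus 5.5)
Your plan follows the paper's proof. It uses the same two simplices, $(a,b,b')$ at $z(h)$ and $(a,a',b')$ at $z(h')$, and the same strict $\SL(\Lambda)$-equivariance plus homogeneity to identify $\Gamma_{b,b'}(z(h),z)$ with $\Gamma_{a,a'}(gz(h),z)$. It then applies $d\Gamma=\phi^{1,1}\cdot\partial\Delta$ to the translations $u$ and $v$, and finally $d\Delta=\phi^{2,0}$ together with the transport of framings under $g$. Three points in your bookkeeping need correcting before you run it.

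First, your second step has no framing factor. $\Gamma_{a,b}$ does not depend on $f$ and $(d\Gamma)_{a,b}(g)=1$, so $\Gamma_{b,b'}(z(h),z)=\Gamma_{a,a'}(gz(h),z)$ holds exactly. The factor $\psi^{f,g\cdot f}_{b'}(v)(z(h),z)^{-1}$ enters only in your fourth step. It comes from $\Delta^f_{g^{-1}a}(g^{-1}v)(w,g^{-1}z)=\Delta^{g\cdot f}_a(v)(w,z)$, which gives $\Delta^f_{a'}(g^{-1}v)(gz(h),z)=\Delta^f_{b'}(v)(z(h),z)\,\psi^{f,g\cdot f}_{b'}(v)(z(h),z)$. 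Attributing it to both steps would double count.

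Second, the paper's convention is $(d\Gamma)_{a,b}=\phi^{1,1}_{a,b}\,\Delta_a/\Delta_b$. Hence $\Gamma_{a,a'}(z(h'),z)/\Gamma_{a,a'}(gz(h),z)=\phi^{1,1}_{a,a'}(u)\,\Delta_a(u)/\Delta_{a'}(u)$, all evaluated at $(gz(h),z)$. Your description of $\phi^{1,1}$ is the inverse of this, and it would invert every $\Delta$-residue, and with them the $\psi$ and $\phi^{2,0}$ factors.

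Third, take the cocycle relation $\phi^{2,0}_a(u,v)(w,z)\,\Delta_a(u+v)(w,z)=\Delta_a(u)(w,z)\,\Delta_a(v)(w+z(u),z)$. Since $\Delta_a(u)$ depends only on $a(u)$, this relation for the pair $(u,v-u)$ compares $\Delta_a(u)$ and $\Delta_a(v)$ at the same point. It therefore gives $\phi^{2,0}_a(u,v-u)\equiv 1$, so it cannot bridge $gz(h)=z(h)+z(v-u)$ and $z(h)$. For that you need the pair in the other order: $\phi^{2,0}_a(v-u,u)(z(h),z)=\Delta_a(u)(gz(h),z)/\Delta_a(v)(z(h),z)$.

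The paper's proof of its first Claim writes down exactly this $(v-u,u)$ relation while labelling it $\phi^{2,0}_a(u,v-u)$. So the honest output of your cancellation is the displayed formula with $\phi^{2,0}_a(v-u,u)(z(h),z)$ in the numerator. The factor $\phi^{2,0}_{a'}(u,g^{-1}v-u)$ is correctly ordered, since both $\Delta_{a'}$ values sit at $gz(h)$, but it is likewise identically $1$. The slip is harmless downstream, because $\widetilde{\phi^{2,0}}\equiv 1$ for compatible pairs whatever the order of its arguments.
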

The double complex below helps us to organise the material.

\begin{figure}[!htbp]
    \centering
   \begin{tikzcd}
        \phi^{2,0} & & \\
        \underbrace{\Delta}_{\textcolor{red}{\in C^{1,0}_{G_3}(\mathcal{U},\mathcal{M}^*) }} \arrow[u,"d"]{} \arrow[r,"\partial"] &\phi^{1,1} &\\
        & 
        \underbrace{\Gamma}_{\textcolor{red}{\in C^{0,1}_{G_3}(\mathcal{U},\mathcal{M}^*) }} \arrow[u,"d"]\arrow[r,"\partial"]  & \phi^{0,2}
    \end{tikzcd} 
    \label{figure: double complex}
    \caption{The group-equivariant \v{C}ech double complex}
\end{figure}

We will use each relation in turn, ascending from the bottom right to upper left. Our three relations are
\begin{enumerate}[label=\color{red}\protect\circled{\arabic*}]
    \item $\partial\Gamma=\phi^{0,2}\in \prod \mathcal{M}^*(V_a\cap V_b\cap V_c)$
    \item $d\Gamma=\phi^{1,1}\cdot\partial\Delta\in\Hom(G_3,\prod\mathcal{M}^*(V_a\cap V_b))$
    \item $d\Delta=\phi^{2,0}\in\Hom(G_3\times G_3,\prod\mathcal{M}^*(V_a))$
\end{enumerate}
\begin{proof}[Proof of Proposition \ref{prop: cocycle expression}]
    The proof will use the three relations coming from the double complex depicted in Figure \ref{figure: double complex} and the assumptions of the Proposition, which we compile here for later use:
    \begin{enumerate}[label=(\roman*)]
        \item $z$ is an eigenvector of $g$. \label{item (i)}
        \item $h'\equiv h$ modulo $L$ so that $h'=h+v$ for some $v\in L$.\label{item (ii)}
        \item $gh\equiv h $ modulo $L$ so that $h'=g^{-1}h+u$ for some $u$ in $L$.\label{item (iii)}
    \end{enumerate}
    We immediately deduce the following facts:
    \begin{enumerate}[resume,label=(\roman*)]
        \item $v-u =g^{-1}h-h\in \ker(a)$ since $h\in \ker(a)\cap\ker(ga)$.\label{item (iv)}
        \item $g^{-1}v-u=g^{-1}h'-h'\in \ker(a')$ since $h'\in\ker(a')\cap\ker(ga')$.\label{item (v)}
    \end{enumerate}
    Moreover, recalling the construction of our opens $V_a$ as the quotient of $\mathbb{C}\times U_a$ by a \textit{diagonal} $\mathbb{C}^*$-action, it follows from our assumption that $z$ is an eigenvector of $g$ that $(z(h),g^{-1}z) = (gz(h),z)$.
    
We now begin the proof, starting with our first cocycle relation.

\noindent\textcolor{red}{\circled{1}} Given a triple of functionals $a,b$ and $c$, we have:
\begin{equation*}
    (\partial\Gamma)_{a,b,c} = \Gamma_{a,b}\Gamma_{b,c}\Gamma_{c,a}=\phi^{0,2}_{a,b,c}.
\end{equation*}
Now consider the decomposition of the figure below into simplices:
\begin{figure}[!htbp]
    \centering
    \begin{tikzpicture}[scale = 0.8]
    \tkzDefPoint(0,0){a}  \tkzLabelPoint[below left](a){$a$}
    \tkzDefPoint(3,0){b}  \tkzLabelPoint[below right](b){$b$}
    \tkzDefSquare(a,b)    \tkzGetPoints{b'}{a'}
    \tkzLabelPoint[above left](a'){$a'$} \tkzLabelPoint[above right](b'){$b'$}
    \tkzDrawPolygon(a,b,b',a')
    \tkzDrawSegments[dashed](a,b')
\end{tikzpicture}
\caption{Decomposition into simplices}
\end{figure}

\noindent The first identity below corresponds to the lower right simplex, the second to the upper left simplex.
\begin{align}\label{eq: Cech cocycle relations 1}
    & (\partial\Gamma)_{a,b,b'}(z(h),z)=\overbrace{\Gamma_{a,b}(z(h),z)}^{\textcolor{red}{\ast}}\Gamma_{b,b'}(z(h),z)\Gamma_{b',a}(z(h),z) =\phi^{0,2}_{a,b,b'}(z(h),z)\\
    & (\partial\Gamma)_{a,a',b'}(z(h'),z)=\Gamma_{a,a'}(z(h'),z)\underbrace{\Gamma_{a',b'}(z(h'),z)}_{\textcolor{red}{\ast}}\Gamma_{b',a}(z(h'),z) =\phi^{0,2}_{a,a',b'}(z(h'),z) \label{eq: Cech cocycle relations 2}
\end{align}
 We are interested in the quotient of the two \textcolor{red}{$\ast$}. Combining equations \eqref{eq: Cech cocycle relations 1} and \eqref{eq: Cech cocycle relations 2} above, we can express the quotient:
\begin{equation}\label{eq: quotients of gamma}
    \frac{\Gamma_{a,b}(z(h),z)}{\Gamma_{a',b'}(z(h'),z)}= \frac{\Gamma_{a,a'}(z(h'),z)}{\Gamma_{b,b'}(z(h),z)}\cdot \frac{\Gamma_{b',a}(z(h'),z)}{\Gamma_{b',a}(z(h),z)}\cdot \frac{\phi^{0,2}_{a,b,b'}(z(h),z)}{\phi^{0,2}_{a,a',b'}(z(h'),z)}.
\end{equation}
There are two observations to make. First, we exploit the readily verified $\SL(\Lambda)$-equivariance of $\Gamma_{a,b}$, explicitly that $\Gamma_{ga,gb}(w,gz)=\Gamma_{a,b}(w,z)$ for $g\in \SL(\Lambda)$; equivalently that $(d\Gamma)_{a,b}(g)(w,z)= 1$.
Together with the assumption \ref{item (i)} that $z$ is an eigenvector for the action of $g$, we deduce the following:
\begin{equation*}
    \Gamma_{g a,g a'}(z(h),z) = \Gamma_{a,a'}(z(h),g^{-1}z)=\Gamma_{a,a'}(gz(h),z).
\end{equation*}
We now use the assumptions \ref{item (ii)} and \ref{item (iii)}, namely that $h'=h+v=g^{-1}h+u$ for $u$, $v\in L$, and recalling also that $g$ acts on $z$ by pullback. Thus, the quotient in \eqref{eq: quotients of gamma} becomes:
\begin{equation*}
    \frac{\Phi\triple{h}}{\Phi\triple{h'}}= \frac{\Gamma_{a,a'}(gz(h)+z(u),z)}{\Gamma_{a,a'}(gz(h),z)}\cdot\frac{\Gamma_{b',a}(z(h)+z(v),z)}{\Gamma_{b',a}(z(h),z)}\cdot \frac{\phi^{0,2}_{a,b,b'}(z(h),z)}{\phi^{0,2}_{a,a',b'}(z(h'),z)}.
\end{equation*}
We observe that both quotients are of the form $(d\Gamma)_{a,b}(u)(w,z) = \Gamma_{a,b}(w+z(u),z)/\Gamma_{a,b}(w,z)$ for $u\in L$. Hence:
\begin{equation}\label{eq: dGamma}
     \frac{\Phi\triple{h}}{\Phi\triple{h'}}=(d\Gamma)_{a,a'} (u)(gz(h),z)\cdot (d\Gamma)_{b',a} (v)(z(h),z)\cdot \frac{\phi^{0,2}_{a,b,b'}(z(h),z)}{\phi^{0,2}_{a,a',b'}(z(h'),z)}.
\end{equation}
We now turn to our second relation.

\noindent \textcolor{red}{\circled{2}} The second relation $d\Gamma=\phi^{1,1}\cdot \partial\Delta$ allows us to rewrite the expression above in terms of quotients of $\Delta$. Precisely, the relation tells us that:
\begin{equation*}
    (d\Gamma)_{a,b}=\phi^{1,1}_{a,b}\cdot (\partial\Delta)_{a,b}= \phi^{1,1}_{a,b}\cdot \frac{\Delta_a}{\Delta_b}.
\end{equation*}
Thus, we may rewrite the identity in \eqref{eq: dGamma} as:
\begin{align}
    \frac{\Phi\triple{h}}{\Phi\triple{h'}}&=\phi^{1,1}_{a,a'}(u)(gz(h),z)\cdot \frac{\Delta_a(u)(gz(h),z)}{\Delta_{a'}(u)(gz(h),z)}\cdot \phi^{1,1}_{b',a}(v)(z(h),z)\cdot \frac{\Delta_{b'}(v)(z(h),z)}{\Delta_a(v)(z(h),z)}\cdot \frac{\phi^{0,2}_{a,b,b'}(z(h),z)}{\phi^{0,2}_{a,a',b'}(z(h'),z)} \nonumber
    \\ & =\underbrace{\frac{\Delta_a(u)(gz(h),z)}{\Delta_a(v)(z(h),z)}}_{1.}\cdot \underbrace{\frac{\Delta_{b'}(v)(z(h),z)}{\Delta_{a'}(u)(gz(h),z)}}_{2.}\cdot \phi^{1,1}_{a,a'}(u)(gz(h),z)\cdot \phi^{1,1}_{b',a}(v)(z(h),z)\cdot \frac{\phi^{0,2}_{a,b,b'}(z(h),z)}{\phi^{0,2}_{a,a',b'}(z(h'),z)}\label{eq: quotients 1 and 2}
\end{align}
Simplifying the quotients of $\Delta$ in \eqref{eq: quotients 1 and 2} will require us to use our third and final relation.

\noindent \textcolor{red}{\circled{3}} Our final relation is that $d\Delta=\phi^{2,0}$. In particular, for $u,v\in L$, we have the group cocycle relation:
\begin{equation}\label{eq: delta cocycle relation}
    \phi^{2,0}_{a}(u,v)(w,z)\cdot \Delta_a(u+v)(w,z)=\Delta_a(u)(w,z) \cdot\Delta_a(v)(w+z(u),z).
\end{equation}
The Proposition will follow from the next two claims which resolve each quotient.
\begin{claim}
    $\Delta_a(u)(gz(h),z)/\Delta_a(v)(z(h),z) = \phi^{2,0}_a(u,v-u)(z(h),z)$.
\end{claim}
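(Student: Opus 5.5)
Apply the group cocycle relation \eqref{eq: delta cocycle relation} for $\Delta_a$ to the pair $(u, v-u)$, evaluated at the point $(w,z)=(z(h),z)$. It reads
\begin{equation*}
    \phi^{2,0}_a(u,v-u)(z(h),z)\cdot \Delta_a(v)(z(h),z) = \Delta_a(u)(z(h),z)\cdot \Delta_a(v-u)(z(h)+z(u),z),
\end{equation*}
so the claim reduces to two facts. First, $\Delta_a(v-u)(z(h)+z(u),z)=1$. Second, the translated argument $z(h)+z(u)$ may be replaced by $gz(h)$ in the factor $\Delta_a(u)$. I would prove these in that order and then rearrange.

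\textbf{Step 1 (vanishing factor).} By fact \ref{item (iv)}, $v-u\in\ker(a)$, so $a(v-u)=0$. From Definition \ref{def: elliptic Gamma cochain}, $\Delta^f_a(x)$ is a product over $k=0,\dots,a(x)-1$. When $a(x)=0$ this is the empty product, equal to $1$. This holds in the general definition of \cite{FHRZ08}[Definition 3.12] for $a(x)=0$ as well, not just in the case $a(x)>0$ displayed in the paper. Hence $\Delta_a(v-u)\equiv 1$.

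\textbf{Step 2 (the point $gz(h)$).} The cocycle relation produces $\Delta_a(u)(z(h),z)$, whereas the claim has $\Delta_a(u)(gz(h),z)$. The bookkeeping has to be done carefully here. One has $gz(h)=z(g^{-1}h)$, and by \ref{item (iii)} and \ref{item (ii)},
\begin{equation*}
    g^{-1}h = h' - u = h+v-u .
\end{equation*}
Thus $gz(h) = z(h)+z(v-u)$, a translate of $z(h)$ by an element of $\ker(a)$. I would then check directly from the formula that $\Delta_a(x)(w+z(y),z)=\Delta_a(x)(w,z)$ for $y\in\ker(a)$. Writing $y = m x_2 + n x_3$ in the framing basis, the shift $w\mapsto w+z(y)$ changes each theta argument $\frac{w+kz(x_1)}{z(x_3)}$ by $m\tau+n$, where $\tau=z(x_2)/z(x_3)$. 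Each theta factor therefore picks up the automorphy factor $j((m\ n))$ from Lemma \ref{lemma: explicit cocycle j}, which is not identically $1$.

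\textbf{Main obstacle.} Because of those automorphy factors, I do not expect exact invariance under $\ker(a)$-translation. The more plausible route is to apply the cocycle relation \eqref{eq: delta cocycle relation} in the other order, with the pair $(v-u,u)$, or equivalently to evaluate at the shifted point so that the translate appears on the correct factor. Concretely, take $(u, v-u)$ at base point $w=gz(h)-z(u)$, or use
\begin{equation*}
    \phi^{2,0}_a(v-u,u)(z(h),z)\cdot\Delta_a(v)(z(h),z)=\Delta_a(v-u)(z(h),z)\cdot\Delta_a(u)(z(h)+z(v-u),z).
\end{equation*}
Here the right-hand side is $1\cdot\Delta_a(u)(gz(h),z)$ by Steps 1 and 2, giving exactly the desired quotient as a value of $\phi^{2,0}_a$. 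The remaining task is to match the stated order of arguments $(u,v-u)$. I would first try to absorb the order into the paper's convention for the inhomogeneous cocycle (left versus right action in $d\Delta$). Failing that, I would note that $\phi^{2,0}_a$ restricted to the abelian translation group is symmetric up to the commutator term, which is trivial here since $L$ is abelian and $\Delta_a(u)\Delta_a(v-u)(\cdot+z(u))$ can be compared with the reversed product via Step 1. Settling this convention-dependent ordering is the one step I expect to require care.
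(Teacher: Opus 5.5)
Your core computation is the paper's proof. The identity displayed there, $\phi^{2,0}_a(\cdot)(w,z)\,\Delta_a(v)(w,z)=\Delta_a(v-u)(w,z)\,\Delta_a(u)(w+z(v-u),z)$, is the instance of \eqref{eq: delta cocycle relation} for the ordered pair $(v-u,u)$. It is followed by $\Delta_a(v-u)=1$ and $z(h)+z(v-u)=z(g^{-1}h)=gz(h)$, exactly as in your Steps 1--2. So you were right that this argument yields $\phi^{2,0}_a(v-u,u)(z(h),z)$; the paper labels it $\phi^{2,0}_a(u,v-u)$.

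The gap is your final reconciliation step, and it cannot be closed.

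\textbf{Why symmetry fails.} $\phi^{2,0}_a$ is not symmetric on the abelian group $L$; a $2$-cocycle with coefficients in a nontrivial module need not be. Under \eqref{eq: delta cocycle relation}, with $\zeta=w/z(x_3)$ and $\tau=z(x_2)/z(x_3)$, one gets
\[
\phi^{2,0}_a(x_1,x_2)=1, \qquad \phi^{2,0}_a(x_2,x_1)(w,z)=\frac{\theta_0(\zeta+\tau,\tau)}{\theta_0(\zeta,\tau)}=-e^{-2\pi i\zeta}.
\]

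\textbf{Why the printed statement fails.} $\Delta_a(x)$ depends on $x$ only through $a(x)$, and $a(u)=a(v)$. Hence the right-hand side as printed is $\phi^{2,0}_a(u,v-u)=\Delta_a(u)/\Delta_a(v)\equiv 1$ at every base point. For $a(u)>0$, the left-hand side is $\prod_{k=0}^{a(u)-1}j(v-u)\bigl(\tfrac{z(h)+kz(x_1)}{z(x_3)},\tfrac{z(x_2)}{z(x_3)}\bigr)$. This is the nontrivial automorphy factor you found in Step 2, and in general it is not $1$.

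\textbf{Why a convention change does not help.} The order is fixed by \eqref{eq: delta cocycle relation}. The companion claim, with $\phi^{2,0}_{a'}(u,g^{-1}v-u)$, uses it in the stated order. Shifting the base point does not help either, since $\phi^{2,0}_a(u,v-u)$ is identically $1$.

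So drop the symmetry argument and record the identity with $\phi^{2,0}_a(v-u,u)(z(h),z)$, which is what both you and the paper's proof actually establish. This is harmless downstream. For a compatible pair, $\widetilde{\phi^{2,0}}$ is trivial on all inputs (Proposition \ref{prop: smoothed phi2,0}). Corollary \ref{corollary: root of unity} only uses that $\phi^{2,0}$ is the exponential of a polynomial, which holds for either ordering.
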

\begin{proof}
    We use the fact \ref{item (iv)} that $g^{-1}h - h = v-u\in \ker(a)$. Using the cocycle relation, we can write:
    \begin{equation*}
         \phi^{2,0}_a(u,v-u)(w,z)\cdot \Delta_a(u+(v-u))(w,z)=\Delta_a(v-u)(w,z)\cdot \Delta_a(u)(w+z(v-u),z).
    \end{equation*}
    From the definition of $\Delta$, it is apparent that $\Delta_a(v-u)(w,z)=1$ since $v-u\in\ker(a)$. Thus, we have deduced the identity $ \phi^{2,0}_a(u,v-u)(w,z)\cdot \Delta_a(v)(w,z)=\Delta_a(u)(w+z(v-u),z)$ and evaluating at $w = z(h)$ yields:
    \begin{equation*}
        \phi^{2,0}_a(u,v-u)_a(z(h),z)\cdot\Delta_a(v)(z(h),z)=\Delta_a(u)(z(h+v-u),z)=\Delta_a(u)(z(g^{-1}h),z)=\Delta_a(u)(gz(h),z).
    \end{equation*}
    Thus, the first quotient in \eqref{eq: quotients 1 and 2} is equal to $\phi^{2,0}_a(u,v-u)(z(h),z)$.
\end{proof}
\begin{claim}
    $\Delta_{b'}(v)(z(h),z)/\Delta_{a'}(u)(gz(h),z) = (\psi^{f,g\cdot f}_a(v)(z(h),z)\cdot\phi^{2,0}_{a'}(u,g^{-1}v-u)(gz(h),z))^{-1}$.
\end{claim}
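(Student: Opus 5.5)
The plan is to mirror the proof of the previous claim. The one new ingredient is that $\Delta^f$ is not $\SL(\Lambda)$-equivariant on the nose: transporting it by $g$ changes the framing from $f$ to $g\cdot f$. The discrepancy is exactly the change-of-framing cochain $\psi^{f,g\cdot f}$ of \eqref{eq: change of framings}. Throughout, $\Delta$ means $\Delta^f$. I expect the $\psi$ factor to carry the subscript $b'=ga'$, as in Proposition~\ref{prop: cocycle expression}, rather than the $a$ written in the claim.

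\textbf{Step 1 (equivariance up to framing).} I would first show that
\begin{equation*}
    \Delta^{g\cdot f}_{ga'}(v)(w,z)=\Delta^{f}_{a'}(g^{-1}v)(w,g^{-1}z).
\end{equation*}
Write $f(a')=(y_1,y_2,y_3)$, so that $(g\cdot f)(ga')=(gy_1,gy_2,gy_3)$. Since $(ga')(v)=a'(g^{-1}v)$, the products defining the two sides have the same number of factors. Since $z(gy_i)=(g^{-1}z)(y_i)$, because $g$ acts on functionals by pullback, the factors agree term by term. This is the $\Delta$-analogue of the identity $\Gamma_{ga,gb}(w,gz)=\Gamma_{a,b}(w,z)$ used in relation \circled{1}. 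I would check it directly from Definition~\ref{def: elliptic Gamma cochain}, including the cases $a'(g^{-1}v)\le 0$ handled by the general definition in \cite{FHRZ08}.

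Next, evaluate at $w=z(h)$ and use that $z$ is an eigenvector of $g$, so that $(z(h),g^{-1}z)=(gz(h),z)$ by homogeneity. Then use the definition $\Delta^{g\cdot f}=\psi^{f,g\cdot f}\cdot\Delta^f$. Together these give
\begin{equation*}
    \Delta_{b'}(v)(z(h),z)=\psi^{f,g\cdot f}_{b'}(v)(z(h),z)^{-1}\cdot\Delta_{a'}(g^{-1}v)(gz(h),z).
\end{equation*}

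\textbf{Step 2 (cocycle relation at $a'$).} I would apply \eqref{eq: delta cocycle relation} at $a'$ to the pair $(u,\,g^{-1}v-u)$:
\begin{equation*}
    \phi^{2,0}_{a'}(u,g^{-1}v-u)(w,z)\cdot\Delta_{a'}(g^{-1}v)(w,z)=\Delta_{a'}(u)(w,z)\cdot\Delta_{a'}(g^{-1}v-u)(w+z(u),z).
\end{equation*}
By fact \ref{item (v)}, $g^{-1}v-u\in\ker(a')$, so the last factor is an empty product and equals $1$. Evaluating at $w=gz(h)$ then expresses $\Delta_{a'}(g^{-1}v)(gz(h),z)$ as $\Delta_{a'}(u)(gz(h),z)/\phi^{2,0}_{a'}(u,g^{-1}v-u)(gz(h),z)$.

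\textbf{Step 3 (combine).} Substituting the output of Step 2 into the display at the end of Step 1 and dividing by $\Delta_{a'}(u)(gz(h),z)$ gives the claimed expression for the second quotient in \eqref{eq: quotients 1 and 2}. Together with the first claim, this yields Proposition~\ref{prop: cocycle expression}.

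The main obstacle is Step 1. The conventions need care: the action of $g$ on $L$ versus $\Lambda$, the direction of the framing action $g\cdot f$, and whether the correction is $\psi^{f,g\cdot f}$ or its inverse. These are exactly what fix the exponent $-1$ on $\psi$ and its subscript. I would first verify the sign and convention on a single term $k=0$ of the product before trusting the general identity.
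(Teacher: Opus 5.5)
Your proposal is correct and follows the paper's proof essentially step for step. You transport $\Delta^f$ by $g$ to get $\Delta^f_{a'}(g^{-1}v)(gz(h),z)=\Delta^f_{b'}(v)(z(h),z)\cdot\psi^{f,g\cdot f}_{b'}(v)(z(h),z)$, then apply the $\Delta$-cocycle relation at $a'$ to the pair $(u,g^{-1}v-u)$ using $g^{-1}v-u\in\ker(a')$, and combine.

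You are also right that the subscript should be $b'$ rather than $a$. The paper's own proof ends with $\psi^{f,g\cdot f}_{b'}$, which matches Proposition~\ref{prop: cocycle expression}.
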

\begin{proof}
    From the action of $\SL(\Lambda)$ on the set of framings and the construction of $\psi^{f,f'}$, we deduce the equalities:
    \begin{equation*}
        \Delta^{f}_{g^{-1}a}(g^{-1} v)(w,g^{-1}z)= \Delta^{g\cdot f}_{a}(v)(w,z) = \Delta^f_a(v)(w,z)\cdot\psi^{f,g\cdot f}_a(v)(w,z).
    \end{equation*}
    Applying this with $a'=g^{-1}b'$ and $w=z(h)$ yields:
    \begin{equation*}
        \Delta^f_{a'}(g^{-1}v)(gz(h),z)=\Delta^f_{g^{-1}b'}(g^{-1}v)(z(h),g^{-1}z) = \Delta^f_{b'}(v)(z(h),z)\cdot\psi^{f,g\cdot f}_{b'}(v)(z(h),z).
    \end{equation*}
    Thus, the quotient becomes:
    \begin{equation}\label{eq: delta quotient 1}
        \frac{\Delta_{b'}(v)(z(h),z)}{\Delta_{a'}(u)(gz(h),z)} =  \frac{\Delta_{a'}(g^{-1}v)(gz(h),z)}{\Delta_{a'}(u)(gz(h),z)} \cdot \frac{1}{\psi^{f,g\cdot f}_{b'}(v)(z(h),z)}.
    \end{equation}
    As before, we use the cocycle relation \eqref{eq: delta cocycle relation}, together with fact \ref{item (v)} that $g^{-1}h'-h' = g^{-1}v-u\in\ker(a')$ to deduce:
    \begin{equation*}
        \phi^{2,0}_{a'}(u,g^{-1}v-u)(w,z)\cdot\Delta_{a'}(u+(g^{-1}v-u))(w,z)=\Delta_{a'}(u)(w,z)\cdot\underbrace{\Delta_{a'}(g^{-1}v-u)(w+z(u),z)}_{=1}.
    \end{equation*}
    Evaluating at $w = z(g^{-1}h)=gz(h)$ yields:
    \begin{equation}\label{eq: delta quotient 2}
        \frac{\Delta_{a'}(g^{-1}v)(gz(h),z)}{\Delta_{a'}(u)(gz(h),z)} = \frac{1}{\phi^{2,0}_{a'}(u,g^{-1}v-u)(gz(h),z)}.
    \end{equation}
    Combining equations \eqref{eq: delta quotient 1} and \eqref{eq: delta quotient 2},
     yields the result that the second quotient appearing in \eqref{eq: quotients 1 and 2} is given by:
     \begin{equation*}
         \frac{\Delta_{b'}(v)(z(h),z)}{\Delta_{a'}(u)(gz(h),z)}= \frac{1}{ \psi^{f,g\cdot f}_{b'}(v)(z(h),z)\cdot\phi^{2,0}_{a'}(u,g^{-1}v-u)(gz(h),z)}.
     \end{equation*}
\end{proof}
Combining all the work above finishes the proof of the proposition.
\end{proof}
\subsection{Root of unity}
By combining the cocycle expression above with the limit formula stated in Theorem \ref{theorem: limit formula}, we are able to deduce the independence of $h$ up to a root of unity.
\begin{corollary}\label{corollary: root of unity}
    For $h$ and $h'$ two admissible vectors, the quotient ${\bf\Gamma}_{\mathfrak{a},h}(L)/{\bf\Gamma}_{\mathfrak{a},h'}(L)$ is a root of unity.
\end{corollary}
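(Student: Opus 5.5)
The plan is to show that the quotient has absolute value $1$ and is locally constant in a suitable parameter, so that it lies in a discrete subgroup of the unit circle. The Limit Formula (Theorem \ref{theorem: limit formula}) is the natural input. Both ${\bf\Gamma}_{\mathfrak{a},h}(L)$ and ${\bf\Gamma}_{\mathfrak{a},h'}(L)$ satisfy
\begin{equation*}
\log|{\bf\Gamma}_{\mathfrak{a},h}(L)|^2=\zeta_{\mathfrak{f},\mathfrak{a}}'(\sigma_{\mathfrak{b}},0)=\log|{\bf\Gamma}_{\mathfrak{a},h'}(L)|^2,
\end{equation*}
since the right-hand side does not involve $h$. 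Hence the quotient $\rho\defeq{\bf\Gamma}_{\mathfrak{a},h}(L)/{\bf\Gamma}_{\mathfrak{a},h'}(L)$ has modulus $1$. It remains to show that $\rho$ is a root of unity rather than an arbitrary point of $S^1$.

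For this I would use Proposition \ref{prop: cocycle expression} together with its smoothed analogue. Applying the proposition to both $L$ and $V=\aL$ and forming the quotient $\Phi(-;L)^N/\Phi(-;V)$ expresses $\rho^{-1}$ as a product of the components $\widetilde{\phi}^{2,0},\widetilde{\phi}^{1,1},\widetilde{\phi}^{0,2}$ of $D(\widetilde\Delta,\widetilde\Gamma)$ and of the smoothed framing-change factor $\widetilde\psi^{f,g\cdot f}$, all evaluated at points $(z(h),z)$ or $(gz(h),z)$. Every factor is the exponential of something explicit. The $\psi$ term is a product of values of the theta cocycle $j$, so by Lemma \ref{lemma: explicit cocycle j} it is $\exp(\pi i P)$ with $P$ a polynomial with rational coefficients in the homogeneous coordinates. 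The components of $D(\Delta,\Gamma)$ are, by \cite{FHRZ08}, exponentials of multiple Bernoulli polynomials $B_{n,n}$ via \eqref{eq: cocycle identity}, again with $\mathbb{Q}$-coefficients in $w$ and the values $z(x)$ for $x\in L$.

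The key step is to substitute $w=z(h)$ with $h\in L\otimes\mathbb{Q}$. After dehomogenising, each exponent becomes a rational linear combination of rational functions in the values $z(e_i)$ on a basis. I would then vary $z$ continuously inside the eigen-data. The cleaner route is to treat the whole expression as a function of $z\in V_a\cap V_{a'}\cap\cdots$, holding fixed the combinatorial data $a,a',b,b',u,v$. This gives a holomorphic function $F(z)$ that equals the quotient of the two smoothed Gamma products at $z$, with no eigenvector hypothesis needed on the analytic side beyond the identity $(z(h),g^{-1}z)=(gz(h),z)$ used in the proof.

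This suggests a cleaner alternative for the final step. Write $\log\rho=2\pi i\,R$ with $R$ a rational combination of Bernoulli-type expressions. By homogeneity and the smoothing, the parts of $R$ of positive degree in $w$ and in the ratios $z(x)/z(y)$ should cancel between the $N$-th power on $L$ and the term on $V$; this is exactly the cancellation seen for $j^N$ in Proposition \ref{prop: j^N trivial}. What survives is a constant in $\tfrac{1}{6}\mathbb{Z}$-type rational numbers, for instance from $\Phi(g)/6$ and Bernoulli constants. So $\rho\in\exp(2\pi i\mathbb{Q})$, and the unit-modulus fact from the Limit Formula serves as a consistency check.

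I expect the main obstacle to be controlling the exponents. One must show that after smoothing the non-constant parts of the polynomial exponents either cancel or contribute only rational multiples of $2\pi i$. If that direct check proves too messy, the fallback is a rigidity argument. Deform $(h,h')$ over $h+L\otimes\mathbb{R}$ keeping $u,v$ fixed: the quotient is then a holomorphic function of the deformation parameter of modulus $1$ on a connected set, hence constant. At an endpoint such as $h\in L$, the value can be computed directly, as in the extremal case of Appendix B, and is $\lambda^{(N-1)/2}$ up to sign, which is a root of unity.
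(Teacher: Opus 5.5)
Your first step, $|\rho|=1$ from the Limit Formula, is correct and is also the paper's first input. Neither of your routes from there to ``root of unity'' works, however.

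\textbf{The main route.} It needs the claim that, after smoothing, every non-constant part of the exponents cancels, leaving a rational constant. That is essentially the main theorem, and it is not available for this corollary. The paper only shows $\widetilde{\phi^{1,1}_{a,b}}=1$ for $(a,b)\in\mathcal{W}$ and $\widetilde{\phi^{0,2}}=1$ on $\mathcal{Q}$. The expression from Proposition \ref{prop: cocycle expression}, however, involves the pairs $(a,a')$ and $(b',a)$, which lie in $\mathcal{W}$ only when $d(h,h')=3$. Reaching that case needs Section \ref{sec: reduction} and the extra hypothesis $(\operatorname{N}\mathfrak{a},\disc(\chi_\varepsilon))=1$, which the corollary does not assume. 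Nothing in your proposal justifies the cancellation factor by factor.

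\textbf{The fallback.} It breaks for several independent reasons.
\begin{itemize}
\item With $u,v$ fixed, $h$ is pinned down by $(g^{-1}-1)h=v-u$, since $g$ has no eigenvalue $1$. The eigenline $\langle z\rangle$ is equally rigid. So there is no family to deform along.
\item $|{\bf\Gamma}_{\mathfrak{a},h}(L)|$ is controlled by the Limit Formula only at arithmetic admissible $h$, not along a real family.
\item A real-analytic function of a real parameter with modulus $1$ need not be constant.
\item The value $\pm\lambda^{(N-1)/2}$ from Appendix \ref{appendix: extremal} is not a root of unity. Indeed $|\sigC(\varepsilon)|^2\sigma_{\mathbb{R}}(\varepsilon)=1$ with $\sigma_{\mathbb{R}}(\varepsilon)\in(0,1)$, so $|\lambda|\neq 1$.
\end{itemize}

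\textbf{The missing idea.} It is arithmetic, and you were one step from it when you observed that each exponent is a rational expression with $\mathbb{Q}$-coefficients in $w$ and the values $z(e_i)$. Specialise to $z=\sigC$ and $w=\sigC(h)$ with $h\in F$. Since $\sigC(L)\subset\sigC(F)$, every factor has the form $\exp(2\pi i\cdot(\text{element of }\sigC(F)))$. This covers the $\psi$ term and the components of $D(\Delta,\Gamma)$, both for $L$ and for $\mathfrak{a}^{-1}L$. Hence $\rho=\exp(2\pi i\alpha)$ with $\alpha\in\sigC(F)$.

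Then $|\rho|=e^{-2\pi\operatorname{Im}\alpha}=1$ forces $\alpha\in\sigC(F)\cap\mathbb{R}$. This intersection is $\mathbb{Q}$, because a cubic field has no proper intermediate subfields and $\sigC(F)\not\subset\mathbb{R}$. So $\rho$ is a root of unity.

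This is the paper's argument. No cancellation of exponents is needed, and the unit-modulus input from the Limit Formula is the essential ingredient rather than a consistency check.
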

\begin{proof}
    The statement that the cocycle $D(\Delta,\Gamma)$ is valued in $\mathcal{O}^*$ can actually be strengthened: the cocycle is actually valued in $\exp(2\pi i \cdot \mathbb{Q}(z(L))[w])$. As discussed in Remark \ref{remark: admissible vector gives admissible triple}, both $(h,\langle\sigC\rangle,\varepsilon)$ and $(h',\langle\sigC\rangle,\varepsilon)$ are admissible triples, and so:
    \begin{equation*}
        \frac{\Phi(h,\langle \sigC\rangle,\varepsilon )}{\Phi(h',\langle \sigC\rangle ,\varepsilon )}=\frac{\Gamma_{a,b}(\sigC(h),\sigma_{\mathbb{C}};L)}{\Gamma_{a',b'}(\sigC(h'),\sigma_{\mathbb{C}};L)}\in \exp(2\pi i \cdot \mathbb{Q}(\sigma_{\mathbb{C}}(L))[\sigC(h)]).
    \end{equation*}
    Thus, we conclude that:
    \begin{equation}\label{eq: corolllary root of unity}
         \frac{{\bf\Gamma}_{\mathfrak{a},h}(L)}{{\bf\Gamma}_{\mathfrak{a},h'}(L)} =\frac{\Gamma_{a,b}(h,\mathfrak{a}^{-1}L)}{\Gamma_{a',b'}(h',\mathfrak{a}^{-1}L)}\cdot\frac{\Gamma_{a',b'}(h',L)^{\operatorname{N}(\mathfrak{a})}}{\Gamma_{a,b}(h,L)^{\operatorname{N}(\mathfrak{a})}} \in\exp(2\pi i\cdot  \sigma_{\mathbb{C}}(F)).
    \end{equation}
    The limit formula implies that $|{\bf\Gamma}_{\mathfrak{a},h}(L)|$ is independent of $h$, so that $|{\bf\Gamma}_{\mathfrak{a},h}(L)/{\bf\Gamma}_{\mathfrak{a},h'}(L)|=1$.

    With these two inputs, we are able to deduce the result. We write $x={\bf\Gamma}_{\mathfrak{a},h}(L)/{\bf\Gamma}_{\mathfrak{a},h'}(L)$. Then:
    \begin{equation*}
        \underbrace{\text{Limit formula}}_{|x|=1}\quad + \quad \underbrace{(\ref{eq: corolllary root of unity})}_{\substack{x=\exp(2\pi i \alpha)\\\alpha\in \sigma_{\mathbb{C}}(F)}} \quad \implies \quad \underbrace{\text{Corollary \ref{corollary: root of unity}}}_{\alpha\in \sigma_{\mathbb{C}}(F)\cap\mathbb{R}=\mathbb{Q} \implies x \text{ is a root of unity}} 
    \end{equation*}
\end{proof}
In the next section, we will prove that this root of unity is $1$, \textit{independently} of the limit formula, by explicitly computing the cocycles involved.
\newpage
\section{Vanishing theorems}\label{sec: vanishing theorems}
We now introduce a superlattice $V\supset L$, with respect to which we smooth our elliptic Gamma functions. We only consider those lattice pairs $(V,L)$ for which the quotient $V/L$ is cyclic of order $N$.

To define smoothed avatars of all the cocycles appearing in Proposition \ref{prop: cocycle expression}, we need a pair of framings for $L$ and $V$, which we denote $\underline{f} = (f_L,f_V)$. Moreover, whilst $\Hom(V,\mathbb{Z})\hookrightarrow\Hom(L,\mathbb{Z})$, we cannot embed the primitive functionals: $\Hom(V,\mathbb{Z})_{\text{prim}}\not\hookrightarrow\Hom(L,\mathbb{Z})_{\text{prim}}$. To address this, we define the following subset:
\begin{equation*}
    \mathcal{V}\defeq   \{a\in\Hom(V,\mathbb{Z}): V = L +\ker(a)\}
\end{equation*}
The relevance of this set $\mathcal{V}$ is clarified by the following lemma.
\begin{lemma}
    We have the equality $\mathcal{V} \cap (V^{\vee})_{\text{prim}} = (L^{\vee})_{\text{prim}}\cap (V^{\vee})_{\text{prim}}$. 
\end{lemma}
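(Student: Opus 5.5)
The plan is to reduce both membership conditions to statements about the image subgroups $a(L)\subseteq a(V)\subseteq\mathbb{Z}$. Throughout, $\Hom(V,\mathbb{Z})$ is regarded as a subgroup of $\Hom(L,\mathbb{Z})$ via restriction; this is injective since $L$ has finite index in $V$. We will use the elementary fact that a functional $a$ on a lattice $M$ is primitive in $M^{\vee}$ if and only if $a(M)=\mathbb{Z}$. Indeed, $a(M)=k\mathbb{Z}$ for some $k\geq 0$, and $a=k\cdot a'$ with $a'$ integral, so $a$ is primitive exactly when $k=1$.

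\emph{Key step.} For any $a\in\Hom(V,\mathbb{Z})$ we have $V=L+\ker(a)$ if and only if $a(L)=a(V)$.

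If $V=L+\ker(a)$, applying $a$ gives $a(V)=a(L)$. Conversely, suppose $a(L)=a(V)$ and let $v\in V$. Choose $l\in L$ with $a(l)=a(v)$. Then $v-l\in\ker(a)$, so $v\in L+\ker(a)$.

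\emph{Conclusion.} Now take $a\in (V^{\vee})_{\text{prim}}$, so that $a(V)=\mathbb{Z}$. By the key step,
\begin{equation*}
a\in\mathcal{V}\iff a(L)=\mathbb{Z}\iff a|_L\in (L^{\vee})_{\text{prim}}.
\end{equation*}
This gives both inclusions of the claimed equality $\mathcal{V}\cap (V^{\vee})_{\text{prim}}=(L^{\vee})_{\text{prim}}\cap (V^{\vee})_{\text{prim}}$.

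I do not expect a genuine obstacle here. The only point needing care is the identification of $V^{\vee}$ with a subgroup of $L^{\vee}$, so that the intersection on the right-hand side makes sense. Note also that the argument does not use the hypothesis that $V/L$ is cyclic. That hypothesis presumably matters later, when one needs $\mathcal{V}$ to be large, but it plays no role in this equality.
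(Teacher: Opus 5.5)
Your proof is correct and takes essentially the paper's route: both reduce $a\in\mathcal{V}$ to the condition $a(L)=a(V)$, and then use that a functional is primitive exactly when it surjects onto $\mathbb{Z}$. The only difference is that the paper gets the key equivalence by applying the snake lemma to $a$ on $0\to L\to V\to V/L\to 0$, while you prove it by a direct element chase, which is the same argument written out by hand.
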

\begin{proof}
    By applying the snake lemma to:
    \[\begin{tikzcd}
	0 & L & {V} & {V/L} & 0 \\
	\\
	0 & {\mathbb{Z}} & {\mathbb{Z}} & 0 & 0
	\arrow[from=1-1, to=1-2]
	\arrow[from=1-2, to=1-3]
	\arrow["a", from=1-2, to=3-2]
	\arrow[from=1-3, to=1-4]
	\arrow["a", from=1-3, to=3-3]
	\arrow[from=1-4, to=1-5]
	\arrow[from=1-4, to=3-4]
	\arrow[from=3-1, to=3-2]
	\arrow[from=3-2, to=3-3,equal]
	\arrow[from=3-3, to=3-4]
	\arrow[from=3-4, to=3-5]
\end{tikzcd}\]
    we deduce that $a\in\mathcal{V}$ is equivalent to $a(L)=a(V)$, from which the equality follows immediately.
\end{proof}
We now smooth the cochain $(\Delta,\Gamma)$. Since we must simultaneously consider it as a cochain for $L$ and $V$, we restrict our group $G_3$ to the congruence subgroup $G=L\rtimes (\SL(L^{\vee})\cap \SL(V^{\vee}))\subset L\rtimes \SL(L^{\vee})$. Choosing coordinates such that $V^{\vee} = \langle e^1,e^2,Ne^3\rangle \subset L^{\vee}=\bigoplus \mathbb{Z}e^i$, an automorphism $g$ of $L^{\vee}$ restricts to an automorphism of $V^{\vee}$ if and only if its matrix form with respect to the basis $\{e^i\}$ has the following form modulo $N$:
\begin{equation*}
    [g]_{\{e^1,e^2,e^3\}} \equiv \begin{pmatrix}
        * & * & * \\ * & * & * \\ 0 & 0 & *
    \end{pmatrix} 
\end{equation*}
Notice that $\SL(L^{\vee})\cap \SL(V^{\vee})$ acts on $\mathcal{V}$ and so we obtain a $G$-invariant open cover $X=\bigcup_{a\in \mathcal{V}} V_{a}\subseteq X_3 = \bigcup_{a\in (L^{\vee})_{\text{prim}}}V_a$.
\begin{remark}
    Note that $X=X_3$: for all $\langle z\rangle \in \base$, we have $z\in U_a$ for some $a\in \mathcal{V}$. Writing $L=\oplus \mathbb{Z}e_i$ and $V=\langle e_1,e_2,e_3/N\rangle$, it is clear that the dual functionals $e^1,e^2\in \mathcal{V}$. With respect to this basis, we write $[z]_{\{e^{i}\}}= \begin{pmatrix}
        z_1 & z_2 & z_3
    \end{pmatrix}$. By the assumption $z\notin \mathbb{P}(\Lambda\otimes\mathbb{R})$, it follows that either $z_2/z_3$ or $z_1/z_3\notin\mathbb{R}$. Thus, $z\in U_a$ for at least one of $\pm e^1,\pm e^2$. 

    Despite this, we will continue to use the notation $X$ instead of $X_3$ to indicate that we are using a restricted open cover to compute $G$-equivariant cohomology.
\end{remark}
\begin{definition}[Smoothed Elliptic Gamma Cochain]
    The smoothed elliptic Gamma cochain is the following $G$-equivariant $1$-cochain valued in $\mathcal{M}^*$ for the $G$-invariant open cover $\mathcal{U} = \{V_a\}_{a\in\mathcal{V}}$:
    \begin{equation*}
        (\widetilde{\Delta}^{\underline{f}},\widetilde{\Gamma})\in C^1_{G}(\mathcal{U},\mathcal{M}^*) = C^1(G,\check{C}^0(\mathcal{U},\mathcal{M}^*)) \oplus C^0(G,\check{C}^1(\mathcal{U},\mathcal{M}^*)) 
    \end{equation*}
    where:
    \begin{equation*}
        \widetilde{\Delta}^{\underline{f}}_a(u)(w,z) \defeq \frac{\Delta^{f_L}_a(u)(w,z;L)^{N}}{\Delta^{f_{V}}_a(u)(w,z;V)} \quad \quad \widetilde{\Gamma}_{a,b}(w,z) \defeq \frac{\Gamma_{a,b}(w,z;L)^{N}}{\Gamma_{a,b}(w,z;V)}
    \end{equation*}
\end{definition}
Using the smoothed cochain, we obtain smoothed cocycles by applying the differential $D(\widetilde{\Delta},\widetilde{\Gamma}) = (\widetilde{\phi^{2,0}},\widetilde{\phi^{1,1}},\widetilde{\phi^{0,2}})$. These are related to the unsmoothed cocycles as you would expect:
\begin{equation*}
    \widetilde{\phi^{2,0}}(-)=\frac{\phi^{2,0}(-;L)^{N}}{\phi^{2,0}(-;V)} \hspace{1cm} \widetilde{\phi^{1,1}}(-)=\frac{\phi^{1,1}(-;L)^{N}}{\phi^{1,1}(-;V)} \hspace{1cm} \widetilde{\phi^{0,2}}(-)=\frac{\phi^{0,2}(-;L)^{N}}{\phi^{0,2}(-;V)}
\end{equation*}
where we have suppressed the inputs for clarity. In other words, we have $D(\widetilde{\Delta},\widetilde{\Gamma}) = \widetilde{D(\Delta,\Gamma)}$. 

Similarly, we define a smoothed change of framing cochain. For two pairs of framings $\underline{f}$ and $\underline{f'}$, we define:
\begin{equation*}
    \widetilde{\psi^{\underline{f},\underline{f'}}}(-)\defeq\frac{\psi^{f_L,f'_L}(-;L)^{N}}{\psi^{f_{V},f'_{V}}(-;V)}
\end{equation*}
which by construction will satisfy the smoothed version of \eqref{eq: change of framings}, namely:
\begin{equation}\label{eq: relate smoothed psi to smoothed delta}
    \widetilde{\psi^{\underline{f},\underline{f'}}} = \frac{\widetilde{\Delta}^{\underline{f'}}}{\widetilde{\Delta}^{\underline{f}}}.
\end{equation}
Finally, we define the smoothed analogue of $\Phi$.
\begin{definition}
    Let $g\in \operatorname{SL}(L^{\vee})\cap \SL(V^{\vee})$, for which $\chi_g\in\mathbb{Q}[x]$ is irreducible and has negative discriminant,  and $z\in\Lambda\otimes\mathbb{C}-\Lambda\otimes\mathbb{R}$ be an eigenfunctional of $g$. Then, for those nonzero elements $h\in L\otimes\mathbb{Q}$ such that associated primitive functional $a$ lies in $\mathcal{V}$, we define:
    \begin{equation*}
        \widetilde{\Phi}\triple{h} \defeq \widetilde{\Gamma}_{a,ga}(z(h),z).
    \end{equation*}
\end{definition}
Since $h\in \ker(a)\otimes\mathbb{Q}$, a sufficient condition for $a\in \mathcal{V}$ is that $\langle h \rangle_{\mathbb{Q}}$ generates $V/L$. We thus refine our notion of an admissible triple to include this condition.
\begin{definition}[Admissible triple for the pair $(V,L)$]  
    Consider a nonzero rational vector $h\in L\otimes\mathbb{Q}$, strictly complex functional $z\in \Lambda\otimes\mathbb{C}-\Lambda\otimes\mathbb{R}$ and automorphism $g\in \SL(L^{\vee})\cap \SL(V^{\vee})$ that has no rational eigenvalues. We call $\triple{h}$ an admissible triple for the pair $(V,L)$ if:
    \begin{equation*}
        g\cdot \langle z \rangle =\langle z\rangle \quad\quad  g\cdot  h\equiv h \text{ modulo }L \quad\quad \langle h\rangle_{\mathbb{Q}} \text{ generates }V/L.
    \end{equation*}
\end{definition}
\begin{remark}
    In contrast to the definition for an admissible triple for $L$, for $h'\in h+L$, the triple $\triple{h'}$ is no longer necessarily admissible for the pair $(V,L)$.
\end{remark}
We are now ready to state a smoothed version of Proposition \ref{prop: cocycle expression}. We fix a pair of framings $\underline{f}=(f_L,f_{V})$.
\begin{proposition}\label{prop: smoothed cocycle expression}
    Let $\triple{h}$ and $\triple{h'}$ be two admissible triples for the pair $(V,L)$. If $h'\equiv h$ modulo $L$:      
    \begin{equation*}
        \frac{\widetilde{\Phi}\triple{h}}{\widetilde{\Phi}\triple{h'}}=\widetilde{\psi_{b'}^{\underline{f},g \cdot \underline{f}}}(v)(z(h),z)^{-1}\cdot \frac{\widetilde{\phi^{2,0}_a}(u,v-u)(z(h),z)}{\widetilde{\phi^{2,0}_{a'}}(u,g^{-1}v-u)(gz(h),z) }\cdot \widetilde{\phi^{1,1}_{a,a'}}(u)(gz(h),z)\cdot \widetilde{\phi^{1,1}_{b',a}}(v)(z(h),z)\cdot \frac{\widetilde{\phi^{0,2}_{a,b,b'}}(z(h),z)}{\widetilde{\phi^{0,2}_{a,a',b'}}(z(h'),z)}
    \end{equation*}
    where as before $b=ga$, $b'=ga'$, and $u$, $v$ are the elements of $L$ determined by:
    \begin{equation*}
        h'=h+v \hspace{1cm} \text{and}\hspace{1cm} h' = g^{-1}h+u.
    \end{equation*}
\end{proposition}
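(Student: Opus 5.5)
The plan is to deduce Proposition \ref{prop: smoothed cocycle expression} formally from Proposition \ref{prop: cocycle expression}. We apply that unsmoothed identity twice: once for the lattice $L$ with framing $f_L$, and once for the superlattice $V$ with framing $f_V$. We then take the $N$-th power of the first and divide by the second. Every term in the identity is multiplicative in exactly this way. By definition, $\widetilde{\Phi}$, $\widetilde{\phi^{p,q}}$ and $\widetilde{\psi^{\underline{f},g\cdot\underline{f}}}$ are the quotients $(\cdot;L)^N/(\cdot;V)$, and we have already observed that $D(\widetilde{\Delta},\widetilde{\Gamma})=\widetilde{D(\Delta,\Gamma)}$. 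So once both unsmoothed identities hold with \emph{the same} auxiliary data $a,a',b,b',u,v$, the smoothed identity follows term by term.

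The main task is therefore to check that the hypotheses of Proposition \ref{prop: cocycle expression} hold for $V$ with matching data. We verify the following points in turn.

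\textbf{(1) The triple is admissible for $V$.} Since $g\in\SL(L^\vee)\cap\SL(V^\vee)$, $g$ is an automorphism of $V$. Since $L\subset V$, the condition $gh\equiv h$ modulo $L$ gives $gh\equiv h$ modulo $V$. Because $V^\vee\otimes\mathbb{C}=\Lambda\otimes\mathbb{C}$, $z$ is still a strictly complex eigenfunctional. So $\triple{h}$ and $\triple{h'}$ are admissible triples for $V$, and $h'\equiv h$ modulo $V$.

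\textbf{(2) The elements $u,v$ can be reused.} The $u,v\in L\subset V$ defined by $h'=h+v=g^{-1}h+u$ are legitimate choices in $V$. The proof of Proposition \ref{prop: cocycle expression} only used that $u,v$ lie in the translation group, facts (iv) and (v), and the cocycle relations. So we may use the same $u,v$ for $V$ rather than anything canonical to $V$.

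\textbf{(3) The functionals match.} The unique primitive functional attached to $\triple{h}$ via Lemma \ref{lemma: a assocaited to (h,z,g)} must be the same element for $L$ and for $V$. The condition $h\in\ker(a,ga)\otimes\mathbb{Q}$ and $z\in U_a$ determine the line of $a$ and its sign independently of the lattice. The only issue is primitivity: a functional primitive in $L^\vee$ need not lie in, or be primitive in, $V^\vee$. Here we use that $\langle h\rangle_{\mathbb{Q}}$ generates $V/L$ and $h\in\ker(a)\otimes\mathbb{Q}$. This gives $V=L+\ker(a)\otimes\mathbb{Q}\cap V$, so $a(V)=a(L)$. Hence $a\in\mathcal{V}$, it extends integrally, and by the lemma on $\mathcal{V}$ it is primitive for both lattices. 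The same argument applies to $a'$. The functionals $b=ga$ and $b'=ga'$ then also lie in $\mathcal{V}$, since $g$ preserves both lattices. Consequently, $\widetilde{\Gamma}_{a,ga}(z(h),z)$ is precisely $\Phi\triple{h}(L)^N/\Phi\triple{h}(V)$.

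\textbf{(4) The orientation.} The open set $U_a$ depends on the orientation of $\ker(a)$, which is induced from $\bigwedge^3$. Orientations of $L$ and $V$ differ by the positive index $N$, so they agree and the sign choice in (3) is the same for both lattices.

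I expect the main obstacle to be this identification of primitive functionals in (3) and (4), i.e.\ ensuring the cover $\{V_a\}_{a\in\mathcal{V}}$ and the cochains for $V$ are indexed by the same $a$. Compatibility of framings is not a problem. The pair $g\cdot\underline{f}=(g\cdot f_L,g\cdot f_V)$ is a pair of framings, and equation \eqref{eq: relate smoothed psi to smoothed delta} shows the smoothed $\psi$ term is exactly the ratio of the $L$- and $V$-versions appearing in the two unsmoothed identities.

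Finally, we multiply the $N$-th power of the $L$-identity by the inverse of the $V$-identity, and regroup each factor into its smoothed avatar.
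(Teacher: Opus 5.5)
Your proposal is correct and follows the paper's own argument. The proof applies Proposition~\ref{prop: cocycle expression} to $L$ with $f_L$ and to $V$ with $f_V$, then takes the quotient $(\cdot;L)^N/(\cdot;V)$. The only real input is that $a,a'\in\mathcal{V}$, which holds because $\langle h\rangle_{\mathbb{Q}}$ and $\langle h'\rangle_{\mathbb{Q}}$ generate $V/L$; hence $b,b'\in\mathcal{V}$ since $g$ acts on $\mathcal{V}$, so the same functionals index both unsmoothed identities. Your extra checks (admissibility over $V$, the orientation, and the observation that $u,v\in L\subset V$ are canonically determined by $h,h',g$ rather than chosen) spell out what the paper leaves under ``by construction''.
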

\begin{proof}
    Since $g$ acts on $\mathcal{V}$, from our assumption that $a$, $a'\in \mathcal{V}$, it follows that $b$, $b'\in\mathcal{V}$, so that all the smoothed cocycles appearing are well-defined. The result follows immediately from Proposition \ref{prop: cocycle expression} by construction.
\end{proof}
\begin{remark}\label{remark: geometric / evaluation}
    We quickly discuss the assumptions of Proposition \ref{prop: smoothed cocycle expression}. The assumptions that are carried over from Proposition \ref{prop: cocycle expression} are \emph{geometric} assumptions, in that they imply the triples $\triple{h}$ and $\triple{h'}$ define homologous loops in $X\times_G EG$ as discussed in Section \ref{subsec: geometric}. For the smoothed expression, we only need to impose that $a$, $a'\in \mathcal{V}$. However, we have imposed the stronger condition that 
    $\langle h\rangle_{\mathbb{Q}}$, $\langle h'\rangle_{\mathbb{Q}}$ generate $V/L$; equivalently that $(a,b)$, $ (a',b')\in \mathcal{W}$, for $\mathcal{W}$ defined in \eqref{eq: def of W}. This is an \emph{evaluation} assumption, in the sense that it ensures the functionals are well-placed with respect to the smoothing pair $(V,L)$. To appreciate the necessity of this condition, it is instructive to consider the extremal case of $h\in L$, which we discuss in Appendix \ref{appendix: extremal}.
\end{remark}
Since the elliptic Gamma function, and consequently $\widetilde{\Phi}\triple{h}$, is independent of any framing, we can pick one that suits our purpose. This brings us to the notion of a \textit{compatible} pair.
\begin{definition}[Compatible pair]
    We call the pair of framings $\underline{f}= (f_L,f_{V})$ a compatible pair if they satisfy the following condition for all $a\in \mathcal{V}$: that $f_L(a)=(x_1,x_2,x_3)$ and $f_{V}(a)=(x_1,x_2,\frac{x_3}{N})$.
\end{definition}
The motivation for this definition is that for a compatible pair $\widetilde{\Delta}^{\underline{f}}$ is a product of $N$-smoothed $\theta_0^N$:
\begin{equation}\label{eq: smooth delta smooth theta}
    \widetilde{\Delta}^{\underline{f}}_a(u)(w,z) = \prod_{k=0}^{a(u)-1}\theta_0^N\left(\frac{w+kz(x_1)}{z(x_3)},\frac{z(x_2)}{z(x_3)}\right).
\end{equation}
This will allow us to leverage the $\mathbb{Z}^2\rtimes \Gamma_0(N)$-invariance of $\theta_0^N$ which is one of the key inputs into the following Theorem, from which the main result of our paper follows immediately.
\begin{theorem}\label{theorem: main theorem in main body}
    Let $\triple{h}$ and $\triple{h'}$ be two admissible triples for the pair $(V,L)$. If $h'\equiv h$ modulo $L$ and $(N,6\cdot \disc(\chi_g))=1$, then:
    \begin{equation*}
        \widetilde{\Phi}\triple{h} = \widetilde{\Phi}\triple{h'}.
    \end{equation*}
\end{theorem}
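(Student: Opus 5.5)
The plan follows the three-step outline of Section \ref{sec: expression}. First, fix a compatible pair of framings $\underline{f}=(f_L,f_V)$ and apply Proposition \ref{prop: smoothed cocycle expression}. This writes $\widetilde{\Phi}\triple{h}/\widetilde{\Phi}\triple{h'}$ as a product of six factors:
\begin{itemize}
    \item the framing factor $\widetilde{\psi^{\underline{f},g\cdot\underline{f}}_{b'}}(v)$,
    \item two values of $\widetilde{\phi^{2,0}}$,
    \item two values of $\widetilde{\phi^{1,1}}$,
    \item two values of $\widetilde{\phi^{0,2}}$.
\end{itemize}
Every functional that appears ($a,a',b=ga,b'=ga'$) lies in $\mathcal{V}$, because the triples are admissible for $(V,L)$. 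The goal is to show that each factor is $1$ when evaluated at the relevant points.

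For the group-direction factors, I would use \eqref{eq: smooth delta smooth theta}. For a compatible pair, $\widetilde{\Delta}^{\underline{f}}_a(u)$ is a product of $\theta_0^N$ values. Since $\phi^{2,0}=d\Delta$ only involves $\Delta$ at translates by $L$, Proposition \ref{prop: j^N trivial} (the $\mathbb{Z}^2$-invariance of $\theta_0^N$ for $(N,6)=1$) should make $\widetilde{\phi^{2,0}}$ identically $1$. For $\widetilde{\psi}$, Lemma \ref{lemma: change of framing} writes $\psi$ as a product of $j((kl,g))$. The smoothed version is a product of $j^N$ values, provided $g\cdot\underline{f}$ is again compatible, which holds because $g$ preserves $V$. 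The matrix relating the two framings on $\ker(b')$ should then lie in $\mathbb{Z}^2\rtimes\Gamma_0(N)$ relative to the compatible bases, so Proposition \ref{prop: j^N trivial} kills this factor too. This needs a check that the induced change of basis respects the flag $\langle x_3\rangle\subset\ker(a)$ modulo $N$, which follows from $V=L+\ker(a)$.

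The \v{C}ech factors $\widetilde{\phi^{1,1}}$ and $\widetilde{\phi^{0,2}}$ are the real content, since they involve genuinely different functionals. In \cite{FHRZ08}, $\phi^{1,1}_{a,b}$ and $\phi^{0,2}_{a,b,c}$ are exponentials of polynomials built from the Bernoulli data in \eqref{eq: cocycle identity} and the moduli of the pairs. I would compute them under a simplifying configuration, call it $d(h,h')=3$: the relevant pairs $(a,a'),(b',a),(a,b,b'),(a,a',b')$ are placed so that their direction vectors generate $V/L$, meaning they lie in the set $\mathcal{W}$ of pairs whose $\ker(a,b)$ generates $V/L$. In that configuration the moduli in $L$ and in $V$ are related by a factor of $N$ in a controlled way. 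The index sets $F^{\alpha,\beta}_{a,b}$ for $V$ and for $L$ are then related by the distribution relations \eqref{eq: first DR}, \eqref{eq: second DR}, and the smoothed exponent $N\cdot B(\,\cdot\,;L)-B(\,\cdot\,;V)$ should cancel modulo $\mathbb{Z}$ exactly as in the proof of Proposition \ref{prop: j^N trivial}. The condition $(N,6)=1$ absorbs the constants $1/6$ and $1/2$ coming from $B_{3,3}$ and $B_{2,2}$.

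Finally, I would reduce the general case to $d=3$. I would choose a chain $h=h_0,h_1,\dots,h_k=h'$ of admissible elements, all congruent modulo $L$, such that each consecutive pair is in the good position, and then telescope. Producing such intermediates means finding $h''\in h+L$ whose associated functional $a''$ (from Lemma \ref{lemma: a assocaited to (h,z,g)}) together with $ga''$ avoids the bad residues modulo $N$. This is a question about how the map $h''\mapsto(a'',ga'')$ distributes modulo $N$. Here $(N,\disc(\chi_g))=1$ enters: it makes $g$ semisimple modulo $N$, so that $\langle h, g^{-1}h\rangle$ modulo $N$ is controlled and enough residues are hit. I expect this surjectivity and avoidance step (``the image of $\Psi$'') to be the \textbf{main obstacle}, together with pinning down the exact configuration in which the \v{C}ech cocycles $\widetilde{\phi^{1,1}},\widetilde{\phi^{0,2}}$ vanish.
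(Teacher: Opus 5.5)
Your outline follows the paper's architecture, and the first steps are fine:
\begin{itemize}
\item you fix compatible framings and apply Proposition~\ref{prop: smoothed cocycle expression};
\item you kill $\widetilde{\psi}$ and $\widetilde{\phi^{2,0}}$ by the $\mathbb{Z}^2\rtimes\Gamma_0(N)$-invariance of $\theta_0^N$ (your flag check modulo $N$ is exactly the paper's argument);
\item you treat the \v{C}ech terms when $d(h,h')=3$, and then reduce via intermediate elements.
\end{itemize}
The gap is that the two steps which actually carry the theorem are not supplied, and the mechanism you sketch for $\widetilde{\phi^{0,2}}$ would not work.

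\textbf{The vanishing of $\widetilde{\phi^{0,2}}$.} For $(a_1,a_2,a_3)\in\mathcal{Q}$ the cocycle is $\exp\bigl(-\tfrac{2\pi i}{6}\sum_{\delta\in F(L)}B_{3,3}(w+z(\delta),z(\underline{x}))\bigr)$. This is a sum over a \emph{specific} set $F(L)$ of representatives of $L/U$, where $U=\langle x_1,x_2,x_3\rangle$. For $V$ the direction vectors become $x_i/N$ (it is these, not the moduli, that scale by $N$), and $|F(L)|=N^2|F(V)|$. So one needs $N\sum_{F(L)}B_{3,3}(w+z(\delta),z(\underline{x}))\equiv\sum_{F(V)}B_{3,3}(N(w+z(\mu)),z(\underline{x}))$ modulo $6\mathbb{Z}$.
\begin{itemize}
\item The distribution relations \eqref{eq: first DR}, \eqref{eq: second DR} do not by themselves give this. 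On the Bernoulli side they amount to the exact rank-three identity summed over all of $\frac{1}{N}U/U$.
\item Here the sum runs over the plane $P=(L\cap\frac{1}{N}U)/U$, which meets every coordinate axis trivially precisely because the triple lies in $\mathcal{Q}$.
\item The plane relation (Proposition~\ref{prop: Bernoulli DR}) is not an identity. Its defect is $6\bigl(-\tfrac{(N-1)^3}{8}+N^{-2}\sum_{\nu\in P}\nu_1\nu_2\nu_3\bigr)$. Oddness of $N$ handles the first term, and $(N,6)=1$ is used to get $N^2\mid\sum_{P}\nu_1\nu_2\nu_3$ via $\sum_{m<N}m^2\in N\mathbb{Z}$.
\item Because $B_{3,3}(\cdot,z(\underline{x}))$ is not $z(U)$-periodic, $F(L)$ is not literally $F(V)+P$ on representatives. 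The splitting holds only after projecting to coordinate planes. The leftover $x_1x_2x_3$-coefficient must be shown separately to lie in $6\mathbb{Z}$, via the bound $\sum_{P}\nu_1\nu_2[\nu_3+X]_N\in N^2\mathbb{Z}$.
\end{itemize}
Nothing here resembles the conjugation $g\mapsto\alpha g\alpha^{-1}$ that trivialised $j^N$, so ``cancel exactly as in Proposition~\ref{prop: j^N trivial}'' gives no argument. Your treatment of $\widetilde{\phi^{1,1}}$ is also unspecified. The paper avoids polynomial bookkeeping there: a coplanar chain reduces to modulus-one pairs, where the difference relation identifies $(d\widetilde{\Gamma})_{a,b}$ with $(\partial\widetilde{\Delta})_{a,b}$ for an auxiliary compatible pair.

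\textbf{The reduction to $d=3$.} You leave this open, and the property you point to is not the one needed. One must show that $h''\mapsto(\langle h'',g^{-1}h''\rangle_{\mathbb{Q}}\cap L)/NV$ hits every line of $L/NV$ as $h''$ runs over elements of $h+L$ whose rational span still generates $V/L$.
\begin{itemize}
\item The paper does this with $\psi=-\lambda+g^{-1}$, where $\lambda$ is the eigenvalue of $g^{-1}$ on $V/L$. Modulo $N$ one has $\chi_{g^{-1}}\equiv(x-\lambda)\,\chi_{g^{-1}}(x;L/NV)$, and $\chi_g$ has no repeated root modulo any $p\mid N$. So $\lambda$ is not an eigenvalue on $L/NV$, and $\psi|_{L/NV}$ is an automorphism.
\item Moving $h''$ through a suitable coset $h+cL$ therefore sweeps out every line (Proposition~\ref{prop: image of Psi}).
\item Semisimplicity of $g$ modulo $N$ would not suffice. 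For example, $g\equiv 1$ modulo $p$ is semisimple, but then $\psi$ vanishes on $L/NV$.
\item A pair lies in $\mathcal{W}$ as soon as the associated kernel lines in $L/NV$ are distinct modulo every $p\mid N$ (Lemma~\ref{lemma: sufficient condition}). So one needs a line avoiding the four lines attached to $a,g^{-1}a,a',g^{-1}a'$.
\item This requires $p+1>4$ for every $p\mid N$, which is the second place $(N,6)=1$ enters.
\end{itemize}
With that, a single $h''$ with $d(h,h'')=d(h',h'')=3$ suffices; no longer chain is needed.
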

In the appendix, we address the case in which $N$ is no longer required to be coprime to $6$. 
\begin{proof}
    Since the identity does not depend on the choice of framing, it suffices to prove it for a particular $\underline{f}$. So we pick $\underline{f}$ to be a compatible pair. Propositions \ref{prop: smoothed compatible pairs} and \ref{prop: smoothed phi2,0} imply that the $\widetilde{\psi^{\underline{f},g\cdot\underline{f}}}$ and $\widetilde{\phi^{2,0}}$ terms vanish. Under the assumption that $d(h,h')=3$, Propositions \ref{prop: smoothed phi1,1} and \ref{prop: smoothed phi0,2} imply the remaining contributions from $\widetilde{\phi^{1,1}}$ and $\widetilde{\phi^{0,2}}$ vanish. 

    Finally, by Corollary \ref{corollary: d=3} to Proposition \ref{prop: image of Psi}, if $(N,6\cdot \disc(\chi_g))=1$, for any pair $h$ and $h'$ that form part of an admissible triple, we can always find an $h''$ such that $d(h,h'')=d(h',h'')=3$. This completes the proof.
\end{proof} 
\begin{corollary}
    Let $\mathfrak{f}$, $\sigC$, $\mathfrak{b}$, $L$ and $\varepsilon$ be as in the statement of Theorem \ref{theorem: arithmetic theorem in introduction}. Suppose $\mathfrak{a}$ is a prime ideal of prime norm and coprime to $6\operatorname{N}(\mathfrak{f})\cdot \disc(\chi_{\varepsilon})$. Then, for $h$ and $h'$ admissible elements, we have:
    \begin{equation*}
        {\bf\Gamma}_{\mathfrak{a},h}(L) = {\bf\Gamma}_{\mathfrak{a},h'}(L).
    \end{equation*}
\end{corollary}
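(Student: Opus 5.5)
The plan is to deduce the Corollary directly from Theorem \ref{theorem: main theorem in main body}, applied to the lattice pair $(V,L)=(\aL,L)$ and the automorphism $g=\varepsilon$ acting on $L$ by multiplication. Concretely, I would verify each hypothesis of that theorem in turn. The rest is then bookkeeping identifying ${\bf\Gamma}_{\mathfrak{a},h}(L)^{-1}$ with $\widetilde{\Phi}(h,\langle\sigC\rangle,\varepsilon)$.

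\textbf{Step 1: the lattice pair and $g$.} Since $\mathfrak{a}$ is prime of prime norm $N=\norm$, the quotient $\aL/L\cong\mathcal{O}_F/\mathfrak{a}$ is cyclic of order $N$, so $(V,L)$ is as required. Multiplication by $\varepsilon$ preserves both $L$ and $\aL$, because it is a unit. It has determinant $\operatorname{N}(\varepsilon)=1$, since $\varepsilon$ is positive at the real place and complex conjugate pairs contribute $|\cdot|^2>0$. Hence its transpose lies in $\SL(L^\vee)\cap\SL(V^\vee)$. Its characteristic polynomial is $\chi_\varepsilon$. We need $\varepsilon\notin\mathbb{Q}$, i.e.\ $\varepsilon\neq\pm1$; this holds because $\mathcal{O}_{\mathfrak{f}}^{+,*}$ has rank one, which is used in the limit formula. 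Then $\chi_\varepsilon$ is irreducible over $\mathbb{Q}$, since $F$ is cubic, and it has negative discriminant because $F$ has a complex place. The embedding $\sigC$ is an eigenfunctional: $\sigC(\varepsilon x)=\sigC(\varepsilon)\sigC(x)$, so $\varepsilon\cdot\langle\sigC\rangle=\langle\sigC\rangle$.

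\textbf{Step 2: admissibility and coprimality.} By Remark \ref{remark: admissible vector gives admissible triple}, an admissible $h$ satisfies $\varepsilon h\equiv h$ modulo $L$. By \eqref{eq: arithmetic admissible}, $\langle h\rangle_{\mathbb{Q}}$ generates $V/L$. So $(h,\langle\sigC\rangle,\varepsilon)$ and $(h',\langle\sigC\rangle,\varepsilon)$ are admissible triples for $(V,L)$. Moreover $h\equiv1\equiv h'$ modulo $L$, as both lie in $1+L$, so $h\equiv h'$. The hypothesis that $\norm$ is coprime to $6\operatorname{N}(\mathfrak{f})\disc(\chi_\varepsilon)$ gives in particular $(N,6\disc(\chi_g))=1$.

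\textbf{Step 3: identification and conclusion.} I would check that the primitive functional $a$ in the definition of ${\bf\Gamma}_{\mathfrak{a},h}(L)$ is the one from Lemma \ref{lemma: a assocaited to (h,z,g)}. This is the uniqueness statement there: the conditions $h\in\ker(a,\varepsilon a)\otimes\mathbb{Q}$ and $\sigC\in U_a$ determine $a$. It follows that
\[{\bf\Gamma}_{\mathfrak{a},h}(L)^{-1}=\frac{\Gamma_{a,\varepsilon a}(\sigC(h),\sigC;L)^{N}}{\Gamma_{a,\varepsilon a}(\sigC(h),\sigC;V)}=\widetilde{\Phi}(h,\langle\sigC\rangle,\varepsilon),\]
and likewise for $h'$. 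Theorem \ref{theorem: main theorem in main body} then gives equality of the inverses, hence the claim.

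The only mildly delicate point is the convention in Step 1 on how $g$ acts on $L$ versus $\Lambda$, namely that pullback by $g^{-1}$ on $L^\vee$ corresponds to multiplication by $\varepsilon$ on $L$. I would fix this so that $ga$ in the theorem matches $\varepsilon a$ in the definition of ${\bf\Gamma}_{\mathfrak{a},h}(L)$. A mismatch would only replace $\varepsilon$ by $\varepsilon^{-1}$. Admissibility and the eigenvector condition are symmetric under this change, and $\disc(\chi_{\varepsilon^{-1}})=\disc(\chi_\varepsilon)$ since $\operatorname{N}(\varepsilon)=1$, so the argument still goes through.
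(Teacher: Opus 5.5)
Your proposal is correct and follows the paper's own route. The paper applies Theorem \ref{theorem: main theorem in main body} to the pair $(\aL,L)$ with $g=\varepsilon$, gets admissible triples for the pair from Remark \ref{remark: admissible vector gives admissible triple} together with the second admissibility condition, and concludes via $\widetilde{\Phi}(h,\langle\sigC\rangle,\varepsilon)={\bf\Gamma}_{\mathfrak{a},h}(L)^{-1}$; your extra checks ($\operatorname{N}(\varepsilon)=1$, $\chi_\varepsilon$ irreducible with negative discriminant, $h\equiv h'$ modulo $L$, and the $\varepsilon$ versus $\varepsilon^{-1}$ convention) are left implicit there but are all sound.
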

\begin{proof}
    As discussed in Remark \ref{remark: admissible vector gives admissible triple}, the notion of admissibility in the arithmetic context implies that $(h,\langle \sigC\rangle,\varepsilon)$ is an admissible triple for $L$. The second ingredient to admissibility implies that $(h,\langle \sigC\rangle,\varepsilon)$ is an admissible triple for the pair $(\aL,L)$. Moreover, for a prime ideal $\mathfrak{a}$, the prime norm assumption is equivalent to $\aL/L$ being cyclic. Then, the result follows from the identity $\widetilde{\Phi}(h,\langle \sigC\rangle ,\varepsilon) = {\bf\Gamma}_{\mathfrak{a},h}(L)^{-1}$.
\end{proof}
\begin{remark}
    The assumption $\mathfrak{a}$ is coprime to $6\operatorname{N}(\mathfrak{f})\cdot \disc(\chi_{\varepsilon})$ excludes a finite set of primes. As $\disc(F/\mathbb{Q})|\disc(\chi_{\varepsilon})$, the condition excludes all primes of $F$ that lie above a rational prime that ramifies in $F/\mathbb{Q}$.
\end{remark}
From here on, we assume that $N=[V:L]$ is coprime to $6$, before returning to this assumption in the appendix. Before we begin the proofs of the vanishing results for the smoothed cocycles, let us note the varying complexity of the results. Apart from $\widetilde{\phi^{0,2}}$, the vanishing of all the smoothed avatars in Proposition \ref{prop: smoothed cocycle expression} are fairly immediate consequences of the $\mathbb{Z}^2\rtimes \Gamma_0(N)$-invariance of $\theta_0^N$, as stated in Proposition \ref{prop: j^N trivial}. The technical part of the proof is the vanishing of $\widetilde{\phi^{0,2}}$ and the reduction to the case $d=3$.

\subsection{Vanishing of $\widetilde{\psi^{\underline{f},\underline{f'}}}$}
We first consider the effect of smoothing upon the change of framing cochain.
\begin{proposition}\label{prop: smoothed compatible pairs}
    Suppose $\underline{f}$ and $\underline{f'}$ are compatible pairs. Then, the smoothed cochain $\widetilde{\psi^{\underline{f},\underline{f'}}}$ is trivial.
\end{proposition}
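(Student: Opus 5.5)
The plan is to write $\widetilde{\psi^{\underline{f},\underline{f'}}}$ explicitly using the formula from the proof of Lemma \ref{lemma: change of framing}, and then show it is a product of values of the smoothed cocycle $j^N$. By Proposition \ref{prop: j^N trivial}, with $(N,6)=1$, all such values are trivial.

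Fix $a\in\mathcal{V}$. Write $f_L(a)=(x_1,x_2,x_3)$ and $f'_L(a)=(y_1,y_2,y_3)$. Compatibility gives $f_V(a)=(x_1,x_2,x_3/N)$ and $f'_V(a)=(y_1,y_2,y_3/N)$. As in Lemma \ref{lemma: change of framing}, set $l=y_1-x_1\in\ker(a)\cap L$, and let $g_L\in\SL(\ker(a)\cap L)$ send $(y_2,y_3)$ to $(x_2,x_3)$. On the $V$ side the same $l$ is used. The matrix $g_V$ taking $(y_2,y_3/N)$ to $(x_2,x_3/N)$ is $\alpha g_L\alpha^{-1}$, where $\alpha$ is the diagonal rescaling of the third coordinate by $N$. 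It lies in $\SL_2(\mathbb{Z})$ because both pairs are bases of $\ker(a)\cap V$; here we use $a\in\mathcal{V}$, so that $\ker(a)\cap V=\langle x_2,x_3/N\rangle$. This forces $g_L$, written in the basis $(x_2,x_3)$, to lie in $\Gamma_0(N)$, which is exactly the condition appearing in Proposition \ref{prop: j^N trivial}.

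Next I would rewrite the product formula in inhomogeneous coordinates. Put $x=\frac{w+kz(x_1)}{z(x_3)}$ and $\tau=z(x_2)/z(x_3)$. The $V$-factor is then $j((kl,g_V))\bigl(Nx,N\tau\bigr)$, where $kl$ is now expressed in the $V$-basis of $\ker(a)\cap V$, so its last coordinate is multiplied by $N$. Hence
\begin{equation*}
\widetilde{\psi^{\underline{f},\underline{f'}}}_a(u)(w,z)=\prod_{k=0}^{a(u)-1}\frac{j((kl,g_L))(x,\tau)^N}{j((kl,g_V))(Nx,N\tau)},
\end{equation*}
and each factor is precisely $j^N((kl,g_L))(x,\tau)$. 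This uses $d\theta_0^N=(d\theta_0)^N/(d\theta_0\circ\alpha)$, the same identity as in the proof of Proposition \ref{prop: j^N trivial}, now extended to the semidirect product $\mathbb{Z}^2\rtimes\Gamma_0(N)$. By that proposition each factor equals $1$, and the case $a(u)\le 0$ follows from the general definition of $\Delta$ in \cite{FHRZ08}.

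The main obstacle is the bookkeeping showing that the translation and $\SL_2$ parts transform compatibly under $\alpha$. Concretely, one must check that $j((kl,g))$, as used in Lemma \ref{lemma: change of framing}, is evaluated at the correct point, namely with the order of translation and rotation matching the semidirect product. One must also check that the homogeneous coordinate $z(x_3/N)=z(x_3)/N$ produces exactly the substitution $(x,\tau)\mapsto(Nx,N\tau)$. A simpler route to try first avoids $j^N$ altogether: use \eqref{eq: relate smoothed psi to smoothed delta} together with \eqref{eq: smooth delta smooth theta}. Each smoothed factor $\theta_0^N$ for $f'$ is obtained from the corresponding factor for $f$ by an element of $\mathbb{Z}^2\rtimes\Gamma_0(N)$ acting on $(x,\tau)$, so by Proposition \ref{prop: j^N trivial} the two products agree term by term.
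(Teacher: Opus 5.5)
Your proposal is correct, and your ``simpler route'' is exactly the paper's proof. The paper writes $\widetilde{\psi^{\underline{f},\underline{f'}}_a}(u)$ as a product of ratios of $\theta_0^N$ factors via \eqref{eq: relate smoothed psi to smoothed delta} and \eqref{eq: smooth delta smooth theta}. It absorbs $y_1-x_1\in\ker(a)\cap L$ using the $\mathbb{Z}\oplus\mathbb{Z}\tau$-periodicity of $\theta_0^N$, and gets $\delta\in\Gamma_0(N)$ precisely because $x_3/N$ must be an integral combination of $y_2$ and $y_3/N$ in $\ker(a)\cap V$, which is your observation that $g_V$ is integral. Your primary route through $j^N$ on $\mathbb{Z}^2\rtimes\Gamma_0(N)$ is the same idea repackaged, and the direct comparison avoids the conjugation bookkeeping you flag.
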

\begin{corollary}
    If $\underline{f}$ and $\underline{f'}$ are two compatible pairs, we have $\widetilde{\Delta}^{\underline{f}} =  \widetilde{\Delta}^{\underline{f'}}$.
\end{corollary}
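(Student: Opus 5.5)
The Corollary is immediate from Proposition \ref{prop: smoothed compatible pairs} together with \eqref{eq: relate smoothed psi to smoothed delta}. So the real work is the Proposition, and I plan to prove it directly from the explicit formula for $\psi^{f,f'}$ in the proof of Lemma \ref{lemma: change of framing}.

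Fix $a\in\mathcal{V}$ and write $f_L(a)=(x_1,x_2,x_3)$ and $f'_L(a)=(y_1,y_2,y_3)$. Compatibility gives
\begin{equation*}
f_V(a)=(x_1,x_2,x_3/N),\qquad f'_V(a)=(y_1,y_2,y_3/N).
\end{equation*}
The auxiliary data are then $l=y_1-x_1\in\ker(a)\cap L$ for both lattices, together with $g_L\in\SL(\ker(a)\cap L)$ sending $(y_2,y_3)$ to $(x_2,x_3)$ and $g_V\in\SL(\ker(a)\cap V)$ sending $(y_2,y_3/N)$ to $(x_2,x_3/N)$.

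In the coordinates $(x,\tau)=\big(\frac{w+kz(x_1)}{z(x_3)},\frac{z(x_2)}{z(x_3)}\big)$, the $V$-factor is evaluated at $\big(\frac{N(w+kz(x_1))}{z(x_3)},\frac{Nz(x_2)}{z(x_3)}\big)=(Nx,N\tau)$. Here $g_V$ is the conjugate $\alpha g_L\alpha^{-1}$ by $\alpha=\operatorname{diag}(N,1)$, and the translation $kl$ is likewise conjugated. Since $g_L$ and $g_V$ are simultaneously integral, $g_L$ lies in $\Gamma_0(N)$ (with the appropriate orientation of indices). This is exactly the situation in the proof of Proposition \ref{prop: j^N trivial}.

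I will then write each factor of $\widetilde{\psi}_a(u)$ as
\begin{equation*}
\frac{j((kl,g_L))(x,\tau)^N}{j((kl',\alpha g_L\alpha^{-1}))(Nx,N\tau)},
\end{equation*}
where $l'$ is $l$ expressed in the $V$-basis. I want to identify this with $j^N((kl,g_L))(x,\tau)$, that is, with the quotient $\theta_0^N(\cdot)/\theta_0^N(\cdot)$ of the same transformed points. To do so, I check that the point to which $(kl,g_L)$ moves $(x,\tau)$, once multiplied by $N$, is precisely where $(kl',\alpha g_L\alpha^{-1})$ moves $(Nx,N\tau)$. This is a short coordinate verification using \eqref{eq: homogeneous action}. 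Proposition \ref{prop: j^N trivial} then gives $j^N=1$ because $(N,6)=1$. Taking the product over $k$ gives $\widetilde{\psi}_a(u)=1$. The case $a(u)\le 0$ is handled in the same way, using the general definition from \cite{FHRZ08}.

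The main obstacle is the bookkeeping in the coordinate identification. One must confirm that $l$, written in the $V$-framing coordinates, becomes $(u\ Nv)$ in the appropriate sense. One must also confirm that the conjugation lands in $\Gamma_0(N)$ rather than $\Gamma^0(N)$, given the convention $g^{-1}=\big(\begin{smallmatrix}a&b\\c&d\end{smallmatrix}\big)$. As a fallback, I would bypass the cocycle altogether: by \eqref{eq: smooth delta smooth theta}, both $\widetilde\Delta^{\underline f}$ and $\widetilde\Delta^{\underline f'}$ are products of $\theta_0^N$ evaluated at points related by an element of $\mathbb{Z}^2\rtimes\Gamma_0(N)$. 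Their equality then follows directly from the invariance in Proposition \ref{prop: j^N trivial}.
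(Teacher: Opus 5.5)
Your proposal is correct and matches the paper: the Corollary is exactly $\widetilde{\psi^{\underline{f},\underline{f'}}}=1$ (Proposition \ref{prop: smoothed compatible pairs}) combined with \eqref{eq: relate smoothed psi to smoothed delta}. Re-proving the Proposition is unnecessary. Your route through $j$ and conjugation by $\operatorname{diag}(N,1)$ is a repackaging of the paper's argument, which is your fallback: rewrite $\widetilde{\Delta}$ via \eqref{eq: smooth delta smooth theta}, then use the $\mathbb{Z}^2$-periodicity and $\Gamma_0(N)$-invariance of $\theta_0^N$. Compatibility forces the basis change into $\Gamma_0(N)$, and since $\Gamma_0(N)$ is closed under inverses, your worry about the $g^{-1}$ convention does not arise.
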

\begin{proof}
    Immediate from \eqref{eq: relate smoothed psi to smoothed delta}.
\end{proof}
\begin{corollary}
    For $\underline{f}$ a compatible pair and $g\in \SL(L^{\vee})\cap \SL(V^{\vee})$, we have $\widetilde{\psi^{\underline{f},g\cdot \underline{f}}}$ is trivial.
\end{corollary}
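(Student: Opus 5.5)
The claim reduces to Proposition \ref{prop: smoothed compatible pairs} once we know that $g\cdot\underline{f}$ is itself a compatible pair. So the plan is to verify this compatibility and then apply the proposition with $\underline{f'}=g\cdot\underline{f}$.

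First we check that $g\cdot\underline{f}=(g\cdot f_L,g\cdot f_V)$ is a pair of framings for $L$ and $V$ indexed by $\mathcal{V}$. Since $g\in\SL(L^{\vee})\cap\SL(V^{\vee})$, it preserves both $L$ and $V$, and the pullback by $g^{-1}$ maps $\mathcal{V}$ to itself. The latter holds because $V=L+\ker(a)$ implies $V=gV=gL+g\ker(a)=L+\ker(ga)$. Hence for $a\in\mathcal{V}$ we also have $g^{-1}a\in\mathcal{V}$, and $f_L(g^{-1}a)$, $f_V(g^{-1}a)$ are defined.

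Next we check that $g\cdot f_L(a)$ is a valid framing datum for $L$, and similarly for $V$. Write $f_L(g^{-1}a)=(y_1,y_2,y_3)$. Then $a(gy_1)=(g^{-1}a)(y_1)=1$, and $gy_2,gy_3\in\ker(a)$ because $y_2,y_3\in\ker(g^{-1}a)$. The basis $(gy_1,gy_2,gy_3)$ is oriented since $\det g=1$, and it is a basis of $L$ (respectively $V$) because $g$ preserves that lattice.

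Then we check the compatibility condition. Because $\underline{f}$ is compatible, $f_V(g^{-1}a)=(y_1,y_2,y_3/N)$. By linearity of $g$,
\begin{equation*}
g\cdot f_V(a)=(gy_1,gy_2,gy_3/N),
\end{equation*}
which is exactly the compatibility condition for the pair $(g\cdot f_L(a),g\cdot f_V(a))$. So $g\cdot\underline{f}$ is compatible.

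Finally, Proposition \ref{prop: smoothed compatible pairs}, applied to the compatible pairs $\underline{f}$ and $g\cdot\underline{f}$, gives that $\widetilde{\psi^{\underline{f},g\cdot\underline{f}}}$ is trivial. The only point needing care is the $\mathcal{V}$-invariance under $g$, which is the short argument given in the second paragraph above.
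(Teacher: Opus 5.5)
Your proposal is correct and follows the paper's argument. The paper likewise notes that $g$ acts on $\mathcal{V}$, concludes that $g\cdot\underline{f}$ is again a compatible pair, and applies Proposition \ref{prop: smoothed compatible pairs}; you additionally verify the framing and compatibility conditions that the paper leaves implicit.
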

\begin{proof}
     Recall that $g$ acts on $\mathcal{V}$. In particular, since $\underline{f}$ is compatible, we know that $g\cdot \underline{f}$ is also compatible.
\end{proof}
\begin{proof}[Proof of Proposition \ref{prop: smoothed compatible pairs}]
    Let $f_L(a)=(x_1,x_2,x_3)$ and $f'_L(a)=(y_1,y_2,y_3)$ for some $a\in \mathcal{V}$. It suffices to consider the evaluation of the cochain at $u\in L$ since $\Delta$ is defined via restriction to $L$. Using the identities of 
    \eqref{eq: relate smoothed psi to smoothed delta} and \eqref{eq: smooth delta smooth theta}, we can express:
    \begin{equation*}
     \widetilde{\psi^{\underline{f},\underline{f'}}_a}(u)(w,z)= \prod_{k=0}^{a(u)-1}\theta_0^N\left(\frac{w+kz(y_1)}{z(y_3)},\frac{z(y_2)}{z(y_3)}\right)\cdot \theta_0^N\left(\frac{w+kz(x_1)}{z(x_3)},\frac{z(x_2)}{z(x_3)}\right)^{-1}.
    \end{equation*}
    The result then follows from the two equalities:
    \begin{equation}\label{eq: equalities of smoothed theta}
        \theta_0^N\left(\frac{w+kz(y_1)}{z(y_3)},\frac{z(y_2)}{z(y_3)}\right)=\theta_0^N\left(\frac{w+kz(x_1)}{z(y_3)},\frac{z(y_2)}{z(y_3)}\right)=\theta_0^N\left(\frac{w+kz(x_1)}{z(x_3)},\frac{z(x_2)}{z(x_3)}\right).
    \end{equation}
    For the first equality, we use $y_1 -x_1\in \ker(a)\cap L$ and $\theta_0^N(-,\tau)$ is $\mathbb{Z}\oplus\mathbb{Z}\cdot \tau$-periodic. 
    
    Moreover, from the identity $\mathbb{Z}y_2\oplus\mathbb{Z}y_3 = \ker(a)\cap L=\mathbb{Z}x_2\oplus\mathbb{Z}x_3$, we can produce an automorphism $\delta = \big(\begin{smallmatrix} a&b\\c&d  \end{smallmatrix}\big)\in\SL_2(\mathbb{Z})$ such that $\delta\cdot (\begin{smallmatrix}
        y_2 \\y_3
    \end{smallmatrix}) = (\begin{smallmatrix}
        x_2 \\x_3
    \end{smallmatrix})$. To deduce the second equality in \eqref{eq: equalities of smoothed theta}, it remains to show that $\delta\in \Gamma_0(N)$. This follows from the compatibility of our framings. We know $x_3 = c y_2 +dy_3$ for $c,d\in \mathbb{Z}$ and so $x_3/N = (c y_2 +dy_3)/N\in \ker(a)\cap V = \mathbb{Z}y_2\oplus\mathbb{Z}\frac{y_3}{N}$. In particular, $c/N\in \mathbb{Z}$ and thus $\delta\in \Gamma_0(N)$.
\end{proof}
\subsection{Vanishing of $\widetilde{\phi^{2,0}}$}
We now turn to the smoothed cocycle $\widetilde{\phi^{2,0}}$. 
\begin{proposition}\label{prop: smoothed phi2,0}
    Suppose $\underline{f}$ is a compatible pair. Then, the smoothed cocycle $\widetilde{\phi^{2,0}}$ is trivial.
\end{proposition}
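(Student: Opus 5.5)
The plan is to compute $\widetilde{\phi^{2,0}}=d\widetilde{\Delta}^{\underline{f}}$ directly from the product formula \eqref{eq: smooth delta smooth theta}, which holds because $\underline{f}$ is a compatible pair. First, $\Delta$ is defined by restriction to the translation subgroup $L$. For $u,v\in L$, the cocycle relation \eqref{eq: delta cocycle relation} shows that the content of $\widetilde{\phi^{2,0}_a}(u,v)$ lies in
\begin{equation*}
\widetilde{\phi^{2,0}_a}(u,v)(w,z)=\frac{\widetilde{\Delta}_a(u)(w,z)\cdot\widetilde{\Delta}_a(v)(w+z(u),z)}{\widetilde{\Delta}_a(u+v)(w,z)}.
\end{equation*}
Components involving the $\SL$-part reduce to this, since the action on $(w,z)$ is by pullback and the functions depend on $g$ only through $u$. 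I would spell this out using the same $(u,g)\mapsto u$ restriction convention as \cite{FHRZ08}[\S5.3].

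Write $\tau=z(x_2)/z(x_3)$ and $x=w/z(x_3)$, and set $t=z(x_1)/z(x_3)$. Then
\begin{equation*}
\widetilde{\Delta}_a(u)(w,z)=\prod_{k=0}^{a(u)-1}\theta_0^N(x+kt,\tau).
\end{equation*}

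Next I would decompose $u=a(u)x_1+u_0$ with $u_0\in\ker(a)\cap L=\mathbb{Z}x_2\oplus\mathbb{Z}x_3$. Then $z(u)/z(x_3)=a(u)t+m\tau+n$ for integers $m,n$. By Proposition \ref{prop: j^N trivial}, $\theta_0^N(-,\tau)$ is $\mathbb{Z}\oplus\mathbb{Z}\tau$-periodic for $(N,6)=1$. So in the second factor the shift $w\mapsto w+z(u)$ acts as $x\mapsto x+a(u)t$ inside $\theta_0^N$. Assume first that $a(u),a(v)\ge 0$. The numerator becomes
\begin{equation*}
\prod_{k=0}^{a(u)-1}\theta_0^N(x+kt,\tau)\cdot\prod_{k=a(u)}^{a(u)+a(v)-1}\theta_0^N(x+kt,\tau),
\end{equation*}
which is exactly $\widetilde{\Delta}_a(u+v)$. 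Hence the quotient is $1$.

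Finally, I must handle the general-sign definition of $\Delta$ from \cite{FHRZ08}[Definition 3.12]. For $a(u)<0$ this is presumably the inverse product over $k=a(u),\dots,-1$, and for $a(u)=0$ it is the empty product $1$. With this convention the same telescoping argument works in every sign case: the product over $k$ in $[0,a(u))$ is interpreted as a signed interval product. The key point is that periodicity reduces everything to the additive identity of signed interval products.

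The step I expect to be the main obstacle is not the telescoping but making sure the unsmoothed $\phi^{2,0}$ genuinely has this shape. In the unsmoothed case $\phi^{2,0}$ is nontrivial precisely because $\theta_0$ is only quasi-periodic, with factor $e^{2\pi iQ_l}$ from Lemma \ref{lemma: explicit cocycle j}. So I must check that smoothing kills these factors \emph{term by term}. This needs the compatible framing: the $V$-framing uses $x_3/N$, so that $\Delta(-;V)$ involves $\theta_0(Nx,N\tau)$ with the same $x_1,x_2$. I would verify this identification, and that $z(u)$ expressed in the $V$-frame still has integral $\mathbb{Z}\oplus\mathbb{Z}N\tau$-coordinates; this holds since $u\in L$ and $x_3=N\cdot(x_3/N)$. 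Then I would invoke $j^N(l)=1$ from Proposition \ref{prop: j^N trivial}.
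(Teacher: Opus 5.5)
Your computation on $L\times L$ is correct and is essentially the paper's. With a compatible pair, $\widetilde{\phi^{2,0}_a}(u,v)$ is a ratio of products of $\theta_0^N$. Writing $u=a(u)x_1+u_0$ with $u_0\in\mathbb{Z}x_2\oplus\mathbb{Z}x_3$, the $\mathbb{Z}\oplus\mathbb{Z}\tau$-periodicity of $\theta_0^N$ makes everything telescope. Your worry about the $V$-frame is already absorbed into the identity \eqref{eq: smooth delta smooth theta}.

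The gap is the sentence dismissing the $\SL$-part. It is true that $\Delta_a((u,g))=\Delta_a(u)$. However, the group differential also involves the action of the first argument $(u_1,g_1)$ on $\check{C}^0(\mathcal{U},\mathcal{M}^*)$, which reindexes $a\mapsto g_1^{-1}a$ and pulls back along $g_1$. So the middle factor of $(d\Delta)_a((u_1,g_1),(u_2,g_2))$ is
\[
\Delta^f_{g_1^{-1}a}(u_2)(w+z(u_1),g_1^{-1}z)=\Delta^{g_1\cdot f}_a(g_1u_2)(w+z(u_1),z),
\]
which is computed with the translated framing $g_1\cdot f$, not with $f$. Since $g_1\cdot f\neq f$ in general (no framing is fixed by a nontrivial element of the stabiliser of $a$), one gets
\[
\phi^{2,0}_a((u_1,g_1),(u_2,g_2))(w,z)=\phi^{2,0}_a(u_1,g_1u_2)(w,z)\cdot\psi^{f,g_1\cdot f}_a(g_1u_2)(w+z(u_1),z).
\]
The change-of-framing factor is genuinely nontrivial before smoothing: it is built from values $j((kl,\delta))$ with $\delta\in\SL_2(\mathbb{Z})$.

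Killing it after smoothing requires an ingredient your proposal never uses; periodicity alone, i.e.\ $j^N(l)=1$, is not enough.
\begin{enumerate}
\item Since $g_1\in\SL(L^{\vee})\cap\SL(V^{\vee})$ preserves $\mathcal{V}$, the pair $g_1\cdot\underline{f}$ is again compatible.
\item Two compatible pairs differ on $\ker(a)\cap L$ by a matrix in $\Gamma_0(N)$, because $x_3/N$ and $y_3/N$ both lie in $\ker(a)\cap V$. They also differ by a translation of $x_1$ by an element of $\ker(a)\cap L$.
\item Hence $\widetilde{\psi^{\underline{f},g_1\cdot\underline{f}}}=1$, using the $\Gamma_0(N)$-invariance of $\theta_0^N$ from Proposition \ref{prop: j^N trivial}.
\end{enumerate}
This is exactly Proposition \ref{prop: smoothed compatible pairs}, which the paper invokes at this point to pass from $L\times L$ to $G\times G$.
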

\begin{proof}
    Let us first consider $\phi^{2,0}$ restricted to the translation subgroup $L^2\subset G^2$. To begin with, we express the unsmoothed cocycle $\phi^{2,0}(-;L)$ in terms of $\theta_0$. 
    \begin{align}
        \phi^{2,0}_a(u,v)(w,z) &= \frac{\Delta_a(v)(w+z(u),z)\cdot \Delta_a(u)(w,z)}{\Delta_a(u+v)(w,z)} \nonumber \\&= \prod_{k=0}^{a(v)-1}\theta_0\left(\frac{w+z(u)+kz(x_1)}{z(x_3)},\frac{z(x_2)}{z(x_3)}\right)\cdot \theta_0\left(\frac{w+a(u)z(x_1)+kz(x_1)}{z(x_3)},\frac{z(x_2)}{z(x_3)}\right)^{-1}. \label{eq: phi^{2,0} explicit}
    \end{align}
    Since $\underline{f}$ is a compatible pair, we can use \eqref{eq: smooth delta smooth theta} to deduce:
    \begin{equation*}
        \widetilde{\phi^{2,0}_a}(u,v)(w,z) =  \prod_{k=0}^{a(v)-1}\theta_0^N\left(\frac{w+z(u)+kz(x_1)}{z(x_3)},\frac{z(x_2)}{z(x_3)}\right)\cdot \theta_0^N\left(\frac{w+a(u)z(x_1)+kz(x_1)}{z(x_3)},\frac{z(x_2)}{z(x_3)}\right)^{-1}.
    \end{equation*}
    We have $u-a(u)x_1\in \ker(a) = \mathbb{Z}x_2\oplus\mathbb{Z}x_3$ and so by the $\mathbb{Z}^2$-invariance of $\theta_0^N$ each term in the product is $1$.

    Although $\Delta$ is defined by restriction to the translation subgroup, $\phi^{2,0}$ does depend on the lattice automorphism. However, one can verify the following relation:
    \begin{equation*}
        \phi^{2,0}_a((u_1,g_1),(u_2,g_2))(w,z)=\phi^{2,0}_a(u_1,g_1u_2)(w,z)\cdot \psi^{f,g_1\cdot f}_a(g_1u_2)(w+z(u_1),z).
    \end{equation*}
    In particular, the vanishing of $\widetilde{\phi^{2,0}}$ in the general case reduces to the computation above for $L^2$ and Proposition \ref{prop: smoothed compatible pairs} on the vanishing of $\widetilde{\psi^{\underline{f},\underline{f'}}}$.
\end{proof}
\begin{remark}\label{remark: explicit phi2,0}
    We can also verify the proposition by giving an explicit description of $\phi^{2,0}(-;L)$ in terms of exponentials of polynomials. Using the expression in \eqref{eq: phi^{2,0} explicit}, we express $\phi^{2,0}$ in terms of the cocycle $j$ for $\theta_0$:
    \begin{equation*}
        \phi^{2,0}_a(u,v)(w,z) =\prod_{k=0}^{a(v)-1}j(u-a(u)x_1)\left(\frac{w+a(u)z(x_1)+kz(x_1)}{z(x_3)},\frac{z(x_2)}{z(x_3)}\right).
    \end{equation*}
    To use the explicit description of $j$ given in Lemma \ref{lemma: explicit cocycle j}, we first express $u$ and $v$ in terms of the basis specified by the framing $f_L(a) = (x_1,x_2,x_3)$: $u=\sum m_ix_i$ and $v=\sum n_i x_i$. In particular, $u-a(u)x_1 = u-m_1x_1 = m_2x_2+m_3x_3$.
    \begin{align*}
         \phi^{2,0}_a(u,v)(w,z) =\prod_{k=0}^{n_1-1}e^{2\pi i Q_{(m_2 ,m_3)}(\zeta,\tau)} = e^{2\pi i P^{2,0}_a(u,v)(w,z)}
    \end{align*}
    where $P^{2,0}_a(u,v)(w,z) = \sum_{k=0}^{n_1-1}Q_{(m_2 ,m_3)}(\zeta,\tau)$ and $(\zeta,\tau) = \left(\frac{w+a(u)z(x_1)+kz(x_1)}{z(x_3)},\frac{z(x_2)}{z(x_3)}\right)$. Explicitly, we have:
    \begin{equation*}
        P^{2,0}_a(u,v)(w,z) = \frac{n_1m_2}{2z(x_3)}\cdot\left(-2w+z(x_1)(1-n_1-2m_1)+z(x_2)(1-m_2)+z(x_3)\right).
    \end{equation*}
    Then, for $\underline{f}$ a compatible pair, we can explicitly compute:
    \begin{equation*}
        \widetilde{\phi^{2,0}_a}(u,v)(w,z)=e^{2\pi i \left(N\cdot P^{2,0}_a(u,v)(w,z;L)- P^{2,0}_a(u,v)(w,z;V) \right)} = e^{2\pi i\cdot n_1m_2\cdot \frac{N -1}{2}}= 1
    \end{equation*}
    since $N$ is odd. This verifies Proposition \ref{prop: smoothed phi2,0} directly.
\end{remark}
\subsection{Vanishing of $\widetilde{\phi^{1,1}}$}
We now turn to $\widetilde{\phi^{1,1}}\in C^{1,1}_G(\mathcal{U},\mathcal{O}^*)$. The choice of restricted cover means that this $(1,1)$-cochain is a map $ \widetilde{\phi^{1,1}}:G\rightarrow\prod_{(a,b)\in \mathcal{V}\times\mathcal{V}}\mathcal{O}^*(V_a\cap V_b)$. We do not claim that this cocycle is trivial in general, rather that it is only trivialised once we impose further conditions on the pair $(a,b)$. Rather than simply requiring the kernels of each functional to generate the quotient $V/L$, we further impose that the \textit{intersection} of the kernels generate the quotient. More precisely, we define:
\begin{equation}\label{eq: def of W}
    \mathcal{W} \defeq \{(a,b)\in \mathcal{V}^2: V = L + \ker(a,b)\}
\end{equation}
\begin{remark}
    When $(a,b)\in \mathcal{W}$, $\widetilde{\Gamma}_{a,b}$ is a product of $N$-smoothed $\Gamma$-functions, as in \eqref{eq: smoothed elliptic gamma}.
\end{remark}
\begin{proposition}\label{prop: smoothed phi1,1}
    Suppose $\underline{f}$ is a compatible pair. Then, for $(a,b)\in \mathcal{W}$, the smoothed cocycle $\widetilde{\phi^{1,1}_{a,b}}$ is trivial.
\end{proposition}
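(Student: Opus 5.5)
We need $\widetilde{\phi^{1,1}_{a,b}}(x)=1$ for all $x\in G$ and $(a,b)\in\mathcal{W}$. By definition $\phi^{1,1}_{a,b}=(d\Gamma)_{a,b}/(\partial\Delta)_{a,b}$, so we must show
\begin{equation*}
(d\widetilde{\Gamma})_{a,b}(x)=\frac{\widetilde{\Delta}^{\underline{f}}_a(x)}{\widetilde{\Delta}^{\underline{f}}_b(x)} .
\end{equation*}
Write $x=(u,g)$. The $\SL(\Lambda)$-equivariance $\Gamma_{ga,gb}(w,gz)=\Gamma_{a,b}(w,z)$ holds for both $L$ and $V$, so $d\widetilde{\Gamma}$ only sees the translation part. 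Likewise $\Delta$ is defined by restriction to $L$. Hence it suffices to treat $x=u\in L$. We will show that both sides equal $1$, using only the $\mathbb{Z}^2\rtimes\Gamma_0(N)$-invariance of $\theta_0^N$ (Proposition \ref{prop: j^N trivial}) and of the smoothed Gamma.

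\textbf{Step 1: the $\widetilde{\Delta}$ side.} Since $\underline{f}$ is compatible, \eqref{eq: smooth delta smooth theta} writes $\widetilde{\Delta}_a(u)$ as a product of $a(u)$ factors $\theta_0^N\bigl(\tfrac{w+kz(x_1)}{z(x_3)},\tfrac{z(x_2)}{z(x_3)}\bigr)$. Because $a\in\mathcal{V}$, Proposition \ref{prop: smoothed compatible pairs} lets us choose any compatible framing at $a$. We would take $x_3$ to be (a multiple of) $\gamma=(a,b)_L$, arranged so that $x_3/N\in V$. This is possible precisely because $(a,b)\in\mathcal{W}$ forces $V=L+\mathbb{Z}\gamma/N$. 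We do the same at $b$.

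By double periodicity of $\theta_0^N$ in its first variable, each factor depends only on the image of $w$ in $\mathbb{C}/(\mathbb{Z}z(x_2)+\mathbb{Z}z(x_3))$, shifted by $kz(x_1)$. So $\widetilde{\Delta}_a(u)$ is a smoothed theta product over the residues of $\{kx_1\}$ modulo $\ker(a)$, and similarly for $b$.

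\textbf{Step 2: the $\widetilde{\Gamma}$ side.} First reduce to $u$ ranging over generators of $L$, using the cocycle relation for $d\widetilde\Gamma$ and the $(2,0)$-vanishing already proved. Take the basis $\alpha,\beta,\gamma$ from Remark \ref{remark: alpha and beta}, chosen minimally, together with $\gamma$.
\begin{itemize}
\item For $u=\gamma$: each $\Gamma$ factor is $1$-periodic in its first argument. For the $V$-lattice the relevant translation is by $\gamma/N$, so a short check with the first distribution relation \eqref{eq: first DR} shows the smoothed quotient is invariant.
\item For $u=\alpha$ or $u=\beta$: the difference relation \eqref{eq: difference relation} gives $d\Gamma_{a,b}(\beta)$ as a product of $\theta_0$'s in the variable $z(\alpha)/z(\gamma)$, that is, a $\Delta_a$-type expression. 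Smoothing turns it into a product of $\theta_0^N$ factors.
\end{itemize}
Matching these against Step 1 shows both sides agree, and each is a product of factors that are constant by Proposition \ref{prop: j^N trivial}. That gives the claim. This reduction is essentially the unsmoothed identity in \cite{FHRZ08}[Theorem 4.1] with $\theta_0$ replaced by $\theta_0^N$, and the replacement is legitimate because every lattice change of basis occurring lies in $\Gamma_0(N)$ by the $\mathcal{W}$ condition.

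\textbf{Main obstacle.} The difficulty is bookkeeping the index sets $F_{a,b}^{\alpha,\beta}$ for $L$ versus $V$. The $V$-version of $\gamma$ is $\gamma/N$, while $\alpha,\beta$ remain minimal for $V$ only because $(a,b)\in\mathcal{W}$. So the modulus of $(a,b)$ is the same for both lattices, and the smoothing is termwise exactly $\Gamma^N$ as in the remark after \eqref{eq: def of W}. Once $\widetilde{\Gamma}_{a,b}$ is written as $\prod_\delta\Gamma^N(\cdots)$ with common $\alpha,\beta,\gamma$, everything follows from the facts that $\theta_0^N$ and $\Gamma^N$ have trivial automorphy under translations (cf. Remark \ref{remark: explicit phi2,0}, where the phase is $e^{2\pi i m(N-1)/2}=1$ for $N$ odd). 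If a direct match fails, the fallback is to compute $\phi^{1,1}$ explicitly as $\exp(2\pi i\,\cdot)$ of a polynomial, via Narukawa's Bernoulli identities. Compatibility then reduces the smoothed exponent to an integer multiple of $(N-1)/2$ or $(N^2-1)/24$, which vanish modulo $\mathbb{Z}$ since $(N,6)=1$.
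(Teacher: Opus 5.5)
Your reduction to $u\in L$, and your use of $(a,b)\in\mathcal{W}$ to choose compatible framings with third vector $\pm\gamma$ before invoking Proposition~\ref{prop: smoothed compatible pairs}, are the right ingredients. The central claim of the proposal, however, is false: neither $(d\widetilde{\Gamma})_{a,b}(u)$ nor $\widetilde{\Delta}_a(u)/\widetilde{\Delta}_b(u)$ equals $1$.

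Proposition~\ref{prop: j^N trivial} says that the automorphy factor $j^N$ of $\theta_0^N$ is trivial, not that $\theta_0^N$ is constant. In fact $\theta_0^N(-,\tau)$ is a nonconstant elliptic function with divisor $N[0]-\sum_j[j/N]$. Already for modulus $1$, the difference relation gives $(d\widetilde{\Gamma})_{a,b}(\beta)(w,z)=\theta_0^N\bigl(\tfrac{w}{z(\gamma)},\tfrac{z(\alpha)}{z(\gamma)}\bigr)$. Similarly, $\Gamma^N$ does not have trivial automorphy under $x\mapsto x+\tau$; its automorphy factor is a $\theta_0^N$ product.

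What you must prove is an identity between nonconstant meromorphic functions, $(d\widetilde{\Gamma})_{a,b}(u)=(\partial\widetilde{\Delta}^{\underline{f}})_{a,b}(u)$. Your sentence ``matching these against Step 1 shows both sides agree'' is exactly this content. It is asserted rather than proved, and the justification attached to it (the factors are constant) is wrong.

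The paper carries the matching out as follows. For modulus $1$, take $L=\langle\alpha,\beta,\gamma\rangle$ and the compatible pair with $f'_L(a)=(\alpha,\beta,\gamma)$ and $f'_L(b)=(\beta,\alpha,-\gamma)$. Write $u=m\alpha+n\beta+l\gamma$. Iterating the difference relation gives $\prod_{j<m}\theta_0^N(x+j\tau+n\sigma,\sigma)\prod_{k<n}\theta_0^N(x+k\sigma,\tau)$. This is identified factor by factor with $(\partial\widetilde{\Delta}^{\underline{f'}})_{a,b}(u)$, using $\theta_0(x,\tau)=\theta_0(-x,-\tau)^{-1}$ and the $\mathbb{Z}^2$-invariance of $\theta_0^N$. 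Independence of the compatible framing then replaces $\underline{f'}$ by $\underline{f}$.

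Second, your reduction to generators does not cover modulus $s>1$. Since $\widetilde{\phi^{2,0}}=1$, $\widetilde{\phi^{1,1}_{a,b}}$ is indeed a group $1$-cocycle on $L$. But the minimal $\alpha,\beta,\gamma$ span a sublattice of index $s$ in $L$ (Remark~\ref{remark: alpha and beta}). Knowing the cocycle on that sublattice only tells you that a product of $s$ translates of $\widetilde{\phi^{1,1}_{a,b}}(u)$ equals $1$, not that $\widetilde{\phi^{1,1}_{a,b}}(u)=1$. You would need a genuine computation for an extra generator, e.g.\ $x_1$ with $a(x_1)=1$, where translation permutes $F^{\alpha,\beta}_{a,b}$ with wrap-around. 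The proposal does not do this.

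The paper avoids the issue with the coplanar chain. With $a=e^1$, $b=re^1+se^2$ and $c=e^2$, one has $\Gamma_{a,b}=\Gamma_{a,c}\Gamma_{c,b}$, with moduli $1$ and $r<s$. Iterating writes $\widetilde{\Gamma}_{a,b}$ as a product over modulus-$1$ pairs. All of these pairs remain in $\mathcal{W}$ because they share $\ker(a,b)$, so the $\partial\widetilde{\Delta}$ terms telescope. The resulting identity then extends from $\bigcap V_{a_i}$ to $V_a\cap V_b$ by analytic continuation. Your Bernoulli-polynomial fallback is too unspecified to repair either gap.
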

\begin{proof}
    Since $(d\Gamma)_{a,b}(g)$ is trivial for all pairs $a,b$ and $g\in \SL(L^{\vee})$, it follows that $(d\Gamma)_{a,b}((u,g))=(d\Gamma)_{a,b}(u)$. Combining this with $\Delta_a((u,g))= \Delta_a(u)$, we have $\phi^{1,1}_{a,b}((u,g))=\phi^{1,1}_{a,b}(u)$. Thus, it suffices to consider $\widetilde{\phi^{1,1}}$ restricted to $L$ for $u\in L$. We assume for simplicity that $u$ is in the positive cone of $a$ and $b$, with the computations in the other cases proceeding similarly.
    
    The proof will be by reduction to the case of $(a,b)$ of modulus $1$, for the notion of modulus introduced in Definition \ref{def: direction vector and modulus}. If $(a,b)$ has modulus $1$, then we can choose the pair $(\alpha,\beta)$ so that $L=\langle \alpha,\beta,\gamma\rangle $. In particular, we can construct a framing with $f'_L(a) = (\alpha,\beta,\gamma)$ and $f'_L(b) = (\beta,\alpha,-\gamma)$. By our assumption that $(a,b)\in\mathcal{W}$, the framing $f_L'$ extends to a compatible pair $\underline{f'}$.

    Using this basis, we write $u=m\alpha +n\beta +l\gamma$, for $m$, $n$, $l\in \mathbb{Z}$ and $m$, $n >0$. We repeatedly apply the difference relation $\Gamma(x+\tau,\tau,\sigma) = \theta_0(x,\sigma)\cdot \Gamma(x,\tau,\sigma)$ to deduce:
    \begin{equation*}
        (d\Gamma)_{a,b}(u)(w,z) = \frac{\Gamma(x+m\tau+n\sigma,\tau,\sigma)}{\Gamma(x,\tau,\sigma)} = \prod_{j=0}^{m-1}\theta_0(x+j\tau+n\sigma,\sigma)\cdot \prod_{k=0}^{n-1}\theta_0(x+k\sigma,\tau)
    \end{equation*}
    where $x = \frac{w}{z(\gamma)}$, $\tau =\frac{z(\alpha)}{z(\gamma)}$ and $\sigma = \frac{z(\beta)}{z(\gamma)}$. From $(a,b)\in \mathcal{W}$, we know $V=\langle \alpha,\beta,\frac{\gamma}{N}\rangle$ and so obtain:
    \begin{align*}
        (d\widetilde{\Gamma})_{a,b}(u)(w,z) &= \prod_{j=0}^{m-1}\theta_0^N(x+j\tau+n\sigma,\sigma)\cdot \prod_{k=0}^{n-1}\theta_0^N(x+k\sigma,\tau)
        \\ & = \prod_{j=0}^{a(u)-1}\theta_0^N\left(\frac{w+jz(\alpha)}{z(\gamma)},\frac{z(\beta)}{z(\gamma)}\right)\cdot \prod_{k=0}^{b(u)-1}\theta_0^N\left(\frac{w+kz(\beta)}{-z(\gamma)},\frac{z(\alpha)}{-z(\gamma)}\right)^{-1} = (\partial\widetilde{\Delta})^{\underline{f'}}_{a,b}(u)(w,z)
    \end{align*}
    where we used $\theta_0(x,\tau) = \theta_0(-x,-\tau)^{-1}$ and the $\mathbb{Z}^2$-invariance of $\theta_0^N$. Since both $\underline{f}$ and $\underline{f'}$ are compatible pairs, it follows from Proposition \ref{prop: smoothed compatible pairs} that $\widetilde{\Delta}^{\underline{f'}} =\widetilde{\Delta}^{\underline{f}}$. Thus, $(d\widetilde{\Gamma})_{a,b} = (\partial\widetilde{\Delta})^{\underline{f}}_{a,b}$ which is equivalent to the vanishing of $\widetilde{\phi^{1,1}_{a,b}}$.

    It remains to reduce to this case for $(a,b)$ of arbitrary modulus. We do so by forming a chain of pairs of reducing modulus, ultimately expressing $\widetilde{\Gamma}_{a,b}$ as a product over modulus $1$ pairs.
    
    To see how this chain is formed, for a pair $(a,b)$ of modulus $s$, we may assume $a=e^1$ and $b=re^1+se^2$ for $0\leq r <s$. But then letting $c=e^2$, we have the coplanar relation $\Gamma_{a,b}  = \Gamma_{a,c}\cdot\Gamma_{c,b}$, where the modulus of $(a,c)$ is $1$ and the modulus of $(c,b)=r <s$. Repeating this process, yields $\Gamma_{a,b}$ as a product of Gamma functions over modulus $1$ pairs. The resulting telescoping product of $(\partial \widetilde{\Delta})^{\underline{f}}_{a_i,a_{i+1}}$ cancels to give the result for $(a,b)$. Notice that every pair in the chain is in $\mathcal{W}$; indeed, $\ker(a,b)=\ker(a_i,a_{i+1})$ since they are coplanar.

    One subtlety is that the above argument only proves the triviality of $\widetilde{\phi^{1,1}}$ in the intersection $\bigcap V_{a_i}\subset V_{a}\cap V_b$ for the functionals $a_i$ appearing in the chain. However, we can always choose the signs of $a_i$ so that the total intersection is an open subset of $V_a \cap V_b$ and then deduce the result by analytic continuation.
\end{proof}
The definition of an admissible triple for the pair $(V,L)$ includes the assumption that both $\langle h\rangle_{\mathbb{Q}}$ and $\langle h'\rangle_{\mathbb{Q}}$ generate the quotient $V/L$. In particular, both $(a,b)$ and $(a',b')\in\mathcal{W}$. However, the $\widetilde{\phi^{1,1}}$ terms in Proposition \ref{prop: smoothed cocycle expression} involve the pairs $(a,a')$ and $(b',a)$. For these to vanish, we must further assume that $(a,a')$, $ (b',a)\in\mathcal{W}$; note that $(a,a')\in\mathcal{W}$ if and only if $(b,b')\in \mathcal{W}$. In other words, we must assume that every edge in one (equivalently, both) of the two simplices, depicted in the figure below, forms a pair in $\mathcal{W}$. 

To this end, let us define the following subset of triples of functionals:
\begin{equation*}
    \mathcal{Q}\defeq  \{(a,b,c)\in\mathcal{V}^3: (a,b),(b,c),(c,a)\in\mathcal{W}\}
\end{equation*}
\begin{definition}[$d(h,h')$]\label{def: d(h,h'}
    Let $\triple{h}$ and $\triple{h'}$ be two admissible triples for the pair $(V,L)$, which uniquely specify the functionals $a$, $a'$, $b=ga$, and $b=ga'$. By admissibility, we know $(a,b)$, $(a',b')\in \mathcal{W}$. We define $d(h,h')\in\{1,2,3\}$ to be the number of pairs in either the upper left or lower right simplex that lie in $\mathcal{W}$:
\begin{figure}[!htbp]\label{figure: simplices}
    \centering
    \begin{tikzpicture}[scale = 0.5]
    \tkzDefPoint(0,0){a}  \tkzLabelPoint[below left](a){$a$}
    \tkzDefPoint(3,0){b}  \tkzLabelPoint[below right](b){$b$}
    \tkzDefSquare(a,b)    \tkzGetPoints{b'}{a'}
    \tkzLabelPoint[above left](a'){$a'$} \tkzLabelPoint[above right](b'){$b'$}
    \tkzDrawPolygon(a,b,b',a')
    \tkzDrawSegments[dashed](a,b')
\end{tikzpicture}
\end{figure}
\end{definition}
\begin{remark}
    This number is independent of which simplex we choose because $(a,a')\in\mathcal{W}$ if and only if $(b,b')\in \mathcal{W}$. Moreover, we have $d(h,h')=3\iff (a,a',b')\in\mathcal{Q}\iff (a,b,b')\in\mathcal{Q}$.
\end{remark}
\begin{remark}
    We have dropped both $\langle z\rangle$ and $g$ from the notation as we are only varying $h$. Moreover, for fixed $g$, the invariant $d$ is in fact independent of the choice of eigenline as switching $\langle z\rangle$ to $\langle \bar{z}\rangle$ amounts to changing the sign of the associated functionals $a$ and $a'$ which has no effect on the kernel computation.
\end{remark}
We summarise the results up to this point in the following Corollary.
\begin{corollary}
     Let $\triple{h}$ and $\triple{h'}$ be two admissible triples for the pair $(V,L)$, such that $h'\equiv h$ modulo $L$. If $d(h,h')=3$, then:
     \begin{equation*}
        \frac{\widetilde{\Phi}\triple{h}}{\widetilde{\Phi}\triple{h'}}=\frac{\widetilde{\phi^{0,2}_{a,b,b'}}(z(h),z)}{\widetilde{\phi^{0,2}_{a,a',b'}}(z(h'),z)}.
    \end{equation*}
\end{corollary}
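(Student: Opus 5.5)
The plan is to start from the smoothed cocycle expression of Proposition \ref{prop: smoothed cocycle expression} and kill every factor except the two $\widetilde{\phi^{0,2}}$ terms. Since neither side of the claimed identity depends on the framing, we may fix $\underline{f}$ to be a compatible pair.

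First, by the corollary to Proposition \ref{prop: smoothed compatible pairs}, $g\cdot\underline{f}$ is again compatible, because $g\in\SL(L^{\vee})\cap\SL(V^{\vee})$ preserves $\mathcal{V}$. Hence $\widetilde{\psi^{\underline{f},g\cdot\underline{f}}_{b'}}$ is trivial. Second, Proposition \ref{prop: smoothed phi2,0} says $\widetilde{\phi^{2,0}}$ is trivial for a compatible pair, for every functional in $\mathcal{V}$ and every pair of group elements. This applies in particular to $\widetilde{\phi^{2,0}_a}(u,v-u)$ and to $\widetilde{\phi^{2,0}_{a'}}(u,g^{-1}v-u)$; here $a,a'\in\mathcal{V}$ by admissibility, since $\langle h\rangle_{\mathbb{Q}}$ and $\langle h'\rangle_{\mathbb{Q}}$ generate $V/L$. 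So the quotient of $\widetilde{\phi^{2,0}}$ terms equals $1$.

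The remaining work is the two $\widetilde{\phi^{1,1}}$ factors, namely $\widetilde{\phi^{1,1}_{a,a'}}(u)$ and $\widetilde{\phi^{1,1}_{b',a}}(v)$. By Proposition \ref{prop: smoothed phi1,1}, each is trivial once the relevant pair lies in $\mathcal{W}$. This is exactly where the hypothesis $d(h,h')=3$ enters. By the remark after Definition \ref{def: d(h,h'}, $d(h,h')=3$ is equivalent to $(a,a',b')\in\mathcal{Q}$, which gives $(a,a')\in\mathcal{W}$ and $(a',b')\in\mathcal{W}$. It also gives $(b',a)\in\mathcal{W}$, because $\mathcal{W}$ is defined by a condition on $\ker(a,b)$, which is symmetric in the two functionals. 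So both $\widetilde{\phi^{1,1}}$ factors equal $1$ wherever they are evaluated.

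The one point needing care is that the evaluation points lie in the domains where these vanishing statements hold. For $\widetilde{\phi^{1,1}}$ this means $(gz(h),z)\in V_a\cap V_{a'}$ and $(z(h),z)\in V_{b'}\cap V_a$. This follows because $z$ lies in $U_a$ and $U_{a'}$ by Lemma \ref{lemma: a assocaited to (h,z,g)}, and in $U_{b'}=gU_{a'}$ since $z$ is a $g$-eigenline. In any case the vanishing holds as an identity of holomorphic functions on the whole intersection, so evaluating it is harmless. After these cancellations only the ratio of $\widetilde{\phi^{0,2}}$ terms survives, which is the statement. I expect no real obstacle here: the corollary is bookkeeping that assembles the preceding propositions.
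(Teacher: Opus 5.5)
Your proposal is correct and follows the paper's own argument. The paper's proof is just the observation that $\widetilde{\Phi}$ and $\widetilde{\phi^{0,2}}$ are framing-independent, so one may take $\underline{f}$ compatible. Propositions \ref{prop: smoothed compatible pairs} and \ref{prop: smoothed phi2,0}, and Proposition \ref{prop: smoothed phi1,1} via $d(h,h')=3$, then remove all other factors in Proposition \ref{prop: smoothed cocycle expression}; you have simply written out that bookkeeping explicitly, including the domain check.
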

\begin{proof}
    Both $\widetilde{\Phi}$ and $\widetilde{\phi^{0,2}}$ are independent of a framing by definition and so take $\underline{f}$ to be a compatible pair.
\end{proof}
\subsection{Vanishing of $\widetilde{\phi^{0,2}}$}
The result follows, modulo the reduction to the $d=3$ case, from this final proposition, which is the most technically difficult part of the proof. An analogous result for the multiple elliptic Gamma functions $\Gamma_n$ has been independently arrived at in the recent work of Morain \cite{Morain_II}. We have highlighted the particular places where the proof methods overlap.
\begin{proposition}\label{prop: smoothed phi0,2}
    Suppose $(a,b,c)\in\mathcal{Q}$. Then the smoothed cocycle $\widetilde{\phi^{0,2}_{a,b,c}}$ is trivial. 
\end{proposition}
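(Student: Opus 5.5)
The plan is to reduce to an explicit formula for $\phi^{0,2}$ on a small class of ``standard'' triples, and then check directly that the smoothing kills it. Recall that $\phi^{0,2}_{a,b,c}=\Gamma_{a,b}\Gamma_{b,c}\Gamma_{c,a}$, and that it is $\mathcal{O}^*$-valued by Theorem \ref{theorem: section of gerbe}. It is also $\SL$-equivariant, so we may move $(a,b,c)$ by elements of $\SL(L^{\vee})\cap\SL(V^{\vee})$. This preserves $\mathcal{Q}$, since that group acts on $\mathcal{V}$ and preserves $L+\ker(a,b)$.

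\textbf{Step 1: the coplanar case.} If $a,b,c$ are linearly dependent, then $\phi^{0,2}_{a,b,c}=1$ by the coplanar relation $\Gamma_{a,b}=\Gamma_{a,c}\Gamma_{c,b}$ already used in the proof of Proposition \ref{prop: smoothed phi1,1}. The same holds for the $V$-lattice, so $\widetilde{\phi^{0,2}_{a,b,c}}=1$ in this case.

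\textbf{Step 2: reduction to unimodular triples.} For a non-coplanar triple, I would subdivide the cone spanned by $a,b,c$ by inserting auxiliary functionals $d$ in its interior (in the spirit of the modulus-reducing chain of Proposition \ref{prop: smoothed phi1,1}). The cocycle identity $\phi^{0,2}_{a,b,c}=\phi^{0,2}_{a,b,d}\phi^{0,2}_{b,c,d}\phi^{0,2}_{c,a,d}$ holds because $\partial\phi^{0,2}=1$. Iterating, we reach triples with $\det(a\wedge b\wedge c)=\pm1$ on $L^{\vee}$, together with coplanar triples.

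The delicate point is that every new edge $(a,d)$, $(b,d)$, \ldots must still lie in $\mathcal{W}$. I would choose $d$ inside the sublattice $V^{\vee}\cap\mathcal{V}$ so that $\ker(a,d)$ still contains a generator of $V/L$. In coordinates with $V=\langle e_1,e_2,e_3/N\rangle$, this is a congruence condition modulo $N$ on $d$. It is satisfiable because $(N,6)=1$ leaves enough residues; if needed, I would replace $d$ by $d+Nd''$. Alternatively, the subdivision can be done by a Euclidean-type algorithm on the $\Gamma_0(N)$-type subgroup. Analytic continuation handles the shrinking of domains, exactly as in Proposition \ref{prop: smoothed phi1,1}.

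\textbf{Step 3: the unimodular computation.} For a unimodular triple, let $x_1,x_2,x_3$ be the direction vectors of the three edges. These form a basis of $L$, and Narukawa's identity \eqref{eq: cocycle identity} with $n=3$ gives
$$\phi^{0,2}_{a,b,c}(w,z;L)=\exp\Bigl(-\tfrac{2\pi i}{6}B_{3,3}(w,z(x_1),z(x_2),z(x_3))\Bigr).$$
The condition $(a,b,c)\in\mathcal{Q}$ says that each $x_i/N$ generates $V/L$. For the lattice $V$ the same triple is no longer unimodular: its modulus picks up $N$ on each edge in a controlled way. I would compute $\Gamma_{a,b}(-;V)$ with $\alpha,\beta$ chosen in $L$ and direction vector $x_i/N$, and use the distribution relations \eqref{eq: first DR}, \eqref{eq: second DR} to express $\phi^{0,2}(-;V)$ as $\exp(-\tfrac{2\pi i}{6}\,\cdot)$ of a sum of $B_{3,3}$ terms evaluated at $(w+z(\delta),\,z(x_1)/N,\dots)$ over a set of representatives $\delta$. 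Summing $B_{3,3}$ over such a torsion coset is again a distribution relation, now for Bernoulli polynomials, which follows from the generating function \eqref{eq: exp generating function}. This reduces the exponent of $\widetilde{\phi^{0,2}}$ to
$$N\,B_{3,3}(w;x)\;-\;(\text{the }V\text{-expression}).$$

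\textbf{Step 4: the constant.} I expect the $w$-dependent and $z$-dependent parts to cancel identically, since $\widetilde{\phi^{0,2}}$ is a homogeneous degree-zero expression built from the smoothed quantities. What should remain is a rational constant of the form $\tfrac{(N-1)(\cdots)}{12}$, an integer multiple divided by the denominators $2$, $4$, $6$ arising from $B_{3,3}$. Since $(N,6)=1$, the quantity $N^2-1$ is divisible by $24$, and this should make the constant an integer, exactly as $\tfrac{N-1}{2}$ did in Remark \ref{remark: explicit phi2,0}.

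\textbf{Main obstacle.} I expect the hardest part to be Step 2, keeping every edge of the subdivision in $\mathcal{W}$, together with the bookkeeping of the $V$-side modulus in Step 3. Failing a clean subdivision, my first fallback is to prove the vanishing directly for an arbitrary triple in $\mathcal{Q}$. This would use the general explicit cubic formula for $\phi^{0,2}$ from \cite{FHRZ08}, written in terms of generalized Dedekind sums over the index sets $F^{\alpha,\beta}_{a,b}$, and then compare its $L$- and $V$-versions using the reciprocity of those sums.
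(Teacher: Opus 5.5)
There is a genuine gap, and it starts with the target of Step 2, which is empty. Every functional in $\mathcal{V}$ lies in $V^{\vee}$, a sublattice of index $N$ in $L^{\vee}$. Hence $\det_{L^{\vee}}(a\wedge b\wedge c)\in N\mathbb{Z}$ for all $a,b,c\in\mathcal{V}$, and for triples in $\mathcal{Q}$ one even has $|F(L)|=N^2|F(V)|\geq N^2$. No triple in $\mathcal{Q}$ is unimodular on $L^{\vee}$.

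Step 3 is therefore inconsistent: if $x_1,x_2,x_3$ were a basis of $L$ with every $x_i/N\in V$, then $\frac{1}{N}L\subset V$, contradicting $[V:L]=N$. Your bookkeeping is also reversed. On an edge in $\mathcal{W}$ the modulus does not change when passing to $V$; only the direction vector is divided by $N$, and $\det_{V^{\vee}}=\det_{L^{\vee}}/N$.

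The smallest case that actually occurs is the $V^{\vee}$-unimodular one, i.e.\ $V=\frac{1}{N}U$ and $F(L)=L/U$ is a plane $P\subset\frac{1}{N}U/U$ meeting each axis trivially. There the statement is exactly Proposition \ref{prop: Bernoulli DR}. The $w$- and $z$-dependence does not cancel for homogeneity reasons, since each $B_{3,3}$ term is already homogeneous of degree $0$. It cancels by rank-one distribution relations along the coordinate projections of $P$ (Lemma \ref{lemma: vanishing of non constant terms}), and that is where $\mathcal{Q}$ is used. The leftover constant is $3!\bigl(-\frac{(N-1)^3}{8}+\frac{1}{N^2}\sum_{\nu\in P}\nu_1\nu_2\nu_3\bigr)$. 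The real arithmetic input is $N^2\mid\sum_{\nu\in P}\nu_1\nu_2\nu_3$ (Lemma \ref{lemma: consant in 6Z}), not $24\mid N^2-1$.

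Even with the corrected target, the reduction is not available as you describe it. In coordinates with $V=\langle e_1,e_2,e_3/N\rangle$, take $a=e^1$, $b=e^2$, $c=e^1+e^2+2Ne^3$. Then $(a,b,c)\in\mathcal{Q}$ and $[V^{\vee}:\langle a,b,c\rangle]=2$. So the only nonzero point of $V^{\vee}$ in the half-open parallelepiped is $d=\frac{1}{2}(a+b+c)=e^1+e^2+Ne^3$. But $\ker(c,d)\cap V=\mathbb{Z}(e_1-e_2)\subset L$, so $(c,d)\notin\mathcal{W}$. A short computation shows that every $d\in\mathcal{V}$ making $(a,b,d)$, $(b,c,d)$, $(c,a,d)$ all $V^{\vee}$-unimodular breaks one of the new edges.

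Your repair $d\mapsto d+Nd''$ cannot help. It leaves unchanged the residue of $d$ on $L/NV$, which governs the criterion of Lemma \ref{lemma: sufficient condition}. It also pushes $d$ out of the parallelepiped, so the determinants stop decreasing. The existence of a subdivision with all edges in $\mathcal{W}$ is therefore not bookkeeping; it is the unproved heart of your argument.

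The paper never subdivides. It treats an arbitrary triple in $\mathcal{Q}$ directly, writing $F(L)$ fibrewise over $F(V)$ as translates of $P$. That decomposition is exact after projecting to any coordinate plane (Proposition \ref{prop: decomposition after projection}). So the only discrepancy sits in the $x_1x_2x_3$-coefficient of the cubic terms, which is shown to lie in $6\mathbb{Z}$ via Lemma \ref{lemma: sum of a_1a_2(a_3+X)} and Proposition \ref{prop: final error term}. The distribution relation is then applied fibre by fibre.
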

We require the following explicit description of the cocycle $\phi^{0,2}_{a,b,c}$.
\begin{proposition}[\cite{FHRZ08}Theorem 3.10]\label{prop: explicit phi0,2}
    For $(w,z)\in V_{a_1}\cap V_{a_2}\cap V_{a_3}$, the unsmoothed cocycle $\phi^{0,2}$ is given by:
    \begin{equation*}
        \phi^{0,2}_{a_1,a_2,a_3}(w,z)=e^{-\frac{2\pi i }{3!}P^{0,2}_{a_1,a_2,a_3}(w,z)}
    \end{equation*}
    for $P^{0,2}_{a_1,a_2,a_3}(w,z)\in \mathbb{Q}(z(L))[w]$. Explicitly, if $(a_1,a_2,a_3)$ is a positively oriented basis of $\Lambda\otimes\mathbb{Q}$, we have:
    \begin{equation*}
        P^{0,2}_{a_1,a_2,a_3}(w,z) = \sum_{\delta\in F_{a_1,a_2,a_3}(L)} B_{3,3}(w+z(\delta),z(\underline{x})).
    \end{equation*}
    Here, $x = (x_1,x_2,x_3)$, where $x_1$, $x_2$, and $x_3$ are the direction vectors of the wedges $(a_2,a_3)$, $(a_3,a_1)$, and $(a_1,a_2)$ respectively. Finally, $F_{a_1,a_2,a_3}(L)$ is the following choice of representatives for the quotient $L/\langle x_1,x_2,x_3\rangle$. 
    \begin{equation*}
        F_{a_1,a_2,a_3}(L) = \{\delta\in L: \hspace{0.2cm} 0 \leq a_j(\delta)<a_j(x_j),\hspace{0.2cm} j\in \{1,2,3\} \}.
    \end{equation*}
\end{proposition}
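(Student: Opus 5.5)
The plan is to compute $\partial\Gamma$ on a triple directly. In each of the three factors $\Gamma_{a_1,a_2}$, $\Gamma_{a_2,a_3}$, $\Gamma_{a_3,a_1}$ I would choose the auxiliary vectors $\alpha,\beta$ of Definition \ref{def: elliptic Gamma cochain} to be the two direction vectors not used as $\gamma$. The product then collapses, term by term over a common index set, onto Narukawa's identity \eqref{eq: cocycle identity} with $n=3$.

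\textbf{Step 1: the choice of $\alpha,\beta$.} Assume $(a_1,a_2,a_3)$ is a positively oriented basis of $\Lambda\otimes\mathbb{Q}$. By Definition \ref{def: direction vector and modulus}, $x_3=(a_1,a_2)_L$ is the positive primitive vector of $\ker(a_1,a_2)$, and cyclically for $x_1,x_2$. First I check the signs. Since $\det(a_2\wedge a_3\wedge -)=s_1x_1$ with $s_1>0$, evaluating $a_1$ gives $a_1(x_1)=\det(a_2\wedge a_3\wedge a_1)/s_1=\det(a_1\wedge a_2\wedge a_3)/s_1>0$. Cyclically, $a_j(x_j)>0$ for each $j$.

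For the pair $(a_1,a_2)$, with $\gamma=x_3$, I then take $\alpha=x_1$ and $\beta=x_2$. These are admissible, because $a_2(x_1)=a_1(x_2)=0$ and $a_1(x_1),a_2(x_2)>0$. The only point of Remark \ref{remark: alpha and beta} needed here is that $\alpha,\beta$ need not be primitive. Non-primitive choices are permitted by the independence lemma, via the distribution relation \eqref{eq: second DR}. Cyclically, for $(a_2,a_3)$ I take $(\gamma,\alpha,\beta)=(x_1,x_2,x_3)$, and for $(a_3,a_1)$ I take $(x_2,x_3,x_1)$.

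\textbf{Step 2: a common index set.} The index set $F^{x_1,x_2}_{a_1,a_2}$ consists of classes $\delta\in L/\mathbb{Z}x_3$ with $0\le a_1(\delta)<a_1(x_1)$ and $0\le a_2(\delta)<a_2(x_2)$. Each such class has a unique representative with $0\le a_3(\delta)<a_3(x_3)$, since $a_3$ changes by $a_3(x_3)$ under $\delta\mapsto\delta+x_3$ while $a_1,a_2$ are unchanged. This identifies the index set bijectively with $F_{a_1,a_2,a_3}(L)$.

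The choice of representative does not affect the factor. Shifting $\delta$ by $x_3$ shifts the first argument $\frac{w+z(\delta)}{z(x_3)}$ by $1$, and $\Gamma$ is $1$-periodic in its first variable. The same argument applies to the other two pairs. Hence
\begin{equation*}
\phi^{0,2}_{a_1,a_2,a_3}(w,z)=\prod_{\delta\in F_{a_1,a_2,a_3}(L)}\prod_{\text{cyclic}}\Gamma\left(\frac{w+z(\delta)}{z(x_3)},\frac{z(x_1)}{z(x_3)},\frac{z(x_2)}{z(x_3)}\right).
\end{equation*}

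\textbf{Step 3: apply Narukawa's identity.} Using the symmetry $\Gamma(z,\tau,\sigma)=\Gamma(z,\sigma,\tau)$, the inner cyclic product is exactly the left side of \eqref{eq: cocycle identity} for $n=3$, with $w$ replaced by $w+z(\delta)$ and $(x_1,x_2,x_3)$ replaced by $(z(x_1),z(x_2),z(x_3))$. The identity requires $z(x_i)/z(x_j)\notin\mathbb{R}$. This holds on $V_{a_1}\cap V_{a_2}\cap V_{a_3}$, because $z$ is an isomorphism on each $\ker(a_j)\otimes\mathbb{R}$ and $x_i,x_j$ span some $\ker(a_k)$. The identity therefore gives $\exp\bigl(-\tfrac{2\pi i}{3!}B_{3,3}(w+z(\delta),z(\underline{x}))\bigr)$ for each $\delta$. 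Taking the product over $\delta$ yields the stated $P^{0,2}$.

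Rationality of $P^{0,2}$ in $\mathbb{Q}(z(L))[w]$ is immediate from the definition of $B_{3,3}$ by the generating function \eqref{eq: exp generating function}. For triples that are not positively oriented, I would use alternation of the \v{C}ech cocycle, $\Gamma_{b,a}=\Gamma_{a,b}^{-1}$, together with permutation, and reduce to the oriented case. Degenerate (coplanar) triples satisfy $\phi^{0,2}=1$ by the coplanar relation used in the proof of Proposition \ref{prop: smoothed phi1,1}.

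\textbf{Main obstacle.} I expect the hardest part to be the orientation bookkeeping in Steps 1 and 3. One must check that the orientations of $\ker(a_j)$ from Definition \ref{def: open cover} make each $\frac{z(\alpha)}{z(\gamma)}$ and $\frac{z(\beta)}{z(\gamma)}$ lie in the half-planes where the product formula, or its extension via $\Gamma(z,\tau,\sigma)=\Gamma(z-\tau,-\tau,\sigma)^{-1}$, applies. One must also check that the three cyclic terms appear with exponent $+1$ rather than as inverses. I would verify all of this first in the standard case $a_j=e^j$, where $x_j=e_j$, and then transport it by $\SL(\Lambda)$-equivariance combined with the distribution relations for non-unimodular triples.
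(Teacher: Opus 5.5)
Your proposal is correct and is essentially the paper's proof: you choose $(\gamma,\alpha,\beta)$ cyclically among the direction vectors $x_1,x_2,x_3$, identify each index set $F^{\alpha,\beta}_{a_i,a_j}$ with $F_{a_1,a_2,a_3}(L)$ modulo the relevant $\mathbb{Z}x_k$, and apply Narukawa's identity \eqref{eq: cocycle identity} with $n=3$ termwise in $\delta$. The orientation issue you flag as the main obstacle does not actually arise. Both $\Gamma_{a,b}$ and the identity \eqref{eq: cocycle identity} are stated for the extended $\Gamma$ on $\mathbb{C}\times(\mathbb{C}-\mathbb{R})\times(\mathbb{C}-\mathbb{R})$, and the \v{C}ech coboundary $\Gamma_{a,b}\Gamma_{b,c}\Gamma_{c,a}$ already carries all exponents $+1$, so your check that $z(x_i)/z(x_j)\notin\mathbb{R}$ is all that is needed.
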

\begin{proof}
    This follows directly from the functional identity in \eqref{eq: cocycle identity}:
    \begin{equation*}
        \Gamma\left(\frac{w}{x_3},\frac{x_1}{x_3},\frac{x_2}{x_3}\right)\Gamma\left(\frac{w}{x_1},\frac{x_2}{x_1},\frac{x_3}{x_1}\right)\Gamma\left(\frac{w}{x_2},\frac{x_3}{x_2},\frac{x_1}{x_2}\right) = e^{-\frac{2\pi i}{3!}B_{3,3}(w,\underline{x})}
    \end{equation*} 
    together with the fact that $F_{a_1,a_2,a_3}(L)$ can be used as an index set to define each of $\Gamma_{a_i,a_j}$. Indeed, $F_{a_1,a_2,a_3}(L) $ modulo $\mathbb{Z}x_3$ is precisely $ F_{a_1,a_2}^{x_1,x_2}(L)/\mathbb{Z}x_3$, the index set introduced in Definition \ref{def: elliptic Gamma cochain}. We have the analogous relation for $(a_2,a_3)$ and $(a_3,a_1)$.
\end{proof}
\begin{remark}
    If $(a_1,a_2,a_3)$ are linearly dependent, then $\phi^{0,2}_{a_1,a_2,a_3} =1$ so the claim of Proposition \ref{prop: smoothed phi0,2} is immediate. Moreover, from $\Gamma_{a,b}=\Gamma_{b,a}^{-1}$, we deduce $\phi^{0,2}_{a_1,a_2,a_3} = (\phi^{0,2}_{a_1,a_3,a_2})^{-1}$. Thus, it suffices to consider the case $(a_1,a_2,a_3)$ is a positively oriented basis of $\Lambda\otimes\mathbb{Q}$.
\end{remark}
\begin{remark}
    For $\lambda\in \mathbb{C}^*$, if we make the substitution $t\mapsto \lambda t$ in the defining generating series for $B_{n,m}$ in \eqref{eq: exp generating function}, we deduce that the diagonal Bernoulli polynomials satisfy the following homogeneity relation: 
    \begin{equation}\label{eq: Bernoulli homogeneity}
        B_{n,n}(\lambda w,\lambda\underline{x}) = B_{n,n}(w,\underline{x}).
    \end{equation}
\end{remark}
From the description of $\phi^{0,2}$ above, we can express the smoothed cocycle as:
\begin{equation*}
     \widetilde{\phi^{0,2}_{a_1,a_2,a_3}}(w,z)=e^{-\frac{2\pi i}{3!} \left(N\cdot P^{0,2}_{a_1,a_2,a_3}(w,z;L)- P^{0,2}_{a_1,a_2,a_3}(w,z;V) \right)}.
\end{equation*}
Using the notation for direction vectors introduced in Definition \ref{def: direction vector and modulus}, the assumption $(a_1,a_2,a_3)\in\mathcal{Q}$ becomes:
\begin{equation*}
    (a_2,a_3)_{V}=(a_2,a_3)_L/N \hspace{0.5cm} \text{and}\hspace{0.5cm}(a_3,a_1)_{V}=(a_3,a_1)_L/N\hspace{0.5cm} \text{and}\hspace{0.5cm}(a_1,a_2)_{V}=(a_1,a_2)_L/N.
\end{equation*}
Then, by the homogeneity property of $B_{3,3}$ described in \eqref{eq: Bernoulli homogeneity}:
\begin{equation*}
    P^{0,2}_{a_1,a_2,a_3}(w,z;V) = \sum_{\mu\in F_{a_1,a_2,a_3}(V)} B_{3,3}(w+z(\mu),z(\underline{x}/N)) = \sum_{\mu\in F_{a_1,a_2,a_3}(V)} B_{3,3}(N\cdot(w+z(\mu)),z(\underline{x})).
\end{equation*}
Since Proposition \ref{prop: smoothed phi0,2} is equivalent to showing $N\cdot P^{0,2}_{a_1,a_2,a_3}(w,z;L)\equiv P^{0,2}_{a_1,a_2,a_3}(w,z;V)$ modulo $6\mathbb{Z}$, we must show:
\begin{equation}\label{eq: smoothed sum of Bernoulli}
     N\cdot \sum_{\delta\in F_{a_1,a_2,a_3}(L)} B_{3,3}(w+z(\delta),z(\underline{x})) \equiv \sum_{\mu\in F_{a_1,a_2,a_3}(V)} B_{3,3}(N\cdot(w+z(\mu)),z(\underline{x}))\hspace{0.5cm} \text{mod } 6\mathbb{Z}
\end{equation}
\subsubsection{Distribution Relation}
A critical step in the proof is the following rank $2$ Distribution Relation for $B_{3,3}$. This terminology is supposed to recall the rank $1$ Distribution Relation satisfied by the single variable Bernoulli polynomials $B_k$:
\begin{equation*}
    \sum_{m=0}^{N-1}B_{k}\left(w+\frac{m}{N}\right)N^{k-1}=B_k(Nw) .
\end{equation*}
If we set $w=0$, we get the following relation, where $b_k=B_k(0)$ is the $k$-th Bernoulli number.
\begin{equation}\label{eq: rank 1 DR}
    \sum_{m=0}^{N-1}B_{k}\left(\frac{m}{N}\right)N^{k-1}=b_k .
\end{equation}
To avoid any ambiguity, we define the following subobjects of $(\mathbb{Z}/N)^3$.
\begin{definition}[Line and Plane]\label{def: line and plane}
    A line in $(\mathbb{Z}/N)^3$ will be a submodule isomorphic to $\mathbb{Z}/N$. A plane in $(\mathbb{Z}/N)^3$ will be a submodule isomorphic to $(\mathbb{Z}/N)^2$.
\end{definition}
\begin{proposition}\label{prop: Bernoulli DR}
    Consider a rank $3$ lattice $U=\bigoplus_{j=1}^3 \mathbb{Z}x_j$. If $P$ is a plane in $\frac{1}{N}U/U\cong (\mathbb{Z}/N)^3$ that trivially intersects each coordinate axis and $(N,6)=1$, then for all $w\in\mathbb{C}$ and complex functionals $z:U\otimes\mathbb{C}\rightarrow\mathbb{C}$ such that $z(x_j)\neq 0$ we have the following distribution relation:
    \begin{equation}\label{eq: Bernoulli DR}
         N\cdot\sum_{\nu\in P}B_{3,3}(w+z(\nu),z(\underline{x})) \equiv B_{3,3}(Nw,z(\underline{x})) \text{ modulo } 6\mathbb{Z}
    \end{equation}
    where the coset representatives are taken inside of $\{0,\frac{1}{N}\dots,\frac{N-1}{N}\}^3$ and $\underline{x}=(x_1,x_2,x_3)$.
\end{proposition}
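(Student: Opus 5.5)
The plan is to expand $B_{3,3}$ coordinate by coordinate via the generating function \eqref{eq: exp generating function}. This reduces \eqref{eq: Bernoulli DR} to the rank $1$ relation \eqref{eq: rank 1 DR} for all but one term. The remaining term is a sum of Dedekind sums, which must be handled separately.

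\textbf{Step 1: expand $B_{3,3}$.} Since $e^{sxt}\frac{t}{e^{xt}-1}=\frac1x\sum_k B_k(s)\frac{(xt)^k}{k!}$, for $\nu=\sum_j s_jx_j$ we have
\begin{equation*}
B_{3,3}(w+z(\nu),z(\underline{x}))=\sum_{k_0+k_1+k_2+k_3=3}\frac{3!}{k_0!k_1!k_2!k_3!}\,w^{k_0}\prod_{j=1}^3B_{k_j}(s_j)\,z(x_j)^{k_j-1}.
\end{equation*}
The case $s_j=0$ gives the corresponding expansion of $B_{3,3}(Nw,z(\underline{x}))$, with $w^{k_0}$ replaced by $N^{k_0}w^{k_0}$ and $B_{k_j}(s_j)$ by $b_{k_j}$.

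\textbf{Step 2: parametrise $P$.} A plane $P\cong(\mathbb{Z}/N)^2$ is the kernel of a surjective form $(c_1,c_2,c_3)$ modulo $N$. Trivial intersection with the axes forces every $c_j$ to be a unit. Hence $P$ projects isomorphically onto any two coordinates: $\nu\in P$ is $(m_1,m_2,m_3)/N$ with $(m_1,m_2)$ arbitrary and $m_3\equiv -c_3^{-1}(c_1m_1+c_2m_2)$ modulo $N$, and symmetrically for the other pairs of coordinates.

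\textbf{Step 3: dispose of terms with some $k_j=0$.} Suppose, say, $k_3=0$. Then $B_{0}=1$ and the sum over $P$ becomes a sum over the full grid $\{0,\dots,N-1\}^2$ of $B_{k_1}(m_1/N)B_{k_2}(m_2/N)$. Applying \eqref{eq: rank 1 DR} twice gives $N\cdot N^{1-k_1}b_{k_1}N^{1-k_2}b_{k_2}=N^{k_0}b_{k_1}b_{k_2}$ with $k_0=3-k_1-k_2$. This is exactly the coefficient on the right-hand side, so these terms agree on the nose, not merely modulo $6\mathbb{Z}$. The only surviving term is $(k_0,k_1,k_2,k_3)=(0,1,1,1)$, which has coefficient $3!$ and is independent of $w$ and $z$. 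The proposition is therefore equivalent to
\begin{equation*}
N\sum_{\nu\in P}\prod_{j=1}^3B_1(m_j/N)+\tfrac18\in\mathbb{Z}.
\end{equation*}

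\textbf{Step 4: the main obstacle, reduction to Dedekind sums.} I would use the symmetry $\nu\mapsto-\nu$ of $P$, together with $B_1(1-s)=-B_1(s)$ for $s\notin\mathbb{Z}$ (while $B_1(0)=-\tfrac12$).
\begin{itemize}
\item On points with all $m_j\neq0$, the product $\prod_j B_1(m_j/N)$ is odd under $\nu\mapsto-\nu$, so these points cancel in pairs.
\item Because $P$ meets the axes trivially, every other nonzero point lies on exactly one of the three lines $\{m_j=0\}\cap P$. On these lines the summand is even.
\end{itemize}
The sum therefore collapses to the origin, which contributes $-\tfrac18$, plus three sums of the form $-\tfrac12\sum_{m=1}^{N-1}B_1(\tfrac mN)B_1(\tfrac{\{um\}}{N})=-\tfrac12 s(u,N)$. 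Here $s(u,N)$ is the Dedekind sum and $u_{12},u_{23},u_{31}$ are the units $-c_i^{-1}c_k$ relating the two nonzero coordinates on each line. Their product is $-1$ modulo $N$. The claim thus becomes $N\bigl(s(u_{12},N)+s(u_{23},N)+s(u_{31},N)\bigr)\in 2\mathbb{Z}-\tfrac{N^3-1}{4}\cdot(\text{explicit correction})$; this bookkeeping needs to be done carefully.

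I would then invoke the classical congruences for $12Ns(u,N)$, namely $12Ns(u,N)\equiv u+u^{-1}$ modulo $N$, together with its $2$- and $3$-adic refinements, which are valid since $(N,6)=1$. The relation $u_{12}u_{23}u_{31}\equiv-1$ should make the three $u+u^{-1}$ contributions cancel modulo $N$. The $2$- and $3$-parts should be handled by $N$ being odd and prime to $3$.

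I expect this final congruence check to be the delicate point. If the Dedekind-sum route becomes unwieldy, the fallback is to compute $\sum_P m_1m_2m_3$ directly modulo $N^2$. The linear and quadratic moments are explicit because $P$ projects onto any two coordinates. The cubic moment can be obtained by writing $m_3=c m_1+d m_2-N\lfloor (cm_1+dm_2)/N\rfloor$.
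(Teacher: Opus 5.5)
Your Steps 1--3 are correct and coincide with the paper's argument: the generating-function expansion, the exact cancellation of every term with some $k_j=0$ by the rank-$1$ distribution relation (Lemma~\ref{lemma: vanishing of non constant terms}), and the reduction to $N\sum_{\nu\in P}\prod_j B_1(m_j/N)+\tfrac18\in\mathbb{Z}$.

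The gap is in Step~4, which carries all of the remaining content, and your plan there aims at the wrong congruence. Your collapse onto the origin and the three coordinate lines is correct. It turns the goal into
\[
\frac{N-1}{8}+\frac{N}{2}\,\Sigma\in\mathbb{Z},\qquad \Sigma=s(u_{12},N)+s(u_{23},N)+s(u_{31},N),
\]
that is, $N\Sigma+\tfrac{N-1}{4}\in 2\mathbb{Z}$. Your displayed target with $\tfrac{N^3-1}{4}$ is not this.

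\begin{itemize}
\item Since $(N,6)=1$, each $2N\,s(u,N)$ is already an integer. So this is a purely $2$-adic statement, and the congruence $12N\,s(u,N)\equiv u+u^{-1}\pmod N$ plays no role.
\item The cancellation modulo $N$ you anticipate is in fact false. For $N=5$ and $P=\{\nu_1+\nu_2+\nu_3\equiv 0\}$, all three units are $-1$, and $\sum(u+u^{-1})=-6\not\equiv 0 \pmod 5$.
\item Nor is the $2$-part ``handled by $N$ odd''. $N\Sigma$ is in general a genuine half-integer: for $N=7$ and $P=\ker(1,1,2)$ one gets $\Sigma=-\tfrac12$. The claim depends on the exact class of $N\Sigma$ modulo $2$.
\end{itemize}
Finishing along your route needs an input such as Rademacher's congruence $12N\,s(u,N)\equiv N+1-2\left(\frac{u}{N}\right)\pmod 8$, combined with $\prod\left(\frac{u_{ij}}{N}\right)=\left(\frac{-1}{N}\right)$. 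That does suffice, but it is nowhere in your sketch.

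The paper avoids this by never splitting $B_1(m/N)=m/N-\tfrac12$ into sawtooth values. Expanding the product directly, the constant is $-\tfrac{(N-1)^3}{8}+\tfrac{1}{N^2}\sum_{P}\nu_1\nu_2\nu_3$ with an integer numerator, so only $N$-adic integrality is at stake. Lemma~\ref{lemma: consant in 6Z} then proves $N^2\mid\sum_P\nu_1\nu_2\nu_3$. It uses the same $\nu\mapsto-\nu$ pairing you use, applied to integer coordinates via $abc+(N-a)(N-b)(N-c)\equiv N(ab+bc+ca)\pmod{N^2}$, together with $\sum m,\sum m^2\in N\mathbb{Z}$. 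That is exactly your unexecuted fallback.

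Alternatively, your Dedekind decomposition can be rescued cheaply. Equating it with the paper's expression gives $\tfrac{1}{N^2}\sum_P\nu_1\nu_2\nu_3\in\tfrac14\mathbb{Z}\cap\tfrac{1}{N^2}\mathbb{Z}=\mathbb{Z}$. This uses only the elementary fact $2N\,s(u,N)\in\mathbb{Z}$ for $(N,6)=1$.
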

In other words, we claim that the difference is a constant in $6\mathbb{Z}$. By the continuity of $B_{3,3}$ and discreteness of $\mathbb{Z}$, it follows that this constant is necessarily independent of both $w$ and $z$. This will be verified by explicitly computing the constant.

The proof will proceed by using the exponential generating series to rewrite the difference as a sum involving the single-variable Bernoulli polynomials $B_k(x)$ and then using the Rank $1$ Distribution Relation stated in \eqref{eq: rank 1 DR} to simplify the difference.
\begin{proof}
    By writing $\nu=\frac{1}{N}\sum_{j=1}^3\nu_jx_j$, we can express the left hand side as:
    \begin{align*}
        \sum_{\nu\in P }NB_{3,3}(w+z(\nu),z(\underline{x}))&=\text{Coeff}\left(t^3;3!N\sum_{\nu\in P}e^{(w+z(\nu))t}\prod\frac{t}{e^{z(x_j)t}-1}\right) \\ & =\frac{3!N}{\prod_{j=1}^3z(x_j)}\cdot\text{Coeff}\left(t^3;\sum_{m\geq 0}\frac{w^m}{m!}t^m\sum_{\nu\in P}\prod_{j=1}^3\frac{z(x_j)te^{\frac{\nu_j}{N}z(x_j)t}}{e^{z(x_j)t}-1}\right).
    \end{align*}
    The (single-variable) Bernoulli Polynomials are defined via $\frac{te^{xt}}{e^{t}-1}=\sum_{k\geq 0}B_k(x)\frac{t^k}{k!} $ so that we recognise:
    \begin{equation*}
        \frac{z(x_j)te^{\frac{\nu_j}{N}z(x_j)t}}{e^{z(x_j)t}-1} = \sum_{k\geq 0}B_k\left(\frac{\nu_j}{N}\right)\frac{(z(x_j)t)^k}{k!} .
    \end{equation*}
    Thus, we can explicitly express the coefficient of $t^3$:
    \begin{equation}\label{eq: Bernoulli LHS}
        \sum_{\nu\in P }NB_{3,3}(w+z(\nu),z(\underline{x})) = \frac{3!N}{\prod_{j=1}^3z(x_j)}\sum_{l=0}^3\frac{w^l}{l!}\sum_{k_1+k_2+k_3=3-l}\Psi(k_1,k_2,k_3)
    \end{equation}
    where $\Psi(k_1,k_2,k_3) = \sum_{\nu\in P}\prod_{j=1}^3B_{k_j}\left(\frac{\nu_j}{N}\right)\frac{z(x_j)^{k_j}}{k_j!}$ contributes to the coefficient of $t^{k_1+k_2+k_3}$ in $\sum_{\nu\in P}\prod_{j=1}^3\frac{z(x_j)te^{\frac{\nu_j}{N}z(x_j)t}}{e^{z(x_j)t-1}}$.
    
    For the right hand side, we carry out the same computation, this time using the generating series for the Bernoulli numbers $b_k$ to rewrite:
    \begin{equation*}
        \frac{z(x_j)t}{e^{z(x_j)t}-1}=\sum_{k\geq 0}b_k\frac{(z(x_j)t)^k}{k!}.
    \end{equation*}
    This produces the equality:
    \begin{equation}\label{eq: Bernoulli RHS}
        B_{3,3}(Nw,z(\underline{x})) = \frac{3!}{\prod_{j=1}^3z(x_j)}\sum_{l=0}^3\frac{(Nw)^l}{l!}\sum_{k_1+k_2+k_3=3-l}\psi(k_1,k_2,k_3)
    \end{equation}
    where $\psi(k_1,k_2,k_3) = \prod_{j=1}^3b_{k_j}\frac{z(x_j)^{k_j}}{k_j!}$. Thus, combining \eqref{eq: Bernoulli LHS} and \eqref{eq: Bernoulli RHS}, we can express the difference as:
    \begin{equation*}
        \sum_{\nu\in P }NB_{3,3}(w+z(\nu),z(\underline{x})) -  B_{3,3}(Nw,z(\underline{x})) = \frac{3!}{\prod_{j=1}^3z(x_j)}\sum_{l=0}^3\frac{w^l}{l!}\sum_{k_1+k_2+k_3=3-l}N\Psi(k_1,k_2,k_3)-N^l\psi(k_1,k_2,k_3).
    \end{equation*}
    Lemma \ref{lemma: vanishing of non constant terms} below shows that the difference $N\Psi(k_1,k_2,k_3)-N^l\psi(k_1,k_2,k_3)$ vanishes in all cases except when $k_1=k_2=k_3=1$. In particular, the only non-zero contribution comes when $l=0$. Thus, the difference is independent of $w$ and can be written as:
    \begin{equation}\label{eq: Bernoulli difference}
         \sum_{\nu\in P }NB_{3,3}(w+z(\nu),z(\underline{x})) -  B_{3,3}(Nw,z(\underline{x})) = \frac{3!}{\prod_{j=1}^3z(x_j)}\left(N\Psi(1,1,1)-\psi(1,1,1)\right)
    \end{equation}
   where we can explicitly write:
   \begin{equation*}
       \Psi(1,1,1) = \sum_{\nu\in P}\prod_{j=1}^3B_1\left(\frac{\nu_j}{N}\right)\cdot z(x_j) 
       \hspace{2.5cm} \psi(1,1,1) = \prod_{j=1}^3b_1\cdot z(x_j).
   \end{equation*}
   We have $B_1(x) = -\frac{1}{2}+x$ and $b_1 = B_1(0)=-\frac{1}{2}$ and thus:
   \begin{equation*}
       N\Psi(1,1,1)-\psi(1,1,1) = \prod_{j=1}^{3}z(x_j)\cdot \left[N\sum_{\nu\in P}\left(-\frac{1}{8}+\frac{1}{4}\frac{\nu_1+\nu_2+\nu_3}{N}-\frac{1}{2}\frac{\nu_1\nu_2+\nu_2\nu_3+\nu_3\nu_1}{N^2}+\frac{\nu_1\nu_2\nu_3}{N^3}\right) +\frac{1}{8}\right].
   \end{equation*}
    Since projection onto each coordinate plane induces a bijection with $P$, we explicitly compute:
    \begin{equation*}
        \sum_{\nu\in P}\nu_1+\nu_2+\nu_3 = 3N\sum_{m=0}^{N-1}m=\frac{3}{2}N^2(N-1)\hspace{1.5cm}  \sum_{\nu\in P}\nu_1\nu_2+\nu_2\nu_3+\nu_3\nu_1 = 3\sum_{m,n=0}^{N-1}mn = \frac{3}{4}N^2(N-1)^2.
    \end{equation*}
    Thus, the difference in \eqref{eq: Bernoulli difference} becomes:
    \begin{equation*}
         \sum_{\nu\in P }NB_{3,3}(w+z(\nu),z(\underline{x})) -  B_{3,3}(Nw,z(\underline{x})) = 3!\left(-\frac{(N-1)^3}{8}+\frac{1}{N^2}\sum_{\nu\in P}\nu_1\nu_2\nu_3\right)
    \end{equation*}
    and is seen to be a rational constant independent of both $w$ and $z$. Since $(N,2)=1$, it follows that $(N-1)^3\in8\mathbb{Z}$ and Lemma \ref{lemma: consant in 6Z} below shows that $\sum_{\nu\in P}v_1v_2v_3\in N^2{\mathbb{Z}}$. In particular, we have shown that:
    \begin{equation*}
        \sum_{\nu\in P }NB_{3,3}(w+z(\nu),z(\underline{x})) -  B_{3,3}(Nw,z(\underline{x})) = 3!\left(-\frac{(N-1)^3}{8}+\frac{1}{N^2}\sum_{\nu\in P}\nu_1\nu_2\nu_3\right)\in 6 \mathbb{Z}
    \end{equation*}
    is an integer multiple of $6$ independent of $w$ and $z$.
\end{proof}
The following Lemma is a special case of \cite{Morain_II}[Lemma 25]. Though proven independently, the method of proofs are completely analogous. 
\begin{lemma}\label{lemma: vanishing of non constant terms}
    Let $k_i\geq 0$ integers and set $l=3-(k_1+k_2+k_3)\in\mathbb{Z}$. If one of the $k_i = 0$, then $N\Psi(k_1,k_2,k_3)-N^l\psi(k_1,k_2,k_3)=0$. In particular, among the $(k_1,k_2,k_3)$ with $k_1+k_2+k_3 \leq 3$, the only non-zero contribution is when $k_1 = k_2=k_3=1$.
\end{lemma}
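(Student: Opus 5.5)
The plan is to use the fact that $P$ projects bijectively onto each coordinate plane and to reduce everything to the rank $1$ distribution relation \eqref{eq: rank 1 DR}. Without loss of generality suppose $k_3=0$. Then $B_{k_3}(\nu_3/N)=B_0=1$, and
\begin{equation*}
\Psi(k_1,k_2,0)=\frac{z(x_1)^{k_1}z(x_2)^{k_2}}{k_1!\,k_2!}\sum_{\nu\in P}B_{k_1}\left(\frac{\nu_1}{N}\right)B_{k_2}\left(\frac{\nu_2}{N}\right).
\end{equation*}
We first need the projection claim. Since $P\cong(\mathbb{Z}/N)^2$ has $N^2$ elements and meets the $x_3$-axis trivially, the projection $(\nu_1,\nu_2,\nu_3)\mapsto(\nu_1,\nu_2)$ is injective on $P$. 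A kernel element would lie in $P\cap\langle x_3/N\rangle$. This map is therefore a bijection onto $(\mathbb{Z}/N)^2$. This is the same fact already used in the proof of Proposition \ref{prop: Bernoulli DR}, and we invoke it here for whichever coordinate vanishes.

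Granting the bijection, the sum factorises as a product of two independent sums:
\begin{equation*}
\sum_{\nu\in P}B_{k_1}\left(\frac{\nu_1}{N}\right)B_{k_2}\left(\frac{\nu_2}{N}\right)=\left(\sum_{m=0}^{N-1}B_{k_1}\left(\frac{m}{N}\right)\right)\left(\sum_{n=0}^{N-1}B_{k_2}\left(\frac{n}{N}\right)\right)=N^{1-k_1}b_{k_1}\cdot N^{1-k_2}b_{k_2},
\end{equation*}
using \eqref{eq: rank 1 DR}. This relation also holds for $k=0$, since both sides then equal $N$. Multiplying by $N$ gives $N\Psi=N^{3-k_1-k_2}\,\psi(k_1,k_2,0)$, where $\psi(k_1,k_2,0)$ includes $b_0=1$. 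Since $l=3-k_1-k_2$, this is exactly $N^l\psi$. The difference therefore vanishes. The cases $k_1=0$ and $k_2=0$ are identical after permuting the coordinates, using the hypothesis that $P$ meets every coordinate axis trivially.

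For the final assertion, note that if all $k_i\geq1$ and $k_1+k_2+k_3\leq3$, then necessarily $k_1=k_2=k_3=1$. Every other triple has some $k_i=0$ and so contributes zero.

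There is no real obstacle. The only points to check are that the projection is a bijection, so the double sum genuinely splits, and that the representatives in $\{0,\dots,N-1\}$ match those in \eqref{eq: rank 1 DR}. The coprimality hypothesis $(N,6)=1$ is not needed for this lemma.
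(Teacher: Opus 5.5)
Your proposal is correct and follows the paper's proof: reduce to $k_3=0$, use that projection gives a bijection from $P$ onto the $x_1x_2$-coordinate plane so the sum factors, and then apply the rank $1$ distribution relation $\sum_{m=0}^{N-1}B_{k}(m/N)=N^{1-k}b_{k}$. You also spell out two points the paper leaves implicit: the projection is a bijection because its kernel $P\cap\langle x_3/N\rangle$ is trivial and both sides have $N^2$ elements, and the case $k=0$ is covered.
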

\begin{proof}
    Without loss of generality, suppose that $k_3 = 0$. Thus, $l=3-(k_1+k_2)$. Since the plane $P$ trivially intersects each coordinate axis, projection induces a bijection between $P$ and the $x_1x_2$-coordinate plane. Thus:
    \begin{align*}
        N\Psi(k_1,k_2,k_3)-N^l\psi(k_1,k_2,k_3) & = N\sum_{m,n=0}^{N-1}B_{k_1}\left(\frac{m}{N}\right)\frac{z(x_1)^{k_1}}{k_1!}B_{k_2}\left(\frac{n}{N}\right)\frac{z(x_2)^{k_2}}{k_2!} -N^{l} b_{k_1}\frac{z(x_1)^{k_1}}{k_1!}b_{k_2}\frac{z(x_2)^{k_2}}{k_2!} \\ & =\frac{z(x_1)^{k_1}}{k_1!} \frac{z(x_2)^{k_2}}{k_2!}\cdot\left(N\sum_{m=0}^{N-1}B_{k_1}\left(\frac{m}{N}\right)\sum_{n=0}^{N-1}B_{k_2}\left(\frac{n}{N}\right)-N^{l}b_{k_1}b_{k_2}\right) .
    \end{align*}
    The result then follows from the rank $1$ Distribution Relation for single variable Bernoulli Polynomials:
    \begin{equation*}
        \sum_{m=0}^{N-1}B_{k_i}\left(\frac{m}{N}\right) = N^{1-{k_i}}b_{k_i}.
    \end{equation*}
\end{proof}
\begin{lemma}\label{lemma: consant in 6Z}
    For $(N,6)=1$ and a plane $P\subset (\mathbb{Z}/N)^3$ that trivially intersects each coordinate axis, the sum of the product of coordinates is divisible by $N^2$:
    \begin{equation*}
        \sum_{(\nu_1,\nu_2,\nu_3)\in P}\nu_1\nu_2\nu_3\in N^2\mathbb{Z}
    \end{equation*}
    where $\nu_i\in\{0,\dots,N-1\}$.
\end{lemma}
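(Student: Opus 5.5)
The plan is to parametrise $P$ explicitly, and then use the involution $\nu\mapsto-\nu$ on $P$ to reduce the claimed congruence modulo $N^2$ to a congruence modulo $N$. That weaker congruence follows from elementary power-sum identities.

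\textbf{Step 1: parametrise $P$.} Since $P$ trivially intersects the $x_3$-axis and $|P|=N^2$, projection onto the $x_1x_2$-coordinate plane is a bijection (as already used in the proof of Lemma \ref{lemma: vanishing of non constant terms}). Hence there are $\alpha,\beta\in\mathbb{Z}/N$ with
\begin{equation*}
P=\{(m,n,r): m,n\in\{0,\dots,N-1\},\ r\equiv \alpha m+\beta n \bmod N,\ 0\leq r\leq N-1\}.
\end{equation*}
The condition that $P$ meets the $x_1$-axis trivially says that $\alpha m\equiv 0$ forces $m\equiv 0$, so $\alpha$ is a unit. In the same way $\beta$ is a unit. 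Write $S=\sum_{\nu\in P}\nu_1\nu_2\nu_3$. Points of $P$ with some zero coordinate contribute nothing to $S$, so $S$ is a sum over the subset $P^\circ\subset P$ of points with all coordinates nonzero.

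\textbf{Step 2: use the involution.} The map $\nu\mapsto -\nu$ preserves $P$ and preserves $P^\circ$. On representatives in $\{1,\dots,N-1\}$ it acts by $\nu_j\mapsto N-\nu_j$. Expanding
\begin{equation*}
(N-m)(N-n)(N-r)=N^3-N^2(m+n+r)+N(mn+nr+rm)-mnr
\end{equation*}
and summing over $P^\circ$ gives $S\equiv N\sum_{P^\circ}(mn+nr+rm)-S \bmod N^2$. That is,
\begin{equation*}
2S\equiv N\sum_{P^\circ}(mn+nr+rm)\bmod N^2.
\end{equation*}
Because $N$ is odd, it suffices to show $T\defeq\sum_{P^\circ}(mn+nr+rm)\equiv 0 \bmod N$.

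\textbf{Step 3: the congruence mod $N$.} Modulo $N$ we may replace $r$ by $\alpha m+\beta n$. We then compute $T$ by inclusion–exclusion from the full sum over all $(m,n)\in(\mathbb{Z}/N)^2$. The key inputs are $\sum_{m=0}^{N-1}m=N(N-1)/2\equiv 0$ and $\sum_{m=0}^{N-1} m^2=N(N-1)(2N-1)/6\equiv 0 \bmod N$; the latter is exactly where $(N,6)=1$ is used.
\begin{itemize}
\item The full sum of $mn+(m+n)(\alpha m+\beta n)$ is a combination of products of these power sums, so it vanishes mod $N$.
\item The points with $m=0$ contribute $\sum_n \beta n^2\equiv 0$, and symmetrically for $n=0$.
\item The points with $r=0$ satisfy $n\equiv\lambda m$ with $\lambda=-\alpha\beta^{-1}$. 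Their contribution is $\sum_m m\cdot(\lambda m \bmod N)\equiv\lambda\sum m^2\equiv 0$.
\item The overlaps of these three sets consist only of $(0,0,0)$, which contributes $0$.
\end{itemize}
Hence $T\equiv 0 \bmod N$, so $2S\equiv 0$ and therefore $S\equiv 0 \bmod N^2$.

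The main point needing care is Step 2. One must check that the involution really maps the chosen representatives in $\{0,\dots,N-1\}^3$ to $N-\nu_j$, which is why we restrict to $P^\circ$ first. One must also check that $N$ composite causes no trouble; it does not, since only unit-ness of $\alpha,\beta$ and the power sums mod $N$ are used.
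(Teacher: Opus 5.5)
Your proof is correct and follows essentially the paper's route. The involution $\nu\mapsto-\nu$ on points of $P$ with all coordinates nonzero reduces the claim to $\sum(\nu_1\nu_2+\nu_2\nu_3+\nu_3\nu_1)\equiv 0 \bmod N$, which is then settled by $\sum m\equiv\sum m^2\equiv 0 \bmod N$. The difference is only bookkeeping: the paper handles $\sum\nu_1\nu_2$ (and the other two by symmetry) as the full sum over $P$ minus the sum over the line $\nu_3=0$, whereas you write $P$ as the graph $r\equiv\alpha m+\beta n$ with $\alpha,\beta$ units and treat all three pairwise terms at once by inclusion--exclusion.
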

\begin{proof}
The key input from our assumption that $(N,6)=1$ is that both:
\begin{equation}\label{eq: sum of squares}
    \sum_{m=0}^{N-1}m^2=N\frac{(N-1)(2N-1)}{6}\in N\mathbb{Z} \hspace{1cm}\text{and}\hspace{1cm} \sum_{m=0}^{N-1}m=N\frac{N-1}{2}\in N\mathbb{Z} .
\end{equation}
In particular, we deduce the following useful facts, where, as above, we identify $(\mathbb{Z}/N)^3 = \{0,\dots,N-1\}^3$.
\begin{claim}\label{claim: projection to coord plane}
    For $(N,6)=1$ and a plane $P$ that trivially intersects each coordinate axis, we have $\sum_{\nu\in P}\nu_1\nu_2\in N^2\mathbb{Z}$, where $\nu = (\nu_1,\nu_2,\nu_3)$.
\end{claim}
For the proof of Lemma \ref{lemma: consant in 6Z}, we only need the sum to be in $N\mathbb{Z}$. However, to complete the proof of Proposition \ref{prop: smoothed phi0,2}, we will later require the stronger statement of the claim.
\begin{proof}[Proof of Claim \ref{claim: projection to coord plane}]
    Projection induces a bijection between $P$ and the $x_1x_2$-coordinate plane and so the sum is $\left(\sum_{m=0}^{N-1}m\right)^2\in N^2\mathbb{Z}$.
\end{proof}
\begin{claim}\label{claim: intersection with coord plane}
    For $(N,6)=1$ and a plane $P$ that trivially intersects each coordinate axis, we have $\sum_{\nu\in P,\nu_3=0}\nu_1\nu_2\in N\mathbb{Z}$, where $\nu = (\nu_1,\nu_2,\nu_3)$.
\end{claim}
\begin{proof}[Proof of Claim \ref{claim: intersection with coord plane}]
    By our assumption on $P$, the intersection of $P$ with the $x_1x_2$-coordinate plane is the line $\langle(u_1,u_2,0)\rangle$ for some $u_1$, $u_2\in (\mathbb{Z}/N)^*$. Thus, the sum is congruent to $u_1u_2\sum_{m=0}^{N-1}m^2\equiv 0$ modulo $N$.
\end{proof}
Since $N$ is odd, the claim that $S=\sum_{\nu \in P}\nu_1\nu_2\nu_3\in N^2\mathbb{Z}$ is equivalent to $2S\in N^2\mathbb{Z}$.
\begin{equation*}
    2S = \sum_{\nu_1,\nu_2,\nu_3\neq 0}\nu_1\nu_2\nu_3 + (N-\nu_1)(N-\nu_2)(N-\nu_3) \equiv N\sum_{\nu_1,\nu_2,\nu_3\neq 0}\nu_1\nu_2+\nu_2\nu_3+\nu_3\nu_1\hspace{0.5cm} \text{mod }N^2 
\end{equation*}
By symmetry, it suffices to prove that $\sum_{\nu_1,\nu_2,\nu_3\neq 0}\nu_1\nu_2\in N\mathbb{Z}$. We can write:
\begin{equation*}
    \sum_{\nu_1,\nu_2,\nu_3\neq 0}\nu_1\nu_2 =\sum_{\nu_3\neq 0}\nu_1\nu_2= \sum_{\nu\in P}\nu_1\nu_2- \sum_{\nu_3= 0}\nu_1\nu_2\in N\mathbb{Z}
\end{equation*}
by Claims \ref{claim: projection to coord plane} and \ref{claim: intersection with coord plane} respectively.
\end{proof}
\begin{remark}
    A near verbatim proof shows the rank $3$ Distribution Relation:
    \begin{equation*}
        \sum_{j_1,j_2,j_3=0}^{N-1}B_{3,3}\left(w+\frac{\sum j_ix_i}{N},\underline{x}\right) = B_{3,3}(Nw,\underline{x}).
    \end{equation*}
    Notice that this is an \textit{equality} of functions. One can obtain this equality modulo $6\mathbb{Z}$ directly by `taking' the \v{C}ech coboundary of the Distribution Relation:
    \begin{equation*}
       \prod_{j,k,l=0}^{N-1}\Gamma\left(x+\frac{j+k\tau+l\sigma}{N},\tau,\sigma\right)=\Gamma(Nx,\tau,\sigma).
    \end{equation*}
\end{remark}
\subsubsection{Sketch of proof}
We begin with a `moral' sketch of the proof. We have dropped the functionals from the notation of our coset representatives $F(L)$ for ease of notation and write $U=\langle{x_1,x_2,x_3\rangle}$ for $(x_1,x_2,x_3) = ((a_2,a_3)_L,(a_3,a_1)_L,(a_1,a_2)_L)$.
\begin{figure}[!htbp]
    \centering
\[\begin{tikzcd}
	& V \\
	L && {\frac{1}{N}U} \\
	& {L\cap \frac{1}{N}U} \\
	\\
	& U
	\arrow["N"', no head, from=1-2, to=2-1]
	\arrow["{F(V)}", no head, from=1-2, to=2-3]
	\arrow["{\cong V/\frac{1}{N}U}"{pos=0.45}, no head, from=2-1, to=3-2]
	\arrow["{F(L)}"', no head, from=2-1, to=5-2]
	\arrow["N"', no head, from=2-3, to=3-2]
	\arrow["{N^3}", no head, from=2-3, to=5-2]
	\arrow["N^2"{pos=0.3}, no head, from=3-2, to=5-2]
\end{tikzcd}\]
    \caption{Lattice of sublattices}
    \label{fig: lattice of sublattice}
\end{figure}

From this, we deduce the short exact sequence:
\[\begin{tikzcd}
	0 & {L\cap\frac{1}{N}U/U} & {L/U} & {V/\frac{1}{N}U} & 0
	\arrow[from=1-1, to=1-2]
	\arrow[from=1-2, to=1-3]
	\arrow[from=1-3, to=1-4]
	\arrow[from=1-4, to=1-5]
\end{tikzcd}\]
where $L\cap\frac{1}{N}U/U\cong (\mathbb{Z}/N)^2$ is a plane in $\frac{1}{N}U/U$ that trivially intersects each coordinate axis: indeed, by the $d=3$ assumption, $\frac{1}{N}U/L\cap\frac{1}{N}U\cong V/L\cong \mathbb{Z}/N$ and each $\frac{x_i}{N}\in \frac{1}{N}U/U$ generates this quotient. Thus, morally we can decompose $\delta\in F(L)$ as a sum $\delta = \nu+\mu$ for $\nu\in L\cap \frac{1}{N}U/U$ and $\mu\in F(V)$. 
\begin{equation*}\label{eq: misleading}
    \{\delta:\delta\in F(L)\} = \{\nu+\mu:\nu\in L\cap\frac{1}{N}U/U,\mu\in F(V)\} \tag{$\star$}.
\end{equation*}
Then, we can rewrite $N\cdot P^{0,2}_{a_1,a_2,a_3}(w,z;L)$ as follows:
\begin{align*}
    N\cdot P^{0,2}_{a_1,a_2,a_3}(w,z;L) = &\sum_{\delta\in F_{a_1,a_2,a_3}(L)} N\cdot B_{3,3}(w+z(\delta),z(\underline{x})) \overset{(\star)}{=} \sum_{\mu\in F_{a_1,a_2,a_3}(V)}N\cdot \sum_{\nu\in L\cap\frac{1}{N}U/U} B_{3,3}(w+z(\mu)+z(\nu),z(\underline{x})) \\ \equiv & \sum_{\mu\in F_{a_1,a_2,a_3}(V)} B_{3,3}(N\cdot(w+z(\mu)),z(x)) = P^{0,2}_{a_1,a_2,a_3}(w,z;V) \hspace{0.5cm}\text{ mod } 6\mathbb{Z} 
\end{align*}
by the Distribution Relation of Proposition \ref{prop: Bernoulli DR}. Therefore, $e^{-\frac{2\pi i }{3!}\text{(LHS - RHS)}}=1$, proving the triviality of $\widetilde{\phi^{0,2}_{a_1,a_2,a_3}}$.

Why is this not a formal proof? The misleading aspect is the decomposition in \eqref{eq: misleading}. To streamline notation, let $f=|F(V)|$ so that $|F(L)|=fN^2$. Thus we have $L\subset V\subset \frac{1}{Nf}U$ and can embed:
\begin{equation}\label{eq: two rows SES}
\begin{tikzcd}
	0 & {L\cap\frac{1}{N}U/U} & {L/U} & {V/\frac{1}{N}U} & 0 \\
	0 & {\left(\mathbb{Z}/N\mathbb{Z}\right)^3} & {\left(\mathbb{Z}/Nf\mathbb{Z}\right)^3} & {\left(\mathbb{Z}/f\mathbb{Z}\right)^3} & 0
	\arrow[from=1-1, to=1-2]
	\arrow[from=1-2, to=1-3]
	\arrow[hook, from=1-2, to=2-2]
	\arrow[from=1-3, to=1-4]
	\arrow[hook, from=1-3, to=2-3]
	\arrow[from=1-4, to=1-5]
	\arrow[hook, from=1-4, to=2-4]
	\arrow[from=2-1, to=2-2]
	\arrow["\iota",from=2-2, to=2-3]
	\arrow[from=2-3, to=2-4]
	\arrow["s",shift right, curve={height=-6pt}, dashed, from=2-4, to=2-3]
	\arrow[from=2-4, to=2-5]
\end{tikzcd}
\end{equation}
The \textit{set-theoretic} section $s$ is the diagonal map given on each coordinate by $\mathbb{Z}/f\mathbb{Z}\rightarrow\mathbb{Z}/Nf\mathbb{Z}$, $1\mapsto1$. Considering just one coordinate for now for clarity, this gives rise to the decomposition:
\begin{equation}\label{eq: decomposition of 1/Nf}
    \{z:z\in \mathbb{Z}/Nf\mathbb{Z}\} = \{
    \iota(x)+s(y):x\in \mathbb{Z}/N\mathbb{Z},y\in\mathbb{Z}/f\mathbb{Z}\}.
\end{equation}
Taking $z\in \{0,\dots,Nf-1\}$, $x\in \{0,\dots,N-1\}$ and $y\in \{0,\dots,f-1\}$, the decomposition above is simply that $z = fx+y$ for a unique choice of $x$ and $y$. That is we reduce $z$ modulo $f$ to its unique representative $z-fx=y$ in $\{0,\dots,f-1\}$. This is just the Euclidean algorithm.

The issue is the section $s$, and consequent decomposition of \eqref{eq: decomposition of 1/Nf}, does not restrict to the top row of \eqref{eq: two rows SES}. However, we will show that the decomposition does hold when we project onto a coordinate plane, so that the subtlety is concentrated in the cubic terms $\sum_{\delta\in F(L)}\delta^3-\sum_{\mu\in F(V)}\sum_{\nu\in L\cap\frac{1}{N}U/U}(\mu+\nu)^3$. The final step will be to control this difference and thus show that:
\begin{equation*}
    \sum_{\delta\in F(L)} N\cdot B_{3,3}(w+z(\delta),z(\underline{x})) \equiv \sum_{\mu\in F(V)}N\cdot \sum_{\nu\in L\cap\frac{1}{N}U/U} B_{3,3}(w+z(\mu)+z(\nu),z(\underline{x})) \hspace{0.5cm}\text{ mod } 6\mathbb{Z} 
\end{equation*}
from which Proposition \ref{prop: smoothed phi0,2} will follow.
\subsubsection{Rectifying the proof strategy}
We begin with a more explicit description of the objects and maps in the commutative diagram in \eqref{eq: two rows SES}. Remember that $B_{3,3}(w,z(\underline{x}))$ is not $z(U)$-invariant where $U=\langle x_1,x_2,x_3\rangle$ so we must always be careful to refer to a \textit{specific} set of coset representatives rather than simply a quotient group.
\begin{itemize}
    \item $F(L) = \{\delta = \frac{1}{Nf}(\delta_1x_1+\delta_2x_2+\delta_3x_3):   0\leq \delta_i< Nf \text{ , }\delta\in L\}$ is a set of coset representatives for $L/U$.
    \item $F(V) = \{\mu = \frac{1}{f}(\mu_1\frac{x_1}{N}+mu_2\frac{x_2}{N}+\mu_3\frac{x_3}{N}):   0\leq \mu_i< f \text{ , }\mu\in V\}$ is a set of coset representatives for $V/\frac{1}{N}U$.
    \item $F(L\cap\frac{1}{N}U) = \{\nu =\frac{1}{N}(\nu_1x_1+\nu_2x_2+\nu_3x_3):   0\leq \nu_i< N \text{ , }\nu\in L\}$ is a set of coset representatives for $L\cap\frac{1}{N}U/U$.
\end{itemize}
\begin{remark}
    The notation $F(L\cap\frac{1}{N}U)$ is not simply meant to mimic that of $F(L)$ and $F(V)$. For our functionals $a_1$, $a_2$, and $a_3$, the set $F(L\cap\frac{1}{N}U)$ is precisely $F_{a_1,a_2,a_3}(L\cap\frac{1}{N}U)$ in the notation of Proposition \ref{prop: explicit phi0,2}.
\end{remark}
Then the vertical maps in \eqref{eq: two rows SES} are given by $\delta\mapsto (\delta_1,\delta_2,\delta_3)$ and similarly for $\mu$ and $\nu$. Note that the square on the right of \eqref{eq: two rows SES} commutes because the horizontal map $L/U\rightarrow V/\frac{1}{N}U$ corresponds to quotienting by $\frac{f}{Nf}U$, which precisely reduces the $\delta_i$ modulo $f$.

Now let us rectify our previous proof strategy. The reason why the section $s$ of \eqref{eq: two rows SES} does not restrict to $V/\frac{1}{N}U\rightarrow L/U$ is that for $\mu\in V/\frac{1}{N}U$, there is no reason to expect $s(\mu)\in L/U$; whilst $V=L+\frac{1}{N}U$, the specific coset representatives given by $F(V)$ need not lie in $L$. One sufficient condition for this to occur is for $L/U$ to contain all preimages, that is to say that $L\cap\frac{1}{N}U/U=\left(\mathbb{Z}/N\mathbb{Z}\right)^3$. This is evidently not true: the two groups have different cardinality. However, we can recover this equality by projecting first to a coordinate plane. This attempt to restrict the section motivates first applying the projection $\pi_{i,j}$ onto the $x_ix_j$-coordinate plane: $\pi_{i,j}(\delta)=(\delta_i,\delta_j)$ for the notation $\delta_i$ as above. We deduce the following commutative diagram with exact rows:
\begin{equation}\label{eq: three rows SES}
\begin{tikzcd}
	0 & {L\cap\frac{1}{N}U/U} & {L/U} & {V/\frac{1}{N}U} & 0 \\
	0 & {\pi_{i,j}(L\cap\frac{1}{N}U/U)} & {\pi_{i,j}(L/U)} & {\pi_{i,j}(V/\frac{1}{N}U)} & 0 \\
	0 & {\left(\mathbb{Z}/N\mathbb{Z}\right)^2} & {\left(\mathbb{Z}/Nf\mathbb{Z}\right)^2} & {\left(\mathbb{Z}/f\mathbb{Z}\right)^2} & 0
	\arrow[from=1-1, to=1-2]
	\arrow[from=1-2, to=1-3]
	\arrow[two heads, from=1-2, to=2-2]
	\arrow[from=1-3, to=1-4]
	\arrow[two heads, from=1-3, to=2-3]
	\arrow[from=1-4, to=1-5]
	\arrow[two heads, from=1-4, to=2-4]
	\arrow[from=2-1, to=2-2]
	\arrow["\iota",from=2-2, to=2-3]
	\arrow[equal, from=2-2, to=3-2]
	\arrow[from=2-3, to=2-4]
	\arrow[hook, from=2-3, to=3-3]
	\arrow["s", shift right, curve={height=-6pt}, dashed, from=2-4, to=2-3]
	\arrow[from=2-4, to=2-5]
	\arrow[hook, from=2-4, to=3-4]
	\arrow[from=3-1, to=3-2]
	\arrow["\iota",from=3-2, to=3-3]
	\arrow[from=3-3, to=3-4]
	\arrow["s", shift right, curve={height=-6pt}, dashed, from=3-4, to=3-3]
	\arrow[from=3-4, to=3-5]
\end{tikzcd}
\end{equation}
\begin{remark}
    It is not immediate that the middle row, the `image sequence', is exact. In general, for a commutative diagram with exact rows:
    \begin{equation}\label{eq: SL digression}
    \begin{tikzcd}
    	0 & A & B & C & 0 \\
    	0 & {A'} & {B'} & {C'} & 0
    	\arrow[from=1-1, to=1-2]
    	\arrow[from=1-2, to=1-3]
    	\arrow["a"', from=1-2, to=2-2]
    	\arrow[from=1-3, to=1-4]
    	\arrow["b"', from=1-3, to=2-3]
    	\arrow[from=1-4, to=1-5]
    	\arrow["c"', from=1-4, to=2-4]
    	\arrow[from=2-1, to=2-2]
    	\arrow[from=2-2, to=2-3]
    	\arrow[from=2-3, to=2-4]
    	\arrow[from=2-4, to=2-5]
    \end{tikzcd}
    \end{equation}
    it is not always true that the `image sequence`:
    \[\begin{tikzcd}
    	0 & a(A) & b(B) & c(C) & 0 
    	\arrow[from=1-1, to=1-2]
    	\arrow[from=1-2, to=1-3]
    	\arrow[from=1-3, to=1-4]
    	\arrow[from=1-4, to=1-5]
    \end{tikzcd}\]
    is exact. In fact, exactness is equivalent to the vanishing of the coboundary map $\partial$ in the Snake Lemma associated to \eqref{eq: SL digression}. This is equivalent to the more easily verified condition $a(A)=b(B)\cap A'$.

    In the case of \eqref{eq: three rows SES}, this means checking that $\pi_{i,j}(L\cap\frac{1}{N}U/U) = \pi_{i,j}(L/U)\cap \left(\mathbb{Z}/N\mathbb{Z}\right)^2$, which is immediate by the surjectivity on the left.
\end{remark}
As argued previously, by the $d=3$ assumption, the plane $L\cap\frac{1}{N}U/U\subset\frac{1}{N}U/U\cong(\mathbb{Z}/N)^3$ trivially intersects each coordinate axis. Thus, the projection onto any coordinate plane is bijective. Critically, this means the section $s:\left(\mathbb{Z}/f\mathbb{Z}\right)^2\rightarrow\left(\mathbb{Z}/Nf\mathbb{Z}\right)^2$ restricts to $s:\pi_{i,j}(V/\frac{1}{N}U)\rightarrow \pi_{i,j}(L/U)$, as depicted in the diagram above. In particular, the decomposition of \eqref{eq: decomposition of 1/Nf} restricts to give a similar decomposition of $\pi_{i,j}(L/U)$. 
\begin{proposition}\label{prop: decomposition after projection}
    For any pair $(i,j)$ and $\delta\in F(L)$, there exists a unique pair $(\nu,\mu)\in F(L\cap{\frac{1}{N}U)}\times F(V)$ such that $\pi_{i,j}(\delta)=\pi_{i,j}(\iota(\nu)+s(\mu))$. That is to say we can decompose $\pi_{i,j}(L/U)$ as:
    \begin{equation*}
        \{(\delta_i,\delta_j):\delta\in F(L)\} = \left\{(\iota(\nu_i)+s(\mu_i),\iota(\nu_j)+s(\mu_j)):\nu\in F\left(L\cap\frac{1}{N}U\right) ,\mu\in F(V)\right\}.
    \end{equation*}
\end{proposition}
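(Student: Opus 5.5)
We prove the statement by a counting argument: we construct a map in one direction and compare cardinalities. Fix a pair $(i,j)$. Throughout, elements are identified with their coordinate vectors in $\{0,\dots,Nf-1\}^3$, $\{0,\dots,N-1\}^3$ and $\{0,\dots,f-1\}^3$ respectively, as in the explicit descriptions of $F(L)$, $F(L\cap\frac{1}{N}U)$ and $F(V)$ above. Concretely, $\iota(x)=fx$ and $s(y)=y$ coordinatewise, so $\iota(\nu_i)+s(\mu_i)=f\nu_i+\mu_i\in\{0,\dots,Nf-1\}$.

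\textbf{Step 1: the image of $\iota\times s$ lies in $\pi_{i,j}(L/U)$.} Given $(\nu,\mu)$, we want $(f\nu_i+\mu_i,f\nu_j+\mu_j)\in\pi_{i,j}(L/U)$. Since $V=L+\frac1N U$, there is a lift $\delta'\in L/U$ of $\mu$. Its projection $\pi_{i,j}(\delta')$ agrees with $(\mu_i,\mu_j)$ modulo $f$, so the difference $(\mu_i,\mu_j)-\pi_{i,j}(\delta')$ lies in $\iota((\mathbb{Z}/N)^2)$. The $d=3$ assumption makes $L\cap\frac1N U/U$ a plane meeting each coordinate axis trivially. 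Hence $\pi_{i,j}$ restricted to it is a bijection onto $(\mathbb{Z}/N)^2$, which is the left vertical equality in \eqref{eq: three rows SES}. So this difference is the image of some element of $L\cap\frac1NU/U\subset L/U$. Adding that element, together with $\iota(\nu)$ for the same reason, shows $(f\nu_i+\mu_i,f\nu_j+\mu_j)\in\pi_{i,j}(L/U)$. This is exactly the statement that the dashed $s$ restricts to the middle row of \eqref{eq: three rows SES}.

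\textbf{Step 2: injectivity.} We show that $(\nu,\mu)\mapsto\pi_{i,j}(\iota(\nu)+s(\mu))$ is injective on $F(L\cap\frac1NU)\times F(V)$. The coordinates $(f\nu_i+\mu_i,f\nu_j+\mu_j)$ determine $(\nu_i,\nu_j)$ and $(\mu_i,\mu_j)$ by Euclidean division, using $0\le\mu_k<f$ and $0\le\nu_k<N$. The pair $(\nu_i,\nu_j)$ determines $\nu$, since projection is bijective on the plane. The key point is that $(\mu_i,\mu_j)$ determines $\mu$; to see this we need the projection $\pi_{i,j}$ to be injective on $V/\frac1NU$. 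This follows from exactness of the middle row in \eqref{eq: three rows SES}, which the preceding Remark established, combined with a cardinality count, carried out below.

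\textbf{Step 3: counting.} The exact rows give $|\pi_{i,j}(L/U)|=N^2\,|\pi_{i,j}(V/\frac1NU)|$. The map from $F(L\cap\frac1NU)\times F(V)$ surjects onto the set of elements $(f\nu_i+\mu_i,f\nu_j+\mu_j)$ with $\mu$ ranging over $F(V)$, and this set has size $N^2|\pi_{i,j}(F(V))|$. It equals $\pi_{i,j}(L/U)$, because every element of $\pi_{i,j}(L/U)$ has a unique Euclidean decomposition whose $\mu$-part lies in $\pi_{i,j}(V/\frac1NU)$ (this uses commutativity of the right square) and whose $\nu$-part lies in $(\mathbb{Z}/N)^2$. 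This gives existence, and uniqueness of the pair of projections.

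\textbf{Main obstacle.} The hardest step is promoting uniqueness of $(\pi_{i,j}\nu,\pi_{i,j}\mu)$ to uniqueness of $(\nu,\mu)$ itself; for $\nu$ this is already handled by the plane condition. If $\pi_{i,j}$ fails to be injective on $V/\frac1NU$, I would instead read the statement as asserting a bijection of multisets. In that case the approach is to show that each fibre of $\pi_{i,j}$ over $F(L)$ has the same size as the corresponding fibre over $F(V)$. The tool for this is the bijection $L/U\to V/\frac1NU$ restricted to a coset of the kernel, which the short exact sequence supplies: $|F(L)|=N^2|F(V)|$. This multiset version is exactly what the subsequent sum comparison needs.
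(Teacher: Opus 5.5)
Your existence argument and the set identity are correct and follow the paper's route. You reduce $(\delta_i,\delta_j)$ modulo $f$ by Euclidean division, recover $\nu$ from the quotient digits using that $\pi_{i,j}$ is bijective on the plane $L\cap\frac1NU/U$, and take $\mu=p(\delta)$ by commutativity of the right-hand square. Your Step 1 is exactly the paper's remark that $s$ restricts to $\pi_{i,j}(V/\frac1NU)\to\pi_{i,j}(L/U)$.

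The gap is the uniqueness of $\mu$. You rightly note that it needs $\pi_{i,j}$ to be injective on $V/\frac1NU$, but your justification cannot deliver this. Exactness of the middle row of the three-row diagram only gives $|\pi_{i,j}(L/U)|=N^2\,|\pi_{i,j}(V/\frac1NU)|$, and your Step 3 count reproduces the same identity. Both hold whether or not $\pi_{i,j}$ has a kernel on $V/\frac1NU$. Injectivity there is equivalent to injectivity on $L/U$, which you also do not establish. Your multiset fallback weakens the statement, which asserts a unique pair.

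The missing idea is that direction vectors are primitive. By the $d=3$ assumption, $x_k/N$ is the direction vector in $V$ of the two functionals other than $a_k$, so it is primitive in $V$. Suppose $\mu=\frac1f\sum_l\mu_l\frac{x_l}{N}\in V$ has $\mu_i\equiv\mu_j\equiv 0 \pmod f$. Then $\mu-\frac{\mu_i}{f}\frac{x_i}{N}-\frac{\mu_j}{f}\frac{x_j}{N}$ is a rational multiple of $x_k/N$ lying in $V$, hence an integral multiple. So $\mu\in\frac1NU$, and the kernel of $\pi_{i,j}$ on $V/\frac1NU$ is trivial. The same argument with $x_k$ primitive in $L$ gives injectivity of $\pi_{i,j}$ on $L/U$, and with that input your counting route would also work.

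The paper's proof is equally terse at this point: it cites only the injectivity of $s$, which determines $\pi_{i,j}(\mu)$ but not $\mu$ itself. So this is the step that must be made explicit.

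Separately, the map $\delta\mapsto(\nu,p(\delta))$ is injective on $F(L)$: two preimages differ by an element of the plane with zero $\pi_{i,j}$-projection, hence by zero. It is therefore bijective, since $|F(L)|=N^2|F(V)|$. This is the rigorous form of your multiset reading, and it already suffices for the subsequent linear and quadratic sum identities.
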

\begin{proof}
    We can apply the decomposition of \eqref{eq: decomposition of 1/Nf} to $\pi_{i,j}(L/U)$ since $\pi_{i,j}(L\cap\frac{1}{N}U/U) = \left(\mathbb{Z}/N\mathbb{Z}\right)^2$. Essentially, we are simply observing that we can reduce both $\delta_i$ and $\delta_j$ simultaneously modulo $f$. This uniquely specifies an element $\nu\in F(L\cap\frac{1}{N}U)$ and then we will have $\pi_{i,j}(\delta-\iota(\nu))\in s(\pi_{i,j}(V/\frac{1}{N}U))$. Since a section $s$ is necessarily injective, this condition uniquely specifies $\mu\in F(V)$.
\end{proof}
\begin{corollary}\label{corollary: sums linear and quadratic}
    We have the identities:
    \begin{equation*}
        \sum_{\delta\in F(L)}\delta = \sum_{\mu\in F(V)}\sum_{\nu\in F(L\cap\frac{1}{N}U)}\mu+\nu \in V \hspace{1cm} \text{and} \hspace{1cm} \sum_{\delta\in F(L)}\delta^2 = \sum_{\mu\in F(V)}\sum_{\nu\in F(L\cap\frac{1}{N}U)}(\mu+\nu)^2\in \operatorname{Sym}^2(V).
    \end{equation*}
\end{corollary}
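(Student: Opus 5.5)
The plan is to reduce both identities to statements about the coordinate projections $\pi_{i,j}$, and then apply Proposition \ref{prop: decomposition after projection}. Write everything in the coordinates $\delta=\frac{1}{Nf}\sum_k\delta_kx_k$, with $\delta_k\in\{0,\dots,Nf-1\}$. Under $\delta\mapsto(\delta_1,\delta_2,\delta_3)$, the element $\iota(\nu)+s(\mu)$ corresponds to the integer triple $(f\nu_k+\mu_k)_k$, where $\nu_k\in\{0,\dots,N-1\}$ and $\mu_k\in\{0,\dots,f-1\}$. Because $f\nu_k+\mu_k\le Nf-1$, this is an honest integer representative with no reduction. It equals $\frac{1}{Nf}\sum_k(f\nu_k+\mu_k)x_k=\nu+\mu$ as vectors in $V\otimes\mathbb{Q}$, using the representatives $\nu\in F(L\cap\frac1NU)$ and $\mu\in F(V)$.

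\textbf{Linear identity.} Both sides of $\sum_{\delta}\delta=\sum_{\mu,\nu}(\mu+\nu)$ are vectors, so it suffices to compare the coefficient of each $x_k$ separately. That coefficient depends only on the single coordinate $\delta_k$, hence only on $\pi_{i,k}(\delta)$ for any $i\neq k$.

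Proposition \ref{prop: decomposition after projection} gives a bijection $F(L)\to F(L\cap\frac1NU)\times F(V)$, $\delta\mapsto(\nu,\mu)$, satisfying $\pi_{i,k}(\delta)=\pi_{i,k}(\iota(\nu)+s(\mu))$. Strictly, it gives equality of the multisets of pairs $(\delta_i,\delta_k)$ on each side. Since $|F(L)|=fN^2=|F(L\cap\frac1NU)|\cdot|F(V)|$, I would phrase this as an equality of multisets indexed by $F(L)$ on one side and by $F(L\cap\frac1NU)\times F(V)$ on the other.

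Summing $\delta_k$ over this multiset equality gives $\sum_\delta\delta_k=\sum_{\nu,\mu}(f\nu_k+\mu_k)$. This is exactly the $x_k$-coefficient identity.

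\textbf{Quadratic identity.} In $\operatorname{Sym}^2(V\otimes\mathbb{Q})$ with basis $x_kx_l$, the coefficient of $\delta^2$ is $\frac{1}{(Nf)^2}\delta_k\delta_l$, up to the symmetric factor $2$ when $k\neq l$. This monomial depends only on $\pi_{k,l}(\delta)$ when $k\neq l$, and only on $\delta_k$ when $k=l$.

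So for each $k\neq l$ I would apply the multiset equality for the pair $(k,l)$ and sum $\delta_k\delta_l$, obtaining $\sum_\delta\delta_k\delta_l=\sum_{\nu,\mu}(f\nu_k+\mu_k)(f\nu_l+\mu_l)$. The diagonal terms $k=l$ follow as in the linear case. Assembling all the coefficients yields the stated identity in $\operatorname{Sym}^2$.

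The only genuine point is that each coefficient involves at most two coordinates. This is exactly why the argument stops at degree $2$: the cubic monomial $\delta_1\delta_2\delta_3$ needs all three coordinates at once, and Proposition \ref{prop: decomposition after projection} gives no control there.

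\textbf{Main obstacle.} The one step needing care is to state Proposition \ref{prop: decomposition after projection} precisely as a multiset (fibre-counting) equality, not merely as an equality of sets of images. I would check it as follows. The map $F(L)\to\pi_{i,j}(L/U)$ has fibres of size $|F(L)|/|\pi_{i,j}(L/U)|$. By the exactness of the middle row of \eqref{eq: three rows SES}, $|\pi_{i,j}(L/U)|=N^2|\pi_{i,j}(V/\frac1NU)|$. Likewise, the fibres of $F(V)\to\pi_{i,j}(V/\frac1NU)$ have size $|F(V)|/|\pi_{i,j}(V/\frac1NU)|$. Comparing these shows the fibre sizes on the two sides agree, which upgrades the set decomposition to the multiset one needed above.
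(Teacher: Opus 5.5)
Your proof is correct and follows the paper's own route. You compare coefficients in $V$ and $\operatorname{Sym}^2(V)$, note that each coefficient involves at most two of the coordinates $\delta_k$, and apply Proposition \ref{prop: decomposition after projection} separately for each coordinate pair; this is exactly what the paper compresses into ``we can assume $\delta=\iota(\nu)+s(\mu)$''. Your multiset bookkeeping makes that step explicit. In fact the fibres are singletons: $x_k$ is primitive in $L$ and $x_k/N$ is primitive in $V$, so $\pi_{i,j}$ is injective on both $L/U$ and $V/\frac{1}{N}U$.
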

\begin{proof}
    Since  $x_1x_2x_3\not\in V\oplus\operatorname{Sym}^2(V)$, each term appearing above contains at most two indices. Thus, we can assume $\delta = \iota(\nu)+s(\mu)$. Then, the first identity is given by:
    \begin{align*}
        \sum_{\delta\in F(L)}\delta=\sum_{\delta\in F(L)}\sum_{i=1}^3\frac{\delta_i}{Nf}x_i=&\sum_{\mu\in F(V)}\sum_{\nu\in F(L\cap\frac{1}{N}U)}\sum_{i=1}^3\frac{s(\mu_i)+\iota(\nu_i)}{Nf}x_i
        \\ &=\sum_{\mu\in F(V)}\sum_{\nu\in F(L\cap\frac{1}{N}U)}\sum_{i=1}^3\frac{\mu_i}{f}\frac{x_i}{N}+\frac{\nu_i}{N}x_i=\sum_{\mu\in F(V)}\sum_{\nu\in F(L\cap\frac{1}{N}U)}\mu+\nu
    \end{align*}
    since $\iota(\nu_i)=f\nu_i$ and $s(\mu_i)=\mu_i$ on explicit representatives. The second identity is proved similarly.
\end{proof}
\begin{corollary}\label{corollary: cubic error term}
We have the following decomposition of $N\cdot P^{0,2}_{a_1,a_2,a_3}(w,z;L)$:
    \begin{align*}
        \sum&_{\delta\in F(L)}B_{3,3}(w+z(\delta),z(\underline{x})) -\sum_{\mu\in F_{a_1,a_2,a_3}(V)} \sum_{\nu\in F(L\cap\frac{1}{N}U)}  B_{3,3}(w+z(\mu)+z(\nu),z(\underline{x}))  \\ & = \frac{1}{z(x_1)z(x_2)z(x_3)}\left(\sum_{\delta\in F(L)}\delta^3-\sum_{\mu\in F(V)}\sum_{\nu\in F(L\cap\frac{1}{N}U)}(\mu+\nu)^3 \right).
    \end{align*}
\end{corollary}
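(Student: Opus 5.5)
The plan is to expand $B_{3,3}$ explicitly as a polynomial in its first argument and isolate the terms of total degree at most two in the lattice coordinates, which are handled by Corollary \ref{corollary: sums linear and quadratic}.

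From the generating function \eqref{eq: exp generating function},
\begin{equation*}
B_{3,3}(w,z(\underline{x}))=\frac{3!}{z(x_1)z(x_2)z(x_3)}\sum_{l=0}^{3}\frac{w^l}{l!}c_{3-l}(z(\underline{x})),
\end{equation*}
where $c_k$ is the coefficient of $t^k$ in $\prod_j \frac{z(x_j)t}{e^{z(x_j)t}-1}$. The point is that $B_{3,3}(w,\cdot)\cdot z(x_1)z(x_2)z(x_3)$ is a cubic polynomial in $w$ whose leading coefficient is exactly $1$ (since $3!\cdot\frac{1}{3!}=1$), with lower coefficients depending only on $z(\underline{x})$.

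Substitute $w+z(\delta)$ and $w+z(\mu)+z(\nu)$ respectively, and expand the powers binomially. Each side becomes a sum over $l$ of terms $\binom{3}{l}$-weighted products $w^{3-l}$-type coefficients times $\sum z(\delta)^l$. The latter is to be compared with $\sum\sum z(\mu+\nu)^l$ for $l\le 3$.

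For $l=0$, both index sets have cardinality $fN^2$, since $|F(L)|=|F(V)|\cdot|F(L\cap\frac1N U)|$. For $l=1,2$, apply the linear extension of $z$ to $\operatorname{Sym}^l$ of the identities in Corollary \ref{corollary: sums linear and quadratic}. That is, interpret $z(\delta)^l$ as $z^{\otimes l}$ evaluated on $\delta^l\in\operatorname{Sym}^l(V\otimes\mathbb{Q})$. The sums then agree exactly.

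Hence every term except the $l=3$ one cancels, and the $l=3$ term has coefficient $1$ (the $\frac{1}{3!}$ from $w^3/3!$ cancels the $3!$). What survives is
\begin{equation*}
\frac{1}{z(x_1)z(x_2)z(x_3)}\Bigl(\sum_{\delta\in F(L)} z(\delta)^3-\sum_{\mu,\nu} z(\mu+\nu)^3\Bigr),
\end{equation*}
which is the stated expression, with $\delta^3$ read as $z^{\otimes 3}(\delta^3)$ in the paper's shorthand.

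The main care point is bookkeeping. First, I must make sure that the $l\le 2$ identities hold as identities in $\operatorname{Sym}^l$, and not merely after projection, so that applying $z$ is legitimate. Second, I must check that the normalization of the leading coefficient of $B_{3,3}$ is indeed $1/\prod z(x_j)$ after the factor $3!$.
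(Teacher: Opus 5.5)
Your proposal is correct and is the paper's argument: the paper also observes that the leading coefficient of $B_{3,3}(w,z(\underline{x}))$ in $w$ is $1/\prod_j z(x_j)$, and cancels all lower-degree terms in $z(\delta)$ using Corollary \ref{corollary: sums linear and quadratic}. You additionally write out the binomial expansion and the degree-zero count $|F(L)|=fN^2=|F(V)|\cdot|F(L\cap\frac{1}{N}U)|$, both of which the paper leaves implicit.
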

\begin{proof}
    The leading term of $B_{3,3}(w,z(\underline{x}))$ is $\dfrac{w^3}{\prod z(x_j)}$. The result then follows immediately from Corollary \ref{corollary: sums linear and quadratic}.
\end{proof}
Combining the above with the Distribution Relation yields:
\begin{equation*}
    N\cdot P^{0,2}_{a_1,a_2,a_3}(w,z;L)-P^{0,2}_{a_1,a_2,a_3}(w,z;V)\equiv \frac{1}{z(x_1)z(x_2)z(x_3)}\left(\sum_{\delta\in F(L)}\delta^3-\sum_{\mu\in F(V)}\sum_{\nu\in F(L\cap\frac{1}{N}U)}(\mu+\nu)^3 \right)\hspace{0.5cm}\text{mod } 6\mathbb{Z}
\end{equation*}
Thus, it remains to prove that the `error term' to our proof sketch, namely the right hand side above, is in fact in $6\mathbb{Z}$. This is the final step of our proof and is proven in the next, and final, subsection.
\subsubsection{Final steps}
As in the proof of the Distribution Relation, in order to show that the constant term lay in $6\mathbb{Z}$, we required Lemma \ref{lemma: consant in 6Z}: that for $(N,6)=1$ and a plane $P$ that trivially intersects each coordinate axis, the sum $\sum_{(\nu_1,\nu_2,\nu_3)\in P}\nu_1\nu_2\nu_3\in N^2\mathbb{Z}$. We now state a stronger version of this claim.
\begin{lemma}\label{lemma: sum of a_1a_2(a_3+X)}
For $(N,6)=1$, any $X\in \mathbb{Z}$ and a plane $P\subset(\mathbb{Z}/N)^3$ that trivially intersects each coordinate axis, the following sum is divisible by $N^2$:
\begin{equation*}
    \sum_{(\nu_1,\nu_2,\nu_3)\in P}\nu_1\nu_2[\nu_3+X]_N\in N^2\mathbb{Z}
\end{equation*}
where $\nu_i\in\{0,\dots,N-1\}$.
\end{lemma}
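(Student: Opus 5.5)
The plan is to reduce to the case $X=0$, which is Lemma \ref{lemma: consant in 6Z}, by making the carry in $[\nu_3+X]_N$ explicit and controlling the resulting correction term through the fibre structure of $P$ over the third coordinate.

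\textbf{Step 1: remove the carry.} Since $[\nu_3+X]_N$ only depends on $X$ modulo $N$, we may assume $0\le X\le N-1$. For $\nu_3\in\{0,\dots,N-1\}$ we have
\begin{equation*}
    [\nu_3+X]_N=\nu_3+X-N\cdot\mathbf{1}_{\{\nu_3\ge N-X\}},
\end{equation*}
so the sum splits as
\begin{equation*}
    \sum_{\nu\in P}\nu_1\nu_2\nu_3 \;+\; X\sum_{\nu\in P}\nu_1\nu_2 \;-\; N\sum_{c=N-X}^{N-1}\ \sum_{\nu\in P,\ \nu_3=c}\nu_1\nu_2 .
\end{equation*}
The first term lies in $N^2\mathbb{Z}$ by Lemma \ref{lemma: consant in 6Z}. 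The second lies in $N^2\mathbb{Z}$ by Claim \ref{claim: projection to coord plane}; this is where its stronger form is needed. It therefore suffices to show that each fibre sum $\sum_{\nu\in P,\,\nu_3=c}\nu_1\nu_2$ lies in $N\mathbb{Z}$, for every $c$.

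\textbf{Step 2: fibre sums (the main point).} This step generalises Claim \ref{claim: intersection with coord plane}, which is exactly the case $c=0$. Projection of $P$ to the third coordinate is a homomorphism $P\to\mathbb{Z}/N$ whose kernel is $P\cap\{\nu_3=0\}$. As in the proof of that claim, the kernel is the line $\langle (u_1,u_2,0)\rangle$ with $u_1,u_2\in(\mathbb{Z}/N)^*$, since $P$ is a plane meeting each coordinate axis trivially.

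Hence each nonempty fibre $\{\nu\in P:\nu_3=c\}$ is a coset
\begin{equation*}
    \{(p_1+tu_1,\,p_2+tu_2,\,c): t\in\mathbb{Z}/N\}.
\end{equation*}
We only need the fibre sum modulo $N$, and $\nu_1\nu_2$ modulo $N$ depends only on the residues of $\nu_1,\nu_2$. So we may compute with these affine representatives:
\begin{equation*}
    \sum_{t=0}^{N-1}(p_1+tu_1)(p_2+tu_2)=Np_1p_2+(p_1u_2+p_2u_1)\sum_{t=0}^{N-1} t+u_1u_2\sum_{t=0}^{N-1}t^2 .
\end{equation*}
This is $\equiv 0$ modulo $N$ by the two identities in \eqref{eq: sum of squares}, which is where $(N,6)=1$ enters. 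Empty fibres contribute $0$.

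\textbf{Conclusion.} The third term is $N$ times a sum of elements of $N\mathbb{Z}$, so it lies in $N^2\mathbb{Z}$. All three terms therefore lie in $N^2\mathbb{Z}$, which proves the lemma.

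The only step requiring care is recognising that the fibres of $P$ over $\nu_3$ are translates of the fixed line $P\cap\{\nu_3=0\}$. That reduces the shifted sum to the same two power-sum identities already used in Lemma \ref{lemma: consant in 6Z}.
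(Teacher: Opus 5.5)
Your proof is correct, but it is organised differently from the paper's.

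\textbf{The paper's route.} It never writes out the carry. It reruns the symmetrisation trick from the proof of Lemma \ref{lemma: consant in 6Z}, pairing $\nu$ with $-\nu\in P$ to write $2S=\sum_{\nu_1,\nu_2\neq 0}\nu_1\nu_2[X+\nu_3]_N+(N-\nu_1)(N-\nu_2)[X-\nu_3]_N$. It then sets $[X+\nu_3]_N+[X-\nu_3]_N=2X+NC_{\nu_3}$. Working modulo $N^2$, this reduces the lemma to showing that $\sum_{\nu_1,\nu_2\neq 0}\nu_1\nu_2C_{\nu_3}+\nu_3(\nu_1+\nu_2)-X(\nu_1+\nu_2)$ is divisible by $N$. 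That last step uses Claim \ref{claim: sum over level set} (exactly your Step 2), intermediate facts from the proof of Lemma \ref{lemma: consant in 6Z}, and the bijectivity of $\pi_{1,2}$.

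\textbf{Your route.} The identity $[\nu_3+X]_N=\nu_3+X-N\mathbf{1}_{\{\nu_3\ge N-X\}}$ makes the linear structure explicit. The shifted sum becomes the $X=0$ sum, plus $X\sum_{\nu\in P}\nu_1\nu_2$, minus $N$ times a partial sum of fibre sums. So the lemma is a formal consequence of three already-established facts: Lemma \ref{lemma: consant in 6Z}, Claim \ref{claim: projection to coord plane}, and the fibre-sum estimate. There is no second symmetrisation and no bookkeeping of the integers $C_{\nu_3}$.

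\textbf{Comparison.} The genuinely new input is the same in both arguments: every fibre $\{\nu_3=c\}$ is a coset of the line $\langle(u_1,u_2,0)\rangle$, so its sum of $\nu_1\nu_2$ vanishes modulo $N$ when $(N,6)=1$. The paper's version mirrors the earlier proof line by line. Yours is shorter and shows more transparently that the shift by $X$ only adds two correction terms, each controlled by a claim the paper has already proved.
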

Here $[-]_N$ denotes taking the residue class modulo $N$ in the set $\{0,\dots,N-1\}$. 
\begin{proof}
    We begin by recalling the two claims that were introduced in the proof of Lemma \ref{lemma: consant in 6Z} and which we will require again. Namely, we have for $(N,6)=1$ and a plane $P$ that trivially intersects each coordinate axis:
    \begin{equation*}
         \sum_{\nu\in P}\nu_1\nu_2\in N^2\mathbb{Z}  \hspace{0.5cm}\text{and} \hspace{0.5cm} \sum_{\nu\in P,\nu_3=0}\nu_1\nu_2\in N\mathbb{Z}
    \end{equation*}
    To these two, we add a third claim:
    \begin{claim}\label{claim: sum over level set}
        For $(N,6)=1$, a plane $P$ that trivially intersects each coordinate axis and some $c\in \{0,\dots,N-1\}$ we have $\sum_{\nu\in P,\nu_3=c}\nu_1\nu_2\in N\mathbb{Z}$.
    \end{claim}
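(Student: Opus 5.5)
The plan is to parametrise the level set $\{\nu\in P:\nu_3=c\}$ explicitly and then reduce everything to the sums of $m$ and $m^2$ from \eqref{eq: sum of squares}, exactly as in Claim \ref{claim: intersection with coord plane}, which is the case $c=0$.

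First I record the structure of $P$. Since $P$ meets each coordinate axis trivially, projection onto the $x_1x_3$-plane is bijective. Hence for each $c$ the level set $P_c=\{\nu\in P:\nu_3=c\}$ has exactly $N$ elements, and $\nu_1$ runs over all of $\mathbb{Z}/N$ on it. Moreover $P_0$ is the line $\langle (u_1,u_2,0)\rangle$ with $u_1,u_2\in(\mathbb{Z}/N)^*$, and $P_c=\nu^{(c)}+P_0$ for any fixed $\nu^{(c)}\in P_c$.

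Next I parametrise: using the bijectivity of the projection I may take $\nu^{(c)}=(0,e,c)$ for some $e\in\mathbb{Z}/N$. Then $P_c=\{(tu_1,\,e+tu_2,\,c):t\in\mathbb{Z}/N\}$. Modulo $N$ the representatives in $\{0,\dots,N-1\}$ agree with the residues, so
\begin{equation*}
\sum_{\nu\in P_c}\nu_1\nu_2\equiv\sum_{t=0}^{N-1}tu_1(e+tu_2)= u_1e\sum_{t}t+u_1u_2\sum_t t^2 \pmod N .
\end{equation*}
Since $(N,6)=1$, both sums lie in $N\mathbb{Z}$ by \eqref{eq: sum of squares}, which gives the claim.

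The only point needing care is replacing the integer representatives $\nu_i\in\{0,\dots,N-1\}$ by residues. This is harmless because the target is only divisibility by $N$ (not $N^2$), and a product of representatives is congruent modulo $N$ to the product of residues. I do not expect any step to be a genuine obstacle; the main thing to check is the existence of an element of $P_c$ with first coordinate $0$, which is exactly the bijectivity of $\pi_{1,3}$ on $P$.
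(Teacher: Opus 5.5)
Your proof is correct and takes essentially the same approach as the paper. You write the level set as a coset of the line $\langle (u_1,u_2,0)\rangle$ and reduce to $\sum m$ and $\sum m^2$ lying in $N\mathbb{Z}$ when $(N,6)=1$. The only difference is that you choose the base point $(0,e,c)$ using bijectivity of $\pi_{1,3}$, which removes the $Ny_1y_2$ and $y_1u_2\sum m$ terms that appear in the paper's expansion with a general base point $(y_1,y_2,c)$.
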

    \begin{proof}[Proof of Claim \ref{claim: sum over level set}]
        By our assumption on $P$, the projection $\pi_3:P\twoheadrightarrow\mathbb{Z}/N$ surjects. Thus, $\pi_3^{-1}(c)=(y_1,y_2,c) +\langle (u_1,u_2,0)\rangle$ for some $u_1$, $u_2\in (\mathbb{Z}/N)^*$. Therefore, the sum is congruent to:
        \begin{equation*}
            \sum_{\nu\in P,\nu_3=c}\nu_1\nu_2 \equiv  \sum_{m=0}^{N-1}(y_1+mu_1)(y_2+mu_2) \equiv  Ny_1y_2+(u_1y_2+y_1u_2)\sum_{m=0}^{N-1}m+u_1u_2\sum_{m=0}^{N-1}m^2\equiv 0 \text{ mod } N\mathbb{Z}
        \end{equation*}
        by our assumption that $(N,6)=1$.
    \end{proof}
    As in the proof of Lemma \ref{lemma: consant in 6Z}, the claim $S=\sum_{\nu\in P}\nu_1\nu_2[\nu_3+X]_N\in N^2\mathbb{Z}$ is equivalent to $2S\in N^2\mathbb{Z}$.
    \begin{align*}
        2S &= \sum_{\nu_1,\nu_2\neq 0}\nu_1\nu_2[X+\nu_3]_N + (N-\nu_1)(N-\nu_2)[X-\nu_3]_N
        \\
        &\equiv \sum_{\nu_1,\nu_2\neq 0}\nu_1\nu_2([X+\nu_3]_N+[X-\nu_3]_N) - N(\nu_1+\nu_2)[X-\nu_3]_N\hspace{0.5cm} \text{mod }N^2 
    \end{align*}
    Since we have $[X+\nu_3]_N+[X-\nu_3]_N\equiv 2X$ modulo $N$, it follows that $[X+\nu_3]_N+[X-\nu_3]_N=  2X + NC_{\nu_3}$ for some integer $C_{\nu_3}\in\mathbb{Z}$ that depends only on $\nu_3$. Thus, we can write:
    \begin{equation*}
        2S\equiv 2X\sum_{\nu_1,\nu_2\neq 0}\nu_1\nu_2 +N\cdot\left(\sum_{\nu_1,\nu_2\neq 0}\nu_1\nu_2C_{\nu_3}-(\nu_1+\nu_2)[X-\nu_3]_N\right)\hspace{0.5cm} \text{mod }N^2 
    \end{equation*}
    We have seen that the first sum is divisible by $N^2$ and so it remains to show that:
    \begin{equation*}
        \sum_{\nu_1,\nu_2\neq 0}\nu_1\nu_2C_{\nu_3}+ \nu_3(\nu_1+\nu_2)-X(\nu_1+\nu_2)\overset{?}{\in}N\mathbb{Z}
    \end{equation*}
    Since $C_{\nu_3}$ depends only on $\nu_3$, the sum over the first term can be decomposed into the level sets of $\nu_3$, on which we can use Claim \ref{claim: sum over level set}. That $\sum_{\nu_1,\nu_2\neq 0 }\nu_1\nu_3\in N\mathbb{Z}$ was dealt with in the proof of Lemma \ref{lemma: consant in 6Z}. For the final terms, use $\pi_{1,2}:P\xrightarrow{\sim}(\mathbb{Z}/N)^2$ and so $\sum_{\nu_1,\nu_2\neq 0 }\nu_1=\sum_{\nu_1,\nu_2\neq 0 }\nu_2 = (N-1)\sum_{m=1}^{N-1}m\in N\mathbb{Z}$, which completes the proof.
\end{proof}
\begin{proposition}\label{prop: final error term}
    $\frac{N}{z(x_1)z(x_2)z(x_3)}\left(\sum_{\delta\in F(L)}\delta^3-\sum_{\mu\in F(V)}\sum_{\nu\in F(L\cap\frac{1}{N}U)}(\mu+\nu)^3\right)\in 6\mathbb{Z}$. In particular, we have:
    \begin{equation*}
        N\cdot P^{0,2}_{a_1,a_2,a_3}(w,z;L)  \equiv \sum_{\mu\in F_{a_1,a_2,a_3}(V)} \sum_{\nu\in F(L\cap\frac{1}{N}U)} N\cdot B_{3,3}(w+z(\mu)+z(\nu),z(\underline{x})) \hspace{0.5cm} \text{mod } 6\mathbb{Z}
    \end{equation*}
\end{proposition}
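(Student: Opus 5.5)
We need to show that the cubic error term, multiplied by $N$ and divided by $z(x_1)z(x_2)z(x_3)$, lies in $6\mathbb{Z}$. The second statement then follows from Corollary \ref{corollary: cubic error term}, after multiplying through by $N$.

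\textbf{Reduction to the monomial $x_1x_2x_3$.} We expand $\delta^3$ in $\operatorname{Sym}^3(L\otimes\mathbb{Q})$ in the basis $x_i$, and pair it with $z$ via $x_i\mapsto z(x_i)$. By Proposition \ref{prop: decomposition after projection} the multiset of projections $\pi_{i,j}(\delta)$ agrees with that of $\pi_{i,j}(\iota(\nu)+s(\mu))$. So every monomial of $\delta^3$ involving at most two distinct indices (the $x_i^3$ and $x_i^2x_j$ terms) cancels in the difference, exactly as in Corollary \ref{corollary: sums linear and quadratic}. Only the coefficient of $x_1x_2x_3$ survives, and it carries the factor $3!=6$. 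After dividing by $\prod z(x_j)$, the claim therefore reduces to the purely arithmetic statement
\begin{equation*}
    \frac{N}{(Nf)^3}\left(\sum_{\delta\in F(L)}\delta_1\delta_2\delta_3-\sum_{\mu\in F(V)}\sum_{\nu\in F(L\cap\frac{1}{N}U)}(f\nu_1+\mu_1)(f\nu_2+\mu_2)(f\nu_3+\mu_3)\right)\in\mathbb{Z}.
\end{equation*}
Here we write both sums in the integer coordinates $0\le\delta_i<Nf$.

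\textbf{Comparing the two decompositions.} For each $\delta\in F(L)$, we use Proposition \ref{prop: decomposition after projection} on the $(1,2)$-plane. This writes $\delta_1=f\nu_1+\mu_1$ and $\delta_2=f\nu_2+\mu_2$ for a unique pair $(\nu,\mu)$. The third coordinate then satisfies $\delta_3\equiv\mu_3 \pmod f$, so $\delta_3=f\nu_3'+\mu_3$ with $\nu_3'\in\{0,\dots,N-1\}$. Since $\delta-\iota(\nu)-s(\mu)$ projects to $0$ in the $(1,2)$-plane, it lies in $\frac1N U/U$ up to a correction that is determined by $\mu$. Concretely, $\nu_3'=[\nu_3+X_\mu]_N$ for an integer shift $X_\mu$ depending only on $\mu$, not on $\nu$. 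This comes from the fact that $L\cap\frac1NU/U$ is a plane whose projection to the $(1,2)$-plane is a bijection, so the third coordinate is an affine function of the first two modulo $N$.

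The difference of the sums thus becomes
\begin{equation*}
    \sum_{\mu}\sum_{\nu}(f\nu_1+\mu_1)(f\nu_2+\mu_2)\,f\left([\nu_3+X_\mu]_N-\nu_3\right).
\end{equation*}
We expand this in powers of $f$. We need the total to be divisible by $N^2f^3$; the factor $f^3$ is to be absorbed by this expansion together with the structure of $F(V)$. The top-order term is $f^3\sum_\nu\nu_1\nu_2([\nu_3+X_\mu]_N-\nu_3)$, and Lemma \ref{lemma: sum of a_1a_2(a_3+X)} together with Lemma \ref{lemma: consant in 6Z} show it lies in $N^2f^3\mathbb{Z}$. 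For the lower-order terms, such as $f^2\mu_2\sum_\nu\nu_1([\nu_3+X]_N-\nu_3)$ and $f\mu_1\mu_2\sum_\nu([\nu_3+X]_N-\nu_3)$, we use that $\nu\mapsto\nu_3$ restricted to level sets of $\nu_1$ is equidistributed. This makes $\sum_\nu([\nu_3+X]_N-\nu_3)=0$, and $\sum_\nu\nu_1([\nu_3+X]_N-\nu_3)=0$ as well, since for fixed $\nu_1$ the coordinate $\nu_3$ runs over all residues.

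\textbf{Main obstacle.} The delicate step is justifying that the shift $X_\mu$ is independent of $\nu$. It must be checked carefully using that $s(\mu)$ is only a set-theoretic section and $\mu_1,\mu_2<f$ (so no carries occur in the first two coordinates). If $X_\mu$ turns out to depend on $\nu$ through carries, I would instead split $F(L)$ by the residue of $\delta_3$ modulo $f$ and redo the level-set argument of Claim \ref{claim: sum over level set}. A second point to check is that the sum over $\mu\in F(V)$ provides the needed divisibility by $f^3$ (or at least by $f^2$ when combined with the $\frac{1}{f^3}\cdot N$ normalisation). If it does not, I would revisit the normalisation of $F(L)$ relative to $F(V)$.
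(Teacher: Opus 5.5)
Your proposal is correct and follows the paper's route. You reduce to the $x_1x_2x_3$ coefficient via Proposition \ref{prop: decomposition after projection}, and write each fibre of $L/U\to V/\frac{1}{N}U$ over $\mu$ as a translate of $\iota(F(L\cap\frac{1}{N}U))$ with third coordinate $\mu_3+f[\nu_3+X_\mu]_N$ (the paper's $D_\mu$). You then kill the lower-order terms using the coordinate-plane bijections, and finish with Lemmas \ref{lemma: consant in 6Z} and \ref{lemma: sum of a_1a_2(a_3+X)}.

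Both hedges in your final paragraph are unnecessary. The shift is independent of $\nu$ because $\delta-\iota(\nu)\in L/U$ has projection $(\mu_1,\mu_2)$, and since $\ker(a_1,a_2)\cap L=\mathbb{Z}x_3$, an element of $L/U$ is determined by its first two coordinates. Once the lower-order terms vanish, what remains is $f^3\sum_\mu\sum_\nu \nu_1\nu_2([\nu_3+X_\mu]_N-\nu_3)$, which lies in $N^2f^3\mathbb{Z}$ with no further input from $F(V)$.
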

\begin{proof}
    From Proposition \ref{prop: decomposition after projection}, we can in fact deduce more than Corollary \ref{corollary: cubic error term}. Not only can we decompose the sums over the linear and quadratic terms as in Corollary \ref{corollary: sums linear and quadratic}, but we can do the same for those terms in $\operatorname{Sym}^3{V}$ that contain at most two indices. In other words, the coefficients of $x_i^3$ and $x_i^2x_j$ agree in the two sums $\sum_{\delta\in F(L)}\delta^3$ and $\sum_{\mu\in F(V)}\sum_{\nu\in L\cap\frac{1}{N}U/U}(\mu+\nu)^3$. Thus, the difference is:
    \begin{equation*}
        N\cdot \text{Coeff}\left(x_1x_2x_3;\sum_{\delta\in F(L)}\delta^3-\sum_{\mu\in F(V)}\sum_{\nu\in F(L\cap\frac{1}{N}U)}(\mu+\nu)^3\right)= \frac{3!N}{(Nf)^3}\left(\sum_{\delta\in F(L)}\prod_{i=1}^3\delta_i-\sum_{\mu\in F(V)}\sum_{\nu\in F(L\cap\frac{1}{N}U)}\prod_{i=1}^3(\mu_i+f\nu_i)\right)
    \end{equation*}
    since $\nu=\sum_{i=1}^3\frac{\nu_i}{N}x_i=\sum_{i=1}^3\frac{f\nu_i}{Nf}x_i$. Thus, proving the proposition is equivalent to showing that:
    \begin{equation}\label{eq: difference in cubics}
        \sum_{\delta\in F(L)}\prod_{i=1}^3\delta_i-\sum_{\mu\in F(V)}\sum_{\nu\in L\cap\frac{1}{N}U/U}\prod_{i=1}^3(\mu_i+f\nu_i)\in N^2f^3\mathbb{Z}
    \end{equation}
    We begin by strengthening Proposition \ref{prop: decomposition after projection} so as to explicitly describe $F(L)$ in terms of $F(V)$ and $F(L\cap\frac{1}{N}U)$. Writing $p:L/U\twoheadrightarrow V/\frac{1}{N}U$, given on representatives in $\left(\mathbb{Z}/Nf\mathbb{Z}\right)^3$ by reduction mod $f$, we have $F(L)=\sqcup_{\mu\in F(V)} p^{-1}(\mu)$. Then the content of Proposition \ref{prop: decomposition after projection} tells us that for each $\mu\in F(V)$, there is a unique $\delta\in F(L)$ such that $\pi_{1,2}(\delta)=s(\pi_{1,2}(\mu))$: that is to say that $\delta\equiv\mu$ mod $f$ and moreover $\delta_1 = \mu_1$, $\delta_2 = \mu_2$. Let us write $D_{\mu}$ for the difference $\frac{1}{f}(\delta_3-\mu_3)$ so that $\delta = \mu + f(0,0,D_{\mu})$. Thus, we can explicitly describe $p^{-1}(\mu)$:
    \begin{equation*}
        p^{-1}(\mu)=\delta+\iota(F(L\cap\frac{1}{N}U)) = \{(\mu_1+f\nu_1,\mu_2+f\nu_2,\mu_3+f[\nu_3+D_{\mu}]_N): \nu\in F(L\cap\frac{1}{N}U) \}.
    \end{equation*}
    And so the difference in \eqref{eq: difference in cubics} becomes:
    \begin{equation*}
        \sum_{\mu\in F(V)}\sum_{\nu\in F(L\cap\frac{1}{N}U)}(\mu_1+f\nu_1)(\mu_2+f\nu_2)\underbrace{(\mu_3+f[\nu_3+D_{\mu}]_N-(\mu_3+f\nu_3))}_{=f([\nu_3+D_{\mu}]_N-\nu_3)}.
    \end{equation*}
    \begin{claim}
    We have the following identities:
    \begin{equation*}
        \sum_{\nu\in F(L\cap\frac{1}{N}U)}[\nu_3+D_{\mu}]_N-\nu_3 =\sum_{\nu\in F(L\cap\frac{1}{N}U)}\nu_1\cdot([\nu_3+D_{\mu}]_N-\nu_3)= \sum_{\nu\in F(L\cap\frac{1}{N}U)}\nu_2\cdot([\nu_3+D_{\mu}]_N-\nu_3)=0.
    \end{equation*}
    \end{claim}
    \begin{proof}
        Since $\pi_{i,j}(L\cap\frac{1}{N}U/U)=\left(\mathbb{Z}/N\mathbb{Z}\right)^2$, as long as only two indices appear, we can reindex $\nu\in F(L\cap\frac{1}{N}U)$ so that, for example, $\sum_{\nu\in F(L\cap\frac{1}{N}U)}\nu_2\cdot[\nu_3+D_{\mu}]_N=\sum_{\nu\in F(L\cap\frac{1}{N}U)}\nu_2\nu_3$.
    \end{proof}
    Thus, the difference simplifies to:
    \begin{equation}\label{eq: final N^2}
        f^3\cdot \sum_{\mu\in F(V)}\sum_{\nu\in F(L\cap\frac{1}{N}U)}\nu_1\nu_2([\nu_3+D_{\mu}]_N-\nu_3)\overset{?}{\in}N^2f^3\mathbb{Z}
    \end{equation}
    where the $\nu_i\in\{0,\dots,N-1\}$. As we have seen in Lemma \ref{lemma: consant in 6Z}, we know that $\sum_{\nu\in F(L\cap\frac{1}{N}U)}\nu_1\nu_2\nu_3\in N^2\mathbb{Z}$. That $\sum_{\nu\in F(L\cap\frac{1}{N}U)}\nu_1\nu_2[\nu_3+D_{\mu}]_N\in N^2\mathbb{Z}$ is given by Lemma \ref{lemma: sum of a_1a_2(a_3+X)}. This completes the proof of the Proposition.
\end{proof}
Finally, we collate all our results to complete the proof of Proposition \ref{prop: smoothed phi0,2}.
\begin{proof}[Proof of Proposition \ref{prop: smoothed phi0,2}]
    By the explicit description of $\phi^{0,2}$ in Proposition \ref{prop: explicit phi0,2}, we know that:
    \begin{equation*}
         \widetilde{\phi^{0,2}_{a,b,c}}(w,z)=e^{-\frac{2\pi i}{3!} \left(N\cdot P^{0,2}_{a,b,c}(w,z;L)- P^{0,2}_{a,b,c}(w,z;V) \right)}.
    \end{equation*}
    Now we have:
    \begin{align*}
        N\cdot P^{0,2}_{a,b,c}(w,z;L) = &\sum_{\delta\in F_{a,b,c}(L)} N\cdot B_{3,3}(w+z(\delta),z(\underline{x})) \\ \equiv & \sum_{\mu\in F_{a,b,c}(V)}N\cdot \sum_{\nu\in L\cap\frac{1}{N}U/U} B_{3,3}(w+z(\mu)+z(\nu),z(\underline{x})) \hspace{0.5cm}\text{ mod } 6\mathbb{Z} \\ \equiv & \sum_{\mu\in F_{a,b,c}(V)} B_{3,3}(N\cdot(w+z(\mu)),z(x)) = P^{0,2}_{a,b,c}(w,z;V) \hspace{0.5cm}\text{ mod } 6\mathbb{Z} 
    \end{align*}
    where the second congruence follows from Proposition \ref{prop: final error term} and the third congruence follows from the Distribution Relation of Proposition \ref{prop: Bernoulli DR}. And so we conclude $\widetilde{\phi^{0,2}_{a,b,c}}(w,z)=1$.
\end{proof}
In particular, we have proven the following result.
\begin{corollary}\label{corollary: proof with d=3}
    Let $\triple{h}$ and $\triple{h'}$ be two admissible triples for the pair $(V,L)$, such that $h'\equiv h$ modulo $L$. If $d(h,h')=3$, then:
    \begin{equation*}
        \widetilde{\Phi}\triple{h}=\widetilde{\Phi}\triple{h'}.
    \end{equation*}
\end{corollary}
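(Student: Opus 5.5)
The plan is to assemble the vanishing results of this section inside the smoothed cocycle expression of Proposition \ref{prop: smoothed cocycle expression}. Since $\widetilde{\Phi}$ and every $\widetilde{\phi^{p,q}}$ are independent of framing (the cochain $\widetilde{\Gamma}$ involves no framing, and $D(\widetilde{\Delta},\widetilde{\Gamma})$ changes only through $\widetilde{\psi}$, which we will see is trivial), we fix a compatible pair $\underline{f}=(f_L,f_V)$ at the outset. Such a pair exists: for each $a\in\mathcal{V}$ we have $V=L+\ker(a)$, so $\ker(a)\cap V$ has index $N$ over $\ker(a)\cap L$. We can then choose an oriented basis $(x_2,x_3)$ of $\ker(a)\cap L$ with $(x_2,x_3/N)$ a basis of $\ker(a)\cap V$. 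We also choose $x_1\in L$ with $a(x_1)=1$, which is possible because $a(L)=a(V)$ and $a$ is primitive on $V$.

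With this pair, we go term by term through the right-hand side of Proposition \ref{prop: smoothed cocycle expression}.
\begin{enumerate}
\item Since $g\in\SL(L^\vee)\cap\SL(V^\vee)$ preserves $\mathcal{V}$, $g\cdot\underline{f}$ is again compatible. The corollary to Proposition \ref{prop: smoothed compatible pairs} then makes $\widetilde{\psi^{\underline{f},g\cdot\underline{f}}_{b'}}(v)$ identically $1$.
\item Proposition \ref{prop: smoothed phi2,0} kills both $\widetilde{\phi^{2,0}}$ factors.
\item The hypothesis $d(h,h')=3$ means $(a,a',b')\in\mathcal{Q}$, and by the remark following the definition of $d$, also $(a,b,b')\in\mathcal{Q}$. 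Hence the pairs $(a,a')$ and $(b',a)$ lie in $\mathcal{W}$, and Proposition \ref{prop: smoothed phi1,1} gives $\widetilde{\phi^{1,1}_{a,a'}}(u)=\widetilde{\phi^{1,1}_{b',a}}(v)=1$. This reduces everything to the preceding corollary, which expresses the quotient as $\widetilde{\phi^{0,2}_{a,b,b'}}(z(h),z)/\widetilde{\phi^{0,2}_{a,a',b'}}(z(h'),z)$.
\item Proposition \ref{prop: smoothed phi0,2}, applied once to $(a,b,b')\in\mathcal{Q}$ and once to $(a,a',b')\in\mathcal{Q}$, makes numerator and denominator both identically $1$.
\end{enumerate}
Therefore $\widetilde{\Phi}\triple{h}/\widetilde{\Phi}\triple{h'}=1$.

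The only points needing care are bookkeeping. First, all functionals involved must lie in $\mathcal{V}$ so that the smoothed cochains are defined. Admissibility for the pair $(V,L)$ puts $a,a'\in\mathcal{V}$, and $g$-stability of $\mathcal{V}$ gives $b,b'\in\mathcal{V}$. Second, each evaluation point must lie in the relevant intersection of opens, so that the identities of meromorphic functions may be evaluated there. This follows from $z\in U_a\cap U_b$ and $z\in U_{a'}\cap U_{b'}$, together with $g$-invariance of $\langle z\rangle$, exactly as in the proof of Proposition \ref{prop: cocycle expression}.

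I expect the main obstacle to be degenerate cases in step 4, where the triples fail to be a positively oriented basis. A triple that is linearly dependent is handled by the remark that $\phi^{0,2}=1$. A triple with the wrong orientation is handled by the antisymmetry $\phi^{0,2}_{a_1,a_2,a_3}=(\phi^{0,2}_{a_1,a_3,a_2})^{-1}$, using that membership in $\mathcal{Q}$ is symmetric under permutation. Beyond this, the proof is a direct combination of the four vanishing propositions.
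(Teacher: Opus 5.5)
Your proposal is correct and follows the paper's argument. You fix a compatible pair, kill the $\widetilde{\psi}$ and $\widetilde{\phi^{2,0}}$ terms unconditionally, kill the $\widetilde{\phi^{1,1}}$ terms via $(a,a'),(b',a)\in\mathcal{W}$, and kill the $\widetilde{\phi^{0,2}}$ terms via Proposition \ref{prop: smoothed phi0,2} applied to $(a,b,b'),(a,a',b')\in\mathcal{Q}$; that proposition already covers the degenerate and orientation cases you worry about, and your explicit construction of a compatible pair fills in a point the paper leaves implicit.

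One small correction: $\widetilde{\phi^{2,0}}$ and $\widetilde{\phi^{1,1}}$ are only guaranteed to be framing-independent across compatible pairs, not for arbitrary framings. Your argument only needs the framing-independence of $\widetilde{\Phi}$, so nothing breaks.
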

\newpage
\section{Reduction to the $d=3$ case}\label{sec: reduction}
In this section, we fix the pair $\langle z\rangle $ and $g$ that satisfy $g\cdot \langle z\rangle = \langle z\rangle$. To complete the proof of Theorem \ref{theorem: main theorem in main body}, we proceed as follows.  Given two admissible triples $\triple{h}$, $\triple{h'}$, satisfying $h\equiv h'$ modulo $L$, we will find a third admissible triple $\triple{h''}$, also satisfying $h''\equiv h'\equiv h $ modulo $L$, such that $d(h,h'')=d(h',h'')=3$. Then by Corollary \ref{corollary: proof with d=3}:
\begin{equation*}
    \frac{\widetilde{\Phi}\triple{h}}{\widetilde{\Phi}\triple{h'}}=\frac{\widetilde{\Phi}\triple{h}}{\widetilde{\Phi}\triple{h''}}\cdot \frac{\widetilde{\Phi}\triple{h''}}{\widetilde{\Phi}\triple{h'}}=1.
\end{equation*}
Before we outline the proof strategy, let us better understand the membership of the sets $\mathcal{V}$ and $\mathcal{W}$:
\begin{align*}
    \mathcal{V} &=  \{a\in\Hom(V,\mathbb{Z}): V = L +\ker(a)\} \\
    \mathcal{W} &=  \{(a,b)\in \mathcal{V}^2: V = L +\ker(a,b)\} 
\end{align*}
We record the following criterion for when $-\otimes\mathbb{Z}/N$ commutes with kernels for an arbitrary morphism $\mathbb{Z}^n\xrightarrow{\phi}\mathbb{Z}^m$.
\begin{lemma}
    Consider a morphism $\phi:\mathbb{Z}^n\rightarrow\mathbb{Z}^m$. Then, we have the equivalences:
    \begin{equation*}
        \ker(\phi\otimes\mathbb{Z}/N)=\ker(\phi)\otimes\mathbb{Z}/N\iff \operatorname{Tor}^{\mathbb{Z}}_1(\coker(\phi),\mathbb{Z}/N) = \coker(\phi)[N]=0 \iff \operatorname{im}(\phi\otimes \mathbb{Z}/N) = (\mathbb{Z}/N)^{\operatorname{rank}_{\mathbb{Z}}(\phi)} 
    \end{equation*}
\end{lemma}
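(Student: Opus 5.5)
The plan is to reduce everything to the structure theorem for finitely generated abelian groups, applied to the short exact sequences attached to $\phi$. Write $C=\coker(\phi)$ and $I=\operatorname{im}(\phi)\subset\mathbb{Z}^m$, a free group of rank $r=\operatorname{rank}_{\mathbb{Z}}(\phi)$. We have exact sequences $0\to\ker\phi\to\mathbb{Z}^n\to I\to 0$ and $0\to I\to\mathbb{Z}^m\to C\to 0$. Since $I$ is free, the first sequence splits, so tensoring with $\mathbb{Z}/N$ keeps it exact. Tensoring the second gives the long exact sequence
\[
0\to\operatorname{Tor}_1^{\mathbb{Z}}(C,\mathbb{Z}/N)\to I\otimes\mathbb{Z}/N\to(\mathbb{Z}/N)^m\to C\otimes\mathbb{Z}/N\to 0,
\]
using $\operatorname{Tor}_1(\mathbb{Z}^m,\mathbb{Z}/N)=0$. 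The identification $\operatorname{Tor}_1^{\mathbb{Z}}(C,\mathbb{Z}/N)=C[N]$ is standard: apply $C\otimes-$ to $0\to\mathbb{Z}\xrightarrow{N}\mathbb{Z}\to\mathbb{Z}/N\to0$.

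For the first equivalence, factor $\phi\otimes\mathbb{Z}/N$ as $(\mathbb{Z}/N)^n\twoheadrightarrow I\otimes\mathbb{Z}/N\xrightarrow{j}(\mathbb{Z}/N)^m$. By the split sequence, the kernel of the first map is exactly $\ker(\phi)\otimes\mathbb{Z}/N$, viewed as a submodule of $(\mathbb{Z}/N)^n$; injectivity here again uses the splitting. Hence $\ker(\phi\otimes\mathbb{Z}/N)$ equals $\ker(\phi)\otimes\mathbb{Z}/N$ if and only if $j$ is injective. Pulling back along the surjection, $\ker(\phi\otimes\mathbb{Z}/N)$ is the preimage of $\ker j$, so the two kernels coincide exactly when $\ker j=0$. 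By the long exact sequence, $\ker j=\operatorname{Tor}_1(C,\mathbb{Z}/N)$.

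For the second equivalence, interpret $(\mathbb{Z}/N)^{r}$ as the statement that the image is free of rank $r$ over $\mathbb{Z}/N$, or equivalently has cardinality $N^r$. The image of $\phi\otimes\mathbb{Z}/N$ is $j(I\otimes\mathbb{Z}/N)\cong(\mathbb{Z}/N)^r/\operatorname{Tor}_1(C,\mathbb{Z}/N)$. Its order is $N^r/|C[N]|$, so it equals $N^r$, and the image is then isomorphic to $(\mathbb{Z}/N)^r$, precisely when $C[N]=0$. The converse direction is immediate from the same count, since an image isomorphic to $(\mathbb{Z}/N)^r$ forces $|C[N]|=1$.

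Concretely, one can instead use Smith normal form: write $\phi=P\,\mathrm{diag}(d_1,\dots,d_r,0,\dots)\,Q$ with $P,Q$ invertible. Then every statement becomes a check on the elementary divisors: each condition is equivalent to $(d_i,N)=1$ for all $i$. This gives a quick sanity check of the argument above.

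The only delicate point is the phrasing of the third condition. One must make clear that it means an abstract isomorphism of the image with $(\mathbb{Z}/N)^r$, not an equality of submodules. With that reading, the counting argument settles it and no real obstacle remains.
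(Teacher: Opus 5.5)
Your proof is correct and takes the same route as the paper. You tensor $0\to\ker\phi\to\mathbb{Z}^n\to\operatorname{im}\phi\to0$ and $0\to\operatorname{im}\phi\to\mathbb{Z}^m\to\coker\phi\to0$ with $\mathbb{Z}/N$, use that $\operatorname{im}\phi$ is free, and identify the obstruction with $\operatorname{Tor}_1^{\mathbb{Z}}(\coker\phi,\mathbb{Z}/N)=\coker(\phi)[N]$; you also supply details the paper leaves implicit, such as the cardinality argument for the third condition read as an abstract isomorphism, which is how the paper later applies it.
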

\begin{proof}
    Tensor each of the short exact sequences $0\rightarrow\ker(\phi)\rightarrow\mathbb{Z}^n\rightarrow\text{im}(\phi)\rightarrow 0$ and $0\rightarrow\text{im}(\phi)\rightarrow\mathbb{Z}^m\rightarrow\coker(\phi)\rightarrow 0$ with $\mathbb{Z}/N$. Since $\text{im}(\phi)$ is a free $\mathbb{Z}$-module, we then use $\operatorname{Tor}^{\mathbb{Z}}_1(\text{im}(\phi),\mathbb{Z}/N)=0$.
\end{proof}

\begin{lemma}\label{lemma: sufficient condition}
    Let $a$, $b\in V^{\vee}$.
    \begin{enumerate}
        \item If $a\otimes\mathbb{Z}/N\in (V/NV)^{\vee}$ is primitive, then $a\in \mathcal{V}$ if and only if $ a\otimes\mathbb{Z}/N|_{L/NV}\in (L/NV)^{\vee} $ is primitive.
        \item The following is a sufficient condition for $(a,b)\in \mathcal{W}$:
        \begin{equation*}
            \langle a\otimes\mathbb{Z}/N|_{L/NV},b\otimes\mathbb{Z}/N|_{L/NV}\rangle = (L/NV)^{\vee}\implies (a,b)\in \mathcal{W}.
        \end{equation*}
    \end{enumerate}
\end{lemma}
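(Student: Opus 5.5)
Both parts reduce to linear algebra over $\mathbb{Z}/N$, via the previous lemma and the snake-lemma characterisation $a\in\mathcal{V}\iff a(L)=a(V)$. Choose coordinates with $V=\langle e_1,e_2,e_3/N\rangle\supset L=\langle e_1,e_2,e_3\rangle\supset NV$. Then $L/NV\cong(\mathbb{Z}/N)^2$ is a plane in $V/NV\cong(\mathbb{Z}/N)^3$, and $V/L\cong\mathbb{Z}/N$.

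For (1), first note that $a(L)\subseteq a(V)$ always holds, with $[a(V):a(L)]$ dividing $N$. Since $a\otimes\mathbb{Z}/N$ is primitive, $a$ is surjective mod $N$, so $a(V)+N\mathbb{Z}=\mathbb{Z}$ and therefore $a(V)=\mathbb{Z}$.

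Next, observe that $a\otimes\mathbb{Z}/N|_{L/NV}$ is primitive, i.e. surjective onto $\mathbb{Z}/N$, exactly when $a(L)+a(NV)=\mathbb{Z}$.

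It remains to show this is equivalent to $a(L)=\mathbb{Z}$. Since $a(NV)=N\mathbb{Z}$, the condition reads $a(L)+N\mathbb{Z}=\mathbb{Z}$. Write $a(L)=k\mathbb{Z}$ with $k\mid N$, because $NV\subset L$ gives $N\mathbb{Z}=a(NV)\subseteq a(L)$. Then $k\mathbb{Z}+N\mathbb{Z}=k\mathbb{Z}$, and this equals $\mathbb{Z}$ if and only if $k=1$. Hence $a\in\mathcal{V}\iff a(L)=a(V)=\mathbb{Z}\iff$ the restriction is primitive.

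For (2), the hypothesis says the map $(a,b)\otimes\mathbb{Z}/N$ restricted to $L/NV$ is an isomorphism $L/NV\xrightarrow{\sim}(\mathbb{Z}/N)^2$. Consequently $(a,b)\otimes\mathbb{Z}/N$ is surjective on $V/NV$. By the previous lemma applied to $\phi=(a,b):V\to\mathbb{Z}^2$, the image mod $N$ is all of $(\mathbb{Z}/N)^{2}$, so $\ker(\phi\otimes\mathbb{Z}/N)=\ker(a,b)\otimes\mathbb{Z}/N$. This kernel is a line in $V/NV$.

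Injectivity of the restriction to $L/NV$ means this line meets $L/NV$ trivially. Since $L/NV$ has index $N$ in $V/NV$ and the line is cyclic of order $N$, the line surjects onto $(V/NV)/(L/NV)=V/L$. So $\ker(a,b)$ surjects onto $V/L$, i.e. $V=L+\ker(a,b)$.

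Finally, each of $a$ and $b$ restricts to a surjection on $L/NV$, so by (1) both lie in $\mathcal{V}$. Hence $(a,b)\in\mathcal{W}$.

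The main obstacle is the identification $\ker(a,b)\otimes\mathbb{Z}/N=\ker((a,b)\otimes\mathbb{Z}/N)$, which is needed to pass from the mod-$N$ line back to integral kernel elements; this is exactly what the previous lemma supplies, using surjectivity mod $N$.
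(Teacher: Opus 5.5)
Your part (1) contains a false step. Primitivity of $a\otimes\mathbb{Z}/N$ only says that $a$ is surjective mod $N$, i.e.\ $a(V)+N\mathbb{Z}=\mathbb{Z}$. This gives $a(V)=m\mathbb{Z}$ with $\gcd(m,N)=1$; it does not force $m=1$.

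The lemma is stated for arbitrary $a\in V^{\vee}$. In your coordinates, $a=2e^1$ with $N=3$ satisfies the hypothesis, and $a(V)=a(L)=2\mathbb{Z}$. This $a$ lies in $\mathcal{V}$, since its kernel $\langle e_2,e_3/3\rangle$ surjects onto $V/L$. So your concluding chain $a\in\mathcal{V}\iff a(L)=a(V)=\mathbb{Z}$ is false in general.

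The criterion ``surjective on $L/NV$ iff $a(L)+a(NV)=\mathbb{Z}$'' is also wrong as written. Since $NV\subseteq L$, one has $a(L)+a(NV)=a(L)$. The correct condition is $a(L)+N\mathbb{Z}=\mathbb{Z}$, and $a(NV)=Na(V)$ equals $N\mathbb{Z}$ only when $a$ is primitive. So as it stands, your argument proves (1) only for primitive $a$.

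The repair is short. From $Nm\mathbb{Z}=a(NV)\subseteq a(L)\subseteq a(V)=m\mathbb{Z}$, write $a(L)=mk'\mathbb{Z}$ with $k'\mid N$. The restriction is surjective iff $\gcd(mk',N)=1$. Since $\gcd(m,N)=1$ and $k'\mid N$, this gcd equals $k'$. Hence surjectivity holds iff $k'=1$, iff $a(L)=a(V)$, iff $a\in\mathcal{V}$. Equivalently, replace $a$ by $a/m$, which has the same kernel and differs from $a$ mod $N$ by a unit.

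With that fix, your route for (1) is a valid, more elementary alternative to the paper's: you compare $a(L)\subseteq a(V)$ as ideals of $\mathbb{Z}$ via $a\in\mathcal{V}\iff a(L)=a(V)$. The paper instead reduces mod $N$. It uses $\ker(a\otimes\mathbb{Z}/N)=\ker(a)\otimes\mathbb{Z}/N$, which holds because $\coker(a)\cong\mathbb{Z}/m$ has no $N$-torsion. It then compares $L/NV$ with the plane $\ker(a\otimes\mathbb{Z}/N)$ in $V/NV$, which handles non-primitive $a$ with no extra care.

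Your part (2) is correct and is essentially the paper's argument. Surjectivity mod $N$ gives $\ker((a,b)\otimes\mathbb{Z}/N)=\ker(a,b)\otimes\mathbb{Z}/N$, and injectivity on $L/NV$ makes this line a complement to $L/NV$ in $V/NV$. Its last step does not need (1), flawed or repaired: $\ker(a,b)\subseteq\ker(a)\cap\ker(b)$, so $V=L+\ker(a,b)$ already gives $a,b\in\mathcal{V}$.
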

\begin{proof}
    We have the easily verifiable equivalences for $a\in \mathcal{V}$:
    \begin{equation*}
        V = L+\ker(a)\iff V/NV = L/NV +(\ker(a)+NV)/NV \iff V/NV = L/NV +\ker(a)\otimes \mathbb{Z}/N.
    \end{equation*}
    By the assumption $a\otimes\mathbb{Z}/N\in (V/NV)^{\vee}$ is primitive, we know $\ker(a\otimes\mathbb{Z/N}) = \ker(a)\otimes\mathbb{Z}/N$. In particular,
    \begin{equation*}
        a\in\mathcal{V}\iff V/NV = L/NV +\ker(a\otimes\mathbb{Z}/N)\iff a\otimes\mathbb{Z}/N|_{L/NV} \text{ is primitive}.
    \end{equation*}
    Now consider the second part of the claim. As above, $(a,b)\in \mathcal{W}$ if and only if $V/NV = L/NV +\ker(a,b)\otimes \mathbb{Z}/N$.  The hypothesis implies that $\operatorname{rank}_{\mathbb{Z}}(a,b)=2$. Then, by the Lemma above:
    \begin{equation*}
        \ker((a,b)\otimes\mathbb{Z}/N) =\ker(a,b)\otimes\mathbb{Z}/N \iff \text{im}((a,b)\otimes\mathbb{Z}/N) =  (\mathbb{Z}/N)^2 \iff \langle a\otimes\mathbb{Z}/N,b\otimes\mathbb{Z}/N\rangle \cong (\mathbb{Z}/N)^2.
    \end{equation*}
    Since our hypothesis $\langle a\otimes\mathbb{Z}/N|_{L/NV},b\otimes\mathbb{Z}/N|_{L/NV}\rangle = (L/NV)^{\vee}$ implies $\langle a\otimes\mathbb{Z}/N,b\otimes\mathbb{Z}/N\rangle \cong (\mathbb{Z}/N)^2$, we deduce $\ker((a,b)\otimes\mathbb{Z}/N) =\ker(a,b)\otimes\mathbb{Z}/N $ and then use:
    \begin{align*}
        V/NV = L/NV +\ker((a,b)\otimes\mathbb{Z}/N) &\iff (a,b)\otimes\mathbb{Z}/N:L/NV\xrightarrow{\sim}(\mathbb{Z}/N)^2 \\&\iff \langle a\otimes\mathbb{Z}/N|_{L/NV},b\otimes\mathbb{Z}/N|_{L/NV}\rangle = (L/NV)^{\vee}.
    \end{align*}
    This completes the proof of the Lemma.
\end{proof}
\begin{remark}
    By taking the annihilator of these submodules, we can state our conditions in terms of submodules of $L/NV$ rather than the dual. For instance:
    \begin{equation}\label{eq: submodules of dual}
        \langle a\otimes\mathbb{Z}/N|_{L/NV},b\otimes\mathbb{Z}/N|_{L/NV}\rangle = (L/NV)^{\vee}\iff \ker(a\otimes\mathbb{Z}/N|_{L/NV})\cap\ker(b\otimes\mathbb{Z}/N|_{L/NV}) = \{0\}.
    \end{equation}
\end{remark}
We now outline the proof strategy. Write $a\sim b$ for the relation on $\mathcal{V}\times \mathcal{V}$ defined by $(a,b)\in\mathcal{W}$: this is evidently reflexive, symmetric and $g$-equivariant. Fix $h$ and $h'\in L\otimes\mathbb{Q}$ such that $gh\equiv h\equiv h'\equiv gh'$ modulo $L$ and both $\langle h
\rangle_{\mathbb{Q}}$ and $\langle h'\rangle_{\mathbb{Q}}$ generate $V/L$. From these vectors, we obtain primitive functionals $a$, $a'\in\mathcal{V}$. Among the $a''$ arising from eligible $h''$, we seek one for which each of the edges in the following diagram corresponds to a pair of functionals in $\mathcal{W}$.
\[\begin{tikzcd}
	{h''} && {a''} && {ga''} \\
	{h'} &&& {a'} \\
	h &&& a
	\arrow[no head, from=1-3, to=1-5]
	\arrow[no head, from=2-4, to=1-3]
	\arrow[no head, from=2-4, to=1-5]
	\arrow[no head, from=3-4, to=1-3]
	\arrow[no head, from=3-4, to=1-5]
\end{tikzcd}\]
By the assumption $\langle h''\rangle_{\mathbb{Q}}$ generates $V/L$, we necessarily have $a''\sim ga''$. Using the $g$-equivariance, our objective becomes finding an $a''$ such that:
\begin{equation*}
    a'' \sim a,g^{-1}a,a',g^{-1}a'.
\end{equation*}
Thus, we obtain $4$ lines $\langle a\rangle$, $\langle g^{-1}a\rangle$, $\langle a'\rangle $, $\langle g^{-1}a'\rangle \subset  (L/NV)^{\vee}$; as in Definition \ref{def: line and plane}, a line here refers to a submodule isomorphic to $\mathbb{Z}/N$. Using the sufficient condition given in Lemma \ref{lemma: sufficient condition} and its reformulation in \eqref{eq: submodules of dual}, the strategy will be to find an $h''$ such that $\ker(a''\otimes\mathbb{Z}/N|_{L/NV}) = (\langle h'',g^{-1}h''\rangle_{\mathbb{Q}}\cap L)/ NV\subset L/NV $ has trivial intersection with each of the four lines (or rather their annihilators) above. 

We do this by proving any line in $L/NV$ comes from an eligible $h''$ under this construction and that there are sufficiently many lines: for the former, we control the action of $g$ modulo $N$ using $(N, \disc(\chi_g))=1$; for the latter, we require $(N,6)=1$. Since the argument in the prime case is simpler, we begin with this case.
\subsection{$N$ is prime}
Suppose $N$ is a prime and write $R$ for the set of eligible $h''$, that is:
\begin{equation*}
    R\defeq \{h'': h''\equiv h \text{ mod } L\text{, } \langle h''\rangle_{\mathbb{Q}} \text{ generates } V/L\} \subset h +L. 
\end{equation*}
Let us express the association of $h''$ to a line in $L/NV$ as a map:
\begin{equation*}
    R\xrightarrow{\Psi} \mathbb{P}(L/NV) \quad h''\longmapsto\ker(a''\otimes\mathbb{Z}/N|_{L/NV})=(\langle h'',g^{-1}h''\rangle_{\mathbb{Q}}\cap L)/ NV .
\end{equation*}
Our objective is to show there exists $h''\in R$ such that $\Psi(h'')\in \mathbb{P}(L/NV) - \{l_i\}_{i=1}^4$ for four lines $l_i\in \mathbb{P}(L/NV)$; over a field, two lines intersect trivially if and only if they are distinct. For $(N,\disc(\chi_g))=1$, we will deduce $\Psi(R)  =\mathbb{P}(L/NV) $ surjects. Then, assuming $(N,6)=1$, it follows that $|\mathbb{P}(L/NV)| = N+1 > 4$.

To describe $R$, we define the following auxiliary constants for $h$: set $m=\operatorname{order}_{L\otimes\mathbb{Q}/\mathbb{Z}}(h)$ and $n$ be the largest integer such that $mh\in N^nV$, so that $mh\in N^nV - N^{n+1}V$. Then we have:
\begin{center}
    $\langle h \rangle_{\mathbb{Q}}$ generates $V/L$ if and only if $\frac{mh}{N^n}\in V - L$.
\end{center}
\begin{lemma}
    Suppose that $\langle h\rangle_{\mathbb{Q}}$ generates $V/L$. Then:
    \begin{enumerate}
        \item We always have $n\geq 1$. If $N|m$, then $n=1$.
        \item We have $\operatorname{order}_{L\otimes\mathbb{Q}/\mathbb{Z}}(h)=\operatorname{order}_{NV\otimes\mathbb{Q}/\mathbb{Z}}(h)$.
    \end{enumerate}
\end{lemma}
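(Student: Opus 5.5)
The plan is to derive both parts directly from the characterisation stated just above the lemma: since $\langle h\rangle_{\mathbb{Q}}$ generates $V/L$, we have $\frac{mh}{N^n}\in V-L$. We combine this with two elementary facts. First, $NV\subset L\subset V$, because $V/L\cong\mathbb{Z}/N$ is killed by $N$. Second, $m$ is the order of $h$ in $L\otimes\mathbb{Q}/L$, so $mh\in L$ but $kh\notin L$ for every $1\le k<m$. No results beyond these inclusions and the definitions of $m$ and $n$ should be needed.

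For the first part, we argue by contradiction that $n\geq 1$. If $n=0$, then $\frac{mh}{N^0}=mh$ lies in $L$ by definition of $m$. This contradicts $\frac{mh}{N^n}\notin L$. Now suppose $N\mid m$, and suppose for contradiction that $n\geq 2$. Then
\begin{equation*}
\tfrac{m}{N}h = N^{n-1}\cdot \tfrac{mh}{N^n}\in N^{n-1}V\subseteq NV\subseteq L,
\end{equation*}
since $\frac{mh}{N^n}\in V$ and $n-1\geq 1$. Because $\frac{m}{N}<m$ is a positive integer, this contradicts the minimality of $m$. Hence $n=1$.

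For the second part, write $m'=\operatorname{order}_{NV\otimes\mathbb{Q}/\mathbb{Z}}(h)$, meaning the least positive integer with $m'h\in NV$; note $NV\otimes\mathbb{Q}=L\otimes\mathbb{Q}$. Since $NV\subseteq L$, we get $m'h\in L$, so $m\mid m'$. Conversely, $mh\in N^nV\subseteq NV$ because $n\geq 1$ by the first part, so $m'\mid m$. Therefore $m=m'$.

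The only step requiring care is the first: using the hypothesis in the precise form $\frac{mh}{N^n}\in V-L$ and remembering that $NV\subseteq L$. Everything else is bookkeeping with divisibility of orders, so I do not expect a genuine obstacle.
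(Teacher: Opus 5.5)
Your proof is correct and follows essentially the paper's route: the paper deduces both parts from the fact that an element $u\in L$ whose rational span generates $V/L$ satisfies $u\in N^kL\iff u\in N^{k+1}V$. The $k=0$ case of that fact is exactly what you extract from $\frac{mh}{N^n}\in V-L$, and the rest is the same minimality-of-$m$ bookkeeping with $NV\subseteq L\subseteq V$ (you should also state that $n\geq 0$, since $mh\in L\subseteq V$, before ruling out $n=0$).
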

\begin{proof}
    Both claims follow from the following easily verified fact: if $u\in L$ is a lattice element whose rational span generates $V/L$, then $u\in N^kL\iff u \in N^{k+1}V$ for all $k\geq 0$.
\end{proof}
With these auxiliary constants, we describe a subset of $R$, splitting into the cases $N$ coprime to $m$ or not.
\begin{lemma}\label{lemma: subsets of R}
    If $N|m$, then $R = h+L$. If $(N,m)=1$, then $R\supset h+N^nL$.
\end{lemma}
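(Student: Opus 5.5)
Both cases follow from the stated criterion: $\langle h''\rangle_{\mathbb{Q}}$ generates $V/L$ if and only if $m''h''/N^{n''}\in V-L$, where $m''$ and $n''$ are the auxiliary constants of $h''$. The one extra fact needed is $NV\subset L$, which holds because $V/L\cong\mathbb{Z}/N$. The idea is to check that the perturbation $h\mapsto h''$ leaves $m$ unchanged, changes $mh/N^n$ only by an element of $L$, and does not change $n$.

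First, for any $h''\in h+L$ we have $m''=m$, since $m$ only depends on the class of $h$ in $L\otimes\mathbb{Q}/L$. The argument uses the following observation twice. Suppose $x\in V-L$ and $x\equiv mh''/N^{k}$ modulo $L$ for some $k$, so that $mh''\in N^kV$. Then $mh''\notin N^{k+1}V$: otherwise $mh''/N^k\in NV\subset L$, which would force $x\in L$. Hence $n''=k$, and $mh''/N^{n''}\in V-L$.

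\emph{Case $N\mid m$.} By the preceding Lemma, $n=1$, and the hypothesis on $h$ gives $mh/N\in V-L$. Take any $h''=h+l$ with $l\in L$. Then
\begin{equation*}
\frac{mh''}{N}=\frac{mh}{N}+\frac{m}{N}\,l,
\end{equation*}
and $\tfrac{m}{N}\,l\in L$ because $N\mid m$. So $mh''/N\in V$ and it is congruent to $mh/N$ modulo $L$. The observation with $k=1$ gives $n''=1$ and $mh''/N\in V-L$. Therefore $h''\in R$, and $R=h+L$.

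\emph{Case $(N,m)=1$.} Take $h''=h+N^nl$ with $l\in L$. Then
\begin{equation*}
\frac{mh''}{N^n}=\frac{mh}{N^n}+ml,
\end{equation*}
which lies in $V$ and is congruent to $mh/N^n\in V-L$ modulo $L$. The observation with $k=n$ gives $n''=n$ and $mh''/N^{n}\in V-L$, so $h''\in R$. This proves $R\supset h+N^nL$.

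There is no real obstacle here. The only subtle point is that the criterion must be applied with the correct exponent $n''$ for $h''$, not with $n$ for $h$. This is exactly what the observation handles, and it relies on the inclusion $NV\subset L$.
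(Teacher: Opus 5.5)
Your proof is correct and follows the paper's route: both show $mh''/N^n\equiv mh/N^n$ modulo $L$, using $\frac{m}{N}\in\mathbb{Z}$ when $N\mid m$ and $ml\in L$ when $(N,m)=1$. Your extra check that $n''=n$ (via $NV\subset L$) is a careful step the paper skips. It is not needed there, because for prime $N$ any rational multiple of $h''$ lying in $V-L$ already generates $V/L$.
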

\begin{proof}
    When $N|m$, we know $n=1$ and so $\frac{m}{N^n}\in \mathbb{Z}$. Thus, in both cases ($h''$ in $h+L$ or $h+N^nL$ respectively) we have $\frac{mh''}{N^n}\equiv \frac{mh}{N^n}\in V/L$. 
\end{proof}

\begin{proposition}\label{prop: image of Psi}
    Suppose $(N, \disc(\chi_g))=1$. Then, $\Psi(R) = \mathbb{P}(L/NV)$. 
\end{proposition}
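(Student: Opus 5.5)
We first reduce the map $\Psi$ to linear algebra over $\mathbb{F}_N$ and then show surjectivity by a counting argument. Throughout, $N$ is prime and $W\defeq L/NV$, which is a $2$-dimensional $\mathbb{F}_N$-vector space: it sits inside $V/NV\cong\mathbb{F}_N^3$ as the kernel of $V/NV\to V/L\cong\mathbb{Z}/N$. The key point is that $\Psi(h'')$ is determined by the images of $h''$ and $g^{-1}h''$ modulo $NV$ after suitable rescaling.

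\textbf{Step 1: $\Psi(h'')$ depends only on the class of $h''$ in $V/NV$, up to $\mathbb{F}_N^*$.} Choose $h''\in R$. Set $x\defeq\frac{m''h''}{N^{n''}}\in V-L$, where $m''$ and $n''$ are the auxiliary constants of $h''$. The plane $\langle h'',g^{-1}h''\rangle_{\mathbb{Q}}\cap V$ is saturated in $V$. Whenever $\bar x$ and $g^{-1}\bar x$ are linearly independent in $V/NV$, its image in $V/NV$ is exactly the $\mathbb{F}_N$-span of $\bar x$ and $g^{-1}\bar x$. Intersecting with $W$ gives a line. Since $\bar x\notin W$ while $g^{-1}$ preserves $W$ and acts on $V/L$ by a scalar $\lambda_0$, this line is spanned by $g^{-1}\bar x-\lambda_0\bar x$.

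We then have to compare this with $(\langle h'',g^{-1}h''\rangle_{\mathbb{Q}}\cap L)/NV$. Saturation in $L$ versus in $V$ differs only in the $V/L$ direction. Using $V/NV\supset W$, we show that this line is the kernel of $a''\otimes\mathbb{Z}/N|_{W}$. Hence
\[
\Psi(h'')=\bigl\langle (g^{-1}-\lambda_0)\bar x\bigr\rangle .
\]

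\textbf{Step 2: $(N,\disc(\chi_g))=1$ makes this map onto $\mathbb{P}(W)$.} Write $\bar g$ for the reduction of $g$ on $V/NV$. The hypothesis implies $\bar\chi_g$ is separable, so $\bar g$ is semisimple with distinct eigenvalues. The eigenvalue $\lambda_0$, coming from the quotient $V/L$, is a simple root. Therefore $V/NV=W\oplus E_{\lambda_0}$, where $E_{\lambda_0}$ is the $\lambda_0$-eigenline.

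Write $\bar x=w+e$ with $w\in W$ and $e\in E_{\lambda_0}$, $e\neq0$. Then
\[
(g^{-1}-\lambda_0)\bar x=(\bar g^{-1}-\lambda_0)w .
\]
The operator $\bar g^{-1}-\lambda_0$ is invertible on $W$ because $\lambda_0$ is not an eigenvalue of $\bar g|_W$, again by separability. So it suffices to show that every nonzero $w\in W$ arises, up to scalar, from some $h''\in R$. A useful check: $\bar x$ and $g^{-1}\bar x$ are linearly independent exactly when $w\neq 0$, since $\bar x$ is an eigenvector iff $w=0$.

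\textbf{Step 3: realise every class.} We use Lemma \ref{lemma: subsets of R}, splitting into its two cases.

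If $N\mid m$, then $n=1$, $R=h+L$ and $m''=m$. For $l\in L$ we get $x''=\frac{m}{N}(h+l)=x+\frac{m}{N}l$. Since $\frac{m}{N}$ is a multiple of $N$ only if $N^2\mid m$, we would need care here. Instead, we use $h''=h+l$ with $l$ ranging over $L$ and compute modulo $NV$ directly. If $\frac{m}{N}$ is prime to $N$, then $\frac{m}{N}l$ ranges over all of $L$ modulo $NV$, i.e.\ over all of $W$.

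If $(N,m)=1$, take $h''=h+N^{n}l$. Then $x''=x+ml$, and $ml$ again covers $W$ modulo $NV$.

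In either case $w''=w+(\text{arbitrary element of }W)$, which gives surjectivity. We must also confirm that $e$ stays nonzero, which holds because $x''\notin L$.

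\textbf{Main obstacle.} The delicate step is Step 1 together with the subcase $N^2\mid m$ in Step 3. There $\frac{m}{N}l\in NL$, so translating by $L$ may fail to move $\bar x$. For that subcase we first try to use the order statement of the preceding lemma to rule it out: since $h$ has the same order modulo $L$ and modulo $NV$, the constant $n=1$ should force $v_N(m)=1$. Failing that, we would rescale differently, taking $x''=\frac{m''h''}{N}$ with $m''$ recomputed for $h''=h+l$ where $l$ lowers the $N$-part of the order.
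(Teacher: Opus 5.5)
You have reproduced the paper's argument for the cases $(N,m)=1$ and $v_N(m)=1$. Your vector $(\bar g^{-1}-\lambda_0)\bar x$ is the paper's $\psi\bigl(\tfrac{mh''}{N^n}\bigr)$, and Step~2 is exactly Lemma~\ref{lemma: sufficient automorphism}. However, the subcase $N^2\mid m$ is a genuine gap, and it is not vacuous. Take $L=\langle e_1,e_2,e_3\rangle$, $V=\langle e_1,e_2,e_3/N\rangle$ and $h=e_3/N^2$. Then $\langle h\rangle_{\mathbb{Q}}$ contains $e_3/N$, and $gh\equiv h$ as soon as $ge_3\equiv e_3\bmod N^2L$. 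Here $m=N^2$ and $mh=e_3\in NV\setminus N^2V$, so $n=1$. Also $h$ has order $N^2$ modulo $NV=\langle Ne_1,Ne_2,e_3\rangle$ as well, so the order lemma cannot force $v_N(m)=1$.

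Your second fallback fails too. $m''$ is the order of $h''+L=h+L$ in $L\otimes\mathbb{Q}/L$, so $m''=m$ for every $h''\in R$, and no translate lowers it.

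Step~1 itself also breaks down in this subcase. $N\mid m$ forces $\lambda_0=1$, and $\bar g^{-1}\bar x-\bar x=\tfrac{m}{N}(g^{-1}h''-h'')\in\tfrac{m}{N}L\subseteq NL\subseteq NV$. So $\bar x$ is a $\bar g^{-1}$-eigenvector, hence $w=0$ for every $h''\in R$. Moreover $\bar x$ is the same class $\tfrac{m}{N}\bar h$ for all $h''$, so the formula $\langle(\bar g^{-1}-\lambda_0)\bar x\rangle$ gives the zero vector and carries no information about $\Psi(h'')$.

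The missing idea is to stop rescaling $h''$ and use $g^{-1}h''-h''$ directly, as the paper does. By admissibility this vector lies in $L$, and it lies in $\langle h'',g^{-1}h''\rangle_{\mathbb{Q}}$. So whenever its class in $L/NV$ is nonzero, it spans the line $\Psi(h'')$. Recall that $R=h+L$ when $N\mid m$, and write $h''=h+l$ with $l\in L$. The class of $g^{-1}h''-h''$ is then $\psi(h)+\psi(l)$ with $\psi=g^{-1}-1$. On $L/NV$ this $\psi$ agrees with $g^{-1}-\lambda$, because $\lambda\equiv 1\bmod N$. Since $\psi|_{L/NV}$ is an automorphism, these classes exhaust $L/NV$, and every line in $\mathbb{P}(L/NV)$ is attained.
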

    Thus, any line in $L/NV$ can be expressed as $\ker(a''\otimes\mathbb{Z}/N|_{L/NV})$ for $a''\in \mathcal{V}$ associated to $h''\in R$.
\begin{corollary}\label{corollary: d=3}
    Suppose $(N, 6\cdot \disc(\chi_g))=1$. Then, for any $h$, $h'\in R$, there exists an $h'' \in R$ such that $d(h,h'')=d(h',h'')=3$.
\end{corollary}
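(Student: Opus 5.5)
We deduce the corollary from Proposition \ref{prop: image of Psi} by a counting argument in $\mathbb{P}(L/NV)$, in the prime case just treated. Let $a, a'$ be the functionals attached to $h, h'$. By Lemma \ref{lemma: sufficient condition} and \eqref{eq: submodules of dual}, we want $h''\in R$ whose functional $a''$ satisfies
\begin{equation*}
a''\sim a,\quad a''\sim g^{-1}a,\quad a''\sim a',\quad a''\sim g^{-1}a'.
\end{equation*}
Each relation asks that the line $\Psi(h'')=\ker(a''\otimes\mathbb{Z}/N|_{L/NV})$ meet trivially the corresponding kernel line. These are
\begin{equation*}
l_1=\ker(a\otimes\mathbb{Z}/N|_{L/NV}),\quad l_2=\ker(g^{-1}a\otimes\mathbb{Z}/N|_{L/NV}),
\end{equation*}
and $l_3,l_4$ defined likewise from $a',g^{-1}a'$. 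These really are lines: $a,a'\in\mathcal{V}$, hence so are their $g$-translates, and part 1 of Lemma \ref{lemma: sufficient condition} gives that the restrictions are primitive. Over the field $\mathbb{Z}/N$, two lines in the plane $L/NV\cong(\mathbb{Z}/N)^2$ meet trivially exactly when they are distinct. So it suffices to find $h''\in R$ with $\Psi(h'')\notin\{l_1,\dots,l_4\}$.

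Since $(N,\disc(\chi_g))=1$, Proposition \ref{prop: image of Psi} gives $\Psi(R)=\mathbb{P}(L/NV)$, and this set has $N+1$ elements. The hypothesis $(N,6)=1$ forces $N\geq 5$, so $N+1\geq 6>4$. We may therefore pick a line $\ell\notin\{l_1,\dots,l_4\}$ and some $h''\in R$ with $\Psi(h'')=\ell$.

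We then check that this $h''$ works. It lies in $R$, so $h''\equiv h\equiv h'$ modulo $L$ and $\langle h''\rangle_{\mathbb{Q}}$ generates $V/L$. Since $g$ fixes $h$ modulo $L$, it also fixes $h''$ modulo $L$, so $\triple{h''}$ is an admissible triple for the pair $(V,L)$ and $(a'',ga'')\in\mathcal{W}$. By Lemma \ref{lemma: sufficient condition} the choice of $\ell$ gives $a''\sim a$ and $a''\sim g^{-1}a$; applying $g$-equivariance of $\sim$ to the latter gives $ga''\sim a$. Together with $(a,ga)\in\mathcal{W}$ from admissibility of $h$, every edge of the simplex $(a,ga,ga'')$ lies in $\mathcal{W}$, so this simplex is in $\mathcal{Q}$ and $d(h,h'')=3$ by the remark after Definition \ref{def: d(h,h'}. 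The same argument with $a'$ gives $d(h',h'')=3$.

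The one point to watch is the orientation of the relations. We need $ga''\sim a$, which we obtain from $a''\sim g^{-1}a$ rather than from $a''\sim ga$, so the four excluded lines must be the ones attached to $g^{-1}a$ and $g^{-1}a'$. Everything else, including the case of composite $N$, rests on the surjectivity of $\Psi$ supplied by Proposition \ref{prop: image of Psi}; for composite $N$ one would instead need the analogous counting over $\mathbb{Z}/N$, with ``distinct'' replaced by ``intersecting trivially''.
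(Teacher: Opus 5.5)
Your proof is correct and is the paper's argument: Proposition~\ref{prop: image of Psi} makes $\Psi$ surject onto the $N+1\geq 6$ lines of $\mathbb{P}(L/NV)$, so some $h''\in R$ avoids the four kernel lines attached to $a$, $g^{-1}a$, $a'$, $g^{-1}a'$, and Lemma~\ref{lemma: sufficient condition} turns this into $a''\sim a,\, g^{-1}a,\, a',\, g^{-1}a'$, hence $d(h,h'')=d(h',h'')=3$. You also supply details the paper leaves implicit: that the $l_i$ are genuine lines, that $\triple{h''}$ is admissible, and the edge check for $\mathcal{Q}$. The one small slip is that the edge $(ga,ga'')$ of your simplex should be derived from $a''\sim a$ by $g$-equivariance; alternatively, use the simplex $(a,a'',ga'')$, whose edges are exactly the three relations you list.
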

\begin{proof}
    From the Proposition above, we know $|\Psi(R)|=N+1 > 4$ and so given any $4$ lines in $\mathbb{P}(L/NV)$, say arising from $a$, $g^{-1}a$, $a'$, $g^{-1}a'$, we can always find an $h''$ such that $\ker(a''\otimes\mathbb{Z}/N|_{L/NV})$ is distinct from all $4$ of these lines in $\mathbb{P}(L/NV)$. Such an $a''$ will satisfy $a''\sim a$, $g^{-1}a$, $a'$, $g^{-1}a'$ and thus we will have $d(h,h'')=d(h',h'')=3$.
\end{proof}
\begin{corollary}
    Suppose $(N, 6\cdot \disc(\chi_g))=1$. Then, for any $h$, $h'\in R$, we have $\widetilde{\Phi}\triple{h} = \widetilde{\Phi}\triple{h'}$.
\end{corollary}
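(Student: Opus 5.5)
The statement to prove is the final Corollary: for $(N,6\cdot\disc(\chi_g))=1$ and any $h,h'\in R$, $\widetilde{\Phi}\triple{h}=\widetilde{\Phi}\triple{h'}$. The plan is to assemble results already established. First I check that every $h''\in R$ gives an admissible triple $\triple{h''}$ for the pair $(V,L)$.

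\begin{itemize}
\item Since $h''\equiv h$ modulo $L$ and $gh\equiv h$ modulo $L$, we get $gh''\equiv gh\equiv h\equiv h''$ modulo $L$, because $g$ preserves $L$.
\item The condition $g\cdot\langle z\rangle=\langle z\rangle$ is fixed throughout the section.
\item The requirement that $\langle h''\rangle_{\mathbb{Q}}$ generate $V/L$ is built into the definition of $R$.
\end{itemize}

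So $h$, $h'$ and every $h''\in R$ give admissible triples, and all are congruent to one another modulo $L$.

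Next, given $h,h'\in R$, I invoke Corollary \ref{corollary: d=3}, which needs $(N,6\cdot\disc(\chi_g))=1$. It produces $h''\in R$ with $d(h,h'')=d(h',h'')=3$. Corollary \ref{corollary: proof with d=3} applies to the pair $(h,h'')$: both triples are admissible for $(V,L)$, $h''\equiv h$ modulo $L$, and $d=3$. This gives $\widetilde{\Phi}\triple{h}=\widetilde{\Phi}\triple{h''}$. The same argument applied to the pair $(h',h'')$ gives $\widetilde{\Phi}\triple{h'}=\widetilde{\Phi}\triple{h''}$. Writing the quotient as the telescoping product
\begin{equation*}
\frac{\widetilde{\Phi}\triple{h}}{\widetilde{\Phi}\triple{h'}}=\frac{\widetilde{\Phi}\triple{h}}{\widetilde{\Phi}\triple{h''}}\cdot\frac{\widetilde{\Phi}\triple{h''}}{\widetilde{\Phi}\triple{h'}}=1,
\end{equation*}
which concludes the proof.

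The one point needing care is the symmetry of $d$. Corollary \ref{corollary: d=3} is phrased as $d(h',h'')=3$, but the argument may need $d(h'',h')=3$. Both simplices in Definition \ref{def: d(h,h'} involve the same edges up to relabelling, since $(a,a')\in\mathcal{W}$ if and only if $(a',a)\in\mathcal{W}$. In addition, $\sim$ is $g$-equivariant, so $(b',a)\in\mathcal{W}$ if and only if $(a',g^{-1}a)\in\mathcal{W}$. Hence $d$ is symmetric, and Corollary \ref{corollary: proof with d=3} applies in either order. The substantive input is Proposition \ref{prop: image of Psi}, which the statement permits me to assume, so I expect no real obstacle beyond this bookkeeping.
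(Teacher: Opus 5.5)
Your argument is correct and is the paper's own: choose $h''\in R$ via Corollary~\ref{corollary: d=3}, apply Corollary~\ref{corollary: proof with d=3} to the ordered pairs $(h,h'')$ and $(h',h'')$, and telescope. Drop your final paragraph, because symmetry of $d$ is never needed and your argument does not establish it. Corollary~\ref{corollary: d=3} already gives $d(h,h'')=d(h',h'')=3$ in exactly the order Corollary~\ref{corollary: proof with d=3} requires. Moreover, $d(h,h')=3$ involves the diagonal edge $(ga',a)$ whereas $d(h',h)=3$ involves $(ga,a')$, and $g$-equivariance only relates $(ga',a)$ to $(a',g^{-1}a)$, not to $(ga,a')$.
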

We prove Proposition \ref{prop: image of Psi} by showing that, restricted to the subsets of $R$ described above, we can explicitly compute $\Psi(h'')$ using an endomorphism $\psi\in \End(NV\otimes\mathbb{Q}/\mathbb{Z})$. Choosing $\lambda\in \mathbb{Z}$ such that $\lambda = g^{-1} \in \Aut(V/L)\cong(\mathbb{Z}/N)^*$, we define $\psi = -\lambda+g^{-1}\in \End(NV\otimes\mathbb{Q}/\mathbb{Z})$. By construction, $\psi|_{V/NV}:V/NV\rightarrow L/NV\hookrightarrow V/NV$.
\begin{lemma}\label{lemma: sufficient automorphism}
     Suppose $(N, \disc(\chi_g))=1$. Then, $\psi|_{L/NV}\in \Aut(L/NV)$.
\end{lemma}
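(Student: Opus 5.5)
The plan is to reduce the claim to a statement about eigenvalues of $g^{-1}$ acting on the $\mathbb{F}_N$-vector space $V/NV\cong(\mathbb{Z}/N)^3$. Here $N$ is prime and $L/NV\subset V/NV$ is a plane, i.e.\ a $2$-dimensional subspace.

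First I set up the linear algebra. Since $g\in\SL(L^{\vee})\cap\SL(V^{\vee})$, the element $g^{-1}$ preserves both $V$ and $L$, and hence preserves $NV$. It therefore induces an endomorphism $\overline{g^{-1}}$ of $V/NV$ that preserves the subspace $L/NV$ and induces multiplication by $\lambda$ on the $1$-dimensional quotient $V/L\cong (V/NV)/(L/NV)$. The restriction $\psi|_{L/NV}=(\overline{g^{-1}}-\lambda)|_{L/NV}$ is an endomorphism of the finite-dimensional space $L/NV$. So it is an automorphism if and only if $\lambda$ is not an eigenvalue of $\overline{g^{-1}}|_{L/NV}$.

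Next I compare characteristic polynomials. Choose a basis of $V/NV$ adapted to the flag $L/NV\subset V/NV$. In that basis the matrix of $\overline{g^{-1}}$ is block upper triangular, so
\begin{equation*}
\chi_{\overline{g^{-1}}}(x)=\chi_{\overline{g^{-1}}|_{L/NV}}(x)\cdot(x-\lambda)\in\mathbb{F}_N[x].
\end{equation*}
On the other hand, $\chi_{\overline{g^{-1}}}$ is the reduction modulo $N$ of the integral characteristic polynomial $\chi_{g^{-1}}$ of $g^{-1}$ acting on $V$. This polynomial has integer coefficients because $g^{-1}$ preserves $V$, and it equals the characteristic polynomial on $L$ because both are computed over $\mathbb{Q}$.

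Now suppose, for contradiction, that $\lambda$ is an eigenvalue of $\overline{g^{-1}}|_{L/NV}$. Then $(x-\lambda)^2$ divides $\chi_{g^{-1}}\bmod N$. Hence $\disc(\chi_{g^{-1}})\equiv0 \bmod N$, because the discriminant of a monic integral polynomial reduces to the discriminant of its reduction, and the latter vanishes exactly when there is a repeated root. It remains to relate $\disc(\chi_{g^{-1}})$ to $\disc(\chi_g)$. Since $\det g=1$, the roots of $\chi_{g^{-1}}$ are the inverses $\alpha_i^{-1}$ of the roots $\alpha_i$ of $\chi_g$. Using $\alpha_i^{-1}-\alpha_j^{-1}=-(\alpha_i-\alpha_j)/(\alpha_i\alpha_j)$ and $\prod\alpha_i=1$, the product $\prod_{i<j}(\alpha_i^{-1}-\alpha_j^{-1})^2$ equals $\prod_{i<j}(\alpha_i-\alpha_j)^2$ divided by $\prod_i\alpha_i^{4}=1$. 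So $\disc(\chi_{g^{-1}})=\disc(\chi_g)$, which contradicts $(N,\disc(\chi_g))=1$.

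The step I expect to need the most care is verifying that $\overline{g^{-1}}$ acts on $V/L$ by exactly the scalar $\lambda$ used to define $\psi$, so that the factorisation above holds. This is just the definition of $\lambda$ as the image of $g^{-1}$ in $\Aut(V/L)\cong(\mathbb{Z}/N)^*$.
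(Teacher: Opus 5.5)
Your proposal is correct and follows the paper's proof essentially verbatim. Like the paper, you factor the characteristic polynomial of $g^{-1}$ on $V/NV$ as $(x-\lambda)\cdot\chi_{g^{-1}}(x;L/NV)$, then use $(N,\disc(\chi_g))=1$ together with $\disc(\chi_{g^{-1}})=\disc(\chi_g)$ (from $\det g=1$) to rule out a double root at $\lambda$, and conclude $\chi_{g^{-1}}(\lambda;L/NV)\neq 0$; your root computation for the discriminant equality just fills in a step the paper asserts without proof.
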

In particular, any coset of $L/NV$ in $V/NV$ surjects onto $L/NV$ under $\psi$.
\begin{proof}
    Firstly, since $\det(g)=1$, we note that $\disc(\chi_{g^{-1}}) = \disc(\chi_g) $. Now observe:
    \begin{equation*}
        \chi_{g^{-1}}(x)\equiv \chi_{g^{-1}}(x; V/NV) = \chi_{g^{-1}}(x;V/L)\cdot \chi_{g^{-1}}(x;L/NV)=(x-\lambda)\cdot \chi_{g^{-1}}(x;L/NV)\in(\mathbb{Z}/N)[x].
    \end{equation*}
    By assumption $(N, \disc(\chi_g))=1$, we know $\chi_{g^{-1}}\in (\mathbb{Z}/N)[x]$ cannot have a double root. In particular, $\chi_{g^{-1}}(\lambda;L/NV)\neq 0$ and so $-\lambda+g^{-1}\in \Aut(L/NV)$.
\end{proof}
\begin{proof}[Proof of Proposition \ref{prop: image of Psi}]
    Consider the cases $N$ coprime to $m$ or not separately. First, when $(N,m)=1$, we have $R\supset h + N^nL$. Then:
    \begin{equation*}
        h''\in h+N^nL\implies \frac{mh''}{N^n}\in \frac{mh}{N^n}+mL\longmapsto \frac{mh}{N^n}+\frac{L}{NV}\subset \frac{V}{NV}
    \end{equation*}
    where $(N,m)=1$ to deduce $mL/NV = L/NV$. Under the map $\psi$, this coset surjects onto $L/NV$. In other words, any element of $L/NV$ can be expressed as $\psi(\frac{mh''}{N^n})$ for $h''\in R$. By construction $\psi(\frac{mh''}{N^n})\in \langle h'',g^{-1}h''\rangle_{\mathbb{Q}}$, so that when $\psi(\frac{mh''}{N^n})\neq 0 \in L/NV$, we have $\Psi(h'')  = \langle \psi(\frac{mh''}{N^n})\rangle\in\mathbb{P}(L/NV)$. It follows that $\Psi(R) = \mathbb{P}(L/NV)$. 

    It remains to consider the case $N|m$, for which $n=1$. If $v_N(m)=1$, then the above proof works verbatim as $\frac{m}{N}L/NV = L/NV$. However, when $v_N(m)\geq 2$, we have $\frac{mh''}{N}\equiv \frac{mh}{N}\in V/NV$ for all $h''\in R = h+L$ and so we cannot repeat the proof above. Instead, we shall use the following observation: when $N|m$, we have $\frac{mh}{N}\equiv g^{-1}\frac{mh}{N}\in V/L$, which implies that $\lambda\equiv 1\in (\mathbb{Z}/N)^*$. If we then consider $\psi = - 1 +g^{-1}\in \End(NV\otimes \mathbb{Q}/\mathbb{Z})$, this restricts to $ -\lambda+g^{-1}\in \End(V/NV)$ and further satisfies $\psi(h)=-h+g^{-1}h\in L/NV$. Then, for $h''\in h+L$, we compute $\psi(h'')\in \psi(h)+\psi(L/NV) = \psi(h)+L/NV = L/NV$. As above, we thus deduce that $\Psi(R) = \mathbb{P}(L/NV)$. 
\end{proof}
\subsection{General case}
The strategy in the general case remains the same: to find an $h''$ such that $\ker(a''\otimes\mathbb{Z}/N|_{L/NV}) = (\langle h'',g^{-1}h''\rangle_{\mathbb{Q}}\cap L)/ NV\subset L/NV $ has trivial intersection with any four lines in $L/NV$. Again, we do so by proving that any line in $L/NV$ comes from an eligible $h''$ and that there are sufficiently many lines in $L/NV$.

As before, we write $R$ for the set of eligible $h''$:
\begin{equation*}
    R\defeq \{h'': h''\equiv h \text{ mod } L\text{, } \langle h''\rangle_{\mathbb{Q}} \text{ generates } V/L\} \subset h +L. 
\end{equation*}
To make the condition $\langle h\rangle_{\mathbb{Q}}$ generates $V/L$ more explicit, we pick a basis for $L=\langle e_1,e_2,e_3\rangle$ such that $V = \langle e_1,e_2,\frac{e_3}{N}\rangle$. We write $[-]_L$ and $[-]_V$ for the isomorphisms with $\mathbb{Z}^3$ defined by these bases.
\begin{lemma}
    For $x\in L$, with coordinates $[x]_L= (x_1,x_2,x_3)$, the rational span $\langle x\rangle _{\mathbb{Q}}$ generates $V/L$ if and only if $v_p(Nx_3)\leq v_p(x_1)$, $v_p(x_2)$ for all $p|N$. Assuming this condition, an explicit generator in the rational span is given by $\frac{x}{N\prod p^{v_p(x_3)}}\in V$.
\end{lemma}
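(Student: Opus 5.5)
The plan is to reduce everything to the coordinates $[x]_L=(x_1,x_2,x_3)$ and work one prime $p\mid N$ at a time. For $q\in\mathbb{Q}$, the vector $qx=q x_1e_1+qx_2e_2+(Nqx_3)\tfrac{e_3}{N}$ lies in $V$ if and only if
\begin{equation*}
qx_1,\; qx_2,\; Nqx_3\in\mathbb{Z}.
\end{equation*}
Under the identification $V/L\cong\mathbb{Z}/N$ given by the last $V$-coordinate, its class is $Nqx_3 \bmod N$. So $\langle x\rangle_{\mathbb{Q}}$ generates $V/L$ exactly when the image of the set
\begin{equation*}
Q_x\defeq\{q\in\mathbb{Q}: qx\in V\}
\end{equation*}
under $q\mapsto Nqx_3 \bmod N$ is all of $\mathbb{Z}/N$. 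Here $Q_x$ is a finitely generated $\mathbb{Z}$-submodule of $\mathbb{Q}$, hence cyclic, say $Q_x=\mathbb{Z}q_0$. The image is therefore the cyclic subgroup generated by $Nq_0x_3$. The whole statement thus reduces to deciding when $Nq_0x_3$ is a unit modulo $N$, i.e.\ when $v_p(Nq_0x_3)=0$ for every $p\mid N$.

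The degenerate case $x_3=0$ is immediate. The image is then $0$, and since $x\neq 0$ the condition $v_p(Nx_3)=\infty\le v_p(x_1),v_p(x_2)$ fails with the convention $v_p(0)=\infty$, so both sides are false. Assume now $x_3\neq0$. For each prime $\ell$, membership $q\in Q_x$ is the local condition
\begin{equation*}
v_\ell(q)\ge -\min\bigl(v_\ell(x_1),v_\ell(x_2),v_\ell(Nx_3)\bigr).
\end{equation*}
Hence the generator $q_0$ has $v_p(q_0)=-\min(v_p(x_1),v_p(x_2),v_p(Nx_3))$, and so
\begin{equation*}
v_p(Nq_0x_3)=v_p(Nx_3)-\min\bigl(v_p(x_1),v_p(x_2),v_p(Nx_3)\bigr).
\end{equation*}
This vanishes if and only if $v_p(Nx_3)\le v_p(x_1),v_p(x_2)$, which gives the stated equivalence.

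For the explicit generator, I would take $q=1/(N\prod_{p\mid N}p^{v_p(x_3)})$ and check $qx\in V$ directly. At primes $\ell\nmid N$ the denominator is an $\ell$-unit, so integrality at $\ell$ is automatic. At $p\mid N$ we have $v_p(q)=-v_p(Nx_3)$, so $qx_1,qx_2$ are $p$-integral precisely by the hypothesis. The third $V$-coordinate is $Nqx_3=x_3/\prod_{p\mid N}p^{v_p(x_3)}$, which is an integer coprime to $N$, so $qx$ maps to a generator of $V/L$. The only step needing real care, rather than an actual obstacle, is the bookkeeping that reduces "the image generates $\mathbb{Z}/N$" to a single element being a unit. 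This uses that $Q_x$ is cyclic and that a subgroup of $\mathbb{Z}/N$ is everything iff it is not contained in $p\mathbb{Z}/N$ for any $p\mid N$.
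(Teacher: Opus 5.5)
Your proposal is correct and takes essentially the paper's approach: both compute $\min\bigl(v_p(x_1),v_p(x_2),v_p(Nx_3)\bigr)$ prime by prime to get a generator of the image of $\langle x\rangle_{\mathbb{Q}}\cap V$, and then ask whether its last $V$-coordinate is coprime to $N$. The differences are cosmetic. You use the genuinely primitive generator $q_0x$, which involves all primes, whereas the paper only needs primitivity in $V/NV$. You also verify the explicit generator directly; the paper leaves this implicit, since under the condition it equals $x/\prod_{p\mid N}p^{n_p}$.
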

\begin{proof}
    We know $[x]_V = (x_1,x_2,Nx_3)$. The first step is to produce a primitive vector in $V/NV$ in the rational span. So let $n_p =\min(v_p(x_1),v_p(x_2),v_p(Nx_3))$ for each $p|N$. Then $\frac{x}{\prod p ^{n_p}}\in V/NV$ is primitive so $(\langle x\rangle _{\mathbb{Q}}\cap V)/NV = \langle \frac{x}{\prod p ^{n_p}}\rangle \cong \mathbb{Z}/N$. Reducing this line mod $L$, we obtain the subgroup generated by $\frac{Nx_3}{\prod p^{n_p}}\in V/L\cong \mathbb{Z}/N$. This generates $V/L$ if and only if $(\frac{Nx_3}{\prod p^{n_p}},N)=1$. This is equivalent to $n_p = v_p(Nx_3)$ for all $p|N$, which by the definition of $n_p$, is precisely the condition given in the Lemma above.
\end{proof}
We define $m=\operatorname{order}_{L\otimes\mathbb{Q}/\mathbb{Z}}(h)$; as before, one can check this equals $m=\operatorname{order}_{NV\otimes\mathbb{Q}/\mathbb{Z}}(h)$. Writing $[mh]_L = (x_1,x_2,x_3)$ and letting $t = \prod_{p|N}p^{v_p(x_3)}$, we have:
\begin{equation*}
    \langle h\rangle_{\mathbb{Q}}\text{ generates }V/L\iff \frac{mh}{Nt}\in V \text{ generates }V/L.
\end{equation*}
\begin{remark}
    Despite the presence of $t$ in the denominator, we have:
    \begin{equation*}
        v_p\left(\frac{m}{Nt}\right) = \begin{cases}
            <0: v_p(m)< v_p(N)
            \\0: v_p(m)= v_p(N)
            \\>0: v_p(m)> v_p(N)
        \end{cases}
    \end{equation*}
    for all $p$. The above is tautologous if $v_p(N) = 0$, so assume $p|N$. The only nontrivial case to check is when $v_p(m)\geq v_p(N)\geq 1$. But when $v_p(m)\geq 1$, we have $v_p(t)=v_p(x_3)=0$.
\end{remark}
We again use the endomorphism $\psi=-\lambda +g^{-1}\in \End(NV\otimes\mathbb{Q})$ for some $\lambda\in \mathbb{Z}$ satisfying $\lambda \equiv g^{-1}\in \Aut(V/L)\in (\mathbb{Z}/N)^* $. As before, we have $\psi:V/NV \rightarrow L/NV \hookrightarrow V/NV$.
\begin{lemma}
    For $(N,\disc(\chi_g))=1$, we have $\psi|_{L/NV}\in \Aut(L/NV)$.
\end{lemma}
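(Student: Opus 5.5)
The plan is to reduce the claim to a statement about characteristic polynomials over $\mathbb{Z}/N$, mirroring the proof of Lemma \ref{lemma: sufficient automorphism} in the prime case. Here $N$ is no longer prime, so $\mathbb{Z}/N$ is not a field. By the Chinese remainder theorem it suffices to work modulo each prime power $p^k\,\|\,N$: an endomorphism of the free $\mathbb{Z}/N$-module $L/NV$ is an automorphism if and only if its determinant is a unit. Invertibility of a determinant can be tested modulo each prime $p\mid N$, since an element of $\mathbb{Z}/N$ is a unit if and only if its reduction modulo every $p\mid N$ is nonzero. So the statement reduces to showing that $\det(\psi|_{L/NV})\not\equiv 0$ modulo $p$ for each $p\mid N$.

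First, check that $L/NV$ is free of rank $2$ over $\mathbb{Z}/N$. With the basis $V=\langle e_1,e_2,e_3/N\rangle$, we have $L/NV=\langle e_1,e_2,e_3\rangle/\langle Ne_1,Ne_2,e_3\rangle\cong(\mathbb{Z}/N)^2$, and $V/L\cong\mathbb{Z}/N$. Next, $g^{-1}$ preserves both $V$ and $L$, and hence $NV$. This gives the filtration $0\subset L/NV\subset V/NV$ with quotient $V/L$, and characteristic polynomials multiply:
\begin{equation*}
\chi_{g^{-1}}(x)\equiv\chi_{g^{-1}}(x;V/NV)=(x-\lambda)\cdot\chi_{g^{-1}}(x;L/NV)\in(\mathbb{Z}/N)[x],
\end{equation*}
where $g^{-1}$ acts on $V/L$ by $\lambda$. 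We also need $\chi_{g^{-1}}(x;V/NV)$ to agree with the reduction of the integral characteristic polynomial. This holds because $V/NV$ is $V\otimes\mathbb{Z}/N$ and $g^{-1}$ acts on the lattice $V$, whose characteristic polynomial equals that on $L$ (they share the same rational span).

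The determinant of $\psi|_{L/NV}=(g^{-1}-\lambda)|_{L/NV}$ equals $\chi_{g^{-1}}(\lambda;L/NV)$ up to sign. Reduce the displayed factorisation modulo a prime $p\mid N$. If $\chi_{g^{-1}}(\lambda;L/NV)\equiv 0\pmod p$, then $\lambda$ is a double root of $\chi_{g^{-1}}$ modulo $p$, so $p\mid\disc(\chi_{g^{-1}})=\disc(\chi_g)$. The equality of discriminants holds because $\det g=1$ and the roots of $\chi_{g^{-1}}$ are the inverses of those of $\chi_g$; for a monic cubic with unit constant term the discriminant is unchanged, as in the prime case. This contradicts $(N,\disc(\chi_g))=1$. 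Hence the determinant is a unit modulo every $p\mid N$, so it is a unit in $\mathbb{Z}/N$ and $\psi|_{L/NV}\in\Aut(L/NV)$.

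The only step needing care is the first one: justifying that characteristic polynomials are multiplicative for the short exact sequence $0\to L/NV\to V/NV\to V/L\to 0$ over the non-field $\mathbb{Z}/N$. This goes through because all three terms are free $\mathbb{Z}/N$-modules and the submodule $L/NV$ is a direct summand, so in an adapted basis the matrix is block upper triangular. The passage from a double root modulo $p$ to $p\mid\disc$ is standard.
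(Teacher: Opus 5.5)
Your proposal is correct and follows the paper's own argument: reduce invertibility of $\det(\psi|_{L/NV})=\chi_{g^{-1}}(\lambda;L/NV)$ to its non-vanishing modulo each prime $p\mid N$. Then use the factorisation $\chi_{g^{-1}}(x)\equiv(x-\lambda)\cdot\chi_{g^{-1}}(x;L/NV)$ together with $\disc(\chi_{g^{-1}})=\disc(\chi_g)$ to rule out a double root at $\lambda$ modulo $p$. You also spell out details the paper leaves implicit, namely that $L/NV$ is a free direct summand of $V/NV$ (giving the block-triangular form) and why the two discriminants agree.
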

\begin{proof}
    Since $\End(L/NV) = M_2(\mathbb{Z}/N)$, the assertion is equivalent to $\det(\psi) = \chi_{g^{-1}}(\lambda;L/NV)\in (\mathbb{Z}/N)^* \iff \chi_{g^{-1}}(\lambda;L/(pL+NV))\neq 0 \in \mathbb{F}_p$ for all $p|N$. Then, by our previous argument in Lemma \ref{lemma: sufficient automorphism}, we know that a sufficient condition is given by $p\nmid \disc(\chi_g^{-1}(x))=\disc(\chi_g)$ for each $p|N$.
\end{proof}
Rather than splitting into cases as we did when $N$ is prime, we will give one construction which works for all $N$. However, we will still use the insight that was the rationale for splitting into cases: namely, that we can deduce extra information about $\lambda$ when $v_p(m)\geq v_p(N)$. As shown above, for $v_p(m)\geq v_p(N)$, we have $\frac{m}{Nt}\in \mathbb{Z}_p$. In particular, for $v_p(m)\geq v_p(N)$, we have $\frac{mh}{Nt}\equiv g^{-1}\frac{mh}{Nt}\in V/L\otimes\mathbb{Z}_p$ so that $\lambda\equiv 1$ modulo $p^{v_p(N)}$. If we choose $\lambda$ so that it satisfies the stronger condition $\lambda \equiv 1$ modulo $p^{v_p(m)}$, then $\psi(h) = -h+g^{-1}\in L/NV\otimes\mathbb{Z}_p$ for $p$ such that $v_p(m)\geq v_p(N)$; recall that $h\in \frac{1}{p^{v_p(m)}}NV\otimes\mathbb{Z}_p$.

We now define the following constants:
\begin{align*}
    c &= \prod_{p|N}p^{v_p(x_3)}\cdot\max(1,p^{v_p(N)-v_p(m)})\in \mathbb{Z}
    \\ r&= \prod_{p|m,p\nmid N}p^{v_p(m)}\cdot c^{-1}\in c^{-1}\mathbb{Z}
\end{align*}
The role of these constants is clarified in the next Lemma, which is a straightforward computation:
\begin{lemma}
    The constants $c$ and $r$ are the unique positive rational numbers that satisfy:
    \begin{align*}
        c\mathbb{Z}_p=\begin{cases}
            \mathbb{Z}_p&:v_p(m)\geq v_p(N) \\
            \frac{Nt}{m}\mathbb{Z}_p&: v_p(m)< v_p(N)
        \end{cases}\quad\quad\quad 
        r\mathbb{Z}_p  = \begin{cases}
            \mathbb{Z}_p&:v_p(m)\geq v_p(N)\geq 1 \\
            \frac{m}{Nt}\mathbb{Z}_p&: v_p(m)< v_p(N), v_p(N)\neq 0
            \\
            m\mathbb{Z}_p&: v_p(N)=0
        \end{cases}
    \end{align*} 
    for all primes $p$. In particular, $h+cL\subset R$, $rcL/NV = L/NV$, and $\psi(rh)\in L/NV$.
\end{lemma}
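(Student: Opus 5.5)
The plan is to work one prime at a time. All the stated properties of $c$ and $r$ are either statements about $p$-adic valuations or reduce to such statements, so I will compute $v_p(c)$ and $v_p(r)$ directly from the definitions and compare them with the displayed cases.

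\emph{Local computation.} For $p\nmid N$ the factor of $c$ at $p$ is trivial. For $p\mid N$ it is $p^{v_p(x_3)}\max(1,p^{v_p(N)-v_p(m)})$.
\begin{itemize}
\item If $v_p(m)\geq v_p(N)\geq 1$, the preceding Remark gives $v_p(x_3)=v_p(t)=0$ (because $p\mid m$), so $v_p(c)=0$.
\item If $v_p(m)<v_p(N)$, then $v_p(c)=v_p(t)+v_p(N)-v_p(m)=v_p(Nt/m)$.
\end{itemize}
This reproduces the displayed description of $c\mathbb{Z}_p$.

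For $r$, note that $r$ carries the factor $p^{v_p(m)}$ exactly when $p\nmid N$, and at such primes $v_p(c)=0$. Hence:
\begin{itemize}
\item for $p\nmid N$, $v_p(r)=v_p(m)$;
\item for $p\mid N$, $v_p(r)=-v_p(c)$, which is $0$ or $v_p(m/(Nt))$ according to the two cases above.
\end{itemize}
Uniqueness among positive rationals is automatic, since a positive rational is determined by its valuations.

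\emph{The containment $h+cL\subset R$.} Take $h''=h+cl$ with $l\in L$. I will show that $\frac{mh''}{Nt}\equiv\frac{mh}{Nt}$ in $V/L$. Because the order of $h''$ and the parameter $t$ could a priori change, the cleanest route is to show that $\frac{m}{Nt}\,cL\subset L$ locally at every $p\mid N$; this gives $\frac{mh''}{Nt}\in\frac{mh}{Nt}+L$, which generates $V/L$.
\begin{itemize}
\item If $v_p(m)<v_p(N)$, then $\frac{mc}{Nt}$ is a $p$-adic unit.
\item If $v_p(m)\geq v_p(N)$, then $\frac{m}{Nt}\in\mathbb{Z}_p$ by the Remark, and $c$ is a unit.
\end{itemize}
At primes $p\nmid N$ the quotient $V/L$ is trivial after tensoring with $\mathbb{Z}_p$. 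The vector $\frac{mh''}{Nt}$ lies in the rational span of $h''$, so that span generates $V/L$. Together with $h''\equiv h$ modulo $L$, this gives $h''\in R$.

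\emph{The identity $rcL/NV=L/NV$.} This is again local at $p\mid N$. One has $v_p(rc)=v_p(r)+v_p(c)=0$ whenever $p\mid N$, since $v_p(r)=-v_p(c)$ there. So $rc$ is prime to $N$ and acts invertibly on $L/NV$.

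\emph{The claim $\psi(rh)\in L/NV$.} I expect this to be the only delicate step. I will check it after tensoring with $\mathbb{Z}_p$ for each $p$, with three cases.
\begin{itemize}
\item For $p\nmid N$, $V/NV\otimes\mathbb{Z}_p=0$. The element $rh$ lies in $NV\otimes\mathbb{Z}_p$ because $v_p(r)=v_p(m)$ kills the denominator of $h$. So it suffices that $\psi$ preserves $NV\otimes\mathbb{Z}_p$.
\item For $p\mid N$ with $v_p(m)<v_p(N)$, $r$ agrees with $\frac{m}{Nt}$ up to a $p$-adic unit $\epsilon$. Then $rh=\epsilon\frac{mh}{Nt}\in V\otimes\mathbb{Z}_p$, and $\psi$ maps $V/NV$ into $L/NV$ by construction of $\lambda$.
\item For $p\mid N$ with $v_p(m)\geq v_p(N)$, $r$ is a unit. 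Here I use the choice $\lambda\equiv1$ modulo $p^{v_p(m)}$ made just before the definitions. Since $h\in p^{-v_p(m)}NV\otimes\mathbb{Z}_p$, we get $\psi(h)\equiv(g^{-1}-1)h=g^{-1}h-h$ modulo $NV\otimes\mathbb{Z}_p$, and $g^{-1}h-h\in L$ by admissibility.
\end{itemize}
The main care throughout is bookkeeping for the order $m$ under the $p$-adic decomposition, and ensuring that $\psi$ is well defined on $NV\otimes\mathbb{Q}/\mathbb{Z}$ when applied to $rh$.
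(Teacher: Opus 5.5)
Your proposal is correct and follows the paper's proof. The paper omits the valuation checks that you carry out, and then proves $h+cL\subset R$ via $\frac{mc}{Nt}\in\mathbb{Z}$, $rcL/NV=L/NV$ via $(rc,N)=1$, and $\psi(rh)\in L/NV$ by the same three-case local analysis ($p\nmid N$; $p\mid N$ with $v_p(m)<v_p(N)$; $p\mid N$ with $v_p(m)\geq v_p(N)$, using $\lambda\equiv 1 \bmod p^{v_p(m)}$). One small tidy-up: to conclude $\frac{mh''}{Nt}\in V$ you also need $\frac{mc}{Nt}\in\mathbb{Z}_p$ at $p\nmid N$, not just that $V/L$ vanishes there; this is immediate since $v_p(\frac{mc}{Nt})=v_p(m)\geq 0$, so $\frac{mc}{Nt}$ is in fact a positive integer.
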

\begin{proof}
    We omit the $p$-adic valuation verifications and prove the final claims. 

    Since $c\in \mathbb{Z}$, we know that $h+cL\subset h+L$ and therefore, it remains to verify that for each $h''\in h+cL$, its rational span generates the quotient. Indeed, we have:
    \begin{equation*}
        \frac{m}{Nt}h''\in \frac{m}{Nt}h+\frac{m}{Nt}cL\subset \frac{m}{Nt}h+L\longmapsto \frac{m}{Nt}h \in V/L.
    \end{equation*}
    Explicitly, $\frac{m}{Nt}c = \prod_{p|m,p\nmid N}p^{v_p(m)-v_p(N)}\in \mathbb{Z}$.
    
    The second claim follows immediately from $(rc,N)=1$.

    For the final claim, we must show that $\psi(rh)\in L/NV\otimes\mathbb{Z}_p$ for each prime $p$. We will use $\sim$ to denote up to multiplication by $\mathbb{Z}_p^*$. If $v_p(N)=0$, then $rh\sim  mh=0\in L/NV\otimes\mathbb{Z}_p$. Now assume $p|N$. If $v_p(m)<v_p(N)$, then $rh\sim \frac{m}{Nt}h\in V/NV\otimes\mathbb{Z}_p$ and so the result follows from $\psi:V/NV\rightarrow L/NV$. The final case is when $v_p(m)\geq v_p(N)$, so that $rh\sim h\in \frac{1}{m}NV/NV\otimes\mathbb{Z}_p$. By our assumption that $\lambda\equiv 1\in (\mathbb{Z}/p^{v_p(m)})^*$ in this case, it follows that $\psi(rh)\sim -h+g^{-1}h\in L/NV$. 
\end{proof}
Recall that a line in $L/NV$ is a submodule isomorphic to $\mathbb{Z}/N$.
\begin{proposition}
    Suppose $(N,\disc(\chi_g))=1$. Then, for any line $l$ in $L/NV$, there exists an $h''\in R$ such that $(\langle h'',g^{-1}h''\rangle_{\mathbb{Q}}\cap L)/NV = l$. If we further suppose that $(N,6)=1$, then for any pair $h$, $h'\in R$, there exists $h''\in R$ such that $d(h,h'')=d(h',h'')=3$.
\end{proposition}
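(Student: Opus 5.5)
We mirror the prime case, using the constants $c$, $r$ and the automorphism $\psi$ from the preceding Lemmas.

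\textbf{First claim.} For $h''\in h+cL\subset R$, consider the element $\psi(rh'')$. Since $\psi$ is a $\mathbb{Z}$-linear combination of $1$ and $g^{-1}$, it lies in $\langle h'',g^{-1}h''\rangle_{\mathbb{Q}}$. By the previous Lemma, $\psi(rh)\in L/NV$. Moreover, as $h''$ ranges over $h+cL$, we have
\[
\psi(rh'')\in \psi(rh)+\psi(rcL/NV)=\psi(rh)+\psi(L/NV)=L/NV,
\]
using $rcL/NV=L/NV$ and $\psi|_{L/NV}\in\operatorname{Aut}(L/NV)$. Hence every element of $L/NV$ arises as $\psi(rh'')$ for some $h''\in h+cL$. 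Given a line $l=\langle x\rangle$ with $x$ of exact order $N$, pick $h''$ with $\psi(rh'')=x$.

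It remains to show that $(\langle h'',g^{-1}h''\rangle_{\mathbb{Q}}\cap L)/NV$ equals $l$, rather than merely containing it. Because $h''$ and $g^{-1}h''$ are $\mathbb{Q}$-independent ($g$ has no rational eigenvalues), $\langle h'',g^{-1}h''\rangle_{\mathbb{Q}}\cap L=\ker(a'')\cap L$ is a rank $2$ saturated sublattice. Since $a''\in\mathcal{V}$ and $\ker(a'')\otimes\mathbb{Z}/N$ is a plane in $V/NV$ surjecting onto $V/L$, its intersection with $L/NV$ is isomorphic to $\mathbb{Z}/N$. This is the counting from Lemma \ref{lemma: sufficient condition}, applied to $a''\otimes\mathbb{Z}/N$, which is primitive because $a''$ is primitive on $V$ (take $a''$ primitive in $V^\vee$, which is possible since $a''\in\mathcal{V}$). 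A submodule isomorphic to $\mathbb{Z}/N$ that contains an element of order $N$ is generated by it, so the image equals $l$. This cardinality check, that $\ker(a''\otimes\mathbb{Z}/N|_{L/NV})$ is exactly a line when $N$ is composite, is the step most likely to need care.

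\textbf{Second claim.} Let $a,a'$ correspond to $h,h'$. By Lemma \ref{lemma: sufficient condition} and \eqref{eq: submodules of dual}, it suffices to find a line $l\subset L/NV\cong(\mathbb{Z}/N)^2$ meeting each of the four lines $\ker(a_i\otimes\mathbb{Z}/N|_{L/NV})$, for $a_i\in\{a,g^{-1}a,a',g^{-1}a'\}$, trivially. By CRT, lines in $(\mathbb{Z}/N)^2$ are products over $p^k\| N$ of lines in $(\mathbb{Z}/p^k)^2$. Two lines meet trivially if and only if their reductions mod $p$ are distinct in $\mathbb{P}^1(\mathbb{F}_p)$ for every $p\mid N$: an intersection is nonzero exactly when it has nonzero $p$-torsion, which lies in the mod-$p$ socle lines. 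Since $(N,6)=1$, every $p\mid N$ has $p\ge5$, so $|\mathbb{P}^1(\mathbb{F}_p)|=p+1\ge6>4$. Choose for each $p$ a point of $\mathbb{P}^1(\mathbb{F}_p)$ avoiding the four reductions, lift it to a line in $(\mathbb{Z}/p^k)^2$ (lifts exist since $\mathrm{GL}_2$ surjects), and assemble the lifts by CRT into $l$.

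Applying the first claim to $l$ yields $h''\in R$ with $a''\sim a,g^{-1}a,a',g^{-1}a'$. By $g$-equivariance of $\sim$, every edge in the diagram lies in $\mathcal{W}$, so $d(h,h'')=d(h',h'')=3$.
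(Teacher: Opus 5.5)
Your proposal is correct and follows the paper's proof. You use that $\psi(rh'')$ sweeps out all of $L/NV$ as $h''$ ranges over $h+cL\subset R$, and then count lines modulo each prime $p\mid N$, using $p\geq 5$, to avoid the four lines coming from $a$, $g^{-1}a$, $a'$, $g^{-1}a'$. You also correctly supply two details the paper leaves implicit: that $(\langle h'',g^{-1}h''\rangle_{\mathbb{Q}}\cap L)/NV$ is exactly a line, so it equals $l$ rather than merely containing it, and the CRT lifting of the chosen points of $\mathbb{P}^1(\mathbb{F}_p)$ to a line in $(\mathbb{Z}/N)^2$.
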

\begin{proof}
    Combining all the above, for $h''\in h+cL$, we know that $-r\lambda h''+ rg^{-1}h''  = \psi(rh'')\in \psi(rh+rcL/NV) = \psi(rh)+\psi(L/NV)=L/NV$. In particular, we can find an $h''\in R$ whose associated line in $L/NV$ is $l$.

    Why do we need $(N,6)=1$? Given four lines in $L/NV$, we need there to exist at least one other line that intersects trivially with each of them. We know that:
    \begin{align*}
        l_1\cap l_2=\{0\}&\iff \det(l_1\wedge l_2)\in (\mathbb{Z}/N)^*\iff \det(l_1\wedge l_2)\in (\mathbb{F}_p)^* \text{ for all }p|N \\ &\iff \text{ the reductions of $l_1$ and $l_2$ are distinct mod $p$ for all $p|N$}.
    \end{align*}
    In particular, if we denote $p_{min}$ to be the smallest prime dividing $N$, then we need $p_{min}+1>4\iff p_{min}\geq 5$. Thus, we require $(N,6)=1$.
\end{proof}
\newpage
\section{Cohomological Framework}
In this section, we reflect on the cohomological aspects of our proof. Firstly, we provide a homological interpretation of the assumptions placed on the admissible triple $(h,\langle z\rangle,g)$. Secondly, we give a new evaluation procedure which has more natural cohomological properties. 
\subsection{Cohomological Summary}
Recall the setup is a pair of rank $3$ lattices $(V,L)$ with cyclic quotient $V/L\cong\mathbb{Z}/N$. We write $\Lambda = L^{\vee}$ and consider the complex manifold $X=\operatorname{Tot}(\mathcal{O}(1)\rightarrow (\base))$ which carries an action of the congruence subgroup $G = L\rtimes (\SL(L^{\vee})\cap \SL(V^{\vee}))$. We defined the following spaces:
\begin{align*}
    \mathcal{V} &\defeq   \{a\in\Hom(V,\mathbb{Z}): V = L +\ker(a)\} \\
    \mathcal{W} &\defeq \{(a,b)\in \mathcal{V}^2: V = L + \ker(a,b)\} \\ 
    \mathcal{Q} &\defeq  \{(a,b,c)\in\mathcal{V}^3: (a,b),(b,c),(c,a)\in\mathcal{W}\}
\end{align*}
In particular, we constructed a $G$-invariant open cover of $X$ indexed by primitive functionals $a\in \mathcal{V}$, denoted $\mathcal{U} = \{V_a\}_{a\in \mathcal{V}_{\text{prim}}}$. The work of \cite{FHRZ08} gave us the elliptic Gamma cocycle $(\Delta^f,\Gamma)\in Z^1_G(\mathcal{U},\mathcal{M}^*/\mathcal{O}^*)$ which was independent of $f$; note that it is valued in $\mathcal{M}^*/\mathcal{O}^*$. Equivalently:
\begin{equation*}
    \psi^{f,f'} = \frac{\Delta^{f'}}{\Delta^f}\in C^{1,0}_G(\mathcal{U},\mathcal{O}^*)\quad D(\Delta^f,\Gamma) = (\phi^{2,0},\phi^{1,1},\phi^{0,2})\in C^2_G(\mathcal{U},\mathcal{O}^*)
\end{equation*}
We summarise our results in the following Theorem.
\begin{theorem}\label{theorem: vanishing cocycle}
    Suppose $(N,6)=1$. Then, the smoothed elliptic Gamma cochain $(\widetilde{\Delta}^{\underline{f}},\widetilde{\Gamma})\in C^1_G(\mathcal{U},\mathcal{M}^*)$ is independent of a compatible pair $\underline{f}$. Moreover, for any compatible pair $\underline{f}$:
    \begin{equation*}
        D(\widetilde{\Delta},\widetilde{\Gamma})= \left(\widetilde{\phi^{2,0}},\widetilde{\phi^{1,1}}\Big\vert_{\mathcal{W}},\widetilde{\phi^{0,2}}\Big\vert_{\mathcal{Q}}\right) =1.
    \end{equation*}
\end{theorem}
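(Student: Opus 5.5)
This theorem is essentially a repackaging of the vanishing results of Section \ref{sec: vanishing theorems}, so I would prove it by assembling them, checking along the way that each applies at the level of generality claimed here. The argument has two parts.

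\textbf{Independence of the compatible pair.} For the $(0,1)$-component there is nothing to check, since $\widetilde{\Gamma}$ makes no reference to a framing. For the $(1,0)$-component, I would invoke Proposition \ref{prop: smoothed compatible pairs}: for two compatible pairs $\underline{f}$ and $\underline{f'}$ the smoothed cochain $\widetilde{\psi^{\underline{f},\underline{f'}}}$ is trivial. By \eqref{eq: relate smoothed psi to smoothed delta} this gives $\widetilde{\Delta}^{\underline{f}}=\widetilde{\Delta}^{\underline{f'}}$. Recall the key input there: the base-change matrix between $(y_2,y_3)$ and $(x_2,x_3)$ lies in $\Gamma_0(N)$ precisely because of compatibility. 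Together with the $\mathbb{Z}^2\rtimes\Gamma_0(N)$-invariance of $\theta_0^N$ (Proposition \ref{prop: j^N trivial}, which uses $(N,6)=1$), this gives the triviality. It follows that $D(\widetilde{\Delta},\widetilde{\Gamma})$ is also independent of the compatible pair.

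\textbf{Vanishing of the differential.} I would write $D(\widetilde{\Delta},\widetilde{\Gamma})=\widetilde{D(\Delta,\Gamma)}=(\widetilde{\phi^{2,0}},\widetilde{\phi^{1,1}},\widetilde{\phi^{0,2}})$ and treat each component in turn.
\begin{enumerate}
\item $\widetilde{\phi^{2,0}}$ is trivial on all of $G^2$ by Proposition \ref{prop: smoothed phi2,0}. This proposition covers the translation part, and the general case reduces to it through the relation between $\phi^{2,0}$ and $\psi^{f,g\cdot f}$, whose smoothed version vanishes because $g\cdot\underline{f}$ is again compatible.
\item $\widetilde{\phi^{1,1}}$ restricted to pairs in $\mathcal{W}$ is trivial by Proposition \ref{prop: smoothed phi1,1}. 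Its proof reduces to the translation subgroup and chains arbitrary pairs down to modulus $1$ pairs, all of which stay in $\mathcal{W}$ by coplanarity.
\item $\widetilde{\phi^{0,2}}$ restricted to triples in $\mathcal{Q}$ is trivial by Proposition \ref{prop: smoothed phi0,2}.
\end{enumerate}
Together these give the stated equality.

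\textbf{Where the difficulty lies.} The genuinely hard input is the $\widetilde{\phi^{0,2}}$ vanishing. Its proof combines the Bernoulli distribution relation of Proposition \ref{prop: Bernoulli DR} with the cubic error term estimate of Proposition \ref{prop: final error term}, and both of these use $(N,6)=1$ through Lemmas \ref{lemma: consant in 6Z} and \ref{lemma: sum of a_1a_2(a_3+X)}. Since those results are already established, the remaining work is bookkeeping.

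Two points need checking. First, the cases excluded in Proposition \ref{prop: smoothed phi0,2} must really be covered: linearly dependent triples give $\phi^{0,2}=1$, and negatively oriented triples are handled by the antisymmetry $\phi^{0,2}_{a_1,a_2,a_3}=(\phi^{0,2}_{a_1,a_3,a_2})^{-1}$. Second, in Proposition \ref{prop: smoothed phi1,1} the positive-cone simplification and the analytic continuation from $\bigcap V_{a_i}$ to $V_a\cap V_b$ must extend to all $u\in L$. For this I would use the cocycle property in $u$ of $d\widetilde{\Gamma}$ and of $\partial\widetilde{\Delta}$, which lets one reduce to $u$ in the positive cone.
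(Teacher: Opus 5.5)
Your proposal is correct and is essentially the paper's own argument: the paper states this theorem as a summary of Propositions \ref{prop: smoothed compatible pairs}, \ref{prop: smoothed phi2,0}, \ref{prop: smoothed phi1,1} and \ref{prop: smoothed phi0,2}, and you assemble it from exactly these. Your cocycle-in-$u$ device for extending the $\widetilde{\phi^{1,1}}$ step beyond the positive cone is sound, since it uses $\widetilde{\phi^{2,0}}=1$ to make $\widetilde{\Delta}$, and hence $\widetilde{\phi^{1,1}}$, a group cocycle; but you should apply it to each modulus-$1$ pair before telescoping, because $u$ in the positive cone of $(a,b)$ need not lie in the positive cone of the intermediate pairs of the chain.
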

\begin{remark}
    The notation $\widetilde{\phi^{1,1}}\Big\vert_{\mathcal{W}}$ indicates the cochain in $C^{1,1}_G(\mathcal{U},\mathcal{O}^*)$ restricted to double intersections $V_a\cap V_b$ for $(a,b)\in \mathcal{W}$, that is $\widetilde{\phi^{1,1}}\Big\vert_{\mathcal{W}}: G\rightarrow\prod_{(a,b)\in \mathcal{W}}\mathcal{O}^*(V_a\cap V_b)$. 
\end{remark}
Thus, in some restricted sense, we may think of $(\widetilde{\Delta},\widetilde{\Gamma})$ as defining a $1$-cocycle in $H^1_G(X,\mathcal{M}^*)$. Associated to an admissible triple $\triple{h}$ for the pair $(V,L)$, we defined an evaluation:
\begin{equation}\label{eq: evaluation}
    \Phi_{\triple{h}}(\widetilde{\Delta},\widetilde{\Gamma}) = \widetilde{\Gamma}_{a,ga}(z(h),z).
\end{equation}
This is what we previously denoted by $\widetilde{\Phi}\triple{h}$. For another admissible triple $\triple{h'}$ satisfying $h\equiv h'$ modulo $L$, we proved:
\begin{equation*}
    \frac{\Phi_{\triple{h}}}{\Phi_{\triple{h'}}}(\widetilde{\Delta},\widetilde{\Gamma}) = \text{expression in }D(\widetilde{\Delta},\widetilde{\Gamma})
\end{equation*}
from which one immediately deduces the following Corollary.
\begin{corollary}
    If $(a,b,b')$, $(a,a',b')\in\mathcal{Q}$, we have $\Phi_{(h,\langle z\rangle,g)}(\widetilde{\Delta},\widetilde{\Gamma})= \Phi_{(h',\langle z\rangle,g)}(\widetilde{\Delta},\widetilde{\Gamma})$.
\end{corollary}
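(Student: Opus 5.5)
The corollary follows by combining the smoothed cocycle expression of Proposition \ref{prop: smoothed cocycle expression} with the vanishing statements collected in Theorem \ref{theorem: vanishing cocycle}. Since both $\Phi_{(h,\langle z\rangle,g)}$ and $\Phi_{(h',\langle z\rangle,g)}$ are defined purely in terms of $\widetilde{\Gamma}$, which is framing-independent, we may fix a compatible pair $\underline{f}$. We then write the quotient $\Phi_{(h,\langle z\rangle,g)}/\Phi_{(h',\langle z\rangle,g)}$ as the product in Proposition \ref{prop: smoothed cocycle expression}. This product has six factors: one factor $\widetilde{\psi^{\underline{f},g\cdot\underline{f}}}$, two factors $\widetilde{\phi^{2,0}}$, two factors $\widetilde{\phi^{1,1}}$ indexed by $(a,a')$ and $(b',a)$, and two factors $\widetilde{\phi^{0,2}}$ indexed by $(a,b,b')$ and $(a,a',b')$.

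We dispose of the factors in turn.
\begin{enumerate}
    \item The $\widetilde{\psi}$ factor is trivial. The pair $g\cdot\underline{f}$ is again compatible, because $g$ preserves $\mathcal{V}$ and respects the compatibility condition, so Proposition \ref{prop: smoothed compatible pairs} applies.
    \item Both $\widetilde{\phi^{2,0}}$ factors are trivial by Proposition \ref{prop: smoothed phi2,0}, which holds with no condition on the functionals.
    \item For the $\widetilde{\phi^{1,1}}$ factors, the hypothesis $(a,b,b'),(a,a',b')\in\mathcal{Q}$ gives $(a,a')\in\mathcal{W}$ from the second triple and $(b',a)\in\mathcal{W}$ from either triple. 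Proposition \ref{prop: smoothed phi1,1} then kills both factors.
    \item The two $\widetilde{\phi^{0,2}}$ factors are indexed by triples lying in $\mathcal{Q}$ by hypothesis, so Proposition \ref{prop: smoothed phi0,2} makes each equal to $1$ identically on the triple intersection. In particular they equal $1$ at the evaluation points $(z(h),z)$ and $(z(h'),z)$.
\end{enumerate}

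The only point needing care is that every evaluation point lies in the domain where the relevant cochain is defined and the vanishing identity holds. The points are $(z(h),z)$, $(gz(h),z)$ and $(z(h'),z)$, and each must lie in the appropriate intersection $V_a\cap V_{a'}$, $V_{b'}\cap V_a$, and so on. Membership follows from the construction: by Lemma \ref{lemma: a assocaited to (h,z,g)} we have $z\in U_a\cap U_{a'}$, and since $z$ is an eigenfunctional of $g$ we get $z\in U_{ga}\cap U_{ga'}$ as well, up to the $\mathbb{C}^*$-rescaling built into $V_a$. The identities of Propositions \ref{prop: smoothed phi1,1} and \ref{prop: smoothed phi0,2} hold as identities of holomorphic functions, so evaluation is legitimate.

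The main subtlety, though a mild one, is checking that we have the hypotheses of Proposition \ref{prop: smoothed cocycle expression}. That proposition requires two admissible triples for the pair $(V,L)$ with $h\equiv h'$ modulo $L$, whereas the corollary lists only the $\mathcal{Q}$-conditions. We read the corollary as standing in the context of the preceding sentence, where $h\equiv h'$ modulo $L$ and both triples are admissible. Under that reading the corollary is exactly the $d(h,h')=3$ case, Corollary \ref{corollary: proof with d=3}, restated with the evaluation notation, and the argument above completes the proof.
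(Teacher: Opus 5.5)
Your proposal is correct and is essentially the paper's own argument: take a compatible pair in Proposition \ref{prop: smoothed cocycle expression} and kill each factor with the vanishing results collected in Theorem \ref{theorem: vanishing cocycle}. You also correctly identify the implicit context that $h\equiv h'$ modulo $L$ and both triples are admissible for $(V,L)$. Two trivial remarks: the product has seven factors, not six, and the propositions you cite also rely on the standing hypothesis $(N,6)=1$ of Theorem \ref{theorem: vanishing cocycle}.
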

Thus we introduced the invariant $d$, which satisfies $d(h,h')=3\iff (a,a',b')\in\mathcal{Q}\iff (a,b,b')\in\mathcal{Q}$. In particular, our independence of $h$ result follows from the reduction to this case.
\begin{corollary}
    If $(N, 6\cdot \text{disc}(\chi_g))=1$, then for any $h$, $h'\in R$, there exists an $h'' \in R$ such that $d(h,h'')=d(h',h'')=3$. In particular, we deduce $\Phi_{(h,\langle z\rangle,g)}(\widetilde{\Delta},\widetilde{\Gamma})= \Phi_{(h',\langle z\rangle,g)}(\widetilde{\Delta},\widetilde{\Gamma})$.
\end{corollary}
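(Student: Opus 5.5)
The plan is to chain together results already established, since this corollary is the assembled form of the argument. Fix $h,h'\in R$. By the preceding Corollary (the general-$N$ version of Corollary \ref{corollary: d=3}, which needs $(N,6\cdot\disc(\chi_g))=1$), there is an $h''\in R$ with $d(h,h'')=d(h',h'')=3$. Since $h''\in R\subset h+L$ and $\langle h''\rangle_{\mathbb{Q}}$ generates $V/L$, the triple $\triple{h''}$ is admissible for the pair $(V,L)$. The conditions $g\langle z\rangle=\langle z\rangle$ and $gh''\equiv h''$ modulo $L$ follow from $gh\equiv h$ and $g$ preserving $L$. Moreover $h''\equiv h\equiv h'$ modulo $L$.

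Next I apply the Corollary above, that $(a,b,b')$, $(a,a',b')\in\mathcal{Q}$ implies equality of evaluations, to the pairs $(h,h'')$ and $(h',h'')$. Using the Remark after Definition \ref{def: d(h,h'}, $d=3$ is equivalent to the relevant triples lying in $\mathcal{Q}$, so the hypotheses hold. Unwinding what sits behind that Corollary, namely Proposition \ref{prop: smoothed cocycle expression} with a compatible pair $\underline{f}$ and Theorem \ref{theorem: vanishing cocycle}, every factor in the expression vanishes:
\begin{itemize}
\item $\widetilde{\psi}$ and $\widetilde{\phi^{2,0}}$ vanish unconditionally.
\item $\widetilde{\phi^{1,1}}$ on $(a,a'')$ and $(b'',a)$ vanishes because these pairs lie in $\mathcal{W}$.
\item $\widetilde{\phi^{0,2}}$ on $(a,b,b'')$ and $(a,a'',b'')$ vanishes because these triples lie in $\mathcal{Q}$.
\end{itemize}
This gives $\Phi_{\triple{h}}=\Phi_{\triple{h''}}$, and likewise $\Phi_{\triple{h'}}=\Phi_{\triple{h''}}$.

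Finally, I combine the two equalities through the telescoping identity at the start of Section \ref{sec: reduction} to get $\Phi_{\triple{h}}(\widetilde{\Delta},\widetilde{\Gamma})=\Phi_{\triple{h'}}(\widetilde{\Delta},\widetilde{\Gamma})$. There is no real obstacle at this stage. The only point needing care is the orientation of the simplices: Proposition \ref{prop: smoothed cocycle expression} is stated with $h$ in the "unprimed" role, so I check that $d(h',h'')=3$ is symmetric and can be applied with $h''$ in the primed slot. This holds because $d$ depends only on which edges lie in $\mathcal{W}$, and $\mathcal{W}$ is a symmetric relation. The hard work, namely the vanishing of $\widetilde{\phi^{0,2}}$ on $\mathcal{Q}$ and the surjectivity of $\Psi$ onto lines, has already been carried out earlier in the paper.
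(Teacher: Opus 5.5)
Your proposal is correct and is essentially the paper's own argument: the corollary is the Section~\ref{sec: reduction} existence result (Corollary~\ref{corollary: d=3} and its general-$N$ analogue) combined with Corollary~\ref{corollary: proof with d=3} applied to $(h,h'')$ and $(h',h'')$, followed by the telescoping identity. Your closing remark about symmetry is unnecessary, because $d(h',h'')=3$ already places $h''$ in the primed slot; its justification is also not right, since swapping the arguments of $d$ replaces the diagonal edge $(a,ga')$ by the different pair $(a',ga)$, so $d$ need not be symmetric (this orientation is why Section~\ref{sec: reduction} asks for $a''\sim a,\,g^{-1}a,\,a',\,g^{-1}a'$ rather than conditions involving $ga$ and $ga'$).
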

The notation of \eqref{eq: evaluation} is suggestive of a more general evaluation procedure, a pairing between a homology class given by $\triple{h}$ and a cohomology class (almost) given by $(\widetilde{\Delta},\widetilde{\Gamma})$. More precisely, we ask:
\begin{enumerate}
    \item Is there a homological interpretation of the data $(h,\langle z\rangle,g)$ and the compatibility conditions imposed?
    \item Is there an evaluation morphism defined on all $1$-cochains that restricts to this evaluation map on the cochain $(\widetilde{\Delta},\widetilde{\Gamma})$? Motivated by the fact that 
    $\Phi_{(h,\langle z\rangle,g)}(\widetilde{\Delta},\widetilde{\Gamma})/\Phi_{(h',\langle z\rangle,g)}(\widetilde{\Delta},\widetilde{\Gamma})$ is expressed in terms of $D(\widetilde{\Delta},\widetilde{\Gamma})$, we might hope that (the restriction to $1$-cycles of) such an evaluation morphism descends to cohomology.
\end{enumerate}
Before answering these questions, we end this section with a computation of the restriction to a generic triptic curve of Dixmier-Douady class associated to the smoothed Gamma gerbe. Although it is not true that $(\widetilde{\Delta},\widetilde{\Gamma})$ is a cocycle, the smoothing operation does at least trivialise the restricted Dixmier-Douady class.
\subsubsection{An explicit cohomological computation}
As in \cite{FHRZ08}, denote by $\mathcal{G}$ the Gamma gerbe $[D(\Delta^f,\Gamma)]\in H^2_G(X,\mathcal{O}^*)$. Under the boundary map in the long exact sequence associated to the exponential sheaf sequence, we obtain the Dixmier-Douady class $c(\mathcal{G})\in H^3_G(X,\underline{\mathbb{Z}})$. We say $\langle z\rangle \in \base$ is generic when $z:L\hookrightarrow \mathbb{C}$; this ensures the quotient stack $[\mathbb{C}/z(L)]$ is in fact a sheaf. We restrict $c(\mathcal{G})$ to obtain the Dixmier-Douady class of the restricted Gamma gerbe:
\begin{equation*}
    H^3_G(X,\underline{\mathbb{Z}})\rightarrow H^3_L(X,\underline{\mathbb{Z}})\rightarrow H^3_L(\pi^{-1}\langle z\rangle \cong \mathbb{C},\underline{\mathbb{Z}}) = H^3(L,\mathbb{Z}).
\end{equation*}
For the final equality, since $L$ acts freely on the contractible fiber $\pi^{-1}\langle z\rangle \cong \mathbb{C}$, it follows that $[\mathbb{C}/z(L)]$ is a model for $BL$ and thus computes group cohomology. In \cite{FHRZ08}[Proposition 6.10], it is proven that $c(\mathcal{G})$ is a generator of $H^3(L,\mathbb{Z})= \mathbb{Z}$. From the explicit description of $\phi^{2,0}\in C^2(G,\check{C}^0(\mathcal{U},\mathcal{O}^*))$, we can express the restriction of $c(\mathcal{G})$ as:
\begin{equation*}
    \psi = \frac{1}{2\pi i}d\log(\phi^{2,0}_a)\in C^3(L,\check{C}^0(V_a,\underline{\mathbb{Z}})) = C^3(L,\mathbb{Z})
\end{equation*}
for some $a\in \mathcal{V}$ such that $z\in U_a$. Using the framing $f(a)=(x_1,x_2,x_3)$ and writing $\lambda = \sum l_i x_i $, $\mu = \sum m_ix_i$, $\nu = \sum n_ix_i\in L$, the cocycle is then explicitly computed to be:
\begin{equation*}
    \psi(\lambda,\mu,\nu) = -n_1m_2l_3
\end{equation*}
which is a generator of $H^3(L,\mathbb{Z})$.
\begin{proposition}
    Smoothing kills the restricted Dixmier-Douady class. Explicitly, we have $\widetilde{\psi}=0$ so that $c(\widetilde{\mathcal{G}})=[\widetilde{\psi}]=0\in  H^3_L(\pi^{-1}\langle z\rangle\cong \mathbb{C},\underline{\mathbb{Z}})$.  
\end{proposition}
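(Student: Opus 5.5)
The plan is to compute $\widetilde{\psi}$ directly from the smoothed cocycle $\widetilde{\phi^{2,0}}$, which we already know vanishes identically for a compatible pair by Proposition \ref{prop: smoothed phi2,0}. For any compatible pair $\underline{f}$ we have
\begin{equation*}
\widetilde{\psi}=\frac{1}{2\pi i}d\log\bigl(\widetilde{\phi^{2,0}_a}\bigr) = N\cdot\psi(-;L)-\psi(-;V),
\end{equation*}
where $\psi(-;L)$ and $\psi(-;V)$ are the restricted Dixmier-Douady representatives computed from $\phi^{2,0}(-;L)$ and $\phi^{2,0}(-;V)$ with the framings $f_L(a)=(x_1,x_2,x_3)$ and $f_V(a)=(x_1,x_2,x_3/N)$. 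The choice of $d\log$ branch is immaterial: $\widetilde{\phi^{2,0}}=1$ allows the logarithm to be taken as the constant $0$, or, using Remark \ref{remark: explicit phi2,0}, as $2\pi i\cdot n_1m_2\frac{N-1}{2}$, which is an integer multiple of $2\pi i$ depending only on the cochain arguments. Its group coboundary is then an integer-valued coboundary, so the class vanishes either way.

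The main check is that this is consistent with the explicit formula $\psi(\lambda,\mu,\nu)=-n_1m_2l_3$. Restrict to $L^3$ and write $\lambda,\mu,\nu\in L$ in the $V$-framing basis: the $L$-coordinates $(l_1,l_2,l_3)$ become $(l_1,l_2,Nl_3)$. Then
\begin{equation*}
\psi(\lambda,\mu,\nu;V) = -n_1m_2(Nl_3) = N\psi(\lambda,\mu,\nu;L),
\end{equation*}
so $\widetilde{\psi}=N\psi(-;L)-\psi(-;V)=0$ identically, as a cochain on $L^3$. The restriction map in the statement runs through $H^3_L$, so only $L$ matters, and we obtain $c(\widetilde{\mathcal{G}})=[\widetilde{\psi}]=0$.

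The step needing care is the claim that the explicit cocycle formula transforms this way. It rests on two facts: the branch of $\log\phi^{2,0}$ given by $2\pi iP^{2,0}$ from Remark \ref{remark: explicit phi2,0} is used uniformly for $L$ and $V$, and the formula for $\psi$ depends only on the framing coordinates. I would verify this by recomputing $dP^{2,0}$ from the explicit polynomial: the $w$-dependent and $z$-dependent parts cancel in the coboundary, leaving the integer $-n_1m_2l_3$ (coming from the $-2w$ term, since $w\mapsto w+z(\lambda)$ contributes $l_3z(x_3)/z(x_3)$). In the $V$-framing the third coordinate scales by $N$ while $z(x_3)$ is replaced by $z(x_3)/N$, and these effects combine to give exactly the factor $N$ above.
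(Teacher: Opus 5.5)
Your proposal is correct and follows the paper's proof. Fixing a compatible pair, the paper computes $\widetilde{\psi}(\lambda,\mu,\nu)=N\cdot(-n_1m_2l_3)-(-n_1m_2\cdot Nl_3)=0$ via the same coordinate change $l_3\mapsto Nl_3$, and, like your branch remark, it alternatively checks that the coboundary of $\frac{1}{2\pi i}\log\widetilde{\phi^{2,0}_a}=n_1m_2\frac{N-1}{2}$ vanishes identically.

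Two small clarifications. First, the factor $N$ comes entirely from $l_3\mapsto Nl_3$; the rescaling of $z(x_3)$ cancels in the ratio. Second, only the branch remarks in your first paragraph use that $N$ is odd, whereas the main computation holds for every $N$.
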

\begin{proof}
    Fix a compatible pair so that $f_L(a)=(x_1,x_2,x_3)$ and $f_V(a)=(x_1,x_2,\frac{x_3}{N})$. Then, we compute:
    \begin{equation*}
        \widetilde{\psi}(\lambda,\mu,\nu)=N\cdot \psi(\lambda,\mu,\nu;L)-\psi(\lambda,\mu,\nu;V) =  N\cdot (-n_1m_2l_3) -(-n_1m_2(N\cdot l_3))=0.
    \end{equation*}
    Alternatively, we can see this via our explicit computation of $\widetilde{\phi^{2,0}}$ in Remark \ref{remark: explicit phi2,0} as:
    \begin{equation*}
        \widetilde{\phi^{2,0}_a}(\mu,\nu)(w,z)= e^{2\pi i\cdot n_1m_2\cdot \frac{N -1}{2}}.
    \end{equation*}
    In particular, $\frac{1}{2\pi i}\log(\widetilde{\phi^{2,0}_a})=n_1m_2\frac{N-1}{2}$. Then applying the additive differential we obtain:
    \begin{equation*}
         \widetilde{\psi}(\lambda,\mu,\nu)=\frac{1}{2\pi i}d\log(\widetilde{\phi^{2,0}_a})(\lambda,\mu,\nu)= \frac{N-1}{2}\left(n_1m_2+(m_1+n_1)l_2-n_1(l_2+m_2)-m_1l_2\right)=0.
    \end{equation*}
\end{proof}
\subsection{Homological interpretation of the input data}\label{subsec: geometric}
In this section, we show that admissible triples $\triple{h}$ for $L$ define loops in $X\times_G EG$, where $EG$ is the universal bundle over the classifying space $BG$. Admissible triples satisfy the following compatibility conditions:
\begin{equation*}
    g\cdot \langle z \rangle =\langle z\rangle \quad\quad  g\cdot  h\equiv h \text{ modulo }L.
\end{equation*}
In particular, $gh-h\in L$ and so we can define an element $x=(gh-h,g)\in G$. We write $p=(z(h),z)\in X$, which is independent of the choice of generator for the line $\langle z \rangle$. Now take the simplicial model for $EG$ whose $n$-simplices are $(EG)_n=G^{n+1}$. We denote $(p,(e,x))\in (X\times EG)_1$ for the $1$-chain $(p,e)\mapsto (p,x)\in (X\times EG)_0$. 
\begin{proposition}
    An admissible triple $\triple{h}$ for $L$ defines a loop $(p,(e,x))$ in $X\times_G EG$, where $p=(z(h),z)\in X$ and $x=(gh-h,g)\in G$.
\end{proposition}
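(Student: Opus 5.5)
We need to show that the 1-simplex $(p,(e,x))$ in $X\times EG$ becomes a closed loop in the Borel construction $X\times_G EG=(X\times EG)/G$. The plan is to check that its two endpoints, $(p,e)$ and $(p,x)$, have the same image in the quotient. The first step is to fix conventions. $G$ acts diagonally on $X\times EG$: on $X$ via the homogeneous action \eqref{eq: homogeneous action}, $(l,g)\cdot(w,z)=(w-gz(l),gz)$, and on the simplicial model $(EG)_n=G^{n+1}$ by left multiplication on every entry. With these conventions the vertex $(p,x)$ equals $x\cdot(x^{-1}p,e)$, so the 1-simplex closes up in the quotient exactly when $x^{-1}\cdot p=p$, or equivalently $x\cdot p=p$. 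The whole proof therefore reduces to showing that $x=(gh-h,g)$ fixes the point $p=(z(h),z)\in X$. The sign convention for the left versus right action on $EG$ only affects whether we need $x$ or $x^{-1}$ here, and since stabilisers are subgroups this choice is harmless.

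The second step is the computation itself. By \eqref{eq: homogeneous action},
\begin{equation*}
(gh-h,g)\cdot(z(h),z)=\bigl(z(h)-gz(gh-h),\,gz\bigr).
\end{equation*}
Recall that $g$ acts on functionals by pullback along $g^{-1}$, so $gz(gh)=z(h)$. The first coordinate is then
\begin{equation*}
z(h)-z(h)+gz(h)=gz(h).
\end{equation*}
The point is therefore $(gz(h),gz)$. By admissibility, $g\cdot\langle z\rangle=\langle z\rangle$, so $gz=\lambda z$ for some $\lambda\in\mathbb{C}^*$. This gives $(gz(h),gz)=(\lambda z(h),\lambda z)$, which is $\mathbb{C}^*$-equivalent to $(z(h),z)=p$ in $V_a=(\mathbb{C}\times U_a)/\mathbb{C}^*$. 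This is the same identity $(z(h),g^{-1}z)=(gz(h),z)$ already used in the proof of Proposition~\ref{prop: cocycle expression}.

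Two well-definedness checks remain. First, the element $x$ must lie in $G$. The condition $gh\equiv h$ modulo $L$ gives $gh-h\in L$, and $g\in\SL(\Lambda)$ (or $g\in\SL(L^\vee)\cap\SL(V^\vee)$ in the pair setting), so $x\in G$. Second, $p$ must be independent of the chosen generator of $\langle z\rangle$, which follows from the same homogeneity. The main obstacle is bookkeeping rather than anything conceptual: the signs, the direction of the $\SL$-action on $L$ versus $\Lambda$, and the left/right conventions on $EG$ all have to be consistent so that $x$ (and not some other group element such as $(h-gh,g)$ or $(g^{-1}h-h,g^{-1})$) is the one that fixes $p$. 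I would settle this by writing the face maps $d_0,d_1$ of the 1-simplex explicitly and then verifying the fixed-point identity above.
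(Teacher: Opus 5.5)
Your proposal is correct and matches the paper's proof. Both reduce the closing of the $1$-simplex to the fixed-point identity $x\cdot p=p$, and both verify it through the homogeneous action: $gz(gh)=z(h)$ gives $x\cdot p=(gz(h),gz)$, which equals $p$ in $V_a=(\mathbb{C}\times U_a)/\mathbb{C}^*$ because $gz=\lambda z$. Your additional checks, that $x\in G$ since $gh-h\in L$ and that $p$ does not depend on the generator of $\langle z\rangle$, are left implicit in the paper.
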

\begin{proof}
    To show the $1$-chain $(p,(e,x))$ in $X\times EG$ becomes a $1$-cycle in the quotient $X\times_G EG$, we must show $(p,x)=x\cdot (p,e)$. We compute:
    \begin{equation*}
        x\cdot p = (gh-g,g)\cdot (z(h),z) = (z(h)-gz(gh-h),gz)=(gz(h),gz)=(z(h),z)
    \end{equation*}
    where for the final equality we use that $gz=\lambda z$ for $\lambda\in \mathbb{C}^*$ and that the coordinates on the opens $V_a$ are homogeneous: $V_a = (\mathbb{C}\times U_a)/\mathbb{C}^*$.
\end{proof}
Recall that $X\times_G EG$ naturally has the structure of a fiber bundle over $BG$ with fiber $X$:
\begin{equation*}
    X\hookrightarrow X\times_G EG\twoheadrightarrow BG
\end{equation*}
Thus, another phrasing of conditions on $h$ and $z$ is that define a lift of the loop $x\in \pi_1(BG)=G$, itself a lift of $g\in \pi_1(B\SL(L^{\vee})\cap \SL(V^{\vee}))$, from the base space to the fiber bundle.

For another admissible triple $(h',\langle z\rangle,g)$, we obtain another $1$-cycle in $X\times_G EG$ lifting $x'=(gh'-h',g)\in \pi_1(BG)$. A natural question to ask is whether the assumption $h\equiv h'$ implies these two cycles are homologous.

\begin{proposition}
    The $1$-cycles associated to $\triple{h}$ and $\triple{h'}$ are homologous when $h\equiv h'$ modulo $L$. In particular, there is a canonical homology class in $H_1(X\times_G EG,\mathbb{Z})$ attached to $\triple{h+L}$ where $g\cdot h+L=h+L$ and $g\cdot \langle z\rangle$.
\end{proposition}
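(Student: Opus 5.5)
We will exhibit an explicit $2$-chain in $X\times_G EG$ whose boundary is the difference of the two $1$-cycles. Set $p=(z(h),z)$, $p'=(z(h'),z)$, $x=(gh-h,g)$, $x'=(gh'-h',g)$, and write $h'=h+v$ with $v\in L$. The element $y=(-v,1)\in L\subset G$ translates: $y\cdot p=(z(h)+z(v),z)=p'$ by \eqref{eq: homogeneous action}. The loop for $(h,\langle z\rangle,g)$ is the edge from $(p,e)$ to $(p,x)$ in $X\times EG$. Applying $y$, it maps in the quotient to the edge from $(p',y)$ to $(p',yx)$. So the cycle for $h$ is represented by a path based at $p'$ that uses the group elements $y$ and $yx$, and the cycle for $h'$ uses $e$ and $x'$.

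The first step is an algebraic identity in $G$: $x'=y\,x\,y^{-1}$. We have
\[
y x y^{-1} = (-v,1)(gh-h,g)(v,1) = (gh-h+gv-v,g) = (gh'-h',g) = x'.
\]
Here we must check this against the group law implicit in \eqref{eq: non-homo action}, i.e.\ $(l,g)(l',g')=(l+gl',gg')$. This is the only place conventions could bite, so we will verify it carefully with the action, possibly replacing $y$ by $y^{-1}$.

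The second step builds the homotopy from simplices in the model $(EG)_n=G^{n+1}$ with diagonal $G$-action. Take the two $2$-simplices
\[
\sigma_1=(p',(e,y,yx)), \qquad \sigma_2=(p',(e,yx,x'y)).
\]
Since $x'y=yx$, the second is really a degenerate-looking simplex, so we choose the vertices more carefully. The standard fact is that in $EG\times_G X$ the $1$-chains $(q,(e,a))$ for fixed $q$ satisfy the bar relations
\[
(q,(e,a))+(q,(e,b)) \sim (q,(e,ab)),
\]
up to translating the basepoint. Concretely, $\partial(q,(e,a,ab)) = (q,(a,ab))-(q,(e,ab))+(q,(e,a))$, and $(q,(a,ab))=a\cdot(a^{-1}q,(e,b))$. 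Using $x'y=yx$ and these face relations, we obtain in $C_1(X\times_G EG)$
\[
[(p',(e,x'))] + [(y^{-1}p',(e,y))] \sim [(p',(e,x'y))] = [(p',(e,yx))] \sim [(p',(e,y))] + [(y^{-1}p',(e,x))]\ \text{(suitably based)}.
\]
Note that $y^{-1}p'=p$. The two $y$-edges are not loops individually; they are the same $1$-chain, so they cancel, leaving the $h$-cycle homologous to the $h'$-cycle.

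The main obstacle is this bookkeeping: matching basepoints under the diagonal quotient so that the two appearances of the $y$-edge are literally the same chain in $X\times_G EG$ and cancel. We will write out the faces of $(p',(e,y,yx))$ and $(p',(e,x',x'y))$ explicitly and check they share the faces $(p',(e,yx))$ and a common $y$-edge.

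Canonicity then follows because any two representatives $h,h'$ of $h+L$ are related by such a $v$. Independence of the generator of $\langle z\rangle$ holds because $p$ is defined homogeneously. Finally, $g\cdot(h+L)=h+L$ guarantees that $x$ lies in $G$ for every representative.
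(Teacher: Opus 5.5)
Your argument works once one slip is repaired, and it is essentially the explicit $2$-chain argument the paper gives in its subsection on explicit homology of loops.

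The group law forced by the action on $X$ is $(l,g)(l',g')=(l+gl',gg')$. With this law your identity $x'=yxy^{-1}$ holds exactly as written with $y=(-v,1)$, and $y\cdot p=p'$. So no switch to $y^{-1}$ is needed.

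The slip is in your displayed chain of relations. Apply your own face formula to $(p',(e,x',x'y))$, i.e.\ with $q=p'$, $a=x'$, $b=y$. The $y$-edge this produces is $(x'^{-1}p',(e,y))$, not $(y^{-1}p',(e,y))$. As you wrote it, the edge would be $(p,(e,y))$, which is a different chain from the $(p',(e,y))$ on the right, and nothing would cancel. The correct edge equals $(p',(e,y))$ because $x'\cdot p'=p'$. This fixed-point property, which is what makes the $h'$-chain a cycle, is the input your bookkeeping never invokes. The relation $y^{-1}p'=p$ is used elsewhere: it identifies the face $(p',(y,yx))$ of $(p',(e,y,yx))$ with the $h$-loop $(p,(e,x))$. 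With this correction, in the paper's bar coordinates,
\[
\overline{\partial}_2\big((p',[y|x])-(p',[x'|y])\big)=(p,[x])-(p',[x']),
\]
so the two simplices you finally name are the right ones.

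The paper's proof of the proposition itself goes differently. It uses that $X$ is homotopy equivalent to $S^2$: $X$ retracts onto the base, which fibres over $\operatorname{Gr}_2^+(\Lambda\otimes\mathbb{R})\cong S^2$ with contractible fibres. Hence $H_1(X\times_G EG,\mathbb{Z})\cong G^{ab}$, and the paper then notes $x(x')^{-1}=[v,g]$. That route gives more, since the class of the loop is simply the image of $x$ in $G^{ab}$, but it leans on the topology of $X$. The chain-level route, yours or the paper's subsequent one, avoids this.

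The two explicit chains differ only in how the conjugacy is packaged. The paper factors $x=ky$ and $x'=yk$ with $k=(gh'-h,g)$ and takes $(p,[k|y])-(p',[y|k])$. You instead use $yx=x'y$ directly. The paper remarks that its $v$ and $k$ are exactly the group elements appearing in its cocycle formula for $\Phi_{(h,\langle z\rangle,g)}/\Phi_{(h',\langle z\rangle,g)}$.
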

\begin{proof}
    The space $X$ retracts onto $\base$, which itself is fibered over the Grassmannian of oriented planes $\operatorname{Gr}_2^+(\Lambda\otimes\mathbb{R})\cong S^2$ with contractible fibers. In particular, $X$ is homotopy equivalent to $S^2$ and $\pi_0(X)=\pi_1(X)=1$. Then, from the long exact sequence on homotopy groups associated to a fiber bundle, projection induces isomorphisms $\pi_1(X\times_G EG)\cong \pi_1(BG)=G$ and $H_1(X\times_G EG,\mathbb{Z})\cong H_1(BG,\mathbb{Z})=G^{ab}$. It therefore suffices to show that the group elements $x$ and $x'$ differ by a commutator in $G$. 
    \begin{equation*}
        x\cdot (x')^{-1}=(gh-h,g)\cdot(gh'-h',g)^{-1} = (gh-h,g)\cdot (g^{-1}h'-h',g^{-1}) = (h'-h-g(h'-h),1)
    \end{equation*}
    By our assumption that $h'\equiv h$ modulo $L$, we have $v=h'-h\in L$. Abusing notation, we write $v$ and $g$ for the group elements $(v,1)$ and $(0,g)$ respectively. Then:
    \begin{equation*}
        x\cdot (x')^{-1} = vgv^{-1}g^{-1}=[v,g]\in [G,G]
    \end{equation*}
    which proves that the loops associated to $\triple{h}$ and $\triple{h'}$ are homologous.

    Alternatively, denoting $H=\SL(L^{\vee})\cap \SL(V^{\vee})$, we use the explicit description of the abelianisation:
    \begin{equation*}
        G^{ab} = (L\rtimes H)^{ab} = (L^{ab})_H\times H^{ab}= (L)_H\times H^{ab}
    \end{equation*}
    Evidently $x\cdot (x')^{-1}=(h'-h-g(h'-h),1)\mapsto (0,1)\in G^{ab}$, as $h'-h-g(h'-h)\in I_HL$ for the augmentation ideal $I_H$.
\end{proof}
In the next subsection, we explicitly show that the loops are homologous in $H_1(X\times_G EG,\mathbb{Z})$ by exhibiting their difference as the boundary of a $2$-chain. This proves the result without resorting to a computation of the fundamental group of $X$.
\subsubsection{Explicit homology of loops}
Recall the notation $p=(z(h),z)$, $p'=(z(h'),z)\in X_3$ and $x = (gh-h,g)$, $ x' = (gh'-h',g)\in G$. The key relations are $xp=p$ and $x'p'=p'$. We denote the corresponding cycles as $(p,[x])$ and $(p',[x'])$ in $Z_1(X\times_G EG)\subset C_1(X\times_G EG) = C_1(X\times EG)_G$. Explicitly, the boundary map is $\overline{\partial}_1(p,[x])=(x^{-1}p,\ast)-(p,\ast)=0$, where $(BG)_0=\{\ast\}$. For the notation of the boundary map, see the Remark below.
\begin{remark}
    Recall the identification $C_*(BG) = C_*(EG)_G$: the $G$-equivariant (co)-homology of $EG$ is precisely the (co)-homology of $BG$, which in turn computes the group (co)-homology of $G$. It is standard to make the following change of coordinates when passing from $C_*(EG)$ to the quotient $C_*(BG)$. For instance, on $2$-chains:
\[\begin{tikzcd}[column sep=0.2cm,row sep=0.3cm]
	{C_2(EG)} &&& {C_1(EG)} &&& {(g_0,g_1,g_2)} &&& {(g_1,g_2)-(g_0,g_2)+(g_0,g_1)}  \\ \\\\\\
    {C_2(BG)} &&& {C_1(BG)} &&& {[x|y]=[g_0^{-1}g_1|g_1^{-1}g_2] } &&& {[y]-[xy]+[x]=[g_1^{-1}g_2]-[g_0^{-1}g_2]+[g_0^{-1}g_1]}\\
	\arrow["\partial_2",from=1-1, to=1-4]
	\arrow["\pi",from=1-1, to=5-1]
	\arrow["\pi",from=1-4, to=5-4]
	\arrow[maps to, from=1-7, to=1-10]
	\arrow[maps to, from=1-7, to=5-7]
	\arrow[maps to, from=1-10, to=5-10]
	\arrow["\overline{\partial}_2",from=5-1, to=5-4]
	\arrow[maps to, from=5-7, to=5-10]
\end{tikzcd}\]
    This change of coordinates gives a `nicer' formula for $\overline{\partial}$: indeed, $\overline{\partial}_2([x|y]) = [y]-[xy]+[x]$. From this, one easily deduces $H_1(G,\mathbb{Z}) = H_1(BG,\mathbb{Z}) = \mathbb{Z}G/\langle [y]-[xy]+[x]\rangle = G^{ab} $.
\end{remark}
We perform a similar change of coordinates in passing from $C_*(X\times EG)$ to $C_*(X\times_G EG)$; explicitly for $2$-chains, this is $(p,(g_0,g_1,g_2))\mapsto (g_0^{-1}p,[g_0^{-1}g_1|g_1^{-1}g_2])$. To recover a representative in $C_2(X\times EG)$ from $(q,[x|y])$, we choose $g_0\in G$ and send $(q,[x|y])\mapsto (g_0q,(g_0,g_0x,g_0xy))$. As above, this choice of coordinates gives a simpler formula for the boundary map $\overline{\partial}_2$. We compute:
\begin{equation*}
    \pi\circ\partial_2(p,(g_0,g_1,g_2)) = (g_1^{-1}p,[g_1^{-1}g_2])-(g_0^{-1}p,[g_0^{-1}g_2])+(g_0^{-1}p,[g_0^{-1}g_1]).
\end{equation*}
Writing $(z,[x|y]) =(g_0^{-1}p,[g_0^{-1}g_1|g_1^{-1}g_2]) = \pi((p,(g_0,g_1,g_2)))  $, we arrive at the formula:
\begin{equation*}
    \overline{\partial}_2(z,[x|y]) = (x^{-1}z,[y])-(z,[xy])+(z,[x]).
\end{equation*}
Setting $g_0 = e$, we get the useful identity:
\begin{equation}
    \pi\circ\partial_2(p,(e,g_1,g_2))= (g_1^{-1}p,[g_1^{-1}g_2])-(p,[g_2])+(p,[g_1]).
\end{equation} 
\begin{proposition}
    The difference $ (p',[x'])-(p,[x])\in C_1(X\times_G EG)$ is a boundary cycle, that is it lies in the image $\overline{\partial}_2(C_2(X\times_G EG)) = \pi\circ\partial_2(C_2(X\times EG))$. We give an explicit preimage below.
\end{proposition}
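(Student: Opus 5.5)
The plan is to build the explicit $2$-chain from the commutator identity $x(x')^{-1}=[v,g]$ of the previous proposition. In $\overline{\partial}_2$ the point coordinate moves, so each face must also carry the correct point. Write $v=h'-h\in L$ and, abusing notation, $v=(v,1)$ and $g=(0,g)$ in $G$. A direct computation with the group law gives $x=v\,x'\,v^{-1}$. Indeed, $v x' v^{-1}=(v+gh'-h'-gv,g)=(gh-h,g)$, since $h'=h+v$. On points, $v\cdot p'=(z(h')-z(v),z)=(z(h),z)=p$ by \eqref{eq: homogeneous action}, so $v^{-1}p=p'$. Conjugation by $v$ therefore carries the loop $(p',[x'])$ to $(p,[x])$. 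The homology can then be realised by the standard two triangles exhibiting that conjugate elements are homologous.

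Using $\overline{\partial}_2(q,[a|b])=(a^{-1}q,[b])-(q,[ab])+(q,[a])$, I would take
\begin{equation*}
c=(p,[v|x'])-(p,[x|v]).
\end{equation*}
For the first term, $v^{-1}p=p'$ gives $\overline{\partial}_2(p,[v|x'])=(p',[x'])-(p,[vx'])+(p,[v])$. For the second term, $x^{-1}p=p$ gives $\overline{\partial}_2(p,[x|v])=(p,[v])-(p,[xv])+(p,[x])$. Since $xv=vx'$, the middle terms coincide and the $(p,[v])$ terms cancel, so $\overline{\partial}_2 c=(p',[x'])-(p,[x])$.

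Finally, I would record lifts to $C_2(X\times EG)$ via $g_0=e$: $(p,(e,v,vx'))$ and $(p,(e,x,xv))$. I would then check them against \eqref{eq: SL digression}'s neighbouring identity for $\pi\circ\partial_2(p,(e,g_1,g_2))$. The main obstacle is bookkeeping: the signs and inverses in the change of coordinates, in particular whether a face carries $g_1^{-1}p$ or $g_1p$. I would verify this by confirming that both faces of each triangle carry the right point, namely $v^{-1}p=p'$ and $x^{-1}p=p$. If the convention turns out inverted, I would replace $v$ by $v^{-1}$ throughout.
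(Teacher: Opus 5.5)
Your argument is correct and is essentially the paper's. Both write the difference of loops as the boundary of two $2$-simplices glued along the element $xv=vx'=(gh'-h,g)$, which encodes the conjugation $x=vx'v^{-1}$. The paper uses the lifts $(p,(e,(gh'-h,g),x))$ and $(p',(e,-v,x'))$; you instead base both triangles at $p$ and use $v$ where the paper uses $v^{-1}$. Your convention check ($v^{-1}p=p'$, $x^{-1}p=p$) already agrees with the paper's formula for $\overline{\partial}_2$, so no inversion is needed. One small slip: the identity for $\pi\circ\partial_2(p,(e,g_1,g_2))$ you should check against is the unlabelled formula just before the proposition, not the snake-lemma diagram you cite.
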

\begin{proof}
    As before, we abuse notation and write $v$ for the element $(h'-h,1)\in G$. One easily verifies $v^{-1}\cdot x=(gh-h',g)$ and $v^{-1}\cdot p=p'$. Then:
     \begin{align*}
        &\pi\circ\partial_2(((p,(e,(gh'-h,g),x))-(p',(e,-v,x'))) \\&=  (p',[-v])-(p,[x])+(p,[(gh'-h,g])- \big((p,[(gh'-h,g)])-(p',[x'])+(p',[-v])\big) \\ & = (p',[x'])- (p,[x]).
    \end{align*}
    This explicitly verifies the identity $(p',[x'])=(p,[x])\in H_1(X\times_G EG,\mathbb{Z})$. 
\end{proof}
The following diagram helps track the relation between the group elements and provides an explicit picture of the boundary of $2$-chains:
    \[\begin{tikzcd}
    	p &&& {p'} \\
    	\\
    	p &&& {p'}
    	\arrow["v^{-1}", from=1-1, to=1-4]
    	\arrow["x", from=3-1, to=1-1]
    	\arrow["{(gh'-h,g)}"{description}, from=3-4, to=1-1]
    	\arrow["{x'}"', from=3-4, to=1-4]
    	\arrow["v^{-1}"', from=3-1, to=3-4]
    \end{tikzcd}\]
\subsection{Evaluation Morphism}
We have previously defined \eqref{eq: evaluation} an evaluation procedure
\begin{equation*}
    \Phi_{(h,\langle z\rangle,g)}(\widetilde{\Delta},\widetilde{\Gamma}) = \widetilde{\Gamma}_{a,ga}(z(h),z).
\end{equation*}
The aim now is to extend this to an evaluation map on all $1$-cochains $C^1_G(\mathcal{U},\mathcal{M}^*)$ which will descend to a map on cohomology.
\begin{definition}
    Consider an admissible triple $\triple{h}$ for $L$. Then, we define a group morphism $\Phi_{(h,\langle z\rangle,g)}:C^1_G(\mathcal{U},\mathcal{M}^*)\rightarrow \mathbb{C}^{*}$ via:
    \begin{equation*}
        (\phi^{1,0},\phi^{0,1})\mapsto \phi^{1,0}_a(x^{-1})(p)\cdot \phi^{0,1}_{a,ga}(p)
    \end{equation*}
    where $p=(z(h),z)$ and $x=(gh-h,g)$.
\end{definition}
\begin{lemma}
    We have $\Phi_{(h,\langle z\rangle,g)}(\widetilde{\Delta},\widetilde{\Gamma}) = \widetilde{\Gamma}_{a,ga}(z(h),z)$. In particular, this evaluation map extends our previous procedure.
\end{lemma}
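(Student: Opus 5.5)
The plan is to unwind the definition of $\Phi_{(h,\langle z\rangle,g)}$ on the smoothed cochain $(\widetilde{\Delta}^{\underline{f}},\widetilde{\Gamma})$. By definition,
\begin{equation*}
\Phi_{(h,\langle z\rangle,g)}(\widetilde{\Delta},\widetilde{\Gamma}) = \widetilde{\Delta}_a(x^{-1})(p)\cdot \widetilde{\Gamma}_{a,ga}(p),
\end{equation*}
with $p=(z(h),z)$ and $x=(gh-h,g)$. The second factor is already $\widetilde{\Gamma}_{a,ga}(z(h),z)$. It therefore suffices to show that the first factor equals $1$.

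First, compute $x^{-1}$ in $G=L\rtimes H$. The group law used in the previous subsection, namely $(gh-h,g)\cdot(g^{-1}h'-h',g^{-1})$, gives $x^{-1}=(g^{-1}(h-gh),g^{-1}) = (g^{-1}h-h,g^{-1})$. The cochain $\Delta$ is defined by restriction to the translation subgroup, so $\Delta^f_a((u,g))=\Delta^f_a(u)$. Hence the first factor is $\widetilde{\Delta}_a(g^{-1}h-h)(p)$.

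The key observation is that $g^{-1}h-h\in\ker(a)$. Indeed, $h\in\ker(a,ga)\otimes\mathbb{Q}$ by Lemma \ref{lemma: a assocaited to (h,z,g)}, and $ga(h)=a(g^{-1}h)$ because $g$ acts on functionals by pullback through $g^{-1}$, so $a(g^{-1}h)=0$. This is exactly fact (iv) from the proof of Proposition \ref{prop: cocycle expression}. Consequently $a(g^{-1}h-h)=0$. From the explicit product formula, $\Delta^{f_L}_a(u)$ is an empty product equal to $1$ whenever $a(u)=0$; the same holds for $\Delta^{f_V}_a(u)$, since the functional $a$ is the same on $V$. Therefore $\widetilde{\Delta}_a(g^{-1}h-h)=1$ identically, and the lemma follows.

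The only point requiring care is the convention for inverses and for $\Delta$ at arguments with $a(u)\le 0$. The paper only writes the formula for $a(u)>0$ and defers to \cite{FHRZ08} for the rest. I would check that the general definition gives $\Delta_a(u)=1$ for $u\in\ker(a)$, and this is already used in the proof of the first Claim in Proposition \ref{prop: cocycle expression}. I would also confirm that the sign and ordering in $x^{-1}$ do not matter: either way the translation part lies in $\ker(a)$.
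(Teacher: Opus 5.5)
Your proposal is correct and follows essentially the same route as the paper. You compute $x^{-1}=(g^{-1}h-h,g^{-1})$, reduce $\widetilde{\Delta}_a$ to its translation part, and conclude $\widetilde{\Delta}_a(g^{-1}h-h)=1$ from $g^{-1}h-h\in\ker(a)$, which is exactly the paper's argument with the intermediate checks spelled out.
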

\begin{proof}
    It suffices to show that $\widetilde{\Delta}_a(x^{-1})(p)=1$. We have $x^{-1}=(g^{-1}h-h,g^{-1})$ and by definition $\widetilde{\Delta}_a((g^{-1}h-h,g^{-1}))=\widetilde{\Delta}_a(g^{-1}h-h)=1\in\mathcal{M}^*(V_a)$ as $g^{-1}h-h\in \ker(a)$.
\end{proof}
\begin{proposition}
    $\Phi_{(h,\langle z\rangle,g)}:H^1_G(X,\mathcal{M}^*)\rightarrow \mathbb{C}^*$ is well-defined.
\end{proposition}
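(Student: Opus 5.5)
To prove the proposition, I will show that the evaluation map $\Phi_{(h,\langle z\rangle,g)}$ kills the image of the total differential $D: C^0_G(\mathcal{U},\mathcal{M}^*)\rightarrow C^1_G(\mathcal{U},\mathcal{M}^*)$. The statement should be read as: restricted to $1$-cocycles (or simply on all $1$-cochains), $\Phi_{(h,\langle z\rangle,g)}$ factors through the quotient by coboundaries. Since $\Phi_{(h,\langle z\rangle,g)}$ is already a group morphism on $C^1_G(\mathcal{U},\mathcal{M}^*)$, it then descends to $H^1_G(\mathcal{U},\mathcal{M}^*)$ and hence to the image in $H^1_G(X,\mathcal{M}^*)$.

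The computation is short. A $0$-cochain is a family $(f_a)$ with $f_a\in\mathcal{M}^*(V_a)$, and $C^{0}_G = C^0(G,\check{C}^0)$. Its differential has two components:
\begin{itemize}
\item the group part $(df)_a(y) = y\cdot f_a/f_a$, in the convention used throughout, where $(d\Gamma)_{a,b}(u)(w,z)=\Gamma_{a,b}(w+z(u),z)/\Gamma_{a,b}(w,z)$ and $\Gamma_{ga,gb}(w,gz)=\Gamma_{a,b}(w,z)$;
\item the \v{C}ech part $(\partial f)_{a,b} = f_b/f_a$, with the orientation matching $(\partial\Delta)_{a,b}=\Delta_a/\Delta_b$ and $(\partial\Gamma)_{a,b,c}=\Gamma_{a,b}\Gamma_{b,c}\Gamma_{c,a}$.
\end{itemize}
Fix the sign conventions so that $D=d\cdot\partial^{\pm1}$ matches the paper's relation $d\Gamma=\phi^{1,1}\partial\Delta$. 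Then
\[
\Phi(Df) = (df)_a(x^{-1})(p)\cdot(\partial f)_{a,ga}(p).
\]
Writing the action out, $(df)_a(x^{-1})(p)$ is $f_{\bullet}(x\cdot p)/f_a(p)$, where $\bullet$ is the index $x$ moves $a$ to. Concretely, $f_a(x^{-1}\text{-translate})$ becomes $f_{ga}$ evaluated at $x\cdot p$, up to the paper's convention on which side the group acts.

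The key input is $x\cdot p=p$, which was proven in the homological subsection using that $z$ is a $g$-eigenvector and the coordinates are homogeneous. With it, the group part becomes $f_{ga}(p)/f_a(p)$, or its inverse. The \v{C}ech part is $f_{ga}(p)/f_a(p)$ with the opposite exponent, so the product is $1$.

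The one genuine subtlety is that both terms must be evaluated at a point where the meromorphic functions are defined and nonzero. The plan handles this by noting that $p\in V_a\cap V_{ga}$, because $z\in U_a$ and $z\in gU_a=U_{ga}$, the latter since $g\langle z\rangle=\langle z\rangle$. If evaluation is only defined generically, I will interpret $\mathcal{M}^*$-evaluation via the leading coefficient along a fixed direction, or restrict to cochains regular at $p$, and note that the same cancellation holds for these leading terms.

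The main obstacle is sign and convention bookkeeping. I must check precisely that the action of $x^{-1}$ on $\check{C}^0$ sends index $a$ to $ga$ rather than $g^{-1}a$, and that the exponents really are opposite. I will verify this against the paper's earlier identity $\Gamma_{ga,ga'}(w,z)=\Gamma_{a,a'}(w,g^{-1}z)$ and against the lemma $\widetilde{\Delta}_a(x^{-1})(p)=1$, which fixes that $x^{-1}$, not $x$, is the correct argument. If the convention turns out reversed, the definition would use $\phi^{0,1}_{ga,a}$ and the same cancellation goes through.
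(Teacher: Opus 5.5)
Your proposal is correct and is essentially the paper's argument: for $f\in C^0_G(\mathcal{U},\mathcal{M}^*)$ the paper computes $\Phi_{(h,\langle z\rangle,g)}(Df)=(df)_a(x^{-1})(p)\cdot(\partial f)_{a,ga}(p)=\frac{f_{ga}(x\cdot p)}{f_a(p)}\cdot\frac{f_a(p)}{f_{ga}(p)}=1$, using $x\cdot p=p$ and noting that $x\cdot p\in V_{ga}$. With the paper's conventions ($(\partial f)_{a,b}=f_a/f_b$, matching $(\partial\Delta)_{a,b}=\Delta_a/\Delta_b$, and $Df=(df,\partial f)$ with no inversion), the bookkeeping you hedge on resolves exactly this way, so your stated $(\partial f)_{a,b}=f_b/f_a$ should be flipped; your point that evaluating meromorphic functions at $p$ needs care is a subtlety the paper does not address.
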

\begin{proof}
    We must show that $\Phi_{(h,\langle z\rangle,g)}$ is trivial on boundaries $B^1_G(\mathcal{U},\mathcal{M}^*)=D(C^{0}_G(\mathcal{U},\mathcal{M}^*))$. To that end, let $f\in C^{0}_G(\mathcal{U},\mathcal{M}^*)=\prod_{a\in \mathcal{V}}\mathcal{M}^*(V_a)$. Then:
    \begin{align*}
        \Phi_{(h,\langle z\rangle,g)}(Df)&=(df)_a(x^{-1})(p)\cdot (\partial f)_{a,ga}(p) \\& = \frac{f_{ga}(x\cdot p)}{f_a(p)}\cdot \frac{f_a(p)}{f_{ga}(p)} = 1
    \end{align*}
    since $x\cdot p = p$. Notice that the numerator for the left quotient is $f_{ga}(x\cdot p)$ not $f_a(x\cdot p)$ as $x\cdot p = (gz(h),gz) \in V_{ga}$.
\end{proof}
\subsubsection{Homologous as maps on cohomology}
It is natural to ask how the evaluation maps $\Phi_{\triple{h}}$ and $\Phi_{\triple{h'}}$ compare. Since we have shown that the loops associated to $\triple{h}$ and $\triple{h'}$ are homologous, it is reasonable to expect they give the same maps on cohomology. 
\begin{proposition}
    Suppose $h\equiv h'$ modulo $L$. Then, we have:
    \begin{equation*}
        \frac{\Phi_{(h,\langle z\rangle,g)}}{\Phi_{(h',\langle z\rangle,g)}}\left(\phi^{1,0},\phi^{0,1}\right) = \text{ expression in } D(\phi^{1,0},\phi^{0,1})
    \end{equation*}
    for some expression that is given explicitly below. Thus, if $(\phi^{1,0},\phi^{0,1})\in Z^1_G(\mathcal{U},\mathcal{M}^*)$, we have:
    \begin{equation*}
        \Phi_{(h,\langle z\rangle,g)}(\phi^{1,0},\phi^{0,1}) =\Phi_{(h',\langle z\rangle,g)}(\phi^{1,0},\phi^{0,1}) 
    \end{equation*}
    and in particular, $\Phi_{(h,\langle z\rangle,g)}=\Phi_{(h',\langle z\rangle,g)}: H^1_G(X,\mathcal{M}^*)\rightarrow\mathbb{C}^*$.
\end{proposition}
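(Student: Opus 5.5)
The plan is to rerun the proof of Proposition \ref{prop: cocycle expression} verbatim, with $(\Delta,\Gamma)$ replaced by an arbitrary $1$-cochain $(\phi^{1,0},\phi^{0,1})\in C^1_G(\mathcal{U},\mathcal{M}^*)$. Write $D(\phi^{1,0},\phi^{0,1})=(\eta^{2,0},\eta^{1,1},\eta^{0,2})$. As before, put $b=ga$, $b'=ga'$, $p=(z(h),z)$, $p'=(z(h'),z)$, $x=(gh-h,g)$ and $x'=(gh'-h',g)$, and let $u,v\in L$ be defined by $h'=h+v=g^{-1}h+u$. The difference from the earlier argument is that we no longer have $\phi^{1,0}_a(x^{-1})=1$ or the $\SL$-equivariance of $\phi^{0,1}$. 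So every group element, and not just translations, has to be carried through the group cocycle relation.

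The steps are as follows.

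\begin{enumerate}[label=(\arabic*)]
\item Use the \v{C}ech relation $\partial\phi^{0,1}=\eta^{0,2}$ on the two simplices $(a,b,b')$ and $(a,a',b')$, exactly as in \eqref{eq: quotients of gamma}. This expresses $\phi^{0,1}_{a,b}(p)/\phi^{0,1}_{a',b'}(p')$ in terms of $\phi^{0,1}_{a,a'}$, $\phi^{0,1}_{b,b'}$, $\phi^{0,1}_{b',a}$ and the two values of $\eta^{0,2}$.
\item Rewrite each transported value using the relation $d\phi^{0,1}=\eta^{1,1}\cdot\partial\phi^{1,0}$. The group elements are the translation $v$ (taking $p$ to $p'$) and the element $(gh'-h,g)$, which relates $\phi^{0,1}_{b,b'}$ at $p$ to $\phi^{0,1}_{a,a'}$ at $p'$. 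This is legitimate because $(gh'-h,g)\cdot p'=p$, as in the square diagram of the previous subsection. The result is $\eta^{1,1}$ factors times ratios of $\phi^{1,0}$ at these group elements.
\item Collect all $\phi^{1,0}$ factors, including $\phi^{1,0}_a(x^{-1})(p)$ and $\phi^{1,0}_{a'}(x'^{-1})(p')$ from the definition of $\Phi$. Express them via the group cocycle relation $d\phi^{1,0}=\eta^{2,0}$. In this relation, $\phi^{1,0}_c(\alpha\beta)$ differs from $\phi^{1,0}_c(\alpha)\cdot\phi^{1,0}_{\alpha^{-1}c}(\beta)$ evaluated at a translate by one value of $\eta^{2,0}$.
\item Use the factorisations $x\cdot(x')^{-1}=[v,g]$ and $v^{-1}x=(gh-h',g)$, mirroring the explicit $2$-chain $(p,(e,(gh'-h,g),x))-(p',(e,-v,x'))$ whose boundary is $(p',[x'])-(p,[x])$. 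The ratio of $\phi^{1,0}$ terms should then telescope to a product of two $\eta^{2,0}$ values, one attached to each $2$-simplex of that chain, evaluated at $p$ or $p'$.
\end{enumerate}

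The final formula will therefore have the shape
\[
\frac{\Phi_{(h,\langle z\rangle,g)}}{\Phi_{(h',\langle z\rangle,g)}}(\phi^{1,0},\phi^{0,1})=\eta^{2,0}(\ast)\,\eta^{2,0}(\ast)\cdot\eta^{1,1}(\ast)\,\eta^{1,1}(\ast)\cdot\frac{\eta^{0,2}_{a,b,b'}(p)}{\eta^{0,2}_{a,a',b'}(p')}.
\]
This is the cochain-level analogue of Proposition \ref{prop: cocycle expression}. The $\psi^{f,g\cdot f}$ term there is absorbed here because no framing-dependent identity is used.

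Once this identity is established, the rest is formal. If $(\phi^{1,0},\phi^{0,1})$ is a cocycle, every $\eta$ is trivial, so the two evaluations agree on $Z^1_G$. The previous Proposition shows each evaluation kills $B^1_G$, so they define the same map on $H^1_G(X,\mathcal{M}^*)$.

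The main obstacle is step (3): the bookkeeping of which functional index ($a$, $ga$, $a'$ or $ga'$) and which evaluation point ($p$, $p'$ or a $g$-translate) accompanies each $\phi^{1,0}$ factor. Here the convention $\phi^{1,0}_c(\alpha\beta)(q)$ involves $\phi^{1,0}_{\alpha^{-1}c}$ at $\alpha^{-1}q$, as seen in the boundary computation $f_{ga}(x\cdot p)$. I would fix this convention first by recomputing $d$ on $C^{1,0}$ from the boundary-vanishing proof. I would then check the cancellation against the two $2$-simplices in the diagram, treating the points where the group element fixes or moves $p$ separately.
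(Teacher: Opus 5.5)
Your proposal is correct and follows the same route as the paper. The steps match one for one: the \v{C}ech relation on the two simplices $(a,ga,ga')$ and $(a,a',ga')$; then $d\phi^{0,1}=\phi^{1,1}\cdot\partial\phi^{1,0}$ applied with $v$ and with $(gh'-h,g)^{-1}$, using $(gh'-h,g)\cdot p'=p$; then two applications of the group cocycle relation, one for each $2$-simplex of the explicit $2$-chain; and finally the same formal passage to $H^1_G(X,\mathcal{M}^*)$.

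One small correction to step (4). The factorisations that actually make the $\phi^{1,0}$ terms telescope are $x^{-1}=v\cdot(gh'-h,g)^{-1}$ and $(x')^{-1}=(gh'-h,g)^{-1}\cdot v$, which you can read off the commuting square. They produce the paper's terms $\phi^{2,0}_a(v,(g^{-1}h-h',g^{-1}))(p)$ and $\phi^{2,0}_{a'}((g^{-1}h-h',g^{-1}),v)(p')$. The identity $v^{-1}x=(gh-h',g)$ you cite is true, but it is not the factorisation that does the work, and the commutator $[v,g]$ is not needed at all.
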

\begin{proof}
    We need to compute:
\begin{equation}\label{eq: initial quotient}
    \frac{\Phi_{(h,\langle z\rangle,g)}}{\Phi_{(h',\langle z\rangle,g)}}\left(\phi^{1,0},\phi^{0,1}\right) = \frac{\phi^{1,0}_a(x^{-1})(p)\cdot \phi^{0,1}_{a,ga}(p) }{\phi^{1,0}_{a'}(x'^{-1})(p')\cdot \phi^{0,1}_{a',ga'}(p')}.
\end{equation}
As in the proof for the cocycle expression for $(\Delta^f,\Gamma)$, we use the relations:
\begin{align*}
    & (\partial\phi^{0,1})_{a,ga,ga'}(p)=\overbrace{\phi^{0,1}_{a,ga}(p)}^{\textcolor{red}{\ast}}\phi^{0,1}_{ga,ga'}(p)\phi^{0,1}_{ga',a}(p) =\phi^{0,2}_{a,ga,ga'}(p)\\
    & (\partial\phi^{0,1})_{a,a',ga'}(p')=\phi^{0,1}_{a,a'}(p')\underbrace{\phi^{0,1}_{a',ga'}(p')}_{\textcolor{red}{\ast}}\phi^{0,1}_{ga',a}(p') =\phi^{0,2}_{a,a',ga'}(p') 
\end{align*}
to rewrite:
\begin{equation*}
    \frac{\phi^{0,1}_{a,ga}(p) }{ \phi^{0,1}_{a',ga'}(p') } = \frac{\phi^{0,2}_{a,ga,ga'}(p)}{\phi^{0,2}_{a,a',ga'}(p') }\cdot\frac{\phi^{0,1}_{a,a'}(p')}{\phi^{0,1}_{ga,ga'}(p)}\cdot \frac{\phi^{0,1}_{ga',a}(p') }{\phi^{0,1}_{ga',a}(p) }.
\end{equation*}
By definition, $(d\phi^{0,1})_{a,b}(v)(w,z) = \phi^{0,1}_{a,b}(w+z(v),z)/\phi^{0,1}_{a,b}(w,z)$, so setting $v = h'-h$ as usual, we have ensures $p'=(z(h'),z) = (z(h)+z(v),z)$ and thus:
\begin{equation}\label{eq: second quotient}
    \frac{\phi^{0,1}_{ga',a}(p') }{\phi^{0,1}_{ga',a}(p) } = (d\phi^{0,1})_{ga',a}(v)(p) = \phi^{1,1}_{ga',a}(v)(p)\cdot (\partial\phi^{1,0})_{ga',a}(v)(p)
\end{equation}
by definition of the differential $D(\phi^{1,0},\phi^{0,1})$. 

Moreover, we have $(gh'-h,g)\cdot p' = (z(h')-gz(gh'-h),gz)=(z(gh')-z(gh'-h),z)=p$. And thus:
\begin{align}
    \frac{\phi^{0,1}_{ga,ga'}(p)}{\phi^{0,1}_{a,a'}(p')} = \frac{\phi^{0,1}_{ga,ga'}((gh'-h,g)\cdot p')}{\phi^{0,1}_{a,a'}(p')} &= (d\phi^{0,1})_{a,a'}((gh'-h,g)^{-1})(p') \notag \\&= \phi^{1,1}_{a,a'}((g^{-1}h-h',g^{-1}))(p') \cdot (\partial \phi^{1,0})_{a,a'}((g^{-1}h-h',g^{-1}))(p') \label{eq: first quotient}
\end{align}
where we use $(gh'-h,g)^{-1}=(g^{-1}h-h',g^{-1})$. Now combining equations \eqref{eq: second quotient} and \eqref{eq: first quotient}, we can rewrite \eqref{eq: initial quotient}:
\begin{align*}
    \frac{\Phi_{(h,\langle z\rangle,g)}}{\Phi_{(h',\langle z\rangle,g)}}\left(\phi^{1,0},\phi^{0,1}\right) = &\frac{\phi^{1,0}_a(x^{-1})(p)}{\phi^{1,0}_{a'}(x'^{-1})(p')}\cdot \frac{\phi^{1,0}_{a'}((g^{-1}h-h',g^{-1}))(p')}{\phi^{1,0}_{a}((g^{-1}h-h',g^{-1}))(p')}\cdot \frac{\phi^{1,0}_{ga'}(v)(p)}{\phi^{1,0}_{a}(v)(p)} \\&\cdot \phi^{1,1}_{a,a'}((g^{-1}h-h',g^{-1}))(p')^{-1}\cdot \phi^{1,1}_{ga',a}(v)(p)\cdot \frac{\phi^{0,2}_{a,ga,ga'}(p)}{\phi^{0,2}_{a,a',ga'}(p')}.
\end{align*}
Now we use the final relation:
\begin{equation*}
    \phi^{2,0}_a(x_1,x_2)(q) =\frac{ \phi^{1,0}_a(x_1)(q)\cdot\phi^{1,0}_a(x_2)(x_1^{-1}\cdot q)}{\phi^{1,0}_a(x_1\cdot x_2)(q)}.
\end{equation*}
In particular, setting $x_1 = v $, $x_2 = (g^{-1}h-h',g^{-1})$, we observe that $x_1\cdot x_2 = (g^{-1}h -h,g^{-1})=x^{-1}$ and $x_1^{-1}p = (h-h',1)\cdot (z(h),z)=p'$. Thus, we have the following identity:
\begin{equation}\label{eq: first 2,0}
    \phi^{2,0}_a(v,(g^{-1}h-h',g^{-1}))(p) =\frac{\phi^{1,0}_{a}(v)(p)\cdot \phi^{1,0}_{a}((g^{-1}h-h',g^{-1}))(p')}{\phi^{1,0}_a(x^{-1})(p)} .
\end{equation}
Moreover, swapping the order of the inputs and evaluating at $p'$, we have:
\begin{equation}\label{eq: second 2,0}
    \phi^{2,0}_{a'}((g^{-1}h-h',g^{-1}),v)(p') = \frac{\phi^{1,0}_{a'}((g^{-1}h-h',g^{-1}))(p')\cdot \phi^{1,0}_{ga'}(v)(p)}{\phi^{1,0}_{a'}( (x')^{-1})(p')} 
\end{equation}
where we note that $(g^{-1}h-h',g^{-1})\cdot v = (x')^{-1}$ and $(g^{-1}h-h',g^{-1})^{-1}\cdot p' = (gh'-h,g)\cdot p'=(gz(h),gz)=p\in V_{ga'}$. Combining equations \eqref{eq: first 2,0} and \eqref{eq: second 2,0}, we finally obtain the explicit formula:
\begin{align*}
    \frac{\Phi_{(h,\langle z\rangle,g)}}{\Phi_{(h',\langle z\rangle,g)}}\left(\phi^{1,0},\phi^{0,1}\right) = \frac{\phi^{2,0}_a(v,(g^{-1}h-h',g^{-1}))(p)^{-1}}{ \phi^{2,0}_{a'}((g^{-1}h-h',g^{-1}),v)(p')}\cdot \phi^{1,1}_{a,a'}((g^{-1}h-h',g^{-1}))(p')^{-1}\cdot \phi^{1,1}_{ga',a}(v)(p)\cdot \frac{\phi^{0,2}_{a,ga,ga'}(p)}{\phi^{0,2}_{a,a',ga'}(p')}.
\end{align*}
This completes the proof of the Proposition.
\end{proof}
\begin{remark}
    The elements $v=(h'-h,1)$ and $(g^{-1}h-h',g^{-1})^{-1}=(gh'-h,g)$ are precisely the elements which we used to define the $2$-chain whose boundary gave us a homology between the loops associated to $\triple{h}$ and $\triple{h'}$.
\end{remark}
\begin{remark}
    The contribution of $\phi^{1,0}$ term to the evaluation $\Phi_{\triple{h}}$ allows us to define a morphism on cochains that descends to cohomology. Additionally, it is a `nicer' formula for the cochain $(\Delta^f,\Gamma)$ in the sense that now $\frac{\Phi_{(h,\langle z\rangle,g)}}{\Phi_{(h',\langle z\rangle,g)}}\left(\Delta^f,\Gamma\right)$ is expressed purely in terms of $D(\Delta^f,\Gamma)$. By adding a redundant $\Delta^f$ term, the change of framing term that previously appeared is now absorbed into the cocycle expression.
\end{remark}
\newpage
\appendix 
\section{Relaxing the coprimality assumption}\label{appendix: coprime assumption}
We still consider a pair of lattices $(V,L)$ with cyclic quotient $V/L \cong \mathbb{Z}/N$. In all the results above, we have assumed that $(N,6)=1$. Aside from in the reduction to the $d=3$ case, we used this coprimality assumption twice: to deduce that $\theta_0^N$ is $\Gamma_0(N)$-invariant and to prove the distribution relation for $B_{3,3}$. If we drop the coprimality assumption, the error to both claims is a root of unity. Crucially, we can bound the root of unity: independently of $N$ for $\theta_0^N$, and in terms of $N$ for the Distribution Relation.
\begin{proposition}
        For $N\in \mathbb{Z}$ and $\big(\begin{smallmatrix}a& b \\c&d\end{smallmatrix}\big)\in\Gamma_0(N)$, then:
    \begin{equation*}
        \frac{\theta_0^N(x,\tau)}{\theta_0^N\left(\frac{x}{c\tau+d},\frac{a\tau+b}{c\tau+d}\right)}\in \mu_{24}.
    \end{equation*} 
    More precisely, the root of unity ambiguity lies in $\mu_{k}$ where $k = \operatorname{gcd}_{(l,N)=1}(l^2-1)$ which we compute to be: 
    \begin{equation*}
        k=\begin{cases}
            \text{  }1 &:\text{  } (N,6)=1 \\
            \text{  }3 &: \text{  }2\nmid N, \text{  }3\mid N \\
            \text{  }8 &: \text{  }2\mid N, \text{             }3\nmid N \\
            \text{  }24 &: \text{  }6|N
        \end{cases}
    \end{equation*}
\end{proposition}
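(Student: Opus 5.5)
The plan is to compute the ratio directly from the explicit automorphy factor of Lemma \ref{lemma: explicit cocycle j}, exactly as in the proof of Proposition \ref{prop: j^N trivial}. Setting $g^{-1}=\big(\begin{smallmatrix}a&b\\c&d\end{smallmatrix}\big)\in\Gamma_0(N)$ and $\alpha=\big(\begin{smallmatrix}N&0\\0&1\end{smallmatrix}\big)$, we have
\begin{equation*}
\frac{\theta_0^N\left(\frac{x}{c\tau+d},\frac{a\tau+b}{c\tau+d}\right)}{\theta_0^N(x,\tau)}=\frac{j(g)(x,\tau)^N}{j(\alpha g\alpha^{-1})(Nx,N\tau)}=\exp\Big(\pi i\big(N P_g(x,\tau)-P_{\alpha g\alpha^{-1}}(Nx,N\tau)\big)\Big),
\end{equation*}
where $(\alpha g\alpha^{-1})^{-1}=\big(\begin{smallmatrix}a&Nb\\c/N&d\end{smallmatrix}\big)$. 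The first step is to check that all non-constant terms cancel. The $x^2$ terms give $N\frac{c}{c\tau+d}x^2$ against $\frac{c/N}{N\tau c/N+d}N^2x^2$, and these agree. The $x$ terms agree similarly. The $\tau$-dependent pieces give $-\frac{N(a\tau+b)}{6(c\tau+d)}+\frac{N\tau}{6}$ against $-\frac{aN\tau+Nb}{6(c\tau+d)}+\frac{N\tau}{6}$, which are identical. So the ratio is the constant $\exp\big(-\frac{\pi i}{6}(N\Phi(g)-\Phi(\alpha g\alpha^{-1}))\big)$, and hence lies in $\mu_{12}$ up to the sign issue from $\Phi$ being only defined mod $12$ while the exponent carries $\pi i/6$. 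In other words it is a $24$th root of unity: writing it as $\exp\big(\frac{2\pi i}{24}\cdot 2(\ldots)\big)$, the value mod $24$ of $2(N\Phi(g)-\Phi(\alpha g \alpha^{-1}))$ is well defined. This gives the $\mu_{24}$ claim.

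For the sharper statement I would identify the character $\epsilon(g)=\exp\big(-\frac{\pi i}{6}(N\Phi(g)-\Phi(\alpha g\alpha^{-1}))\big)$ as a homomorphism $\Gamma_0(N)\to\mu_{24}$. Rather than evaluating Rademacher-type formulas for $\Phi$ on conjugates, I would use the Kato-style argument referenced in the paper, running the pushforward relation \eqref{eq: general pushforward} and the divisor computation. The point is that the twisted function $\theta_0^N$ depends only on the divisor $N[\mathcal{O}]-\sum_j[j/N]$. For an auxiliary integer $l$ coprime to $N$, the function $\theta_0^{N}$ raised to $l^2$ and compared with pullback under multiplication by $l$ on the elliptic curve yields a modular-invariant quantity. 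Concretely, the $l$-smoothed function $\theta_0(x,\tau)^{l^2}/\prod_{t\in E[l]}\theta_0(x+t,\tau)$ is $\mathrm{SL}_2(\mathbb{Z})$-invariant whenever $(l,6)=1$, by Proposition \ref{prop: j^N trivial} combined with distribution relations. Then $(\theta_0^N)^{l^2-1}$ can be written as a product of $\mathbb{Z}^2\rtimes\Gamma_0(N)$-invariant pieces. Here we use that translation by $l$-torsion commutes with the $\Gamma_0(N)$ action on the divisor $\sum_j[j/N]$ when $(l,N)=1$. Hence $\epsilon(g)^{l^2-1}=1$ for every $l$ coprime to $N$, and $\epsilon(g)\in\mu_k$ with $k=\gcd_{(l,N)=1}(l^2-1)$.

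Finally, computing $k$ is elementary. If $2\nmid N$ take $l=2$ to get $3\mid k$ (or $l=4$, giving $15$); if $3\nmid N$ take $l=3$, giving $8$; $\gcd(3,8)=1$, so for $(N,6)=1$ we get $k=1$. The gcd always divides $24$, and the four cases follow since $l^2\equiv1$ mod $8$ for odd $l$ and mod $3$ for $3\nmid l$.

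The main obstacle is the middle step: making rigorous the invariance of the $l$-smoothed object under $\Gamma_0(N)$ rather than $\mathrm{SL}_2(\mathbb{Z})$. In particular, one must ensure that the root of unity $\epsilon$ is genuinely killed by $l^2-1$, and not only up to additional $\mathrm{SL}_2$-factors.
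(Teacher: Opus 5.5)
Your first step is correct and takes a different route from the paper. The explicit automorphy factor of Lemma~\ref{lemma: explicit cocycle j} shows at once that the ratio equals the constant $\epsilon(g)^{-1}$, independent of both $x$ and $\tau$. This replaces the paper's line-bundle/divisor argument for constancy. There is no ``sign issue'', though: $\Phi$ is $\mathbb{Z}/12$-valued and $e^{\pi i m/6}$ depends only on $m \bmod 12$, so you actually get $\epsilon(g)\in\mu_{12}$. Also, in your last paragraph $l=2$ gives $k\mid 3$, not $3\mid k$.

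\textbf{The gap is the sharp bound $\mu_k$.} Your mechanism cannot produce it, for two reasons.
\begin{itemize}
\item You only use auxiliary $l$ with $(l,6)=1$. For such $l$ one has $24\mid l^2-1$, so $\epsilon(g)^{l^2-1}=1$ carries no information beyond $\mu_{24}$. The values $k=1,3,8$ come precisely from $l=2$ and $l=3$.
\item The claimed input is false as stated. By \eqref{eq: theta pushforward}, your $l$-smoothed function is $\theta_0(x,\tau)^{l^2}/\theta_0(lx,\tau)$. Using $\theta_0(x+\tau,\tau)=-q_x^{-1}\theta_0(x,\tau)$, it picks up $q_\tau^{l(l-1)/2}$ under $x\mapsto x+\tau$, so it is not even elliptic. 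Kato's ${}_c\theta$ needs extra normalising factors for exactly this reason. Nor does it follow from Proposition~\ref{prop: j^N trivial}, which concerns the $N$-isogeny smoothing $\theta_0(x,\tau)^N/\theta_0(Nx,N\tau)$, not multiplication by $l$.
\end{itemize}
The assertion that $(\theta_0^N)^{l^2-1}$ factors into $\Gamma_0(N)$-invariant pieces is never substantiated.

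\textbf{What is missing is the paper's key step.} For every $l\geq 1$ with $(l,N)=1$, with no condition involving $6$, both $F=\theta_0^N$ and $F=g^*\theta_0^N$ satisfy
$\prod_{m,n=0}^{l-1}F\bigl(x+\frac{m\tau+n}{l},\tau\bigr)=F(lx,\tau)$.
Combine this with your constancy $g^*\theta_0^N=\epsilon(g)\,\theta_0^N$. The left side of the second relation is then $\epsilon(g)^{l^2}$ times that of the first, while the right side is $\epsilon(g)$ times it. Hence $\epsilon(g)^{l^2-1}=1$ for all such $l$, which gives $\mu_k$.

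The two distribution relations are proved as follows.
\begin{itemize}
\item For $F=\theta_0^N$: apply \eqref{eq: theta pushforward} to $\theta_0(x,\tau)$ and to $\theta_0(Nx,N\tau)$. In the second, reindex $n\mapsto Nn \bmod l$ using $1$-periodicity; this is where $(l,N)=1$ enters.
\item For $F=g^*\theta_0^N$: rewrite the translates in terms of $\sigma=\frac{a\tau+b}{c\tau+d}$. This permutes the $l$-torsion, but with non-standard representatives $\frac{(dm-cn)\sigma+(an-bm)}{l}$.
\begin{itemize}
\item For $N$ odd this is harmless, because $\theta_0^N$ is $\mathbb{Z}+\mathbb{Z}\sigma$-periodic.
\item For $N$ even, $\theta_0^N(x+\sigma,\sigma)=-\theta_0^N(x,\sigma)$, and the accumulated sign is $(-1)^{(d-c-1)l(l-1)/2}$. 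This equals $+1$ because $c$ is even and $d$ is odd, using $\Gamma_0(N)\subset\Gamma_0(2)$.
\end{itemize}
\end{itemize}
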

The proof follows the approach taken by \cite{Kato04}, in which the coprimality hypothesis $(N,6)=1$ is assumed. For a fixed $\tau$ and $g\in \Gamma_0(N)$, we will show both $\theta_0^N(-,\tau)$ and $g^*\theta_0^N(-,\tau)$ are sections of the same line bundle on $\mathbb{C}/\mathbb{Z}\oplus\mathbb{Z}\tau$ with the same divisor. Thus, their quotient is constant. Finally, we use the distribution relations to determine this constant.

Recall $\theta_0$ is a global section of $\mathcal{O}_{\mathcal{E}}([\mathcal{O}])$, the line bundle on the universal elliptic curve $\mathcal{E}$ over $\mathbb{H}$ associated to the divisor given by the zero section. 
\begin{lemma}
    $\theta_0^N$ is a meromorphic section of the line bundle $\mathcal{L}$:
    \begin{equation*}
        \mathcal{L}=\begin{cases}
            \mathcal{O}_{\mathcal{E}}&: (N,2)=1\\\mathcal{O}_{\mathcal{E}}([1/2])&: 2|N 
        \end{cases}
    \end{equation*}
    Moreover, for $g \in\Gamma_0(N)$, we have $g^{*}\theta_0^N$ is a section of the same line bundle $\mathcal{L}$.
\end{lemma}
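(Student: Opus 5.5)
The plan is to compute the divisor and the automorphy factor of $\theta_0^N$ directly, fibrewise over a fixed $\tau$. Then compare with $g^*\theta_0^N$ using the modularity of $\theta_0$ from Lemma \ref{lemma: explicit cocycle j}.

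\textbf{Step 1: the line bundle.} On the fibre $E_\tau=\mathbb{C}/(\mathbb{Z}\oplus\mathbb{Z}\tau)$, the function $\theta_0(x,\tau)^N$ has divisor $N[0]$. It transforms under $x\mapsto x+\tau$ by $(-q_x^{-1})^N$.

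The function $\theta_0(Nx,N\tau)$ is periodic under $x\mapsto x+1/N$ and $x\mapsto x+\tau$, with $\theta_0(N(x+\tau),N\tau)=-q_{Nx}^{-1}\theta_0(Nx,N\tau)=-q_x^{-N}\theta_0(Nx,N\tau)$. Its divisor on $E_\tau$ is therefore $\sum_{j=0}^{N-1}[j/N]$.

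Consequently $\theta_0^N$ is $1$-periodic and satisfies
\begin{equation*}
\theta_0^N(x+\tau,\tau)=(-1)^{N-1}\theta_0^N(x,\tau).
\end{equation*}
This is consistent with $j^N(l)=\exp(2\pi i\frac{N-1}{2}u)$ from the proof of Proposition \ref{prop: j^N trivial}.

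When $N$ is odd the multiplier is trivial, so $\theta_0^N$ is a meromorphic function, i.e.\ a section of $\mathcal{O}_{\mathcal{E}}$. When $N$ is even the multiplier is the nontrivial character $(u,v)\mapsto(-1)^u$ of the period lattice. This is precisely the factor of automorphy of the degree-zero bundle $\mathcal{O}_{\mathcal{E}}([1/2]-[0])$, which is what I read $\mathcal{O}_{\mathcal{E}}([1/2])$ to mean in the statement. I will verify it by computing the multiplier of $\theta_0(x-\tfrac12,\tau)/\theta_0(x,\tau)$ from the formulas for $j(l)$, namely $e^{2\pi iQ_l}$ with $Q_l=-ux+\dots$. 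This makes the identification a routine check on factors of automorphy, uniformly in $\tau$.

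\textbf{Step 2: invariance under $\Gamma_0(N)$.} The pullback $g^*$ acts on $(x,\tau)$ as in Section \ref{sec: definition of theta_0}, mapping the fibre over $\tau$ isomorphically to the fibre over $g\tau$ via $x\mapsto x/(c\tau+d)$. This map sends the lattice $\mathbb{Z}\oplus\mathbb{Z}\tau$ to $\mathbb{Z}\oplus\mathbb{Z}g\tau$. So $g^*\theta_0^N$ is a section of the bundle pulled back along this isomorphism, and its multiplier is the character $\chi$ above composed with the action of $g$ on $\mathbb{Z}^2$.

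The character $\chi$ is $(u,v)\mapsto(-1)^{(N-1)u}$. It is trivial on $v$, so it factors through the reduction of the $u$-coordinate. I need $\chi$ to be invariant under $g$, which means $g$ must preserve the sublattice $\{u \text{ even}\}$ when $N$ is even. That holds because $2\mid N\mid c$, so $g$ is upper triangular modulo $2$. Hence $g^*\mathcal{L}\cong\mathcal{L}$ with the same factor of automorphy.

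Alternatively, I can write $g^*\theta_0^N=\theta_0^N\cdot j(g)^N/j(\alpha g\alpha^{-1})(Nx,N\tau)$ as in Proposition \ref{prop: j^N trivial}. Each $j$ is $\exp$ of a polynomial in $x$ of degree at most $2$, which gives a trivialising ratio. The $x^2$ terms cancel because $c\cdot N x^2$ versus $(c/N)(Nx)^2$ agree, and the $x$-linear terms cancel similarly. So the ratio is independent of $x$, and the multipliers coincide.

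\textbf{Main obstacle.} The main obstacle is the even case: making the identification of the character $(-1)^u$ with $\mathcal{O}([1/2])$ match the paper's convention, and checking that $\Gamma_0(N)$ fixes the relevant $2$-torsion point $1/2$. The point $1/2$ is fixed because $x\mapsto x/(c\tau+d)$ sends $\tfrac12$ to $\tfrac12\cdot\frac{1}{c\tau+d}$. Since $\tfrac12 = \tfrac12(c\tau+d)-\tfrac{c}{2}\tau$ with $c$ even and $d$ odd, this point is congruent to $\tfrac12$ modulo the new lattice. I expect this congruence computation to be the crux.
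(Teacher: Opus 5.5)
Your proposal is correct and is essentially the paper's argument. The paper pins down $\mathcal{L}$ from the divisor $N[\mathcal{O}]-\sum_{j}[j/N]$ via Wilson's theorem, whereas you match the character $(-1)^{(N-1)u}$ against the model section $\theta_0(x-\tfrac12,\tau)/\theta_0(x,\tau)$. Your reading of $\mathcal{O}_{\mathcal{E}}([1/2])$ as the degree-zero bundle $\mathcal{O}_{\mathcal{E}}([1/2]-[\mathcal{O}])$ is the only one consistent with the paper's remark that $\mathcal{L}$ is $2$-torsion. The paper then checks, exactly as in your Step 2, that $\theta_0^N$ and $g^*\theta_0^N$ share the $\mathbb{Z}^2$-cocycle $\theta_0^N(x+\tau,\tau)=(-1)^{N-1}\theta_0^N(x,\tau)$, which in the even case uses only $g\in\Gamma_0(2)$.

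One small slip: $\tfrac12(c\tau+d)-\tfrac{c}{2}\tau=\tfrac{d}{2}$, so your last paragraph gives a congruence modulo $\mathbb{Z}$ rather than an equality. That fixed-point check is in any case redundant once you have shown the character is $g$-invariant.
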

The $\Gamma_0(N)$ is a strictly stronger assumption than needed: if $(N,2)=1$, we could take $g\in \SL_2(\mathbb{Z})$ and if $2|N$, it suffices to take $g\in \Gamma_0(2)$.
\begin{proof}
    We compute the divisor:
    \begin{equation*}
        \text{div}(\theta_0^N) = N\cdot [\mathcal{O}]-\sum_{j=0}^{N-1}\left[\frac{j}{N}\right]
    \end{equation*}
    from which the results follows from an application of Wilson's theorem: namely the sum of these divisors is either $[\mathcal{O}]$ or the unique element of order $2$ when it exists. The claim for $g^*\theta_0^N$ follows immediately from the fact that $\Gamma_0(N)$ preserves $\mathcal{L}$ in either case.
    
    The above only specifies the isomorphism class of the line bundle $\mathcal{L}$. To see the stronger claim that both $\theta_0^N$ and $g^*\theta_0^N$ are sections of the same line bundle, we explicitly verify they both define the same $\mathbb{Z}^2$-cocycle:
    \begin{align*}
        \theta_0^N(x+1,\tau)=\theta_0^N(x,\tau) &\quad \theta_0^N(x+\tau,\tau)= (-1)^{N-1}\cdot \theta_0^N(x,\tau) \\ g^*\theta_0^N(x+1,\tau)=g^*\theta_0^N(x,\tau) &\quad g^*\theta_0^N(x+\tau,\tau)= (-1)^{N-1}
        \cdot g^*\theta_0^N(x,\tau)
    \end{align*}
    which hold for $g\in \Gamma_0(2)\supset \Gamma_0(N)$. In particular, since these sections square to functions on the universal elliptic curve, it is clear that the line bundle $\mathcal{L}$ must be $2$-torsion.
\end{proof}
\begin{lemma}
    The quotient $\theta_0^N/g^*\theta_0^N$ is a non-vanishing function on $\mathbb{H}$ for $g\in \Gamma_0(N)$.
\end{lemma}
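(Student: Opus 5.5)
The plan is to fix $\tau\in\mathbb{H}$ and $g=\big(\begin{smallmatrix}a&b\\c&d\end{smallmatrix}\big)\in\Gamma_0(N)$, and to consider the quotient
\begin{equation*}
F(x,\tau)=\frac{\theta_0^N(x,\tau)}{g^*\theta_0^N(x,\tau)},\qquad g^*\theta_0^N(x,\tau)=\theta_0^N\left(\frac{x}{c\tau+d},\frac{a\tau+b}{c\tau+d}\right),
\end{equation*}
as a meromorphic function of $x\in\mathbb{C}$. The previous lemma shows that $\theta_0^N(-,\tau)$ and $g^*\theta_0^N(-,\tau)$ satisfy the same $\mathbb{Z}^2$-automorphy, with multiplier $1$ under $x\mapsto x+1$ and $(-1)^{N-1}$ under $x\mapsto x+\tau$. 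Hence $F(-,\tau)$ is $\mathbb{Z}\oplus\mathbb{Z}\tau$-periodic, so it is an elliptic function.

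Next I compare divisors. The divisor of $\theta_0^N(-,\tau)$ on $\mathbb{C}/(\mathbb{Z}\oplus\mathbb{Z}\tau)$ is $N[0]-\sum_{j}[j/N]$. The divisor of $g^*\theta_0^N$ is the pullback under the isomorphism $x\mapsto x/(c\tau+d)$ from $\mathbb{C}/(\mathbb{Z}+\mathbb{Z}\tau)$ to $\mathbb{C}/(\mathbb{Z}+\mathbb{Z}g\tau)$. Under this isomorphism the subgroup $\frac1N\mathbb{Z}$ of $N$-torsion maps to $\{(jc\tau+jd)/N\}$. This equals $\{jd/N\}$ modulo the lattice, since $N\mid c$, and it is the same subgroup because $(d,N)=1$.

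So the divisors agree, and $F(-,\tau)$ is holomorphic, non-vanishing and elliptic, hence constant in $x$. By the same token it is holomorphic and nonvanishing in $\tau$, which is the statement of the lemma. Concretely, I would write $\theta_0^N=\prod_{j\ne0}\theta_0(x,\tau)/\theta_0(x+j/N,\tau)$ up to the distribution relation \eqref{eq: first DR}, combined with \eqref{eq: second DR}, so that zeros and poles cancel termwise.

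The main obstacle is the second step: making the divisor identification rigorous in homogeneous form, and checking that no stray factors arise in $\tau$ (for instance zeros of $\theta_0(Nx,N\tau)$ at points such as $x=\tau/N$). Each zero of $\theta_0(Nx,N\tau)$ lies at $x\in\frac1N(\mathbb{Z}+N\tau\mathbb{Z})$, so all of them lie in $\frac1N\mathbb{Z}+\mathbb{Z}\tau$, which the $\Gamma_0(N)$ condition handles. Finally, $F$ is a holomorphic function of $\tau$ whose nonvanishing follows from being a ratio of two sections with identical divisors on every fibre.
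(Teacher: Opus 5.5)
Your proposal is correct and is essentially the paper's argument: the preceding lemma gives identical $\mathbb{Z}^2$-automorphy, so the quotient is elliptic on each fibre of $\mathcal{E}$; and $g\in\Gamma_0(N)$ preserves the divisor $N[0]-\sum_{j}[j/N]$, so the quotient is a holomorphic unit constant along fibres, i.e.\ an element of $\mathcal{O}^*(\mathbb{H})$. Your check that $j(c\tau+d)/N\equiv jd/N$ modulo $\mathbb{Z}+\mathbb{Z}\tau$, with $(d,N)=1$, just spells out the paper's one-line identity $g^*\operatorname{div}(\theta_0^N)=\operatorname{div}(\theta_0^N)$; also, the first distribution relation alone already gives your product formula exactly, so the second is not needed.
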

\begin{proof}
    In the previous Lemma, we saw $d\theta_0^N = dg^*\theta_0^N\in Z^1(\mathbb{Z}^2,\mathcal{O}^*(\mathbb{C}\times\mathbb{H}))$ and so $\theta_0^N/g^*\theta_0^N\in \mathcal{M}^*(\mathcal{E})$. Moreover, for $g\in \Gamma_0(N)$, we compute $\text{div}(g^{*}\theta_0^N)=g^*\text{div}(\theta_0^N)=\text{div}(\theta_0^N)$ and so $\theta_0^N/g^*\theta_0^N\in \mathcal{O}^*(\mathcal{E})=\mathcal{O}^*(\mathbb{H})$.
\end{proof}
Recall $\theta_0$ satisfies the functional identity on $\mathbb{C}\times\mathbb{H}$:
\begin{equation}\label{eq: theta pushforward} 
    \prod_{m,n=0}^{l-1}\theta_0\left(x+\frac{m\tau+n}{l},\tau\right)= \theta_0(lx,\tau).
\end{equation}
Both of these generalise to $\theta_0^N$ and $g^*\theta_0^N$.
\begin{lemma}
    Suppose $g\in \Gamma_0(N)$. Then, as meromorphic functions on $\mathbb{C}\times\mathbb{H}$, for all positive integers $(l,N)=1$:
    \begin{align*}
        \prod_{m,n=0}^{l-1}\theta_0^N\left(x+\frac{m\tau+n}{l},\tau\right) = \theta_0^N(lx,\tau)
        \quad \prod_{m,n=0}^{l-1}g^*\theta_0^N\left(x+\frac{m\tau+n}{l},\tau\right) = g^*\theta_0^N(lx,\tau)
    \end{align*}
\end{lemma}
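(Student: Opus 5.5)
The plan is to deduce both identities directly from the unsmoothed distribution relation \eqref{eq: theta pushforward}. For the first identity, write $\theta_0^N(y,\tau)=\theta_0(y,\tau)^N/\theta_0(Ny,N\tau)$. The numerator is handled by raising \eqref{eq: theta pushforward} to the $N$-th power. This gives
\begin{equation*}
\prod_{m,n=0}^{l-1}\theta_0\left(x+\frac{m\tau+n}{l},\tau\right)^N=\theta_0(lx,\tau)^N.
\end{equation*}
For the denominator we need $\prod_{m,n}\theta_0\left(Nx+\frac{mN\tau+Nn}{l},N\tau\right)=\theta_0(Nlx,N\tau)$. Since $(l,N)=1$, multiplication by $N$ permutes $\mathbb{Z}/l$. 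Hence, as $n$ runs over $0,\dots,l-1$, the shifts $Nn/l$ run over $n'/l$ modulo $\mathbb{Z}$, and $\theta_0(-,N\tau)$ is $1$-periodic. Likewise, as $m$ runs over $0,\dots,l-1$, the shifts $mN\tau/l$ run over $m'(N\tau)/l$ modulo $\mathbb{Z}\cdot N\tau$. Translating by $N\tau$ in the first variable multiplies $\theta_0(-,N\tau)$ by the factor $-q_y^{-1}$ from Lemma \ref{lemma: explicit cocycle j}. So the reindexing produces an automorphy factor, which I will track explicitly; equivalently I will use $j(l)$ with the quadratic polynomial $Q_l$.

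To avoid the bookkeeping, I would instead apply \eqref{eq: theta pushforward} directly with $(Nx,N\tau)$ in place of $(x,\tau)$. This gives $\prod_{m',n'}\theta_0\left(Nx+\frac{m'N\tau+n'}{l},N\tau\right)=\theta_0(lNx,N\tau)$. The task is then to show that the two products differ only by translations in $\mathbb{Z}+\mathbb{Z}N\tau$. The resulting multipliers are exponentials of linear terms in $x$, and summed over $m$ they should cancel up to a constant. I expect the cleanest route is to note that both sides are sections of the same line bundle (by the previous Lemma, $\theta_0^N$ has multipliers $1$ and $(-1)^{N-1}$) with the same divisor. The divisor of the left side is $\sum_{P\in E[l]}\tau_P^*\operatorname{div}(\theta_0^N)=[l]^*\operatorname{div}(\theta_0^N)$ because $(l,N)=1$ makes the $N$-torsion and $l$-torsion translates disjoint and compatible. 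Therefore the ratio is a nowhere-vanishing elliptic function, hence depends only on $\tau$.

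The main obstacle is pinning down this constant $C(\tau)$. Two approaches seem viable. The first is to evaluate at a convenient point, for example comparing leading behaviour as $x\to 0$ or using the normalisation, but $\theta_0^N$ has a zero or pole at $x=0$, so I would compare Laurent leading coefficients there. The second, which I prefer, is to check the identity factor-by-factor: the numerator identity holds exactly, and the denominator identity with $N\tau$ holds exactly by \eqref{eq: theta pushforward} applied at $(Nx,N\tau)$. The reindexing $m\mapsto m'$, $n\mapsto n'$ introduces only the multipliers computed from $Q_l$. Their total product is a root of unity that I expect to cancel by the symmetry $m\leftrightarrow$ the $l$-cycle structure. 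This is the one computation I would actually grind out.

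For the second identity, write $g^*\theta_0^N(x,\tau)=\theta_0^N\left(\frac{x}{c\tau+d},g^{-1}\tau\right)$, with $g^{-1}=\left(\begin{smallmatrix}a&b\\c&d\end{smallmatrix}\right)$ following the earlier convention. Apply the first identity at the point $\left(\frac{x}{c\tau+d},g^{-1}\tau\right)$. The set $\left\{\frac{m\tau+n}{l(c\tau+d)}\right\}$ equals $\left\{\frac{m'\,g^{-1}\tau+n'}{l}\right\}$ modulo the lattice $\mathbb{Z}+\mathbb{Z}g^{-1}\tau$, since $g^{-1}$ induces a bijection of $(\mathbb{Z}/l)^2$. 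Periodicity of $\theta_0^N$ under that lattice, namely the multipliers $1$ and $(-1)^{N-1}$, which are again controlled up to sign, then gives the result, with the same sign check as before.
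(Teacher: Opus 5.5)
Your first identity is correct in substance and matches the paper's argument, but the ``main obstacle'' you set up there does not exist. The $m$-shifts are $\frac{mN\tau}{l}=m\cdot\frac{N\tau}{l}$, which are already the standard $l$-division shifts for the period $N\tau$. So no reindexing in $m$ is needed and no factor $-q_y^{-1}$ appears. Only $n\mapsto[Nn]_l$ is reindexed, and this moves arguments by integers only. Hence $\prod_{m,n}\theta_0\bigl(Nx+\frac{mN\tau+Nn}{l},N\tau\bigr)=\theta_0(lNx,N\tau)$ follows immediately from \eqref{eq: theta pushforward} at $(Nx,N\tau)$. That is the paper's proof, and your divisor argument and the constant $C(\tau)$ are unnecessary.

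The genuine gap is in the second identity. Your proposal never uses the hypothesis $g\in\Gamma_0(N)$, and that is exactly where it is needed. Your reindexing is right: with $g^{-1}=\left(\begin{smallmatrix}a&b\\c&d\end{smallmatrix}\right)$, $w=\frac{x}{c\tau+d}$ and $\sigma=g^{-1}\tau$, the left side equals $\prod_{m,n}\theta_0^N\bigl(w+\frac{(dm-cn)\sigma+(an-bm)}{l},\sigma\bigr)$. Write $dm-cn=[dm-cn]_l+l\,x_{m,n}$. Passing to standard representatives then multiplies the product by $(-1)^{(N-1)\sum x_{m,n}}$. For $N$ odd this factor is $1$ and your argument is complete. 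For $N$ even it is $(-1)^{\sum x_{m,n}}$. Since $\gcd(c,d)=1$, the value $dm-cn$ is equidistributed mod $l$, which gives
\[
\sum_{m,n=0}^{l-1}x_{m,n}=\frac{l(l-1)}{2}\,(d-c-1).
\]
No symmetry makes this even. For example, $N=2$, $l=3$, $g^{-1}=\left(\begin{smallmatrix}1&0\\1&1\end{smallmatrix}\right)$ gives $-3$, and the product equals $-g^*\theta_0^N(lx,\tau)$. The sign disappears only because $2\mid N$ forces $g\in\Gamma_0(2)$, so $c$ is even and $d$ is odd, making $d-c-1$ even. Deferring to ``the same sign check as before'' does not work, because the first identity produces no sign at all. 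This parity computation, which is precisely where $g\in\Gamma_0(N)$ is used, is the real content of the lemma and has to be supplied.
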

\begin{proof}
    Since $\theta_0^N=\theta_0/N^*\theta_0$, it remains to show the analogous relation for $N^*\theta_0$. Explicitly:
    \begin{align*}
        \prod_{m,n=0}^{l-1}N^*\theta_0\left(x+\frac{m\tau+n}{l},\tau\right)&= \prod_{m,n=0}^{l-1}\theta_0\left(Nx+\frac{mM\tau+Nn}{l},N\tau\right)\\&=\prod_{m,n=0}^{l-1}\theta_0\left(Nx+\frac{mN\tau+n}{l},N\tau\right)=\theta_0(lNx,N\tau) =N^*\theta_0(lx,\tau) 
    \end{align*}
    where the first and last equalities are definitions. The penultimate equality is \eqref{eq: theta pushforward}. The second equality follows from $N\in (\mathbb{Z}/l)^*$ and $\theta_0(x+1,\tau)=\theta_0(x,\tau)$. This proves the first functional relation.

    Before proving the second relation, we note that if $(N,2)=1$, then it immediately follows from the first one: in this case, we know $\theta_0^N(-,\tau)$ is a function on the elliptic curve $\mathbb{C}/\mathbb{Z}\oplus\mathbb{Z}\tau$ and then the first relation can be rephrased as pushforward invariance $[l]_*\theta_0^N = \theta_0^N$. Then, we have $[l]_*g^*\theta_0^N = g^*[l]_*\theta_0^N=g^*\theta_0^N$ since $g$ preserves the $l$-torsion.

    So let us assume $2|N$ and write $g^{
    -1} =\big(\begin{smallmatrix}a& b \\c&d\end{smallmatrix}\big) $. Then, the left hand side is:
    \begin{equation*}
        \prod_{m,n=0}^{l-1}\theta_0^N\left(\frac{x}{c\tau+d}+\frac{m\tau+n}{l(c\tau+d)},\frac{a\tau+b}{c\tau+d}\right)=\prod_{m,n=0}^{l-1}\theta_0^N\left(w+\frac{(dm-cn)\sigma+(-bm+an)}{l},\sigma\right)
    \end{equation*}
    where $w=\frac{x}{c\tau+d}$ and $\sigma = \frac{a\tau+n}{c\tau+d}$. Though we know the product ranges over a set of representatives for the $l$-torsion on $\mathbb{C}/\mathbb{Z}\oplus\mathbb{Z}\sigma$, we can no longer conclude as $\theta_0^N$ is not $\mathbb{Z}\oplus\mathbb{Z}\sigma$-invariant. Instead, we use the identities:
    \begin{equation*}
        \theta_0^N(x+1,\tau)=\theta_0^N(x,\tau) \quad \theta_0^N(x+\tau,\tau)= (-1)^{N-1}\cdot \theta_0^N(x,\tau) 
    \end{equation*}
    to deduce:
    \begin{align*}
        \prod_{m,n=0}^{l-1}\theta_0^N\left(w+\frac{(dm-cn)\sigma+(-bm+an)}{l},\sigma\right) &= \prod_{m,n=0}^{l-1}(-1)^{x_{m,n}\cdot(N-1)}\theta_0^N\left(w+\frac{m\sigma+n}{l},\sigma\right)\\ &= \prod_{m,n=0}^{l-1}(-1)^{x_{m,n}}\cdot \theta_0^N(lw,\sigma) = (-1)^{\sum x_{m,n}}\cdot g^*\theta_0^N(lx,\tau)
    \end{align*}
    where $x_{m,n}=\frac{1}{l}(dm-cn - [dm-cn]_l)$. It remains to show $\sum{x_{m,n}}\in 2\mathbb{Z}$. We compute:
    \begin{equation*}
       \sum_{m,n=0}^{l-1}{x_{m,n}} = \frac{1}{l}\left(ld\cdot \sum_{m=0}^{l-1}m-lc\cdot\sum_{n=0}^{n-1}n-l\sum_{k=0}^{n-1}k\right)  = \sum_{m=0}^{l-1}m\cdot (d-1-c)\overset{?}{\in}2\mathbb{Z}
    \end{equation*}
    Since we are in the case $N|2$, it follows $g\in \Gamma_0(2)$. In particular, $2|c$ and $(2,d)=1$, so that $2|d-1-c$.
\end{proof}
\begin{corollary}
    For each $\tau\in \mathbb{H}$, the function $\theta_0^N/g^*\theta_0^N(-,\tau)$ on the elliptic curve $\mathbb{C}/\mathbb{Z}\oplus\mathbb{Z}\tau$ is invariant under pushforward by $[l]_*$ for all positive integers $l$ such that $(l,N)=1$.
\end{corollary}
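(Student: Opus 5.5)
The corollary should follow by combining the previous two lemmas with a short observation about the pushforward operator. For fixed $\tau$, write $E_\tau=\mathbb{C}/\mathbb{Z}\oplus\mathbb{Z}\tau$ and $F=\theta_0^N/g^*\theta_0^N(-,\tau)$.

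First we check that $F$ is a genuine function on $E_\tau$. The lemma showing that $\theta_0^N$ and $g^*\theta_0^N$ are sections of $\mathcal{L}$ establishes that they define the same $\mathbb{Z}^2$-cocycle: both are $1$-periodic and pick up the factor $(-1)^{N-1}$ under $x\mapsto x+\tau$. These automorphy factors cancel in the quotient, so $F$ is $\mathbb{Z}\oplus\mathbb{Z}\tau$-invariant and descends to a meromorphic function on $E_\tau$. By the subsequent lemma it is in fact constant on $E_\tau$, but we do not need this here.

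Next we recall what pushforward means for a meromorphic function on $E_\tau$ along the isogeny $[l]$:
\begin{equation*}
([l]_*F)(y)=\prod_{[l](x')=y}F(x'),
\end{equation*}
and the fibre over $y=lx$ is $\{x+\tfrac{m\tau+n}{l}:0\le m,n<l\}$ modulo the lattice. Periodicity of $F$ makes this product independent of the chosen representatives. Pushforward invariance of $F$ therefore means
\begin{equation*}
\prod_{m,n=0}^{l-1}F\left(x+\tfrac{m\tau+n}{l}\right)=F(lx).
\end{equation*}

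To prove this, take the quotient of the two functional identities in the preceding lemma, which hold for every positive integer $l$ with $(l,N)=1$ and every $g\in\Gamma_0(N)$. Taking quotients is legitimate because both sides are identities of meromorphic functions, and $g^*\theta_0^N$ is not identically zero. This yields exactly the displayed identity, and hence $[l]_*F=F$.

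The main subtlety lies in the preceding lemma rather than here. In the case $2\mid N$ one must track the sign $(-1)^{\sum x_{m,n}}$, and this is handled there using $g\in\Gamma_0(2)$. Given that lemma, the corollary is a direct restatement.
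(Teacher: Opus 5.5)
Your proposal is correct and is essentially the paper's argument. The paper states this corollary without a separate proof, as the immediate quotient of the two distribution identities in the preceding lemma; your added checks are consistent with the paper's usage. Namely, $F$ descends to $\mathbb{C}/\mathbb{Z}\oplus\mathbb{Z}\tau$ because $\theta_0^N$ and $g^*\theta_0^N$ share the same $\mathbb{Z}^2$-cocycle, and $[l]_*$ is the product over the $l$-torsion fibre, matching $[l]_*\lambda=\lambda^{l^2}$ for constants.
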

\begin{proof}[Proof of Proposition]
    Combining the above Lemmas, we know that for $g\in \Gamma_0(N)$ and fixed $\tau$, the quotient $\theta_0^N/g^*\theta_0^N(-,\tau)=\lambda$ for $\lambda\in \mathbb{C}$ satisfying:
    \begin{equation*}
        [l]_*\lambda = \lambda^{l^2}=\lambda\iff  \lambda\in \mu_{l^2-1}  \end{equation*}
    for all $(l,N)=1$. In particular, $\theta_0^N/g^*\theta_0^N(-,\tau)\in \mu_{k}$ for $k=\operatorname{gcd}_{(l,N)=1}(l^2-1)$. The second part of the Proposition, namely the formula for $k$, follows from weak approximation together with the observations:
    \begin{itemize}
        \item $(\mathbb{Z}_2^*)^2=1+8\mathbb{Z}_2$. In particular, there exists $t_2\in\mathbb{Z}_2^*$ such that $v_2(t_2^2-1)=3$. 
        \item $(\mathbb{Z}_3^*)^2=1+3\mathbb{Z}_3$. In particular, there exists $t_3\in\mathbb{Z}_3^*$ such that $v_3(t_3^2-1)=1$.
        \item For $p\neq 2,3$, we have $(\mathbb{Z}_p^*)^2=(\mathbb{F}_p^*)^2\cdot 1+p\mathbb{Z}_p$ and $(\mathbb{F}_p^*)^2\neq\{1\}$. In particular, there exists $t_p\in\mathbb{Z}_p^*$ such that $v_p(t_p^2-1)=0$.
    \end{itemize}

    Finally, by the discreteness of $\mu_{24}$, we deduce the result for all $\tau$, namely that $\theta_0^N/g^*\theta_0^N\in \mu_{24}$.
\end{proof} 
\begin{remark}\label{remark: kato perspective}
    The style of argumentation above leads us to the following characterisation of $\theta_0$. It is the unique holomorphic function on $\mathbb{C}\times\mathbb{H}$ that satisfies the pushforward relation \eqref{eq: theta pushforward} and descends to a section of a specified line bundle on $\mathcal{E}$ with divisor $ [\mathcal{O}]$. Immediately, one deduces that $\theta_0/g^*\theta_0\in \mathcal{O}^*(\mathbb{C}\times\mathbb{H}) = \exp(\mathcal{O}(\mathbb{C}\times\mathbb{H}))$ for $g\in \SL_2(\mathbb{Z})$ as pullback by $\SL_2(\mathbb{Z})$ preserves the divisor of the section. The exact form of this automorphy factor, as stated in Lemma \ref{lemma: explicit cocycle j}, can then be deduced by the uniqueness property.
\end{remark}
Now consider the Distribution Relation for $B_{3,3}$. With no assumption on $N$, we still arrive at:
\begin{equation*}
    \sum_{\nu\in P }NB_{3,3}(w+z(\nu),z(\underline{x})) -  B_{3,3}(Nw,z(\underline{x})) = 3!\left(-\frac{(N-1)^3}{8}+\frac{1}{N^2}\sum_{\nu\in P}\nu_1\nu_2\nu_3\right).
\end{equation*}
In particular, taking $\exp(\frac{-2\pi i}{3!}-)$, we will obtain an element of $\mu_{8\cdot N^2}$. The only other time the coprimality $(N,6)=1$ condition is used is in the final steps of Proposition \ref{prop: final error term}, precisely \eqref{eq: final N^2}, to clear another $N^2$ denominator. In particular, dropping the $(N,6)=1$ condition, we are still able to prove
$\widetilde{\phi^{0,2}}\Big\vert_{\mathcal{Q}}\in \mu_{8\cdot N^2}$.
\begin{corollary}
    The smoothed elliptic Gamma cochain $(\widetilde{\Delta}^{\underline{f}},\widetilde{\Gamma})\in C^1_G(\mathcal{U},\mathcal{M}^*)$ is independent of a compatible pair $\underline{f}$ up to $\mu_{24}$: that is to say $\widetilde{\psi^{\underline{f},\underline{f'}}}\in \mu_{24}$. Moreover, for any compatible pair $\underline{f}$:
    \begin{equation*}
        D(\widetilde{\Delta}^{\underline{f}},\widetilde{\Gamma})=\left(\widetilde{\phi^{2,0}},\widetilde{\phi^{1,1}}\Big\vert_{\mathcal{W}},\widetilde{\phi^{0,2}}\Big\vert_{\mathcal{Q}}\right) \in \mu_{24\cdot N^2}.
    \end{equation*}
    If $(N,6)=1$, then this root of unity is always $1$.
\end{corollary}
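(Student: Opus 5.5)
This corollary is the non-coprime analogue of Theorem \ref{theorem: vanishing cocycle}, so I will rerun the proofs of Propositions \ref{prop: smoothed compatible pairs}, \ref{prop: smoothed phi2,0}, \ref{prop: smoothed phi1,1} and \ref{prop: smoothed phi0,2} without assuming $(N,6)=1$. At each point where that hypothesis was used, I will replace the exact invariance by the root-of-unity bounds just established.

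The input for $\theta_0^N$ is the preceding Proposition. It says that for $\delta\in\Gamma_0(N)$ the ratio $\theta_0^N/\delta^*\theta_0^N$ lies in $\mu_{24}$, and more precisely in $\mu_k$. It also gives the lattice relations $\theta_0^N(x+1,\tau)=\theta_0^N(x,\tau)$ and $\theta_0^N(x+\tau,\tau)=(-1)^{N-1}\theta_0^N(x,\tau)$.

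\textbf{Framing cochain.} In the proof of Proposition \ref{prop: smoothed compatible pairs}, $\widetilde{\psi^{\underline f,\underline f'}}_a(u)$ is a product over $k=0,\dots,a(u)-1$ of ratios of $\theta_0^N$.
\begin{itemize}
\item The first equality in \eqref{eq: equalities of smoothed theta} is a translation by $y_1-x_1\in\ker(a)$, so it costs a sign.
\item The second equality is a $\Gamma_0(N)$-change of coordinates, so it costs an element of $\mu_{24}$.
\end{itemize}
The difficulty is that the product has $a(u)$ factors, so naively the error is $\mu_{24}^{a(u)}$. That is still inside $\mu_{24}$ as a set, since $\mu_{24}$ is a group. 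The real constraint is to keep the exponent $24$ rather than letting signs enlarge it to $48$, which they do not, because $2\mid 24$. So $\widetilde{\psi}\in\mu_{24}$ termwise.

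\textbf{The $(2,0)$ component.} I use Remark \ref{remark: explicit phi2,0} directly: $\widetilde{\phi^{2,0}_a}(u,v)=e^{2\pi i n_1m_2(N-1)/2}\in\mu_2$ on translations. The relation $\phi^{2,0}((u_1,g_1),(u_2,g_2))=\phi^{2,0}(u_1,g_1u_2)\,\psi^{f,g_1f}(g_1u_2)$ then reduces the general case to the translation case times $\widetilde\psi\in\mu_{24}$.

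\textbf{The $(1,1)$ component.} For $\widetilde{\phi^{1,1}}|_{\mathcal W}$, I redo the modulus-one computation. It uses $\theta_0(x,\tau)=\theta_0(-x,-\tau)^{-1}$ (exact), translations of $\theta_0^N$ (signs), and the comparison $\widetilde\Delta^{\underline f'}=\widetilde\Delta^{\underline f}$, which now holds only up to $\mu_{24}$. Each of the $m+n$ factors contributes a root of unity in $\mu_{24}$. Telescoping along the chain of coplanar pairs multiplies these, which stays in $\mu_{24}$. Analytic continuation then propagates the constant, since a continuous function valued in a discrete set is constant.

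\textbf{The $(0,2)$ component.} For $\widetilde{\phi^{0,2}}|_{\mathcal Q}$, the paragraph just before the corollary already gives $\mu_{8N^2}$. The Distribution Relation contributes $e^{-2\pi i(-(N-1)^3/8+S/N^2)}$. The step \eqref{eq: final N^2} contributes $e^{-2\pi i\,T/N^2}$, with $T$ integral, because the exponent was $\frac{1}{N^2 f^3}\cdot f^3\cdot(\text{integer})$. Both lie in $\mu_{8N^2}\subset\mu_{24N^2}$.

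Collecting the three components places $D(\widetilde\Delta,\widetilde\Gamma)$ in $\mu_{24}\cdot\mu_{8N^2}\subset\mu_{24N^2}$, since $\mathrm{lcm}(24,8N^2)$ divides $24N^2$. When $(N,6)=1$ we have $k=1$, and the quantities $(N-1)^3/8$ and $S/N^2$ are integers by the main-body arguments, so every root of unity is $1$.

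The main obstacle is the bookkeeping in the $\widetilde\psi$ and $\widetilde{\phi^{1,1}}$ steps. There I must check that products of many per-factor errors remain in $\mu_{24}$, which holds because it is a group, and that the sign factors $(-1)^{N-1}$ from $\tau$-translations never introduce anything outside $\mu_{24}$.
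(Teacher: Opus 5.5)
Your proposal is correct and follows the paper's own route. You rerun the main-body vanishing arguments, replacing the exact $\Gamma_0(N)$-invariance of $\theta_0^N$ by the $\mu_{24}$ bound of the appendix Proposition (plus the signs $(-1)^{N-1}$ from $\tau$-translations), and you replace the $N^2$-divisibility used in the Distribution Relation and in \eqref{eq: final N^2} by the resulting denominators $8N^2$ and $N^2$. One small point: passing from $\bigcap V_{a_i}$ to all of $V_a\cap V_b$ in the $(1,1)$ step uses the identity theorem on the connected domain $V_a\cap V_b$, not the discreteness of $\mu_{24}$, since the latter would presuppose the conclusion.
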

We are thus able to marginally extend our results for very little work.
\begin{theorem}\label{theorem: total theorem}
    Fix a pair of rank $3$ lattices $(V,L)$ with cyclic quotient $V/L \cong \mathbb{Z}/N$. Consider triples $h\in L\otimes\mathbb{Q}$, $\langle z\rangle \in \mathbb{P}(L^{\vee}\otimes\mathbb{C})-\mathbb{P}(L^{\vee}\otimes\mathbb{R}) $, and an automorphism $g$ of the pair $(V,L)$, that are admissible for the pair $(V,L)$:
    \begin{itemize}
        \item $g\cdot h\equiv h$ modulo $L$.
        \item $\langle h\rangle_{\mathbb{Q}}$ generates the quotient $V/L$.
        \item $g\cdot \langle z\rangle =\langle z\rangle$.
    \end{itemize}
    Then for all admissible triples $\triple{h}$, $\triple{h'}$ for the pair $(V,L)$ such that $h\equiv h'$ modulo $L$ and $d(h,h')=3$:
    \begin{equation*}
        \frac{\widetilde{\Phi}\triple{h}}{ \widetilde{\Phi}\triple{h'}}\in \mu_{24\cdot N^2}
    \end{equation*}
    where:
    \begin{equation*}
        \widetilde{\Phi}\triple{h}=\widetilde{\Gamma}_{a,ga}(z(h),z)=\frac{\Gamma_{a,ga}(z(h),z;L)^N}{\Gamma_{a,ga}(z(h),z;V)}
    \end{equation*}
    for primitive functional $a\in V^{\vee}$ determined by the triple.

    Moreover, if $(N,6)=1$, then we can strengthen the result in two directions: firstly, this root of unity is always $1$; secondly, if we further assume $(N,\disc (\chi_g))=1$, then we can remove the $d=3$ assumption by always finding an admissible triple $\triple{h''}$ with $h''\equiv h'\equiv h$ modulo $L$ such that $d(h,h'')=d(h',h'')=3$.
\end{theorem}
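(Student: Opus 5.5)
The proof is an assembly of the cocycle expression with the relaxed vanishing results from this appendix. I would start from Proposition \ref{prop: smoothed cocycle expression}. It writes $\widetilde{\Phi}\triple{h}/\widetilde{\Phi}\triple{h'}$ as a product of five kinds of factors:
\begin{itemize}
\item the change of framing term $\widetilde{\psi^{\underline{f},g\cdot\underline{f}}}$;
\item two values of $\widetilde{\phi^{2,0}}$;
\item two values of $\widetilde{\phi^{1,1}}$ on the pairs $(a,a')$ and $(b',a)$;
\item two values of $\widetilde{\phi^{0,2}}$ on the triples $(a,b,b')$ and $(a,a',b')$.
\end{itemize}
Since $\widetilde{\Phi}$ does not depend on a framing, I would take $\underline{f}$ to be a compatible pair. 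Then $g\cdot\underline{f}$ is again compatible, because $g$ preserves $\mathcal{V}$. The assumption $d(h,h')=3$ means $(a,a',b')\in\mathcal{Q}$ and $(a,b,b')\in\mathcal{Q}$. In particular every pair that occurs in a $\widetilde{\phi^{1,1}}$ term lies in $\mathcal{W}$.

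Next I would bound each factor using the Corollary just above: $\widetilde{\psi}\in\mu_{24}$, and $D(\widetilde{\Delta},\widetilde{\Gamma})$ restricted to $\mathcal{W}$ and $\mathcal{Q}$ lies in $\mu_{24N^2}$.
\begin{itemize}
\item \emph{The $\widetilde{\psi}$ and $\widetilde{\phi^{2,0}}$ factors.} The proofs of Propositions \ref{prop: smoothed compatible pairs} and \ref{prop: smoothed phi2,0} only use the $\mathbb{Z}^2\rtimes\Gamma_0(N)$ behaviour of $\theta_0^N$. Without $(N,6)=1$ the translations give a sign $(-1)^{N-1}$ and $\Gamma_0(N)$ gives an element of $\mu_{24}$. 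So these factors lie in $\mu_{24}$.
\item \emph{The $\widetilde{\phi^{1,1}}$ factors.} The proof of Proposition \ref{prop: smoothed phi1,1} is a telescoping over modulus-one pairs, so it introduces only the same $\mu_{24}$ ambiguities.
\item \emph{The $\widetilde{\phi^{0,2}}$ factors.} The distribution relation of Proposition \ref{prop: Bernoulli DR} leaves the error $3!(-(N-1)^3/8+N^{-2}\sum\nu_1\nu_2\nu_3)$, whose exponential lies in $\mu_{8N^2}$. The error term in Proposition \ref{prop: final error term} contributes another factor with denominator $N^2$.
\end{itemize}
Multiplying, the ratio lies in $\mu_{24N^2}$. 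The exponent needs checking, because the two $N^2$ denominators must combine to give $N^2$ and not $N^4$.

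\textbf{Main obstacle: that exponent.} In Proposition \ref{prop: final error term} the difference \eqref{eq: difference in cubics} is divided by $N^2f^3$ and then multiplied by $3!N/(Nf)^3\cdot N$. I would track the exact denominator. For general $N$ the sums $\sum_{\nu}\nu_1\nu_2[\nu_3+D]_N$ are still integers, so the error term has exponential in $\mu_{6N^2}$, or better. The distribution-relation part has exponential in $\mu_{8N^2}$. Both lie in $\mu_{24N^2}$, so their product does too. This is the step I would verify carefully.

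For the strengthening when $(N,6)=1$, every factor is $1$ by Theorem \ref{theorem: vanishing cocycle}, which gives equality when $d=3$. If also $(N,\disc(\chi_g))=1$, the Proposition in the general-case subsection of Section \ref{sec: reduction} supplies $h''\in R$ with $d(h,h'')=d(h',h'')=3$. Applying the $d=3$ case twice and telescoping, exactly as in the proof of Theorem \ref{theorem: main theorem in main body}, removes the $d=3$ hypothesis.
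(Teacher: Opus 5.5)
Your proposal is correct and follows essentially the paper's own assembly. You combine the smoothed cocycle expression of Proposition \ref{prop: smoothed cocycle expression}, taken with a compatible pair, with the Appendix A corollary bounding $\widetilde{\psi}$ and $D(\widetilde{\Delta},\widetilde{\Gamma})$ on $\mathcal{W}$ and $\mathcal{Q}$; the $(N,6)=1$ strengthening then comes from Theorem \ref{theorem: vanishing cocycle} and the general-case proposition of Section \ref{sec: reduction}, exactly as the paper does. The exponent ``obstacle'' you flag is automatic, since $\mu_{24N^2}$ is a group and every factor has order dividing $24N^2$. In particular, the error-term piece of each $\widetilde{\phi^{0,2}}$ actually lies in $\mu_{N^2}$, and the distribution-relation piece lies in $\mu_{8N^2}$.
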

\newpage
\section{An extremal case: $h\in L$}\label{appendix: extremal}
Recall that an admissible triple $\triple{h}$ for the pair $(V,L)$ satisfies the following conditions: $g\cdot\langle z \rangle=\langle z\rangle$, $gh\equiv h$ and $\langle h\rangle_{\mathbb{Q}}$ generates the quotient $V/L$. Theorem \ref{theorem: main theorem in main body} then states that if $(N,6\cdot\disc(\chi_g))=1$, then for any two admissible triples $\triple{h}$ and $\triple{h'}$ satisfying $h\equiv h'$ modulo $L$, we have:
\begin{equation*}
    \widetilde{\Gamma}_{a,b}(z(h),z)=\widetilde{\Gamma}_{a',b'}(z(h'),z).
\end{equation*}
The reader may note that we did \textit{not} exclude the case $h\in L$. Moreover, for any pair $h$, $h'\in L $, we have $gh\equiv h\equiv h'\equiv gh'$ modulo $L$. However, computations show that it is \textit{not} true that $\widetilde{\Phi}\triple{h}$ is constant for $h\in L$, demonstrating the need for the further, and in this case only, assumption that $\langle h\rangle_{\mathbb{Q}}$ generates $V/L$. In this case, we can give a much shorter proof that $\widetilde{\Phi}\triple{h}$ is independent, up to signs, of the choice of $h\in L$.
\begin{proposition}\label{prop: h in L}
    Suppose $g\cdot z=\lambda z$ for $z\in \Lambda\otimes\mathbb{C}-\Lambda\otimes\mathbb{R}$. Then, for all $h\in L$ such that $\langle h \rangle_{\mathbb{Q}}$ generates $V/L$:
    \begin{equation*}
        \widetilde{\Phi}\triple{h}^2 = \lambda^{N-1}.
    \end{equation*}
\end{proposition}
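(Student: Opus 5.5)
The plan is to exploit $h\in L$ directly: translation by $h$ relates the evaluation point $z(h)$ to $0$ through the difference relations. We start from $\widetilde{\Phi}\triple{h}=\widetilde{\Gamma}_{a,ga}(z(h),z)$ with $b=ga$. Since $h\in\ker(a,b)$ and $h$ is a lattice vector of $L$, write $h=k\gamma$, where $\gamma=(a,b)_L$ is the direction vector; then $z(h)/z(\gamma)=k\in\mathbb{Z}$. Each factor of $\Gamma_{a,b}(z(h),z;L)$ has the form $\Gamma\left(\frac{z(\delta)}{z(\gamma)}+k,\tau,\sigma\right)=\Gamma\left(\frac{z(\delta)}{z(\gamma)},\tau,\sigma\right)$ by $1$-periodicity in the first variable. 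Hence $\Gamma_{a,b}(z(h),z;L)=\Gamma_{a,b}(0,z;L)$, which is independent of $h$ except through $a$. Over $V$, the condition that $\langle h\rangle_{\mathbb{Q}}$ generates $V/L$ gives $(a,b)\in\mathcal{W}$, so $(a,b)_V=\gamma/N$ and $z(h)/z(\gamma/N)=Nk\in\mathbb{Z}$, and the same reduction applies. So $\widetilde{\Phi}\triple{h}=\widetilde{\Gamma}_{a,ga}(0,z)$.

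Next we use the $\SL$-equivariance $\Gamma_{ga,gb}(w,gz)=\Gamma_{a,b}(w,z)$ together with the reflection $\Gamma_{a,b}=\Gamma_{b,a}^{-1}$ to compute the square. Applying $g$ with $gz=\lambda^{-1}$-rescaled $z$ (homogeneity), and using $w=0$, which is fixed by rescaling, we get
\begin{equation*}
\Gamma_{ga,g^2a}(0,z)=\Gamma_{a,ga}(0,z).
\end{equation*}
The goal is to pair $\widetilde{\Gamma}_{a,ga}(0,z)$ with $\widetilde{\Gamma}_{ga,a}(0,z)^{-1}$ via a reflection identity at $w=0$. The standard identity $\Gamma(z,\tau,\sigma)\Gamma(\tau+\sigma-z,\tau,\sigma)=1$ applied to the index set $F$ (which is symmetric under $\delta\mapsto \alpha+\beta-\delta$ up to boundary terms) should give $\Gamma_{a,b}(0,z)^2$ as a product of boundary factors. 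These boundary factors are values of $\theta_0$ coming from the $\delta$ with $a(\delta)=0$ or $b(\delta)=0$, via the difference relation \eqref{eq: difference relation}.

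The main step, and the one I expect to be the obstacle, is evaluating these boundary $\theta_0$-factors after smoothing. I would reduce them to $\prod\theta_0^N$ evaluated at $0$-adjacent torsion points, and then use the cocycle $j$ of Lemma \ref{lemma: explicit cocycle j}. In particular, $\theta_0^N$ at the points $j/N$ produces, through the pushforward relation \eqref{eq: theta pushforward}, the factor $N$ as a limit. The $\lambda$ should enter through the automorphy factor $c\tau+d$ of $\SL_2(\mathbb{Z})$ applied to $\theta_0'$ at $0$. Concretely, the unsmoothed $\Gamma_{a,b}(0,z)^2$ should equal, up to a sign, a ratio of derivatives $\frac{z(\gamma_1)}{z(\gamma_2)}$ of theta functions at the origin. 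Under the relation $b=ga$ this ratio becomes $\lambda$, and the smoothing exponent $N$ versus $1$ produces $\lambda^{N-1}$, with the $N$-dependent constants cancelling by the choice $(a,b)\in\mathcal{W}$. I would first check the modulus-$1$ case with a basis $L=\langle\alpha,\beta,\gamma\rangle$ and reduce general modulus by the coplanar chain argument of Proposition \ref{prop: smoothed phi1,1}.
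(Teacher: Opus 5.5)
Your first step is exactly the paper's: $h$ is an integer multiple of $\gamma$, so $1$-periodicity of each factor (for $L$, and for $V$ with $(a,ga)_V=\gamma/N$) reduces everything to $\widetilde{\Gamma}_{a,ga}(0,z)$. The overall shape of what follows (reflection identity, a theta-function ratio, a modular relation producing $\lambda$, order of vanishing producing the exponent) also matches the paper. The step you flag as the obstacle, however, is a genuine gap, and the concrete claims you make there do not work as stated.

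First, the reflection identity cannot be used at $w=0$. Choose $(\alpha,\beta)$ minimally. Then the involution $\delta\mapsto\alpha+\beta-\delta$ preserves $F_{a,b}^{\alpha,\beta}\setminus\{0\}$ for every modulus, so no coplanar chain is needed. The only leftover is the $\delta=0$ factor, $\Gamma(x,-\tau_1,\tau_2)\,\Gamma(-x,-\tau_1,\tau_2)=\theta_0(x,\tau_1)/\theta_0(x,\tau_2)$. Here $x=w/z(\gamma)$, $\tau_1=z(-\alpha)/z(\gamma)$ and $\tau_2=z(\beta)/z(\gamma)$, and both moduli lie in $\mathbb{H}$ on $V_a\cap V_b$. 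At $x=0$ this is $0/0$. You must therefore prove the identity of meromorphic functions $\widetilde{\Gamma}_{a,b}(w,z)\,\widetilde{\Gamma}_{a,b}(-w,z)=\theta_0^N(x,\tau_1)/\theta_0^N(x,\tau_2)$ and then let $w\to 0$. Two of your steps contribute nothing: pairing $\widetilde{\Gamma}_{a,ga}$ with $\widetilde{\Gamma}_{ga,a}^{-1}$ is a tautology, and $\Gamma_{ga,g^2a}(0,z)=\Gamma_{a,ga}(0,z)$ is never used.

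Second, your claim that the unsmoothed $\Gamma_{a,b}(0,z)^2$ is $\lambda$ up to sign is false. It equals $\theta_0'(0,\tau_1)/\theta_0'(0,\tau_2)$. By the transformation law of $\eta$, this is $\lambda$ times $e^{2\pi i(\tau_2-\tau_1)/12}$ times a $12$-th root of unity. These extra factors cancel only in the smoothed combination. What makes them cancel is the $\Gamma_0(N)$-invariance of $\theta_0^N$ (Proposition~\ref{prop: j^N trivial}, which needs $(N,6)=1$), not the condition $(a,b)\in\mathcal{W}$ by itself.

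The missing idea is to apply that invariance directly to the two moduli.
\begin{itemize}
\item Since $g$ maps $\ker(a)$ onto $\ker(ga)$ and $z$ is an eigenvector, $\tau_2=z(g\beta)/z(g\gamma)=\delta\cdot\tau_1$. Here $\delta=\bigl(\begin{smallmatrix}*&*\\c&d\end{smallmatrix}\bigr)\in\SL_2(\mathbb{Z})$ expresses $(g\beta,g\gamma)$ in the basis $(-\alpha,\gamma)$ of $\ker(ga)\cap L$.
\item The matrix $\delta$ lies in $\Gamma_0(N)$. Indeed $g$ preserves $V$, and $(a,ga)\in\mathcal{W}$ gives $\gamma/N\in V$. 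So $g(\gamma/N)$ lies in $\ker(ga)\cap V=\langle\alpha,\gamma/N\rangle$, which forces $N\mid c$.
\item The automorphy factor is $\lambda^{-1}$: $cz(-\alpha)+dz(\gamma)=z(g\gamma)=\lambda^{-1}z(\gamma)$.
\end{itemize}
Invariance then gives $\theta_0^N(x,\tau_1)=\theta_0^N(\lambda x,\tau_2)$, so the product equals $\theta_0^N(\lambda x,\tau_2)/\theta_0^N(x,\tau_2)$. This tends to $\lambda^{N-1}$ as $x\to 0$, because $\theta_0^N(\cdot,\tau_2)$ vanishes to order exactly $N-1$ at the origin. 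This is the paper's argument. It replaces your derivative and pushforward bookkeeping and makes the cancellation of all $N$-dependent constants automatic.
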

In particular, the Proposition directly verifies Theorem \ref{theorem: main theorem in main body}, up to signs, for $h\in L$. The proof reduces to the following computation of $\widetilde{\Gamma}$ at $0$.
\begin{lemma}
    Suppose $g\cdot z=\lambda z$ for $z\in \Lambda\otimes\mathbb{C}-\Lambda\otimes\mathbb{R}$. Then, for $(a,ga)\in\mathcal{W}$:
    \begin{equation*}
        \widetilde{\Gamma}_{a,ga}(0,z)^2=\lambda^{N-1}.
    \end{equation*}
\end{lemma}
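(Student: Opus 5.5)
The plan is to compare $\widetilde{\Gamma}_{a,ga}$ with a reflected copy of itself near $w=0$. The factor $\lambda^{N-1}$ should then come out of the order-$(N-1)$ zero of $\theta_0^N(x,\tau)$ at $x=0$ after a rescaling $x\mapsto x/\lambda$.

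\textbf{Step 1 (reflection).} Write $b=ga$ and choose minimal $(\alpha,\beta)$ for $(a,b)$ as in Remark \ref{remark: alpha and beta}, so that
\begin{equation*}
\Gamma_{a,b}(w,z)=\prod_{\delta\in F^{\alpha,\beta}_{a,b}}\Gamma(x_\delta,\tau,\sigma), \qquad x_\delta=\tfrac{w+z(\delta)}{z(\gamma)},\quad \tau=\tfrac{z(\alpha)}{z(\gamma)},\quad \sigma=\tfrac{z(\beta)}{z(\gamma)}.
\end{equation*}
I would use the reflection identity $\Gamma(x,\tau,\sigma)\Gamma(\tau+\sigma-x,\tau,\sigma)=1$, which follows from the product formula. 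The involution $\delta\mapsto\alpha+\beta-\delta$ sends $F^{\alpha,\beta}_{a,b}$ to the translate with half-open conditions $0<a(\cdot)\le a(\alpha)$ and $0<b(\cdot)\le b(\beta)$. The ratio of the products over the two index sets is then a product of boundary terms $\Gamma(x+\tau,\ldots)/\Gamma(x,\ldots)$ and $\Gamma(x+\sigma,\ldots)/\Gamma(x,\ldots)$. By the difference relations \eqref{eq: difference relation}, these boundary terms are explicit $\theta_0$ factors, in the same way as in the computation of $(d\Gamma)_{a,b}$ in the proof of Proposition \ref{prop: smoothed phi1,1}. The result is an identity of the form
\begin{equation*}
\Gamma_{a,b}(w,z)\,\Gamma_{a,b}(-w,z)=\Theta_{a,b}(w,z),
\end{equation*}
where $\Theta_{a,b}$ is a finite product of $\theta_0$'s, up to an exponential of a Bernoulli-type polynomial.

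\textbf{Step 2 (smoothing).} Because $(a,b)\in\mathcal{W}$, we have $V=\langle\alpha,\beta,\gamma/N\rangle$ when the modulus is $1$. The smoothed version of Step 1 then involves only $\theta_0^N$'s, exactly as in Proposition \ref{prop: smoothed phi1,1}. For general modulus I would reduce to modulus $1$ through the coplanar chain used there. Setting $w\to 0$, the left side becomes $\widetilde{\Gamma}_{a,b}(0,z)^2$, since $\widetilde{\Gamma}_{a,b}$ is holomorphic and non-vanishing at $0$. On the right, the terms indexed by $\delta\in\mathbb{Z}\gamma$ are of the form $\theta_0^N(x,\cdot)$ with $x\to0$, and each has a zero of order $N-1$ at $x=0$. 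These zeros must cancel against each other in a ratio of two such terms, with arguments $w/z(\gamma)$ relative to the $a$-side and the $b$-side. The expected outcome is a ratio of leading coefficients of $\theta_0^N$ at two points related by the modular substitution $x\mapsto x/(c\tau+d)$, which contributes $(c\tau+d)^{N-1}$.

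\textbf{Step 3 (identifying $\lambda$).} I would then use $\SL(\Lambda)$-equivariance together with $gz=\lambda z$. Homogeneity gives $\Gamma_{ga,g^2a}(w,z)=\Gamma_{a,ga}(\lambda w,z)$, so the two boundary pieces are exchanged by $g$. This should identify the automorphy factor $c\tau+d$ with $\lambda$ (or $\lambda^{-1}$, to be fixed by orientation), giving $\widetilde{\Gamma}_{a,ga}(0,z)^2=\lambda^{N-1}$. Any residual $\Gamma_0(N)$ factor is trivial by Proposition \ref{prop: j^N trivial}.

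\textbf{Deducing Proposition \ref{prop: h in L}.} For $h\in L$ we have $z(h)\in z(\ker(a,ga)\cap L)$. The quasi-periodicity of $\widetilde{\Gamma}_{a,ga}$ along $\ker(a,ga)$ then moves $z(h)$ to $0$, at the cost of factors that should be trivial after smoothing. The Lemma gives the claim.

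I expect Step 2 to be the main obstacle. The boundary bookkeeping has to be organised so that the vanishing $\theta_0^N$ factors pair off exactly, and it is not yet clear that this pairing works for general modulus.
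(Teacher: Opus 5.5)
Your overall route is the paper's: reflect to write $\widetilde{\Gamma}_{a,b}(w,z)\widetilde{\Gamma}_{a,b}(-w,z)$ as a ratio of two $\theta_0^N$'s, relate them by $\Gamma_0(N)$-invariance, and read off $\lambda^{N-1}$ from the order-$(N-1)$ zero at $x=0$. \textbf{The genuine gap is Step 3.} This is the only place $g$ and $\lambda$ enter, and you leave it as ``should''. The identity you cite, $\Gamma_{ga,g^2a}(w,z)=\Gamma_{a,ga}(\lambda w,z)$, involves a pair that does not occur in the reflection identity. Also, $g$ does not exchange the two theta factors: it carries the $\ker(a)$-factor to a $\ker(ga)$-factor, but it carries the $\ker(ga)$-factor to a $\ker(g^2a)$-factor, not back.

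What is needed is the following. Since $\ker(ga)=g\ker(a)$ and $b(gv)=a(v)$, $g$ restricts to an orientation-preserving isomorphism $\ker(a)\cap L\to\ker(b)\cap L$. Hence $g\beta=p(-\alpha)+q\gamma$ and $g\gamma=c(-\alpha)+d\gamma$ with $\delta=\bigl(\begin{smallmatrix}p&q\\c&d\end{smallmatrix}\bigr)\in\SL_2(\mathbb{Z})$. Because $g$ preserves $V$ and $(a,b)\in\mathcal{W}$, we get $g(\gamma/N)\in\ker(b)\cap V=\langle\alpha,\gamma/N\rangle$, which forces $N\mid c$. This is exactly what licenses Proposition \ref{prop: j^N trivial}; you cannot simply dismiss a ``residual $\Gamma_0(N)$ factor'' without it. The eigen-relation $z(gv)=\lambda^{-1}z(v)$ then gives $\delta\cdot\frac{z(-\alpha)}{z(\gamma)}=\frac{z(\beta)}{z(\gamma)}$ and $c\frac{z(-\alpha)}{z(\gamma)}+d=\lambda^{-1}$. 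Therefore $\theta_0^N\bigl(x,\frac{z(-\alpha)}{z(\gamma)}\bigr)=\theta_0^N\bigl(\lambda x,\frac{z(\beta)}{z(\gamma)}\bigr)$, and the ratio tends to $\lambda^{N-1}$. This also fixes your $\lambda$ versus $\lambda^{-1}$ ambiguity; it comes from the pullback convention, not from orientation.

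Conversely, the step you flag as the main obstacle is immediate, and neither an exponential factor nor the coplanar chain is needed. Take $(\alpha,\beta)$ minimal, i.e.\ $\ker(a)\cap L=\langle\beta,\gamma\rangle$ and $\ker(b)\cap L=\langle\alpha,\gamma\rangle$. Then $\delta=0$ is the only element of $F^{\alpha,\beta}_{a,b}$ on the faces $a=0$ or $b=0$. Likewise $\alpha+\beta$ is the only element of the reflected set on the faces $a=a(\alpha)$ or $b=b(\beta)$. So for every modulus,
\[ \Gamma_{a,b}(w,z)\Gamma_{a,b}(-w,z)=\frac{\Gamma(x,\tau,\sigma)}{\Gamma(x+\tau+\sigma,\tau,\sigma)}=\frac{\theta_0(-x,-\tau)}{\theta_0(-x,\sigma)},\qquad x=\frac{w}{z(\gamma)}. \]
Here $-\tau=z(-\alpha)/z(\gamma)$ and $\sigma=z(\beta)/z(\gamma)$ are the moduli of $z(\ker(b)\cap L)$ and $z(\ker(a)\cap L)$ respectively. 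Since $(a,b)\in\mathcal{W}$ gives $\ker(a)\cap V=\langle\beta,\gamma/N\rangle$ and $\ker(b)\cap V=\langle\alpha,\gamma/N\rangle$, the same $(\alpha,\beta)$ are minimal for $V$, with $\gamma$ replaced by $\gamma/N$. Smoothing then turns both factors into $\theta_0^N$ directly.

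Separately, for the deduction about $h\in L$: $\Gamma_{a,b}(w,z)$ is exactly invariant under $w\mapsto w+z(\gamma)$, and $h\in\mathbb{Z}\gamma$, so no correction factors arise.
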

\begin{proof}
    With no assumption on the pair $(a,b)$, we have the completely general equality:
    \begin{equation*}
        \Gamma_{a,b}(w,z)\cdot \Gamma_{a,b}(-w,z)=\frac{\theta_0\left(\frac{w}{z(\gamma)},\frac{z(-\alpha)}{z(\gamma)}\right)}{\theta_0\left(\frac{w}{z(\gamma)},\frac{z(\beta)}{z(\gamma)}\right)}
    \end{equation*}
    for any choice of $(\alpha,\beta)$ that is minimal in the sense discussed in Remark \ref{remark: alpha and beta}. Let $b=ga$ and assume $(a,b)\in\mathcal{W}$. Thus, the smoothed elliptic Gamma is a product of $N$-smoothed $\Gamma$ functions and so we deduce:
    \begin{equation*}
        \widetilde{\Gamma}_{a,b}(w,z)\cdot \widetilde{\Gamma}_{a,b}(-w,z)=\frac{\theta_0^N\left(\frac{w}{z(\gamma)},\frac{z(-\alpha)}{z(\gamma)}\right)}{\theta_0^N\left(\frac{w}{z(\gamma)},\frac{z(\beta)}{z(\gamma)}\right)}.
    \end{equation*}
    Combining $g:\ker(a)\xrightarrow{\sim}\ker(b)$ with $z$ an eigenvector of $g$, yields $\frac{z(\beta)}{z(\gamma)}=\frac{z(g\beta)}{z(g\gamma)} = \delta\cdot \frac{z(-\alpha)}{z(\gamma)}$,
    for some $\delta=\big(\begin{smallmatrix}
        a&b\\c&d
    \end{smallmatrix}\big)\in \Gamma_0(N)$. In particular, $g\gamma = c(-\alpha)+d\gamma$ so that $cz(-\alpha)+dz(\gamma)=z(g\gamma)=g^{-1}z(\gamma)=\lambda^{-1}z(\gamma)$. By $\Gamma_0(N)$-invariance:
    \begin{equation*}
        \theta_0^N\left(\frac{w}{z(\gamma)},\frac{z(-\alpha)}{z(\gamma)}\right) = \theta_0^N\left(\frac{w}{cz(-\alpha)+dz(\gamma)},\frac{z(\beta)}{z(\gamma)}\right) =\theta_0^N\left(\lambda\frac{w}{z(\gamma)},\frac{z(\beta)}{z(\gamma)}\right).
    \end{equation*}
    And so:
    \begin{equation*}
        \widetilde{\Gamma}_{a,b}(w,z)\cdot \widetilde{\Gamma}_{a,b}(-w,z)=\frac{\theta_0^N\left(\lambda\frac{w}{z(\gamma)},\frac{z(\beta)}{z(\gamma)}\right) }{\theta_0^N\left(\frac{w}{z(\gamma)},\frac{z(\beta)}{z(\gamma)}\right)}.
    \end{equation*}
    Since $\theta_0^N(-,\tau)$ is a meromorphic function on $\mathbb{C}$ with a zero of order $N-1$ at zero, we know $\lim_{x\rightarrow 0}\frac{\theta_0^N(\lambda x,\tau)}{\theta_0^N(x,\tau)} = \lambda^{N-1}$ for all $\lambda\in \mathbb{C}^*$. In particular, $\widetilde{\Gamma}_{a,ga}(0,z)^2=\lambda^{N-1}$.
\end{proof}
\begin{proof}[Proof of Proposition \ref{prop: h in L}]
    By the Lemma above, it suffices to show $\widetilde{\Phi}\triple{h} = \widetilde{\Gamma}_{a,ga}(z(h),z)\overset{?}{=}\widetilde{\Gamma}_{a,ga}(0,z)$. From the description of $\Gamma_{a,b}(w,z)$ as a product of standard elliptic Gamma, it is clear that $\Gamma_{a,b}(w,z)$ is invariant under translating $w$ by $\mathbb{Z}\cdot z(\gamma)$. By construction, $\gamma$ is a (positive) primitive vector in the line $\langle h \rangle_{\mathbb{Q}}\cap L$. Thus, when $h\in L$, it follows that $h$ is an \textit{integer} multiple of $\gamma$, which completes the proof of the Proposition.
\end{proof}

\newpage 
\bibliographystyle{alpha}
\bibliography{refs}
\end{document}